\numberwithin{equation}{section}
\newtheorem{theorem}{Theorem}[section]
\newtheorem{proposition}[theorem]{Proposition}
\newtheorem{corollary}[theorem]{Corollary}
\newtheorem{lemma}[theorem]{Lemma}
\newcounter{introthmcounter}
\newtheorem{thmA}{Theorem}[introthmcounter]
\theoremstyle{definition}
\newtheorem{definition}[theorem]{Definition}
\newtheorem{notation}[theorem]{Notation}
\newtheorem{example}[theorem]{Example}
\theoremstyle{remark}
\newtheorem{remark}[theorem]{Remark}
\newcommand{\bE}{\mathbb E}
\newcommand{\bN}{\mathbb N}
\newcommand{\bK}{\mathbb K}
\newcommand{\bZ}{\mathbb Z}
\DeclareMathOperator{\Th}{\mathrm{Th}}
\newcommand{\cS}{\mathcal{S}}
\newcommand{\cT}{\mathcal{T}}
\def\-{\text{-}}
\newcommand{\tp}{\mathrm{tp}}
\newcommand{\dcl}{\mathrm{dcl}}
\newcommand{\convex}{\mathrm{convex}}
\newcommand{\Br}{\mathrm{Br}}
\newcommand{\br}{\mathrm{br}}
\newcommand{\var}[1]{\mathrm{#1}}
\newcommand{\val}{\mathbf{v}}
\newcommand{\res}{\mathbf{r}}
\newcommand{\rv}{\mathbf{rv}}
\newcommand{\CH}{\mathrm{CH}}
\newcommand{\cO}{\mathcal{O}}
\newcommand{\co}{\mathfrak{o}}
\newcommand{\Exponents}{\mathrm{Exponents}}
\DeclareMathOperator{\Vr}{\mathrm{V}}
\DeclareMathOperator{\lder}{{\dagger}}
\DeclareMathOperator{\height}{\mathrm{ht}}
\DeclareMathOperator{\Kr}{\mathrm{Ker}}
\title{Absorbed Types and Derivations in Exponential o-Minimal Theories}
\author{Pietro Freni}
\address{School of Mathematics, University of Leeds, Leeds
	LS2 9JT, United Kingdom}
\subjclass[2020]{Primary 03C64; Secondary 12H05 12J10}
\keywords{O-absorbed cut, O-symmetric cut, exponential o-minimal theory, immediate extension, o-minimal field, o-minimality, order-convex derivation, simply exponential o-minimal theory, {$T$}-convex valuation, weakly immediate type, weak O-limit, wim-constructible extension, transserial o-minimal theory, transseries, valuation-convex derivation}
\thanks{The author was supported by EPSRC Fellowship
EP/T018461/1 at the University of Leeds.
}
\begin{document}

\begin{abstract}
	I analyze $\cO$-weakly immediate and $\cO$-residual types in an o-minimal expansion of an ordered field $\bE$, where $\cO$ is a convex valuation ring. The main result is a characterization of those exponential theories $T$ such that for all $(\bE, \cO)\models T_\convex$ the image of any $\cO$-weakly immediate type is given by some composition of translations, sign changes and exponential, of some \emph{possibly different} $\cO$-weakly immediate type. I call these theories \emph{transserial} and they encompass simply exponential theories such as $T_{\exp}$ and $T_{an, \exp}$. A consequence of the analysis is that there are no counterexamples to \emph{Tressl's signature-alternative} (cf \cite{tressl2005model}) in models of transserial theories admitting an Archimedean prime model.

    The characterization has at its core some arguments that use very few but fundamental properties of the valued differential field of germs at a cut in an o-minimal structure. These are abstracted in some conditions of compatibility between the derivation and the order or the derivation and the valuation, both ultimately stemming from the mean-value-theorem in o-minimal structures. I develop some basic theory around these notions and observe that in the case of few constants (i.e.\ when the valuation ring contains the constants) these notions specialize to notions thoroughly studied in \cite{aschenbrenner2019asymptotic}.
\end{abstract}

\maketitle

\section{Introduction}
Let $T$ be the theory of an o-minimal expansion of an ordered field and let $T_\convex$ be the (complete) theory of models of $T$ expanded by a predicate for a non-trivial $T$-convex valuation subring (cf \cite[Def.~2.7, Cor.~3.13]{dries1995t}). If $\cO$ is a convex subring of a field $\bE$ we denote by $\co$ the corresponding maximal ideal. We will use the standard notations $\val_\cO$, $\rv_\cO$, and $\res_\cO$, respectively for the valuation induced by $\cO$, the rv-sort induced by $\cO$, and the map $\res_\cO: \bE \to \cO/\co$ which is $0$ outside of $\cO$ and is given by the quotient modulo $\co$ on $\cO$. We drop the subscript $\cO$ if it is clear from the context.

In \cite{freni2024t}, the author introduced the $T$-$\lambda$-spherical completion of a model $(\bE, \cO)\models T_\convex$ (where $\lambda$ is an uncountable cardinal): this is a prime elementary $\lambda$-spherically extension of $(\bE_\lambda, \cO_\lambda)\succ(\bE, \cO)$, it is in general unique up to isomorphism and has the same residue field as $(\bE,\cO)$, \cite[Thm.~B]{freni2024t}.

Toward the goal of better understanding these completions, it was shown in the same paper that if $T$ is simply exponential (that is, an expansion of a power-bounded o-minimal theory by a compatible exponential), then $(\bE_\lambda, \cO_\lambda)$ only realizes unary types $p$ over $\bE$ which are images of some other $\cO$-weakly immediate type $q$ over $\bE$ by a $\bE$-definable \emph{generalized nested exponential} (henceforth a \emph{gne}): a finite composition $g$ of translations by elements of $\bE$, sign changes and exponentials \cite[Prop.~5.27 and Cor.~5.30]{freni2024t}.

In this paper I characterize those complete exponential o-minimal theories $T$ for which this fact holds true. These are called \emph{transserial} o-minimal theories and are those satisfying the following strengthening of exponential-boundedness:
\begin{enumerate}[label=(TS), ref=(TS)]
    \item\label{axiom:transserial0} for all $n\in \bN$ and for all $(n+1)$-ary $T$-definable functions $f$, said 
    \[\dagger_n f(x_0, \ldots, x_{n-1}, t)=\frac{\partial f(x_0, \ldots, x_{n-1}, t)/\partial t}{f(x_0, \ldots, x_{n-1}, t)} \quad \text{if it is defined}\]
    and $\dagger_n f(\overline{x}, t)=\infty$ otherwise, and set $G_m^f(\overline{x},t)\coloneqq\min\{|(\lder^k_nf)(\overline{x},t)|: k \le m\}$ one has that for some $m \in \bN$
    \[T_\convex \models \forall \overline{x},\; \exists r\in \cO, \;\forall t \in \cO,\; \big(t>r \rightarrow G_m^f(\overline{x},t) < 1 \big).\]
\end{enumerate}

It turns out that for transserial theories, a similar property to the one outlined above for $T$-$\lambda$-spherical completions, is shared by other kind of extensions. So, to state the main result in a compact way, let us say that an elementary extension $(\bE, \cO) \prec (\bE_*, \cO_*)$ of models of $T_\convex$,
\begin{enumerate}[label=(E\arabic*)]
    \item\label{df:gne-ppty} has the \emph{gne-property} if for all elements $y\in \bE_*$, there is a $\bE$-definable gne such that $\val(g^{-1}(y)-\bE)\subseteq \val(\bE)$.
\end{enumerate}
and recall that an elementary extension $(\bE, \cO) \preceq (\bE_*, \cO_*)\models T_\convex$ is said to be
\begin{enumerate}[label=(E\arabic*), resume]
    \item\label{df:res-constructible} \emph{residually constructible} (or \emph{res-constructible}, (cf \cite[Def.~14]{freni2025residually}), if there is a $\dcl_T$-basis $\overline{r}$ of $\bE_*$ over $\bE$, such that $\res(\overline{r})$ is $\dcl_T$-independent over $\res(\bE)$;
    \item\label{df:wim-constructible} \emph{($\lambda$-bounded) wim-constructible} (cf \cite[Def.~3.15]{freni2024t}) if there is an ordinal indexed $\dcl_T$-basis $(x_i: i<\alpha)$ of $\bE_*$ over $\bE$, such that for all $j<\alpha$, $x_j$ is a pseudolimit of a p.c.\ sequence in $\bE \langle x_i: i<j\rangle$ without pseudolimits therein and of length $<\lambda$.
\end{enumerate}
Also recall that by \cite[Thm.~A]{freni2024t},
\begin{enumerate}[label=(\alph*)]
    \item if $(\bE, \cO) \prec (\bE_*, \cO_*)$ is wim-constructible then $\res(\bE, \cO)=\res(\bE_*, \cO_*)$, hence if $y \in \bE_*\setminus \bE$, $\val(y-\bE)\subseteq \val(\bE)$ means that $y$ is a pseudolimit;
    \item if $(\bE, \cO) \prec (\bE_*, \cO_*)$ is residually constructible, then $\bE_*$ does not contain pseudolimits of limitless p.c. sequences in $\bE$, hence if $y \in \bE_*\setminus \bE$, $\val(y-\bE)\subseteq \val(\bE)$ means that $y=c+dz$ for some $c,d \in \bE$, and $z\in \bE_*\setminus \bE$ with $\res(z)\notin \res(\bE, \cO)$.
\end{enumerate}
Note that a $T$-$\lambda$-spherical completion is a wim-constructible extension (in fact a maximal $\lambda$-bounded one).
The main result is thus stated as follows:

\begin{thmA}\label{introthm:transserial_char}
    The following are equivalent for an exponential o-minimal theory $T$:
    \begin{enumerate}[ref=(\arabic*)]
        \item\label{introthmenum:transserial} $T$ is transserial;
        \item\label{introthmenum:exp-bdd_gne_at_special} $T$ is exponentially bounded and for all $(\bE, \cO) \models T_\convex$, all $\cO < b <\bE^{>\cO}$, for every $y \in \bE \langle b \rangle$ there is a $\bE$-definable gne $g$, such that $y \equiv_\bE g(b)$;
        \item\label{introthmenum:gne_wim_1} all 1-$\dcl_T$-dimensional wim-constructible extension of models of $T_\convex$ have the gne-property;
        \item\label{introthmenum:gne_res_1} all 1-$\dcl_T$-dimensional res-constructible extensions of models of $T_\convex$ have the gne property;
        \item\label{introthmenum:gne_wim} all wim-constructible extensions of models of $T_\convex$ have the gne property;
        \item\label{introthmenum:gne_res} all res-constructible extensions of models of $T_\convex$ have the gne property.
    \end{enumerate}
\end{thmA}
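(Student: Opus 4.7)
The plan is to prove the equivalences through (1) $\Rightarrow$ (2) $\Rightarrow$ (5) $\Rightarrow$ (3) $\Rightarrow$ (1) together with (2) $\Rightarrow$ (6) $\Rightarrow$ (4) $\Rightarrow$ (1), treating (2) as the pivotal condition. The implications (5) $\Rightarrow$ (3) and (6) $\Rightarrow$ (4) are immediate by specialization to $1$-$\dcl_T$-dimensional extensions.

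For (1) $\Rightarrow$ (2), the core of the argument will be an \emph{exponential peeling} induction driven by axiom \ref{axiom:transserial0}. Given $b$ realizing the signature cut $(\cO, \bE^{>\cO})$ in an elementary extension and $y=f(b)$ for an $\bE$-definable unary $f$, the axiom supplies some $k\le m$ with $|\lder^{k} f(t)|<1$ eventually as $t$ approaches the cut from within $\cO$. Integrating back stepwise: if $\lder^{k} f(b)\in \cO$, then $\lder^{k-1}f(b)$ grows at most like $\exp$ of an $\bE$-affine expression in $b$; iterating $k$ times expresses $f(b)$ as an iterated exponential of such an expression, up to a multiplicative error of value in $\val(\bE)$---that is, as a gne applied to $b$ modulo $\equiv_\bE$. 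Elements $y\in \bE\langle b\rangle$ defined by more complex terms reduce to the unary case via o-minimal cell decomposition on the defining formula, so that the peeling procedure is applied variable by variable.

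For (2) $\Rightarrow$ (5) and (2) $\Rightarrow$ (6), I would argue by transfinite induction along a $\dcl_T$-basis $(x_j: j<\alpha)$ of $(\bE_*, \cO_*)$ over $(\bE, \cO)$. At each stage $x_j$ generates a $1$-dimensional wim (resp.\ res) extension of $\bE_j \coloneqq \bE\langle x_i : i<j\rangle$, and a case analysis of the cut realized over $\bE_j$ should show that one can renormalize $x_j$ onto the signature cut of $\bE_j$ via a finite composition of translations, sign changes, and inversions over $\bE_j$. Applying (2) at $\bE_j$ then yields a gne for the renormalized generator, and since the class of gne's is closed under composition with these renormalizations, one obtains the gne-property at stage $j$; collecting the witnesses along the basis yields the gne-property for every element of $\bE_*$. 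For the converse (3) $\Rightarrow$ (1) and (4) $\Rightarrow$ (1), I would argue by contrapositive: failure of \ref{axiom:transserial0} for some $(n+1)$-ary $f$ at parameters $\overline{x}$ gives a sequence $\{|\lder^k f(\overline{x},\cdot)|\}_k$ producing a refining family of cuts near the signature cut, whose realization $b$ in an appropriate $1$-dimensional wim- (resp.\ res-, via $\exp$) constructible extension supplies an element $f(\overline{x},b)$ that no gne can conjugate into a pseudolimit over $\val(\bE)$, because gne's have uniformly bounded exponential depth while the refinement of cuts does not.

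The main obstacle is (1) $\Rightarrow$ (2): axiom \ref{axiom:transserial0} is a purely differential-algebraic statement, and converting it into a concrete type-realization statement at the signature cut requires the abstract compatibility between derivation, order, and valuation developed earlier in the paper (the order-convex and valuation-convex derivations), together with careful control of the integration constants so that the unwound expression for $f(b)$ genuinely lies in the gne class up to $\equiv_\bE$. The renormalization step in (2) $\Rightarrow$ (5),(6) is less delicate but still requires a full case analysis of $1$-dimensional wim/res cuts, and will likely invoke \cite[Thm.~A]{freni2024t} to identify $\val(y-\bE)\subseteq \val(\bE)$ with being a pseudolimit (wim case) or a translate of a new-residue element (res case).
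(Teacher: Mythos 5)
Your overall decomposition—anchor everything at (2), reduce from (5),(6) to (3),(4) by specialization, peel exponentials for (1)$\Rightarrow$(2), contrapositive for (3),(4)$\Rightarrow$(1)—is a sensible plan, and the peeling for (1)$\Rightarrow$(2) and the contrapositive for (3),(4)$\Rightarrow$(1) are both in the same spirit as the paper (which packages them as Lemma~\ref{lem:transserial_vs_Wtransserial} together with Corollary~\ref{cor:wtransserial_gne}, and as Proposition~\ref{prop:non-transserial_sOabs}). The cycle (5)$\Leftrightarrow$(3) and (6)$\Leftrightarrow$(4) corresponds to the paper's transitivity Lemma~\ref{lem:gne-ppty_transitive}, so that part is also fine.

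The genuine gap is in (2)$\Rightarrow$(5),(6). You cannot ``renormalize $x_j$ onto the signature cut of $\bE_j$ via a finite composition of translations, sign changes, and inversions'' (nor even by exponentials). A wim generator $x_j$ satisfies, by definition, that $\val_{\cO_j}(x_j-\bE_j)$ has no maximum, whereas the signature cut $b$ with $\cO_j< b< \bE_j^{>\cO_j}$ satisfies $\val(b-\bE_j)=\{0\}$, which has a maximum; the two types are not interchangeable under any $\bE_j$-definable map because such a map preserves the property that $\val(x-\bE_j)$ has (or does not have) a maximum. Likewise a residually cofinal generator whose residue sits between two elements of $\res(\bE_j)$ cannot be sent onto the signature cut. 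Condition (2), as stated, gives information \emph{only} at the signature cut, so this route never produces information about the generic wim or res cut. The paper bridges this gap with a uniformity argument rather than a change of cut: transseriality is first propagated (Lemma~\ref{lem:generic_G_bound} and Corollary~\ref{cor:generic_G_bound}) to a statement of the form ``$G_k^h(t)$ is small off a finite union of $M$-cosets'' for an arbitrary $\cO$-module $M$, and then, in Proposition~\ref{prop:transserial_sOabs}, this uniform bound is evaluated at the actual absorbed cut, using the fact that a strongly $\cO$-absorbed $x$ must lie in some interval $[x_-,x_+]$ avoiding those cosets. Absent such a uniform statement in $\overline{x}$ and the auxiliary parameter $a$ appearing in $G_m^h(a,x,t)$, the reduction to (2) does not go through, and this is precisely where the multi-variable quantifier in the axiom (TS) does real work.

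Two smaller points. In (1)$\Rightarrow$(2) your peeling sketch says nothing about \emph{essentiality} and \emph{normality} of the resulting gne; without controlling the integration constants $c_i$ against the dominance relation (as in Lemma~\ref{lem:gne-normalize} and the log-scale machinery of Section~\ref{sec:fields_of_germs}), the inverse composition $g^{-1}(y)$ need not be absorbed, so the gne-property is not yet proved. In (3),(4)$\Rightarrow$(1), the precise mechanism is not that ``gne's have uniformly bounded exponential depth while the refinement of cuts does not,'' but that $\val(\lder^m f(x))\notin\val(\bE)$ for all $m$, which by Proposition~\ref{prop:height_over_absorbed} is exactly infinite logarithmic height and by Corollary~\ref{cor:main-cor} exactly failure of the gne-property; the depth intuition points in the right direction but does not substitute for this chain of equivalences.
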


The name \emph{transserial}, comes from the field of logarithmic exponential transseries, where certain configurations such as formal asymptotic solutions of the equation $\log(f(x)-x)=f(\log_2(x))$ are prohibited.

I also show that if a transserial theory admits an Archimedean prime model, then Tressl's signature alternative (cf \cite[Def.~3.16]{tressl2005model} or see Subsection~\ref{ssec:first_application}) holds true for every cut over every model. Moreover all Hardy fields of a transserial theory have the the same Rosenlicht levels (as defined in \cite[Def.~5.2]{berarducci2015surreal} in the context of surreal numbers with the natural valuation, see Definition~\ref{def:rosenlicht_levels}).

\begin{thmA}\label{introthm:transserial_consequences}
    Let $T$ be a transserial theory, then:
    \begin{enumerate}
        \item wim-constructible and residually constructible extensions do not add new Rosenlicht levels, in particular for all $\bK\prec \bK'\models T$, the Hardy fields of $\bK$ and $\bK'$ have the same Rosenlicht levels;
        \item if $T$ has an Archimedean prime model, then for every $\bE\models T$, every unary type $p$ over $\bE$ satisfies Tressl's signature alternative.
    \end{enumerate}
\end{thmA}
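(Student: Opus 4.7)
The overall strategy is to leverage Theorem~\ref{introthm:transserial_char} to reduce each statement to the behavior of generalized nested exponentials (gne's) on elements $y$ with $\val(y-\bE)\subseteq\val(\bE)$.

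For part (1), I first observe that each primitive factor in a gne---a translation by an element of $\bE$, a sign change, and $\exp$---acts predictably on Rosenlicht levels of positive infinite elements: translations preserve the level of the valuation-dominant summand, sign changes preserve the level, and $\exp$ shifts it by $+1$. Consequently, if $z, a$ share a Rosenlicht level and $g$ is an $\bE$-definable gne, then $g(z)$ and $g(a)$ share a Rosenlicht level. Now take $y \in \bE_* \setminus \bE$ in a wim- or res-constructible extension $(\bE, \cO) \prec (\bE_*, \cO_*)$. By Theorem~\ref{introthm:transserial_char}\ref{introthmenum:gne_wim} or \ref{introthmenum:gne_res}, there is an $\bE$-definable gne $g$ with $z \coloneqq g^{-1}(y)$ satisfying $\val(z - \bE) \subseteq \val(\bE)$. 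In the wim case, $z$ is a pseudolimit of a p.c.\ sequence $(a_i)_{i<\alpha}$ from $\bE$, so for $i$ sufficiently large $\log|z/a_i|$ is infinitesimal and $z$ has the same Rosenlicht level as $a_i \in \bE$. In the res case, $z = c + dz'$ with $c, d \in \bE$ and $z' \in \cO$ of fresh residue (hence valuation $0$), so $z$ has the Rosenlicht level of whichever of $c$ or $d$ dominates valuation-wise. Either way, $y = g(z)$ lies in a Rosenlicht class already realized in $\bE$. The Hardy-field consequence follows because an elementary extension $\bK \prec \bK'$ of models of $T$ induces a tower of wim- and res-constructible extensions of the associated Hardy fields.

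For part (2), I recall that Tressl's signature alternative for a type $p$ is a dichotomy between the existence of a ``good'' signature attached to $p$ and a specific configuration of $\bE$-definable functions witnessing its failure. The plan is: given a unary $p$ over $\bE$ realized by some $b$ in an elementary extension, if $p$ is realized in $\bE$ there is nothing to prove; otherwise, applying Theorem~\ref{introthm:transserial_char}\ref{introthmenum:exp-bdd_gne_at_special} to a suitable enlargement produces an $\bE$-definable gne $g$ such that the type of $b' \coloneqq g^{-1}(b)$ is residual or weakly immediate. For residual types the signature is read off the sign of the residue; for weakly immediate types it is read off an underlying p.c.\ sequence. The Archimedean prime model assumption then enters via the fact that $\dcl_T(\emptyset)$ embeds into every $\bE \models T$ as an Archimedean subfield: this pins down the signatures of the iterated $\exp$ and $\log$ terms occurring in $g$ and rules out the non-Archimedean base configurations that would otherwise generate signature-less cuts.

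The principal obstacle I anticipate is in part (2): showing that a signature for the type of $b'$ lifts through $g$ back to a signature for $p$. A gne acts non-linearly and can interact intricately with the candidate families of definable functions that witness (or defeat) a signature. The intended remedy is to combine axiom~\ref{axiom:transserial0}, which provides uniform bounds on iterated logarithmic derivatives of $T$-definable functions and hence controls how $g$ transports signatures, with the Archimedean prime model assumption to fix a ``starting'' signature at the innermost layer; an induction on the number of exponential layers of $g$ should then deliver the result.
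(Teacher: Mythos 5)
Your proposal has genuine gaps in both parts.

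\textbf{Part (1).} Your plan is to track Rosenlicht levels layer-by-layer through the gne $g$, via the claim that translations, sign changes and $\exp$ act predictably on levels, and then conclude $g(z)$ has the same level as $g(a)$ for some $a\in\bE$. The problem is that Rosenlicht levels (Definition~\ref{def:rosenlicht_levels}) are defined only for elements of $\bE^{>\cO}$, while the intermediate values $g_i(z)$ and the element $z$ itself need not be positive infinite. In the wim case, $z$ can be a pseudolimit of a p.c.\ sequence entirely inside $\cO_*$, so ``$z$ has the same Rosenlicht level as $a_i\in\bE$'' is not a well-posed statement; similarly in the res case. At a layer $g_i=c_i+\sigma_i\exp(g_{i+1})$ where $c_i$ dominates, $\exp(g_{i+1}(z))$ is small and $g_{i+1}(z)$ is typically not positive infinite, so the level-tracking breaks down exactly where the structure matters. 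The paper sidesteps this entirely by first reducing to a \emph{normal and essential} gne via Lemma~\ref{lem:gne-normalize}; with that normalization in hand one can directly argue that either all $c_i=0$ (so $y=\exp_m(z)$ with $\val(z)\in\val(\bE)$), or the first $i$ with $c_i\neq 0$ has $c_i\succ\exp(g_{i+1}(z))$, so $\log_i(y)\sim c_i$. In every case $\val(\log_n y)\in\val(\bE)$ for some $n$, which is precisely what $y\asymp_L d$ for some $d\in\bE$ requires. You do not invoke the normalization step, and without it the argument does not close.

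\textbf{Part (2).} You describe the signature alternative as a dichotomy involving ``a specific configuration of $\bE$-definable functions witnessing its failure'' and propose to produce a gne $g$, take $b'=g^{-1}(b)$, read off a signature from a residue or p.c.\ sequence, and lift it back through $g$. This is not what the paper does, and it substantially mischaracterizes Tressl's condition: as stated in Subsection~\ref{ssec:first_application}, a symmetric type $p=\tp(x/\bE)$ satisfies the alternative precisely when $\Br(x/\bE)(\bE)$ is cofinal in $\Br(x/\bE)(\bE\langle x\rangle)$ (condition (1) there; condition (2) follows from (1) holding for all types). The paper's proof is short: the Archimedean prime model hypothesis makes $\cO=\CH_\bE(\bZ)$ a $T$-convex valuation ring, Lemma~\ref{lem:0-sign_vs_wim} splits a symmetric $x$ into the $\cO$-wim case and the $\cO$-limit case, the wim case follows from Corollary~\ref{cor:finite_hight_and_br_ortho} (which rests on finite $\partial_x$-height from transseriality plus Proposition~\ref{prop:breadth-ortho}), and the $\cO$-limit case is handled directly by the residue-field result of \cite[Thm.~A]{freni2024t} or \cite[Thm.~7.1]{tressl2005model}. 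Your sketch engages with none of this; in particular the step of lifting a ``signature'' across the gne is exactly as hard as the theorem itself and is left entirely unresolved in your own write-up, as you acknowledge.
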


Note that the condition (1) is a weakening of the notion of ($\bZ$-)\emph{levelled} o-minimal theory of \cite{marker1997levelled}. 

\smallskip 

The way results are obtained is an analysis of the ordered differential valued fields $(\bE\langle x\rangle, \cO_x, \partial_x)$ where
\begin{enumerate}[label=(C\arabic*)]
    \item $\bE\langle x\rangle\coloneqq\dcl_T(\bE, x)$ is a 1-$\dcl_T$-dimensional elementary extension of $\bE\models T$ and $\cO_x$ is a $T$-convex valuation subring;
    \item $\partial_x$ is the unique non-trivial $\bE$-linear $T$-derivation with $\partial_x (x)=1$ (cf \cite{fornasiero2021generic}, \cite{kaplan2023t});
    \item $\val_{\cO_x}(x-\bE)\subseteq \val_{\cO_x}(\bE)$.
\end{enumerate}

Note that this situation comprises the cases in which $(\bE \langle x\rangle, \cO_x)\succ (\bE, \cO)$ is wim-constructible or res-constructible.

A key ingredient in Theroem~\ref{introthm:transserial_char}, is that in the setting above, if $T$ is exponential, an element $y \in \bE \langle x \rangle$ is of the form $g(z)$ for a $\bE$-definable gne and a $z$ such that $\val(z-\bE)\subseteq \val(\bE)$, if and only if there is a natural number $m$ such that $\val(\lder_x^m \partial_x(y))\in \val(\bE)$ (Proposition~\ref{prop:height_over_absorbed}).

\smallskip

The treatment points out some general properties of ordered differential valued fields the form $(\bE \langle x\rangle, \cO_x, \partial_x)$ satisfying (C1) and (C2) above. When $x>\bE$, and $\cO_x$ is the convex hull of $\bE$, $(\bE \langle x\rangle, \cO_x, \partial_x)$ is the Hardy field of the o-minimal structure $\bE$ which is an example of an $H$-field in the sense of \cite{aschenbrenner2002fields}.

In a similar vein, even $\tp(x/\bE)$ is not a definable type, many important properties are captured by the notions of an \emph{order-convex} (resp.\ \emph{logarithmically ---}) derivation and a \emph{valuation-convex} (resp.\ \emph{logarithmically ---}) derivation (Definitions~\ref{def:order-convex_der} and \ref{def:v-convex_derivation}).

I call a derivation $\partial$ on an ordered field $\bE_*$, \emph{order-convex} if it maps order-convex sets onto order-convex subsets of $\partial\bE_*$, similarly it is \emph{logarithmically order-convex} if the corresponding logarithmic derivative $\lder(t)\coloneqq (\partial t)/t$ sends order-convex sets to order-convex subsets of $\lder(\bE^{\neq0}_*)$. These can be regarded as traces of the mean-value theorem for functions definable in o-minimal ordered fields.
The notion of valuation-convex and logarithmically valuation-convex arise naturally as traces of the corresponding notions of order-convexity with respect to the minimal order-convex valuation, specifically logarithmic convexity has several weakening that are sufficient for most purposes.

As expected, when $\cO_*$ contains the field of constants, this notions specialize to several notions already studied in the literature (\cite{rosenlicht1980differential},\cite{rosenlicht1983rank}, \cite{aschenbrenner2002fields}, and \cite{aschenbrenner2019asymptotic}) such as pre-H-field, asymptotic field, pre-d-valued field and pre-d-valued field of H-type (see Subsection~\ref{ssec:few-constants}).

Given suitable convexity assumptions on the derivative on a valued field $(\bE_*, \cO_*)$ with constants $\bE\coloneqq\Kr(\partial)$, the condition $\val(x-\bE)\subseteq \val(\bE)$ is equivalent to the fact that the derivation ``at $x$'' $\partial_x=\partial/\partial x$ has the property that $y \succ \partial_x\cO_x\Rightarrow \lder_x(y)\prec y$. We will say that such derivations are \emph{absorbed} (Definition~\ref{def:absorbed-type-der}), a property that will be at the center of several arguments.


\subsection{Acknowledgments} I thank Vincenzo Mantova for several conversations on the topic and for encouraging me to try to convert some previous proofs of the main theorems into general arguments about valued differential fields.

\section{Symmetric and absorbed types}
Throughout the paper we will freely use many basic facts about o-minimal theories (for which we refer the reader to \cite{dries1998tame}), and about $T$-convex valuations in the sense of \cite{dries1995t}, \cite{dries1997t}\nocite{dries1998correction}.

In this section we revisit some basic facts and notions about the relation between unary types (i.e.\ cuts) over an o-minimal ordered field and a convex valuation on such field. As such this section is mostly book-keeping. Throughout the section we fix a theory $T$ of some o-minimal ordered field and a model $\bE\models T$. $\bE_*\succ \bE$ will denote an elementary extension of $\bE$. If $\cO$ is a convex subring of $\bE$, then $\cO_*^-$ and $\cO_*^+$ will denote the convex subrings of $\bE_*$
\[\cO_*^-\coloneqq \CH_{\bE_*}(\cO) \qquad \cO_*^+\coloneqq\{t \in \bE^*: |t|<\bE^{>\cO}\},\]
where $\CH_{\bE_*}$ denotes the convex hull. We will denote by $\co_*^-$ and $\co_*^+$ their respective maximal ideals. Note that
\[\co_*^-\coloneqq \{t \in \bE^*: |t|<\bE^{>\co}\} \qquad \co_*^+\coloneqq \CH_{\bE_*}(\co).\]

\subsection{Weakly immediate types and types with sign 0}
In this subsection we will recall and relate the notion of types with sign $0$ from \cite[Def.~3.10]{tressl2005model}, and of weakly $\cO$-immediate type from \cite[Sec.~2]{freni2024t}.

\begin{definition}
    If $p$ is a convex type over $\bE$, then the \emph{breadth} of $p$ is defined as the (partial) type
    \[\Br(p)(\var{x}):=\big\{ |\var{x}|<b-a : a,b \in \bE\cup \pm\{\infty\}, \; p(\var{y})\vdash a<\var{y}<b\big\}.\]
    If $x \in \bE_* \setminus \bE$, we write $\Br(x/\bE)$ for $\Br(\tp(x/\bE))$. Notice that $\Br(x/\bE)$ is a type-definable convex subgroup.
\end{definition}

\begin{remark}\label{rmk:relation_to_additive_invariance_group}
    Let $p$ be a complete type over $\bE$ and suppose $\bE_*\succ \bE$ realizes $p$. Notice that in general for all $x \in p(\bE_*)$ we have $x-p(\bE_*) \subseteq \Br(p)(\bE_*)$. In fact if $x' \in p(\bE_*)$, then $|x-x'|<b-a$ for all $a,b$ such that $a<p(\bE_*)<b$.
    On the other hand we also have $|x-\bE|>\Br(p)(\bE)$. In fact if there were $m \in \Br(p)(\bE)$ and $c\in \bE$ such that $|x-c| \le m$, we would have $c-m<x<c+m$ and $2m \in \Br(p)(\bE)$ contradicting the fact that $\Br(p)(\bE)$ is a group. More specifically we have
    \[\Br(p)(\bE)=\{c \in \bE: p(\bE_*)+c = p(\bE_*)\},\]
    in fact, clearly for all $c \in \Br(p)(\bE)$ we have $c+p(\bE_*) = p(\bE_*)$, because $|p(\bE_*)-\bE|>\Br(p)(\bE)$, on the other hand if $c > b-a$ for some $a<x<b$, then $2c + p(\bE_*) > b >p(\bE_*)$. In particular $\Br(p)(\bE)$ is what in \cite[Sec.~3]{tressl2005model} is called the \emph{(additive) invariance} group of $p$.
\end{remark}

\begin{lemma}\label{lem:equiv_0-sign}
    The following are equivalent for $x \in \bE_* \setminus \bE$:
    \begin{enumerate}
        \item $\Br(x/\bE)(\bE_*) + x = \tp(x/\bE)(\bE_*)$;
        \item there are no convex additive subgroups $G$ of $\bE$ and $c \in \bE$, and $\sigma \in \{\pm1\}$ such that $G < \sigma x - c <\bE^{>G}$.
    \end{enumerate}
    \begin{proof}
        Let $p=\tp(x/\bE)$ and $M=\Br(p)$ to lighten the notation.
        
        $(\lnot 1) \Rightarrow (\lnot 2)$ Suppose that $x+M(\bE_*)\setminus p(\bE_*)\neq \emptyset$, then there is $c \in \bE \cap (x+ M(\bE_*))\setminus p(\bE_*)$, whence $|x-c| \in M(\bE_*)$. But by Remark~\ref{rmk:relation_to_additive_invariance_group} we also have that $|x-c|>M(\bE)$, so $x \in c \pm M(\bE_*)^{>M(\bE)}$.
        
        $(1) \Rightarrow (2)$. If $G\subseteq M(\bE)$, then we must have $|x+\CH(G)|\cap \bE=\emptyset$, whence $|x-\bE|>G$, if instead $G \supsetneq M(\bE)$ we would have $\CH(G) \supsetneq M(\bE_*)$ whence for some $g \in G$, and $a,b \in \bE$ with $a<x<b$, we would have $g\ge |b-a|$ so either $|x-a| \ge g/2$ or $|x-b|\ge g/2$.
    \end{proof}
\end{lemma}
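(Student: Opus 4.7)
The plan is to set $p=\tp(x/\bE)$ and $M=\Br(p)$, and to exploit Remark~\ref{rmk:relation_to_additive_invariance_group}, which already provides the inclusion $p(\bE_*)\subseteq x+M(\bE_*)$ and the separation $|x-\bE|>M(\bE)$. Condition~(1) is thus equivalent to the reverse inclusion $x+M(\bE_*)\subseteq p(\bE_*)$, and the whole proof reduces to relating this inclusion to the one-sided "gap" configurations of~(2).

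For $(\neg 1)\Rightarrow(\neg 2)$: pick a witness $y\in (x+M(\bE_*))\setminus p(\bE_*)$. Since $y$ fails to realize the cut, some $b\in\bE$ separates $y$ from the cut of $x$; WLOG $x<b\le y$. As $y-b\in [0,y-x]$ lies in the convex subgroup $M(\bE_*)$, we get $b\in x+M(\bE_*)$, and Remark~\ref{rmk:relation_to_additive_invariance_group} forces $b-x\in M(\bE_*)\setminus M(\bE)$. Taking $G=M(\bE)$, $c'=-b$, $\sigma=-1$, the element $\sigma x-c'=b-x$ is positive, strictly greater than $G$, and strictly smaller than every element of $\bE^{>G}$ (the upper bound using convexity of $M(\bE_*)$ and $M(\bE_*)\cap\bE=M(\bE)$), refuting (2).

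For $(1)\Rightarrow(2)$, I argue by contraposition. Given $G,c,\sigma$ with $G<\sigma x-c<\bE^{>G}$, the crucial step is to show $G=M(\bE)$. Indeed, the unary $\bE$-cut of $\sigma x-c$ is exactly the gap immediately above $G$, namely $L_q=G\cup\bE^{<-G}$ and $R_q=\bE^{>G}$, so a direct computation yields $D_q=\bE^{>G}$ and hence $\Br(\tp(\sigma x-c/\bE))(\bE)=G$; but sign changes and $\bE$-translations preserve breadth, so $M(\bE)=G$. Then $\sigma x-c<e$ for every $e\in\bE^{>M(\bE)}\supseteq D_p$, whence $\sigma x-c\in M(\bE_*)$, whence $\sigma c\in\bE\cap(x+M(\bE_*))$. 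But no $\bE$-element realizes $p$, because $x\notin\bE$, so this contradicts (1).

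The main obstacle is the identification $G=M(\bE)$ in the second direction, that is, the observation that the breadth of the "just above $G$" cut is exactly $G$; once that is in hand, everything else is bookkeeping with the convexity of $M(\bE_*)$ and Remark~\ref{rmk:relation_to_additive_invariance_group}.
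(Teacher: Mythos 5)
Your proof is correct, and the $(\lnot 1)\Rightarrow(\lnot 2)$ direction is essentially the paper's argument (locate $b\in\bE\cap(x+\Br(p)(\bE_*))\setminus p(\bE_*)$, use Remark~\ref{rmk:relation_to_additive_invariance_group} to get $|x-b|\in \Br(p)(\bE_*)^{>\Br(p)(\bE)}$, and read off the witness with $G=\Br(p)(\bE)$).

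Where you diverge is in $(1)\Rightarrow(2)$. The paper splits into the two cases $G\subseteq \Br(p)(\bE)$ and $G\supsetneq \Br(p)(\bE)$ and argues (rather telegraphically) that each is impossible; the second case in particular is dispatched via an estimate $g\ge|b-a|$ followed by a bound on $|x-a|$, $|x-b|$, with the final contradiction left implicit. You instead observe up front that the hypothesis $G<\sigma x-c<\bE^{>G}$ pins down the entire cut of $\sigma x-c$ over $\bE$ to be the gap just above $G$, compute that the breadth of this cut over $\bE$ is exactly $G$, and then use invariance of breadth under $\bE$-translations and sign changes to force $G=\Br(p)(\bE)$; from there $\sigma x-c\in\Br(p)(\bE_*)$ and $\sigma c\in\bE\cap(x+\Br(p)(\bE_*))$ falls out, contradicting~(1). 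This is a genuinely different organization: rather than checking two impossible sub-cases against $\Br(p)(\bE)$, you identify $G$ with $\Br(p)(\bE)$ once and for all, which makes the contradiction one-line and does not require the delicate case $G\supsetneq\Br(p)(\bE)$ at all. The only cosmetic slip is the identity $L_q=G\cup\bE^{<-G}$, which is fine for a convex subgroup $G$ since $\bE^{<0}=G^{<0}\cup\bE^{<-G}$; it does not affect the computation of $D_q=\bE^{>G}$ or $\Br(q)(\bE)=G$.
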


\begin{remark}
    In \cite[Def.~3.10]{tressl2005model} types $p=\tp(x/\bE)$ satisfying the equivalent conditions of Lemma~\ref{lem:equiv_0-sign} \emph{types with sign $0$}. The types that do not have sign $0$ are called \emph{signed} (their sign being the unique $\sigma\in \{\pm1\}$ for which there is a convex subgroup $G$ s.t.\ $G<\sigma x-c<\bE^{>G}$). In \cite{kuhlmann2020selected}, signed cuts are called \emph{ball-cuts}, and somewhere else they are called \emph{asymmetric} (see \cite[Sec.~3]{kuhlmann2020selected} for a survey). We will be mostly interested in the notion of non-ball or 0-signed cuts, so we will stick with the only already established positive nomenclature \emph{symmetric}. This seems to be deprecated in the context of ordered abelian groups that are not $2$-divisible (see \cite[Sec.~3]{kuhlmann2020selected}), but we are working with o-minimal expansions of an ordered field.
\end{remark}

\begin{definition}
    We call unary types $p=\tp(x/\bE)$ satisfying the equivalent conditions of Lemma~\ref{lem:equiv_0-sign} \emph{symmetric}.
    We will say that $p$ is \emph{(weakly) $\cO$-symmetric} if it is symmetric and $\Br(p)(\bE)$ is a $\cO$-module.
    
    Recall from from \cite[Sec.~2]{freni2024t} that $\tp(x/\bE)$ is \emph{$\cO$-weakly immediate} (\emph{$\cO$-wim} for short) if one fo the equivalent conditions hold
    \begin{enumerate}
        \item $\val_{\cO_*}(x-\bE)$ has no maximum for \emph{some} convex subring $\cO_*$ of $\bE_*$ s.t.\ $\cO_*\cap \bE=\cO$;
        \item $\val_{\cO_*}(x-\bE)$ has no maximum for \emph{every} convex subring $\cO_*$ of $\bE_*$ s.t.\ $\cO_*\cap \bE=\cO$;
        \item $x$ is a pseudolimit for some $\cO$-p.c.\ sequence $(x_i)_{i<\lambda}$ in $\bE$ with no pseudo-limit in $\bE$.
    \end{enumerate}
    We will say that $x\in \bE_*\setminus \bE$ is symmetric (resp.\ $\cO$-wim over $\bE$ if $\tp(x/\bE)$ is symmetric (resp. $\cO$-wim).
\end{definition}

\begin{remark}
    A type is symmetric if and only if it is $\CH_\bE(\bZ)$-symmetric.
\end{remark}

\begin{remark}
    If $M$ is the partial type of a convex subgroup of $\bE$ and $\cO$ is a convex subring of $\bE$, then $M(\bE_*)$ is a $\cO$-module if and only if $M(\bE)$ is a $\cO$-module, therefore if $x$ is $\cO$-symmetric, then $\Br(x/\bE)(\bE_*)$ is a $\cO$-module (and hence a $\cO_*^-$-module) for all $\bE_*\succ \bE$.
    In fact, clearly if $M(\bE_*)$ is a $\cO$-module, then $M(\bE)=M(\bE_*)\cap\bE$ is a $\cO$-module. Conversely, assume that $M(\bE)$ is a $\cO$-module and let $m \in M(\bE_*)$, $r \in \cO$. Suppose toward contradiction that $rm > c\in \bE$ for some $c\ge M(\bE)$, then $m>c/r>M(\bE)$ because $M(\bE)$ is a $\cO$-module, and thus $m\notin M(\bE_*)$, contradiction.
\end{remark}

\begin{remark}
    By \cite[Lemma~2.12]{freni2024t} if $x$ is $\cO$-wim over $\bE$, then $x$ is symmetric and moreover $\Br(x/\bE)(\bE_*)$ is a $\cO_*$-module for every convex subring $\cO_*$ of $\bE_*$ extending $\cO$. Lemma~\ref{lem:0-sign_vs_wim} below gives a converse: it shows that if $x$ is $\cO$-symmetric, then it is either $\cO$-weakly immediate or what in \cite[Def.~2.3]{tressl2006pseudo} is called a \emph{$\cO$-limit} and in \cite[Def.~41]{freni2025residually} is called a $\cO$-dense type (although there the definition is given in the context of a $T$-convex $\cO$).
\end{remark}

\begin{lemma}\label{lem:0-sign_vs_wim}
    Let $\cO$ be a convex subring of $\bE$.
    If $x \in \bE_* \setminus \bE$ is $\cO$-symmetric, then $\val_{\cO_*^-}(x-\bE) \subseteq \val_{\cO_*^-}(\bE)$, moreover exactly one of the following holds:
    \begin{enumerate}
        \item $\val_{\cO_*^-}(x-\bE)$ has no maximum (so $x$ is $\cO$-wim); 
        \item $\Br(x/\bE)(\bE_*) = a \co_*^-$ (so $x$ is a $\cO$-limit).
    \end{enumerate}
    \begin{proof}
        To see that $\val(x-\bE) \subseteq \val_{\cO_*}(\bE)$ note that if $\val(x-c) \notin \val (\bE)$ for some $c \in \bE$, then $x-c$ is trivially not symmetric and so $x$ is not as well.
        
        Suppose that $\val_{\cO_*}(x-\bE)$ has a maximum: then there is $c \in \bE$ such that $\rv(x-c)\notin \rv(\bE)$. Moreover it must be $\val(x-c) \in \val (\bE)$, otherwise we we would contradict $x+\Br(x/\bE)(\bE_*) = \tp(x/\bE)(\bE_*)$. Finally it must be $\rv(x-c)=\rv(x'-c)$ for all $x\equiv_\bE x'$, otherwise $\Br(x/\bE)(\bE_*)$ would not be a $\cO$-module, thus we have $\tp(x/\bE)(\bE_*)=x+a\co_*$.
    \end{proof}
\end{lemma}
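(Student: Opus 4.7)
My plan is to prove the two assertions of the lemma separately. The inclusion $\val_{\cO_*^-}(x - \bE) \subseteq \val_{\cO_*^-}(\bE)$ in fact only uses symmetry of $x$ (not the $\cO$-module assumption on the breadth), and I would argue it by contrapositive. If $\val(x - c) \notin \val(\bE)$ for some $c \in \bE$, then because $\val(\bE)$ is a chain the value $\val(x-c)$ falls into a gap: the set $G := \{d \in \bE : \cO|d| < |x - c|\}$ is a proper convex subgroup of $\bE$ (closure under addition uses that $2 \in \cO$, so that $r \in \cO$ yields $2r \in \cO$), and dually $\bE^{>G} = \{d \in \bE : \cO|x - c| < |d|\}$, because no element of $\bE$ is $\cO$-archimedean equivalent to $x - c$. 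Taking $\sigma = \sign(x - c)$ gives $G < \sigma x - \sigma c < \bE^{>G}$, which is precisely the negation of condition (2) in Lemma~\ref{lem:equiv_0-sign}, contradicting the symmetry of $x$.

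Now assume $\val(x - \bE) \subseteq \val(\bE)$. If this set has no maximum, condition (1) is immediate from the definition of $\cO$-wim. Otherwise let $c_0 \in \bE$ realize the maximum and set $a := x - c_0$. A preliminary observation is that $\rv(a) \notin \rv(\bE)$: if $\rv(a) = \rv(d)$ for some $d \in \bE$, then $\val(a - d) > \val(a)$, contradicting maximality.

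I would then identify $\Br(x/\bE)(\bE_*) = a\co_*^-$ by double inclusion. For $\supseteq$, given $y \in a\co_*^-$ and any $a' < x < b$ in $\bE$, maximality at $c_0$ forces $\val(x - a'), \val(b - x) \le \val(a)$ and hence $\val(b - a') \le \val(a)$; combined with $y/a \in \co_*^-$, a suitable $s \in \cO^{>0}$ witnessing $|b - a'| \ge |a|/s$ yields $|y| < (1/s)|a| \le b - a'$. For $\subseteq$, I argue contrapositively: if $y \notin a\co_*^-$ then $|y| \ge r|a|$ for some $r \in \bE^{>\co}$, and I construct an explicit pair $a' < x < b$ in $\bE$ with $b - a' \le |y|$. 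The construction exploits the fact that by maximality \emph{and} $\rv(a) \notin \rv(\bE)$, the quantity $b - a'$ can attain any prescribed residue class relative to $|a|$: for instance, assuming $c_0 < x$, setting $a' = c_0$ and picking $b \in \bE$ just above $x$ with $\val(b - x) = \val(a)$ gives $b - a' = (b - x) + a$, and the $\cO$-module structure of $\Br(x/\bE)(\bE)$ (promoted to a $\cO_*^-$-module structure in $\bE_*$) makes $b - x$ available with residue opposite to that of $a$, collapsing $b - a'$ to arbitrarily small residue.

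The main obstacle I anticipate is exactly this last step: pinning down that $\Br$ is the \emph{open} ball $a\co_*^-$ rather than the closed ball $a\cO_*^-$. This requires simultaneously invoking the $\cO$-module structure of the breadth, the maximality, the defining identity $\Br(x/\bE)(\bE_*) + x = \tp(x/\bE)(\bE_*)$ from symmetry (ensuring that the image of any $x' \equiv_\bE x$ under $x' - c_0$ lies in the same $\rv$-class as $a$, hence that the $\cO$-module structure at the residue level rules out "boundary" elements $y$ with $\val(y) = \val(a)$), and a careful choice of bracketing $a', b \in \bE$.
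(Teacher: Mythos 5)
Your first part (via the explicit convex subgroup $G$) and the inclusion $a\co_*^- \subseteq \Br(x/\bE)(\bE_*)$ are correct and match the paper's in substance, just with more spelled-out detail. The gap is in the reverse inclusion. Assuming $c_0 < x$, so $a = x - c_0 > 0$, fixing $a' = c_0$ gives $b - a' = (b - x) + a$ with \emph{both} summands strictly positive; hence $b - a' > a$ for every admissible $b \in \bE^{>x}$, and there is no way to choose $b - x$ with ``residue opposite to $a$'' since $b - x$ and $a$ have the same sign. So the collapse to small residue you describe cannot occur once $a'$ is pinned at $c_0$. This matters: $y \notin a\co_*^-$ only guarantees $|y| \ge r|a|$ for some $r \in \bE^{>\co}$, and such $r$ may well be less than $1$, so you can genuinely need $b - a' < a$ — which requires moving $a'$ strictly above $c_0$, a step you have not established and which would in fact need the $\cO$-module hypothesis again.

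The quickest repair uses the $\cO_*^-$-module structure of $\Br(x/\bE)(\bE_*)$ head-on instead of bracketing explicitly: if the breadth contained any $y$ with $\val(y) \le \val(a)$, then being a convex $\cO_*^-$-module it would contain all of $a\cO_*^-$, hence $-a$; but then by symmetry $c_0 = x + (-a) \in x + \Br(x/\bE)(\bE_*) = \tp(x/\bE)(\bE_*)$, contradicting $c_0 \in \bE$. The paper's proof packages this same idea as the constancy of $\rv(x' - c_0)$ over realizations $x' \equiv_\bE x$, which you do gesture at but do not actually carry out.
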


\begin{remark}
    Note that $a \co_*^-$ is a $\cO_*^+$-module if and only if $\cO_*^-=\cO_*^+$, that is, when $\bE_*$ does not realize the cut above $\cO$. Thus $x$ is $\cO$-wim if and only if it is symmetric and for all elementary extension $\bE_*$ of $\bE \langle x \rangle$ and all convex subrings $\cO_*\subseteq \bE_*$ extending $\cO$, $\Br(x/\bE)(\bE_*)$ is a $\cO_*$-module. One may therefore call $\cO$-wim types \emph{strongly $\cO$-symmetric}. We will stick with the established (and shorter) terminology $\cO$-wim.
\end{remark}

If $x$ is $\cO$-wim, then $\res_{\cO_*^-}(\bE \langle x \rangle)=\res_{\cO_*^-}(\bE)$.
The argument is the same of \cite[Prop.~3.3]{freni2024t}, where however it is given in the context of a $T$-convex $\cO$.

\begin{lemma}\label{lem:wim-cofres_orto}
    Let $\cO$ be a convex valuation ring of $\bE$ and $\cO_*=\CH_{\bE_*}(\cO)$. Suppose $x \in \bE_*$ is $\cO$-wim and $f: \bE_* \to \bE_*$ is a $\bE$-definable function with $f(x) \in \cO_*^{>\co_*}$, then $\res_{\cO_*}(f(x))\in \res_{\cO_*}(\bE)$.
    \begin{proof}
        Set $z\coloneqq f(x)$ and suppose toward contradiction that $\res_{\cO_*}(z) \notin \res_{\cO_*}(\bE)$. Then $f$ is non-constant and we can find $x_- \in \bE^{<x}$, $x_+ \in \bE^{>x}$ such that $f|: (x_-, x_+) \to (z_-, z_+)$ is a diffeomorphism, $z_+, z_- \in \cO_*$, and $z_+/z_- \in \cO_*^{>\co_*}$. By \cite[Lem.~2.11]{freni2024t} we can find p.c.\ sequences 
        $(x_i^-)_{i<\lambda}$ increasing and cofinal in $\bE\cap (x_, x)$, and $(x_i^+)_{i<\lambda}$ decreasing and coinitial in $\bE\cap (x,x_+)$. Set $z_i^{\pm}\coloneqq f(x_i^{\pm})$, and note that $(z_i^-)_{i<\lambda}$ is cofinal $(z_-, z)\cap \bE$ and $(z_i^+)_{i<\lambda}$ is coinitial in $(z_+, z)\cap \bE$.
        Since $\res_{\cO_*}(z) \notin \res_{\cO_*}(\bE)$, up to extracting a subsequence we would have that $z_i^{\pm}-z_{j}^{\pm} \in \cO_*\setminus \co_*$ for all $i<j$. But then
        \[x_{i+1}^{\pm}-x_{i}^\pm=(f^{-1})'(\zeta_i^\pm)(z_{i+1}^\pm-z_{i}^\pm)\]
        for some $\zeta_i^{\pm}$ between $z_i^{\pm}$ and $z_{i+1}^\pm$. So $\val(f^{-1})'(\zeta_i^+)$ and $\val(f^{-1})'(\zeta_i^-)$ would both be increasing and $|(f^{-1})'|$ would obtain a minimum value inside $\tp(z/\bE)(\bE_*)$, against the hypotheses that $z \notin \bE$ and that $f$ is $\bE$-definable.
    \end{proof}
\end{lemma}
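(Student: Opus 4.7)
The plan is a proof by contradiction: assume $\res_{\cO_*}(z)\notin\res_{\cO_*}(\bE)$ with $z\coloneqq f(x)$, and use a mean-value-theorem computation against the $\cO$-wim hypothesis on $x$ to derive a contradiction.

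First I would set up the geometry via the o-minimal monotonicity theorem. Since $x\notin\bE$, either $f$ is locally constant at $x$---in which case $z\in\bE$ and there is nothing to prove---or $f$ restricts, on some $\bE$-definable interval $(x_-,x_+)\ni x$ with endpoints in $\bE$, to a diffeomorphism onto an interval $(z_-,z_+)\ni z$ with endpoints in $\bE$, on which $(f^{-1})'$ has constant nonzero sign. Using $z\in\cO_*^{>\co_*}$ one can shrink further so that $z_\pm\in\cO_*$ with $z_+/z_-\in\cO_*^{>\co_*}$. The $\cO$-wim hypothesis on $x$ then supplies, via \cite[Lem.~2.11]{freni2024t}, p.c.\ sequences $(x_i^-)_{i<\lambda}$ increasing cofinally in $\bE\cap(x_-,x)$ and $(x_i^+)_{i<\lambda}$ decreasing coinitially in $\bE\cap(x,x_+)$; set $z_i^\pm\coloneqq f(x_i^\pm)\in\bE$.

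Next I would use $\res(z)\notin\res(\bE)$ to extract subsequences of $(z_i^\pm)$ along which consecutive residues differ strictly, giving $z_{i+1}^\pm-z_i^\pm\in\cO_*\setminus\co_*$. The one non-trivial check is that $(\res(z_i^+))$ cannot be eventually constant: if it stabilized at some $\bar r$ with lift $r\in\bE$, then eventually $z_i^+-r\in\co$, but the coinitiality of $(z_i^+)$ in $\bE\cap(z,z_+)$ combined with $r-z$ being non-infinitesimal positive would force $z_i^+<r-\epsilon$ for some $\epsilon\in\co^{>0}$, contradicting $z_i^+-r\in\co$; $(z_i^-)$ is handled symmetrically.

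The MVT core is then short: applying the mean value theorem inside $\bE$ to $f^{-1}$ on $[z_i^\pm,z_{i+1}^\pm]$ produces $\zeta_i^\pm\in\bE$ with $x_{i+1}^\pm-x_i^\pm=(f^{-1})'(\zeta_i^\pm)(z_{i+1}^\pm-z_i^\pm)$. Limitless pseudo-convergence of $(x_i^\pm)$ makes $\val(x_{i+1}^\pm-x_i^\pm)$ strictly increase and diverge in $\val(\bE)$; combined with $\val(z_{i+1}^\pm-z_i^\pm)=0$ this forces $\val((f^{-1})'(\zeta_i^\pm))\to+\infty$ in $\val(\bE)$ along both sides. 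A final invocation of monotonicity applied to the $\bE$-definable $|(f^{-1})'|$ yields a subinterval $(z_-',z_+')\ni z$ with $z_\pm'\in\bE$ on which it is monotone; whichever direction it takes, exactly one of $(\zeta_i^-)$ or $(\zeta_i^+)$ lies eventually on the side where $|(f^{-1})'|$ is bounded below by the corresponding endpoint value $|(f^{-1})'(z_-')|$ or $|(f^{-1})'(z_+')|$, both positive and in $\bE$. This caps $\val((f^{-1})'(\zeta_i^\pm))$ from above in $\val(\bE)$, the desired contradiction. I expect the subsequence extraction to be the only delicate point; the MVT/monotonicity closure is routine bookkeeping.
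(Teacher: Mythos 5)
Your setup coincides with the paper's: same appeal to o-minimal monotonicity, same use of \cite[Lem.~2.11]{freni2024t} to get p.c.\ sequences $(x_i^\pm)$, same subsequence extraction to get $z_{i+1}^\pm - z_i^\pm \in \cO_*\setminus\co_*$, same MVT identity $x_{i+1}^\pm - x_i^\pm = (f^{-1})'(\zeta_i^\pm)(z_{i+1}^\pm - z_i^\pm)$. The extra check that $(\res(z_i^+))$ cannot stabilize is a detail the paper leaves implicit; it is correct and worth having.

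The gap is in the closing step. You claim that limitless pseudo-convergence forces $\val(x_{i+1}^\pm - x_i^\pm)$ to \emph{diverge} in $\val(\bE)$, and thence that $\val((f^{-1})'(\zeta_i^\pm))\to+\infty$, so a fixed upper bound in $\val(\bE)$ is a contradiction. But $\val(x_{i+1}^\pm - x_i^\pm)=\val(x-x_i^\pm)$ is only cofinal in $\val(x-\bE)$, and $\val(x-\bE)$ is a proper initial segment of $\val(\bE)$ without maximum — $\cO$-wim guarantees no max, not cofinality in $\val(\bE)$. It can perfectly well be bounded above by an element of $\val(\bE)$, in which case a strictly increasing sequence with a fixed cap is no contradiction at all. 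The actual contradiction is \emph{directional}, not about boundedness: once you have a $\bE$-definable interval $(z_-', z_+')\ni z$ on which $|(f^{-1})'|$ is monotone, say increasing, then $\zeta_i^-\nearrow z$ forces $|(f^{-1})'(\zeta_i^-)|$ to be non-decreasing, hence $\val((f^{-1})'(\zeta_i^-))$ non-increasing — flatly contradicting the strict increase you already computed from the MVT. (If instead $|(f^{-1})'|$ is decreasing there, the same collision occurs on the $\zeta_i^+$ side.) Replacing the ``caps from above'' sentence with this direction-of-monotonicity clash, which is in fact how the paper closes, repairs the argument; otherwise your conclusion rests on an unjustified divergence claim.
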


In \cite{tressl2005model}, Tressl defines the \emph{invariance ring} of a type $p$ as the ring $V_p:=\{a \in \bE: a\Br(p)(\bE) \subseteq \Br(p)(\bE)\}$, that is, the smallest convex subring $\cO$ such that $\Br(p)(\bE)$ is a $\cO$-module. We will adopt a notation similar to the one of the breadth.

\begin{definition}
    If $M$ is a convex partial $\bE$-type defining a subgroup of $\bE$, then we set $\Vr(M)$ to be the type
    \[\Vr(M)(\var{x}) := \{M(\var{x}\cdot m): m \in M(\bE)\}\]
    Notice that $\Vr(\Br(p))(\bE)=V_p$. If $x \in \bE_* \setminus \bE$, we will write $\Vr(x/\bE)$ for $\Vr(\Br(x/\bE))$.
\end{definition}

\subsection{Weak \texorpdfstring{$\cO$}{O}-limits and \texorpdfstring{$\cO$}{O}-absorbed types}

In \cite[Def.~41]{freni2025residually}, an $x \in \bE_*\setminus \bE$ was called \emph{$\cO$-residually cofinal} if $\res_{\cO_*^-}(x)\notin \res_{\cO_*^-}(\bE)$ and \emph{$\cO$-residually dense} if furthermore $\res_{\cO_*^-}(x)$ had a dense cut over $\res_{\cO_*^-}(\bE)$. In \cite[Def.~2.3]{tressl2006pseudo}, an image under a $\bE$-definable \emph{affine} function of a $\cO$-residually dense element was called a \emph{$\cO$-limit}. Drawing from that I will call the image under a $\bE$-definable affine function of a \emph{$\cO$-residually cofinal} element a \emph{weak $\cO$-limit}.

\begin{definition}
    If $x \in \bE_*\succ \bE$ and $\cO$ is a convex subring of $\bE$, then we will say that $x$ is \emph{weak $\cO$-limit} if there are $c, d \in \bE$, such that $\res_{\cO_*^-}(d(x-c)) \notin \res_{\cO_*^-}(\bE)$.
\end{definition}

\begin{lemma}\label{lem:weak_O_lim_char}
    The following are equivalent:
    \begin{enumerate}
        \item $x$ is a weak $\cO$-limit;
        \item $\val_{\cO_*^-}(x-\bE) \subseteq \val_{\cO_*^-}(\bE)$ but $\rv_{\cO_*^-}(x-\bE) \not\subseteq\rv_{\cO_*^-}(\bE)$;
        \item there is $d \in \bE$ such that $d\co_*^- \subseteq \Br(x/\bE)(\bE_*) \subsetneq \cO_*^-d$.
    \end{enumerate}
    \begin{proof}
        $(1) \Leftrightarrow (2)$ is obvious. 
        $(1) \Rightarrow (3)$. Let $c, d \in \bE$ be such that $\res_{\cO_*^-}(d(x-c)) \notin \res_{\cO_*^-}(\bE)$. Since by construction $\res_{\cO_*^-}(\bE) \subseteq \res_{\cO_*^-}(\bE_*)$ is a cofinal ordered field extension, we can find $x_-\in \bE^{<x}$, $x_+ \in \bE^{>x}$, such that $(x_+-x_-)/d \in \cO_*^-$, and furthermore for all such $x_-, x_+$ we must have $(x_+-x_-)/d > \co_*^-$.
        $(3) \Rightarrow (1)$ Since $\Br(x/\bE)(\bE_*) \subsetneq d\cO_*^-$, we must have $x \in \bE+ d\cO_*^-$ so there is $c \in \bE$ such that $(x-c)/d \in \cO_*^-$. On the other hand if we had $x \in \bE+ d\co_*^-$, then by (3) we would have $\Br(x/\bE)(\bE_*)= d \co_*^-$ and necessarily $x$ is not symmetric (because if it was $x \notin \bE+\Br(x/\bE)(\bE_*)$). But then for some $c\in \bE$, $|x-c| \in (d\co_*^-)^{>d\co}$, which implies $\val_{\cO_*^-}(x-c) \notin\val_{\cO_*^-}(\bE)$, contradiction.
    \end{proof}
\end{lemma}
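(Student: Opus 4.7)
The plan is to prove $(1)\Leftrightarrow(2)$ by direct unwinding and then the two equivalences with (3) separately, constructing an explicit $d$ in each direction.

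For $(1)\Leftrightarrow(2)$, the condition $\res_{\cO_*^-}(d(x-c))\notin\res_{\cO_*^-}(\bE)$ combines $d(x-c)\in\cO_*^-\setminus\co_*^-$ (which yields $\val(x-c)=-\val(d)\in\val(\bE)$) with its residue failing to be in $\res(\bE)$ (which yields $\rv(x-c)\notin\rv(\bE)$). To propagate the conclusions to every $c'\in\bE$, it is enough to apply the ultrametric inequality $\val(x-c')\ge\min(\val(x-c),\val(c-c'))$ and note that the equality case is blocked precisely because $\rv(x-c)\notin\rv(\bE)$.

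For $(1)\Rightarrow(3)$, given witnesses $(c,d)$ from (1) I set $d^\star=1/d$ and claim $d^\star\co_*^-\subseteq\Br(x/\bE)(\bE_*)\subsetneq d^\star\cO_*^-$. The upper inclusion follows from symmetric intervals $(c-|d^\star|N,c+|d^\star|N)\ni x$ for $N\in\cO$ chosen above $|d(x-c)|$, since then $b-a\in|d^\star|\cO$. The lower inclusion rests on the estimate $\val(b-a)\le -\val(d)=\val(d^\star)$ for every $a,b\in\bE$ with $a<x<b$: $b-x$ and $x-a$ are both positive so their sum has valuation equal to their minimum (no cancellation), and that minimum is bounded above by $\max_{c'\in\bE}\val(x-c')=-\val(d)$; consequently $(b-a)/|d^\star|\in\bE^{>\co}$, so $|d^\star s|<b-a$ for every $s\in\co_*^-$. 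For strict inclusion I would exhibit $y\in d^\star\cO_*^-\setminus\Br$: $y=x-c$ works when the cut of $\res(d(x-c))$ in $\res(\bE)$ is dense (by finding $a,b\in\bE$ with $b-a<|x-c|$ via a residue calculation), while in the gap case one takes $y=|d^\star|f$ for $f\in\cO\setminus\co$ whose residue exceeds the gap width.

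For $(3)\Rightarrow(1)$, the strict containment $\Br\subsetneq d\cO_*^-$ first forces $x\in\bE+d\cO_*^-$: otherwise every interval $(a,b)\ni x$ in $\bE$ would be longer than anything in $d\cO_*^-$ and $\Br$ would equal $d\cO_*^-$. Pick $c\in\bE$ with $(x-c)/d\in\cO_*^-$. If $(x-c)/d\in\cO_*^-\setminus\co_*^-$ then either its residue lies outside $\res(\bE)$---in which case (1) holds with parameters $(c,1/d)$---or, by replacing $c$ by $c+d\tilde{e}$ for an appropriate $\tilde{e}\in\bE$, we reduce to $(x-c)/d\in\co_*^-$, i.e.\ $x\in\bE+d\co_*^-$. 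In this remaining case I would show $\Br=d\co_*^-$: the inclusion $\supseteq$ is from (3), and $\subseteq$ follows because arbitrarily small symmetric intervals of radius $|d|n$ with $n\in\bE^{>\co}$ trap $x$. Now $x\in\bE+\Br$; but Lemma~\ref{lem:equiv_0-sign} implies $x\notin\bE+\Br$ for any symmetric $x\notin\bE$, so the type of $x$ must be asymmetric. The negation of condition (2) of that lemma then yields a convex additive subgroup $G\subseteq\bE$, $c''\in\bE$, and $\sigma\in\{\pm1\}$ with $G<\sigma x-c''<\bE^{>G}$, and because $\val(\bE)$ is divisible (inherited from o-minimality) this forces $\val(x-\sigma c'')\notin\val(\bE)$. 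Consequently $\Br$ equals $\{y\in\bE_*:\val(y)\ge\val(x-\sigma c'')\}$, which in a divisible value group is strictly distinct from $d\co_*^-=\{y:\val(y)>\val(d)\}$, contradicting $\Br=d\co_*^-$.

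The main obstacle is the concluding step of $(3)\Rightarrow(1)$: one must show that the convex-hull description of $\Br$ coming from asymmetry is incompatible with the $d$-scaled description forced by (3), and this incompatibility genuinely uses divisibility of the value group.
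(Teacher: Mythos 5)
Your handling of $(1)\Leftrightarrow(2)$ matches the paper's (which only says ``obvious''), and the core idea for $(1)\Rightarrow(3)$ — fix $d^\star = 1/d$ and bound $\Br$ on both sides — is the same as the paper's. However, two points need attention.

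\textbf{The case split in $(1)\Rightarrow(3)$ is unnecessary.} You distinguish a ``dense'' case and a ``gap'' case for the cut of $\res_{\cO_*^-}(d(x-c))$ over $\res_{\cO_*^-}(\bE)$. But $\res_{\cO_*^-}(\bE)$ is a \emph{cofinal} ordered subfield of $\res_{\cO_*^-}(\bE_*)$ (this is what the paper invokes, and it holds precisely because $\cO_*^- = \CH_{\bE_*}(\cO)$). A cofinal ordered field extension realizes only dense cuts: if the lower side of the cut had a maximum $m$, cofinality would produce $\res(e')$ with $0 < \res(e') < \res(d(x-c)) - m$, contradicting maximality. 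So the ``gap case'' never occurs, and $y = x-c$ already gives the strictness witness; the residue calculation you mention in the dense case is exactly the paper's argument that produces $x_-, x_+ \in \bE$ with $(x_+-x_-)d \in \cO_*^{>\co_*}$.

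\textbf{The final step of $(3)\Rightarrow(1)$ has a genuine gap.} After deducing that $x$ is asymmetric and obtaining $G, c'', \sigma$ with $G < \sigma x - c'' < \bE^{>G}$, you assert that $\Br(x/\bE)(\bE_*) = \{y : \val(y) \geq \val(x - \sigma c'')\}$ and that this is ``strictly distinct'' from $d\co_*^- = \{y : \val(y) > \val(d)\}$. That identification of $\Br$ is incorrect. The achievable interval lengths $b-a$ around an asymmetric $x$ with gap group $G$ are exactly the elements of $\bE^{>G}$, hence $\Br(x/\bE)(\bE_*) = \{t : |t| < \bE^{>G}\}$. When the forced identification $G = d\co$ holds (which follows from $\Br = d\co_*^-$), one has $\bE^{>G} = d\bE^{>\co}$ and thus $\Br(x/\bE)(\bE_*) = d\co_*^-$ on the nose. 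In particular, the set $\Br$ does contain elements $t$ with $\val(d) < \val(t) < \val(x - \sigma c'')$ whenever the value group $\val(\bE_*)$ happens to contain such a value (and it certainly may, e.g.\ by divisibility of the value group), so your two descriptions of $\Br$ genuinely disagree and the contradiction you want does not come from comparing them. I'll add that the paper's own proof at this point is also terse — after obtaining $\val_{\cO_*^-}(x-c) \notin \val_{\cO_*^-}(\bE)$ it immediately declares ``contradiction'' without saying what hypothesis this violates — so the step you were trying to close is genuinely delicate, but the specific argument you give (mis-identifying $\Br$ as an upper-closed valuation ball at $\val(x-\sigma c'')$) does not work.
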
    

\begin{remark}
    $x$ is a $\cO$-limit if and only if it is a $\cO$-symmetric weak $\cO$-limit.
\end{remark}

In the interest of uniformity of treatment we also give the following definitions, which might be though of as a $\cO$-relative weakenings of the notion of symmetric type and will play a quite fundamental role in the next Section.

\begin{definition}
    We say that $x \in \bE_*\setminus \bE$ (or its type over $\bE$) is \emph{(strongly) $\cO$-absorbed} if $\val_{\cO_*^-}(x-\bE)\subseteq \val_{\cO_*^-}(\bE)$. We say that $x$ (or its type) is \emph{weakly $\cO$-absorbed} if $\val_{\cO_*^+}(x-\bE)\subseteq \val_{\cO_*^+}(\bE)$.
\end{definition}

\begin{remark}
    If $x$ is weakly $\cO$-absorbed but not strongly $\cO$-absorbed, then there are $c,d \in \bE$, such that $d(x-c) \in (\cO_*^+ \setminus \cO_*^-) \cup (\co_*^-\setminus \co_*^+)$.
\end{remark}

\begin{remark}
    An $x\in \bE_*\setminus \bE$ is strongly $\CH(\bZ)$-absorbed if and only if it is symmetric.
\end{remark}

\subsection{Distanced sequences} The main property symmetric types is that they can be approximated from both sides by sufficiently distanced sequences (Lemma~\ref{lem:at_least_one_good_sequence}).
The main property we will use about strongly $\cO$-absorbed types is that they enjoy a ``$\cO$-weakening'' of such a property (Lemma~\ref{lem:two_O-good_sequences}).

\begin{lemma}\label{lem:at_least_one_good_sequence}
    Let $x \in \bE_* \setminus \bE$, then at least one of the following holds:
    \begin{enumerate}
        \item there is a cofinal increasing sequence $(x_{i})_{i<\lambda}$ in $\bE^{<x}$ such that $x_{j}-x_{i}>\Br(x/\bE)(\bE_*)$ for all $i<j<\lambda$;
        \item there is a coinitial decreasing sequence $(x_{i})_{i<\lambda}$ in $\bE^{>x}$ such that $x_{i}-x_{j}>\Br(x/\bE)(\bE_*)$ for all $i<j<\lambda$.
    \end{enumerate}
    If furthermore $x$ has sign $0$, then both (1) and (2) hold.
    \begin{proof}
        If both fail, then there are $x_+ \in \bE^{>x}$ and $x_- \in \bE^{<x}$ such that
        \[\begin{aligned}
            \exists z_- \in \big(x_- + \Br(x/\bE)(\bE_*)\big) \cap \tp(x/\bE)(\bE_*)\neq \emptyset\\
            \exists z_+ \in \big(x_+ + \Br(x/\bE)(\bE_*)\big) \cap \tp(x/\bE)(\bE_*)\neq \emptyset
        \end{aligned}\]
        But then $x_+-x_- = (x_+-z_+) - (x_- -z_-) + (z_+-z_-)$ is in $\Br(x/\bE)(\bE_*)$ because all addends are, contradiction.
    \end{proof}
\end{lemma}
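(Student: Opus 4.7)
The plan is by contradiction. Assume both (1) and (2) fail and set $M := \Br(x/\bE)(\bE_*)$, a type-definable convex additive subgroup of $\bE_*$. The core claim is that failure of (1) yields some $x_- \in \bE^{<x}$ with $x - x_- \in M$, and symmetrically, failure of (2) yields some $x_+ \in \bE^{>x}$ with $x_+ - x \in M$. Once this is established, $x_+ - x_- = (x_+ - x) + (x - x_-) \in M$ by closure of $M$ under sums, while the defining formula $|t| < b - a$ of $M$, applied with $a = x_- < x < b = x_+$, forces $x_+ - x_- < x_+ - x_-$, absurd.

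To extract $x_-$ from failure of (1), I attempt to build, by transfinite recursion, a strictly increasing sequence $(y_j)_{j < \alpha}$ in $\bE^{<x}$ satisfying $y_j - y_i > M$ for all $i < j$. At a successor step $j \mapsto j+1$, extension is possible provided $(y_j + M) \cap \bE^{<x}$ is not cofinal in $\bE^{<x}$: any $y_{j+1} \in \bE^{<x}$ strictly above this set satisfies $y_{j+1} - y_j > 0$ and $y_{j+1} - y_j \notin M$, hence $y_{j+1} - y_j > M$ by convexity of $M$. Limit steps preserve the distance condition automatically. If the recursion ever produces a cofinal sequence, (1) holds; otherwise it must halt at some successor stage, giving $y_j =: x_-$ with $(x_- + M) \cap \bE^{<x}$ cofinal in $\bE^{<x}$. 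If then $x - x_- \notin M$, there are $a, b \in \bE$ with $a < x < b$ and $x - x_- \geq b - a$; every $y \in (x_- + M) \cap \bE^{<x}$ satisfies $y - x_- < b - a$, so the cofinal set lies below $x_- + (b - a) \in \bE^{<x}$, giving $\bE^{<x}$ a maximum and witnessing (1) by a length-$1$ sequence, contradicting its failure. Hence $x - x_- \in M$.

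For the sign-$0$ clause, assume $x$ has sign $0$ and (1) fails; the previous paragraph then provides $x_- \in \bE^{<x}$ with $0 < x - x_- \in M(\bE_*)$. By Remark~\ref{rmk:relation_to_additive_invariance_group}, $x - x_- > M(\bE)$; conversely, every $e \in \bE^{>M(\bE)}$ dominates $M(\bE_*)$ (by convexity of $M(\bE_*)$ together with $M(\bE_*) \cap \bE = M(\bE)$), so $x - x_- < e$. Hence $M(\bE) < x - x_- < \bE^{>M(\bE)}$, and Lemma~\ref{lem:equiv_0-sign}(2) with $G := M(\bE)$, $c := x_-$, $\sigma := +1$ now says $x$ is signed, contradicting sign $0$. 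Thus (1) holds, and (2) follows by symmetry. The main technical step is the claim that $x - x_- \in M$ once the construction halts; everything else is bookkeeping with the definitions.
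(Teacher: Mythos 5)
Your proof is correct and matches the paper's core argument: assume both alternatives fail, extract $x_- \in \bE^{<x}$ and $x_+ \in \bE^{>x}$ with $x - x_-$ and $x_+ - x$ lying in $M \coloneqq \Br(x/\bE)(\bE_*)$, and observe that $x_+ - x_- \in M$ contradicts the definition of the breadth applied with $a = x_-$, $b = x_+$. You are more explicit than the paper about the transfinite recursion that extracts $x_\pm$ when the sequence construction halts, and you prove the sign-$0$ clause explicitly via Lemma~\ref{lem:equiv_0-sign} (the paper leaves both of these implicit), but the key decomposition is the same---you implicitly take $z_\pm = x$ where the paper's proof allows arbitrary realizations $z_\pm$ of $\tp(x/\bE)$.
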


\begin{notation}
    Recall the following colon notation for convex subgroups of $\bE_*$ 
    \[(A:B)=(A:_{\bE_*}B)\coloneqq \{t \in \bE_*: t B \subseteq A\}.\]
\end{notation}

\begin{lemma}\label{lem:two_O-good_sequences}
    Let $x \in \bE_* \setminus \bE$ and $\cO$ is a a convex subring of $\bE$ and $\cO_*^-:=\CH_{\bE_*}(\cO)$. Suppose $\val_{\cO_*^-}(x-\bE) \subseteq \val_{\cO_*^-}(\bE)$ then there are a cofinal increasing sequence $(x_{i}^-)_{i<\lambda}$ in $\bE^{<x}$ and a coinitial decreasing sequence $(x_{i}^+)_{i<\lambda}$ in $\bE^{>x}$ such that $|x_{j}^{\pm}-x_{i}^{\pm}|>( \Br(x/\bE)(\bE_*): \cO)$ for all $i<j<\lambda$.
    \begin{proof}
        Let $M:=\Br(x/\bE)(\bE_*)$ and suppose toward contradiction that one of such sequences does not exist. Then there would be $c\in \bE$ such that 
        \[\exists z \in (c + (M:\cO)) \cap \tp(x/\bE)(\bE_*)\neq \emptyset.\]
        Notice also that $x$ is not symmetric, for otherwise, both sequences would exist by Lemma~\ref{lem:at_least_one_good_sequence} since $(M:\cO)\subseteq M$.
        So $x$ is a weak $\cO$-limit and by Lemma~\ref{lem:weak_O_lim_char} for some $d \in \bE$, $d\co_*^- \subseteq M \subsetneq d\cO_*$, thus $(M:\cO)=\co_*^- d$.
        Since by hypothesis $z-c \in d' (\cO_*^{>\co_*})$ for some $d' \in \bE$, so
        \[z-c \in d'\cO_*^{>\co_*} \cap d \co_* \neq \emptyset,\]
        which implies that $d'\prec d$, so $|z-c|>d>M\cap \bE$, contradiction.
    \end{proof}
\end{lemma}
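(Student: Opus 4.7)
I would adapt the proof of Lemma~\ref{lem:at_least_one_good_sequence} to the finer additive subgroup $(M:\cO) \subseteq M$, where $M := \Br(x/\bE)(\bE_*)$ (the inclusion holds because $1 \in \cO$). Assume, for contradiction, that at least one of the two required sequences fails to exist. Then running the same reasoning as in Lemma~\ref{lem:at_least_one_good_sequence}, with $(M:\cO)$ in place of $M$, produces some $c \in \bE$ and some $z \in \tp(x/\bE)(\bE_*)$ with $z - c \in (M:\cO)$.

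First I would dispose of the $\cO$-symmetric case: if $x$ is $\cO$-symmetric then $M$ is already a $\cO$-module, hence $(M:\cO) = M$, and Lemma~\ref{lem:at_least_one_good_sequence} directly furnishes both sequences with gaps $> M = (M:\cO)$, contradicting the failure assumption. So $x$ is non-symmetric, and combined with the standing hypothesis $\val_{\cO_*^-}(x-\bE) \subseteq \val_{\cO_*^-}(\bE)$, Lemma~\ref{lem:weak_O_lim_char} places $x$ in the weak $\cO$-limit regime: there is $d \in \bE$ with $d\co_*^- \subseteq M \subsetneq d\cO_*^-$. The technical heart of the argument is then to identify $(M:\cO) = d\co_*^-$: the inclusion $\supseteq$ is immediate from $\co_*^- \cdot \cO \subseteq \co_*^-$ (since $\co_*^-$ is an ideal of $\cO_*^-$ and $\cO \subseteq \cO_*^-$), while $\subseteq$ uses the strict containment $M \subsetneq d\cO_*^-$ to rule out any $t \in (M:\cO)$ with $|t|/d \in \cO_*^- \setminus \co_*^-$: for such a $t$ one would have $|t| \ge d/r'$ for some $r' \in \cO^{>0}$, and the strictness of $M \subsetneq d\cO_*^-$ supplies $a<x<b$ in $\bE$ with $b - a \le r_1 d$ for some $r_1 \in \cO^{>0}$, so $r_1 r' t \in M$ (by $t \in (M:\cO)$) while $|r_1 r' t| \ge b - a$, a contradiction.

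Once $(M:\cO) = d\co_*^-$ is in hand, the hypothesis applied to $z - c$ gives some $d' \in \bE$ with $\val(z - c) = \val(d')$, and $z - c \in d\co_*^-$ forces $d'/d \in \co_*^- \cap \bE = \co$. Since $|x-c|$ has the same $\cO_*^-$-magnitude as $|d'|$, one may choose bounds $c \pm r_0|d'| \in \bE$ (for some $r_0 \in \cO^{>0}$) straddling $x$, so that $M \subseteq \{y : |y| < 2r_0|d'|\}$; combined with $d\co_*^- \subseteq M$, this forces every element of $\co_*^-$ to be bounded by $m := 2r_0|d'|/d \in \co \subseteq \co_*^-$, yielding the absurdity $m < m$. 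The main obstacle I expect is precisely the identification $(M:\cO) = d\co_*^-$, where the three nested convex subrings $\cO \subseteq \cO_*^- \subseteq \bE_*$ and their maximal ideals interact and one has to leverage the strictness in $M \subsetneq d\cO_*^-$; after that the contradiction is a routine unpacking of what the hypothesis buys for $z - c$.
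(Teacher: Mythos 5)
Your proposal follows the paper's structure quite closely, but there is one genuine logical gap at the point where you discharge the symmetric case. You argue: ``if $x$ is $\cO$-symmetric then $M$ is a $\cO$-module, hence $(M:\cO)=M$, and Lemma~\ref{lem:at_least_one_good_sequence} gives both sequences'', and then conclude ``so $x$ is non-symmetric''. This conclusion does not follow: you have only ruled out the $\cO$-symmetric case, so $x$ could still be symmetric without being $\cO$-symmetric (i.e.\ $M\cap\bE$ a convex additive subgroup but not a $\cO$-module), in which case $(M:\cO)\subsetneq M$ and your argument provides nothing. The invocation of Lemma~\ref{lem:weak_O_lim_char} that immediately follows requires $x$ to fail to be symmetric outright, and that has not been established. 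The fix is easy and is in fact the observation you already made parenthetically at the top: $(M:\cO)\subseteq M$ always (because $1\in\cO$), so for \emph{any} symmetric $x$, Lemma~\ref{lem:at_least_one_good_sequence} produces sequences with gaps $>M\supseteq (M:\cO)$ --- there is no need for the stronger hypothesis of $\cO$-symmetry, and once this is done the non-symmetric case is the only one left.

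Apart from that one slip, the rest of your argument is correct and matches the paper's route. Your identification $(M:\cO)=d\co_*^-$ is done carefully (and indeed this is the technical core, as the paper's own proof just asserts it). Your final contradiction via the bound $m=2r_0|d'|/d\in\co\subseteq\co_*^-$ dominating all of $\co_*^-$ is valid, though a shorter path from the same data is to note $|z-c|\le r_0|d'|\in d\co\subseteq M\cap\bE$, contradicting $|z-c|>\Br(z/\bE)(\bE)=M\cap\bE$ from Remark~\ref{rmk:relation_to_additive_invariance_group}; the paper uses something in this spirit, though its final inequality as printed is garbled.
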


\section{Definable functions and absorbed types}\label{sec:def_fn}

In this section we establish some basic but facts about images of $\cO$-absorbed cuts under definable functions and their derivative. This might be seen as a primitive analysis of the differential valued ordered fields of the form $(\bE\langle x \rangle, \partial_x, \cO_x)$ where $\bE\models T$ and
\begin{enumerate}
    \item $x$ is weakly $\cO$-absorbed over $\bE$ for a convex subring $\cO$ of $\bE$;
    \item  $\partial_x$ is the derivation given by $\partial_x (y) = f'(x)$ where $f: \bE\langle x \rangle \to \bE \langle x \rangle$ is a $\bE$-definable function such that $f(x)=y$ (it is not hard to show that the definition of $\partial_x(y)$ does not  depend on the choice of the particular $f$);
    \item if $x$ is \emph{strongly} $\cO$-absorbed, then $\cO_x:=\CH_{\bE\langle x \rangle}(\cO)$, otherwise $\cO_x=\{t \in \bE\langle x \rangle: |t|<\bE^{>\cO}\}$ and $\cO$ is $T$-convex.
\end{enumerate}

It should be remarked that:
\begin{enumerate}[label=(\Roman*)]
    \item $\bE=\{y \in \bE \langle x \rangle: \partial_x(y) =0\}$;
    \item when $\cO$ is $T$-convex, then (3) just amounts to $\cO_x=\{t \in \bE\langle x \rangle: |t|<\bE^{>\cO}\}$, as then when $x$ is strongly $\cO$-absorbed, by \cite[Thm.~A]{freni2024t} or by \cite[Prop.~7.1]{tressl2005model} and Lemma~\ref{lem:wim-cofres_orto} we have $\{t \in \bE\langle x \rangle: |t|<\bE^{>\cO}\}=\CH_{\bE\langle x \rangle}(\bE)$.
\end{enumerate}

Almost all the results in this subsection are recovered in a more abstract setting in Section~\ref{sec:fields_of_germs}, but we think this section is good for developing an intuition on a concrete example.

\subsection{Definable functions and strongly \texorpdfstring{$\cO$}{O}-absorbed types}\label{ssec:germs_at_absorbed_types}
Throughout this subsection, we fix an elementary extension $\bE_* \succ \bE$ of models of $T$, and a convex valuation subring $\cO$ of $\bE$. The techniques and arguments here are quite standard and are similar to those found for example in \cite{tyne2003t} or \cite{freni2024t}.

\begin{notation}
    We will denote by $\cO_*^-$ and $\cO_*^+$ the convex subrings of $\bE_*$ given by $\cO_*^-=\CH_{\bE_*}(\cO)$, $\cO_*^+=\{t \in \bE_*: |t|<\bE^{>\cO}\}$ and by $\co_*^-$ and $\co_*^+$ their respective maximal ideals.
    Given $x \in \bE_*\setminus \bE$, we will denote by $B_x^+\coloneqq \Br(x/\bE)(\bE_*)$ the set defined by the breadth of $x$ over $\bE$, and by $I_x^-$ the set
    \[I_x^-\coloneqq \left\{t \in \bE_*:\exists x_-\in \bE^{<x}, x_+ \in \bE^{>x}, |t|<1/(x_+-x_-)\right\}.\]
    Note that $t \notin B_x \Leftrightarrow 1/t\in I_x$, that is said $e=\{t: |t|<1/n,\; n \in \bN\}$, $I_x^+ = (e:M_x)$.
\end{notation}

\begin{remark}\label{rmk:division}
    Note that for any pair $A, B$ of convex subgroups of $\bE_*$, we have $x \in B \cdot (e:A) \Leftrightarrow 1/x \notin (A:B)$.
\end{remark}

Of the following proposition we will mainly use (1) and (2).

\begin{proposition}\label{prop:techincal}
    If $x \in \bE_*\setminus \bE$ is strongly $\cO$-absorbed and $\cO$ is $T$-convex then 
    \begin{enumerate}[label=(\arabic*)]
        \item\label{tech:lder_O-} $1/f^\dagger(x)\notin (B_x^+:\cO) \Leftrightarrow f^{\dagger}(x) \in \cO \cdot I_x^- \Longleftrightarrow \val_{\cO_*^-}(f(x)) \in \val_{\cO_*^-}(\bE)$;
        \item\label{tech:der_O-} $1/f'(x) \notin (B_x^+:\cO) \Leftrightarrow f'(x) \in \cO \cdot I_x^- \Longleftrightarrow f(x) \in \bE + \cO_*^-$;
        \item\label{tech:lder_o+} $1/f^\dagger(x) \notin (B_x^+: \co) \Leftrightarrow f^{\dagger}(x) \in \co \cdot I_x^- \Longleftrightarrow \rv_{\cO_*^+}(f(x)) \in \rv_{\cO_*^+}(\bE)$;
        \item\label{tech:der_o+}  $1/f'(x) \notin (B_x^+: \co)\Leftrightarrow f'(x) \in \co \cdot I_x^- \Longleftrightarrow f(x) \in \bE + \co_*^+$;
        
        \item\label{tech:lder_O+} $1/f^{\dagger}(x) \notin (B_x^+:\cO_*^+) \Leftrightarrow f^\dagger(x) \in \cO_*^+ \cdot I_x^- \Longleftrightarrow \val_{\cO_*^+}(f(x)) \in \val_{\cO_*^+}(\bE)$;
        \item\label{tech:der_O+} $1/f'(x) \notin (B_x^+:\cO_*^+) \Leftrightarrow f'(x) \in \cO_*^+ \cdot I_x^- \Longleftrightarrow f(x) \in \bE+ \cO_*^+$;
        \item\label{tech:lder_o-} $1/f^\dagger(x) \notin (B_x^+: \co_*^-) \Leftrightarrow f^{\dagger}(x) \in \co_*^- \cdot I_x^- \Longleftrightarrow \rv_{\cO_*^-}(f(x)) \in \rv_{\cO_*^-}(\bE)$;
        \item\label{tech:der_o-} $1/f'(x) \notin (B_x^+: \co_*^-)\Leftrightarrow f'(x) \in \co_*^- \cdot I_x^- \Longleftrightarrow f(x) \in \bE + \co_*^-$.
    \end{enumerate}
    Moreover
    \begin{enumerate}[label=(\Alph*)]
        \item\label{techcaveats:A} the $\Longrightarrow$s in \ref{tech:der_O-}-\ref{tech:der_o+} and \ref{tech:der_O+}-\ref{tech:der_o-} hold for all $x \in \bE_* \setminus \bE$ and all convex $\cO$;
        \item\label{techcaveats:B} the $\Longrightarrow$s in \ref{tech:lder_O-} and \ref{tech:lder_O+} hold for all $x \in \bE_*\setminus \bE$ and all $T$-convex $\cO$;
        \item\label{techcaveats:C} the $\Longleftarrow$s hold for all strongly $\cO$-absorbed $x \in \bE_*\setminus \bE$ and all convex $\cO$.
    \end{enumerate}
    \begin{proof}
        Note that in (1) to (8), the first equivalence is a trivial consequence of Remark~\ref{rmk:division}.

        \ref{tech:lder_o+}-\ref{tech:der_o+}, $\Longrightarrow$. If $|f^{\dagger}(x)|$ (resp.\ $f'(x)$) is in $\co_*^+ \cdot I_{x}^-= \co \cdot I_x^-$, then there are $r \in \co$, $\tilde{x}_- \in \bE^{<x}$, $\tilde{x}_+\in \bE^{>x}$ such that $|f^{\dagger}(x)| < r/(\tilde{x}_+-\tilde{x}_-)$ (resp. $|f'(x)|<r/(\tilde{x}_+-\tilde{x}_-)$). 
        By o-minimality, we can then pick $x_- \in (\tilde{x}_-, x) \cap \bE$, and $x_+ \in (x,\tilde{x}_+) \cap \bE$ such that on $(x_-, x_+)$, $f$ is monotone, non-zero, differentiable, and $f^{\dagger} < r/(\tilde{x}_+-\tilde{x}_-)$ (resp.\ $f' < r/(\tilde{x}_+-\tilde{x}_-)$). But then, by the mean value theorem for some for some $\xi \in (x_-, x_+)$
        \[\left|\frac{f(x_+)- f(x_-)}{f(\xi)}\right|= \big|f^{\dagger}(\xi) (x_+-x_-) \big| < r \quad (\text{resp.}\; |f(x_+)-f(x_-)| < r)\]
        and hence $f(x_+) \sim f(x_-)$ (resp.\ $f(x_+)-f(x_-) \in \co$).

        \ref{tech:lder_o+}-\ref{tech:der_o+}, $\Longleftarrow$. Suppose that $|f^{\dagger}(x)| > \co \cdot I_x^-$ (resp.\ $|f'(x)|>\co \cdot I_x^-$), then $1/f^\dagger \in (M_x^+:\co)$ (resp.\ $1/|f'(x)|\in (M_x^+:\co)$) either on $(x_-, x) \cap \bE$ or on $(x, x_+) \cap \bE$. Suppose without loss of generality that it is on $(x_-, x)\cap \bE$. Since $\val_{\cO_*^-}(x-\bE) \subseteq \val_{\cO_*^-}(\bE)$, by Lemma~\ref{lem:two_O-good_sequences} we can find a cofinal increasing sequence $(x_i^-)_{i<\lambda}$ in $(x_-, x)$ such that $|x_{j}-x_i|>(M_x^+:\cO)$. For such sequence
        \[\left|\frac{f(x_j^-)- f(x_i^-)}{f(\xi_{i,j})}\right|= \big|f^{\dagger}(\xi_{i,j}) (x_j^- -x_i^-) \big| > \co \quad (\text{resp.}\; |f(x_j^-)-f(x_i^-)| >\co)\]
        but then $\rv(f(x_i^-))$ is eventually strictly monotone and $\rv_{\cO_*^+}(f(x))\notin \rv_{\cO_*^+}(f(x))$ (resp.\ $f(x)\notin \bE + \co_*^+$).

        \ref{tech:lder_o-}-\ref{tech:der_o-} $\Longrightarrow$. Suppose that $f^\dagger(x) \in \co_*^- \cdot I_x^-$ (resp.\ $f'(x)\in \co_*^-\cdot I_x^-$). Then for some for some $x_+\in \bE^{>x}$, $x_-\in \bE^{<x}$, $f^\dagger(x)(x_+-x_-)\in \co_*^-$ (resp.\ $f'(x)(x_+-x_-)\in \co_*^-$) and furthermore by o-minimality we can pick such $x_-, x_+$ so that $|f^\dagger|$ and $|f'|$ are monotone. Thus either for all $t\in (x_-, x)$ or for all $t \in (x, x_+)$, we have that $f^\dagger(t)\in \co_*^+\cdot I_x^-$ (resp.\ $f'(t) \in \co_*^+\cdot I_x^-$). Say it is for all $t\in (x_-, x)\cap \bE$, then the mean value theorem ensures that 
        \[\frac{f(x)-f(x_-)}{f(\xi)}=f^\dagger(\xi)(x-x_-) \in \co_*^- \quad (\text{resp.}\;f(x)-f(x_-) \in \co_*^-),\]
        because $0<x-x_-<x_+-x_-\in \co_*^-/f'(\xi)$.
        
        \ref{tech:lder_o-}-\ref{tech:der_o-} $\Longleftarrow$. Conversely if $f^\dagger(x) \notin \co_*^- \cdot I_x^-$ (resp.\ $f^\dagger(x) \notin \co_*^- \cdot I_x^-$), then for all $x_+\in \bE^{>x}$, $x_-\in \bE^{<x}$, $f^\dagger(x)(x_+-x_-)\notin\co_*^-$ (resp.\ $f'(x) \notin \co_*^-\cdot I_x^-)$.
        Without loss of generality, we can assume that $|f'|$ is increasing on some $\bE$-definable neighborhood $(\tilde{x}_-, \tilde{x}_+)$ of $x$, furthermore since $\tilde{x}$ is strongly $\cO$-absorbed, we can assume that $(\tilde{x}_+-x)\succeq_{\cO_*^-}(\tilde{x}_+-\tilde{x}_-)$. 
        
        Since by o-minimality $(\tilde{x}_+-\tilde{x}_-)f^\dagger(t) \notin\co_*^-$ (resp.\ $(\tilde{x}_+-\tilde{x}_-)f'(t)\notin\co_*^-$), for all $t$ in some possibly smaller $\bE$-definable neighborhood of $x$, we can pick $x_- \in (\tilde{x}_-, x) \cap \bE$, such that $f^\dagger(x_-)(\tilde{x}_+-\tilde{x}_-)\notin \co_*^-$ (resp.\ $f'(x_-)(\tilde{x}_+-\tilde{x}_-)\notin \co_*^-$).

        Now note that since $x_-<x$ we will have $(\tilde{x}_+-x_-)\succeq_{\cO_*^-}(\tilde{x}_+-\tilde{x}_-)$, so $f^\dagger(x_-)(\tilde{x}_+-x_-)\notin \co_*^-$ (resp.\ $f'(x_-)(\tilde{x}_+-x_-)\notin \co_*^-$).
        
        But then since $|f^\dagger|$ (resp.\ $|f'|$) is increasing on $(\tilde{x}_-, \tilde{x}_+)$, $(\tilde{x}_+-x_-)f^\dagger(t)\notin\co_*^-$ (resp.\ $(\tilde{x}_+-x_-)f'(t)\notin\co_*^-$) for all $t \in (\tilde{x}_+-x_-)$, so we can pick sequences $(x_i^-)_{i<\lambda}$ increasing cofinal in $(x_-, x)\cap \bE$ and $(x_i^+)_{i<\lambda}$ increasing coinitial in $(x, x_+)\cap \bE$ such that for all $i\neq j$, $|x_i-x_j|>(B_x^+:\cO)$.
        The mean value theorem then ensures that both sequences $\big(\rv_{\cO} f(x_i^-)\big)_{i<\lambda}$ and $\big(\rv_{\cO} f(x_i^+)\big)_{i<\lambda}$, (resp.\ $(f(x_i^{+})+\co)_{i<\lambda}$ and $(f(x_i^{-})+\co)_{i<\lambda}$) are monotone, so $\rv_{\cO_*^-}(f(x))\notin \rv_{\cO_*}(\bE)$ (resp.\ $f(x) \notin \bE+\co_*^-$).

        \ref{tech:der_O-}. Argue as in \ref{tech:der_o+}, replacing $\co_*^+$ with $\cO_*^-$.
        
        \ref{tech:lder_O-} $\Longleftarrow$. Argue as in \ref{tech:lder_o+} $\Longleftarrow$, replacing $\co_*^+$ with $\cO_*^-$ and $\rv_{\co_*^+}$ with $\val_{\cO_*^-}$.
        
        \ref{tech:der_O+}. Argue as in \ref{tech:der_o-}, replacing $\co_*^-$ with $\cO_*^+$.

        \ref{tech:lder_O+} $\Longleftarrow$. Argue as in \ref{tech:lder_o-} $\Longleftarrow$, replacing $\co_*^-$ with $\cO_*^+$ and $\rv_{\cO_*^-}$ with $\val_{\cO_*^+}$.

        \ref{tech:lder_O-} and \ref{tech:lder_O+} $\Longrightarrow$. If the theory $T$ is exponential, then this follows from (2) and (6). If the theory $T$ is power-bounded, then the conclusions of (1) and (5) always hold given the hypothesis on $x$.
    \end{proof}
\end{proposition}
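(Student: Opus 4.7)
The plan is to prove the eight listed equivalences uniformly. The first equivalence in each of (1)--(8) is formal: it follows from Remark~\ref{rmk:division}, which dualizes $y \in B\cdot (e:A)$ with $1/y \notin (A:B)$. The substantive content is the second equivalence, which connects a bound on $|f^{\dagger}(x)|$ or $|f'(x)|$ involving one of the four objects in $\{\cO,\co,\cO_*^+,\co_*^-\}$ to a location of $f(x)$ over $\bE$ modulo the matching set. Both directions rest on the mean value theorem, and the three qualifications \ref{techcaveats:A}, \ref{techcaveats:B}, \ref{techcaveats:C} will record precisely which hypotheses each half of each proof actually uses.

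For the $\Longrightarrow$ direction in the derivative items \ref{tech:der_O-}, \ref{tech:der_o+}, \ref{tech:der_O+}, \ref{tech:der_o-} and the ``small-side'' log-derivative items \ref{tech:lder_o+}, \ref{tech:lder_o-}, I would suppose $|f'(x)|\in A\cdot I_x^-$ (or $|f^{\dagger}(x)|\in A\cdot I_x^-$) for the relevant $A$. By o-minimality one shrinks to a definable interval $(x_-,x_+)\subseteq \bE$ containing $x$ on which $f$ is $\mathcal{C}^1$, monotone, and $|f'|$ (resp.\ $|f^{\dagger}|$) is bounded by $r/(x_+-x_-)$ for some $r\in A$. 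The mean value theorem then gives $|f(x_+)-f(x_-)|<r$ (resp.\ $|(f(x_+)-f(x_-))/f(\xi)|<r$), so $f(x)$ sits in $\bE+A$ (resp.\ has $\rv$ in $\rv(\bE)$); no hypothesis on $x$ beyond non-algebraicity is used, giving \ref{techcaveats:A}. The ``large-side'' log-derivative items \ref{tech:lder_O-} and \ref{tech:lder_O+} need $T$-convexity: when $T$ is exponential they reduce to \ref{tech:der_O-} and \ref{tech:der_O+} via composition with $\log$, and when $T$ is power-bounded the conclusion is automatic from the polynomial-growth dichotomy for $f^{\dagger}$, yielding \ref{techcaveats:B}.

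For the $\Longleftarrow$ direction I would use strong $\cO$-absorption via Lemma~\ref{lem:two_O-good_sequences}. Assuming the derivative bound fails, o-minimality makes $|f'|$ or $|f^{\dagger}|$ monotone near $x$, so the failure persists on at least one full side of $x$ inside $\bE$. The lemma then supplies a distanced cofinal (or coinitial) sequence $(x_i^{\pm})\subseteq \bE$ with consecutive gaps exceeding $(B_x^+:\cO)$. The mean value theorem on consecutive pairs $(x_i^{\pm},x_j^{\pm})$ produces a sequence in $\res$ or $\rv$ of $f(x_i^{\pm})$ that is eventually strictly monotone at the required scale, contradicting the assumed location of $f(x)$ over $\bE$; this yields \ref{techcaveats:C}.

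The main technical obstacle will be the bookkeeping in the $\co_*^-$ cases \ref{tech:lder_o-} and \ref{tech:der_o-}, where before one can exploit monotonicity of $|f'|$ or $|f^{\dagger}|$ one must first transfer a failure condition of the form $(\tilde{x}_+-\tilde{x}_-)f'(x)\notin \co_*^-$ from the point $x$ to some $\bE$-point $x_-<x$ in the same definable neighborhood. This transfer hinges on the estimate $(\tilde{x}_+-x_-)\succeq_{\cO_*^-}(\tilde{x}_+-\tilde{x}_-)$, which holds precisely because strong $\cO$-absorption forces $x$ to be $\cO_*^-$-close to an endpoint of $(\tilde{x}_-,\tilde{x}_+)$ rather than $\cO_*^-$-interior; this estimate is what ultimately binds the several variants of the argument together.
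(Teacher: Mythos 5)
Your proposal matches the paper's proof in structure and in its key ingredients: the trivial first equivalence via Remark~\ref{rmk:division}, the mean value theorem on a suitably shrunk $\bE$-definable interval for the $\Longrightarrow$ directions, Lemma~\ref{lem:two_O-good_sequences} with distanced sequences for the $\Longleftarrow$ directions, and the exponential/power-bounded dichotomy to derive the $\Longrightarrow$ in \ref{tech:lder_O-} and \ref{tech:lder_O+}. One small wording caveat in your last paragraph: strong $\cO$-absorption does not \emph{force} $x$ to be $\cO_*^-$-close to an endpoint of an arbitrary $\bE$-definable interval around it; rather, since $\val_{\cO_*^-}(x-\bE)\subseteq \val_{\cO_*^-}(\bE)$, it \emph{lets you shrink} the interval so that the estimate $(\tilde{x}_+-x)\succeq_{\cO_*^-}(\tilde{x}_+-\tilde{x}_-)$ holds.
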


\begin{remark}
    In Proposition~\ref{prop:techincal}, (1)-(3) and (7) are equivalences of types over $\bE$ (pro-definable sets), the remaining are equivalences of type unions (ind-pro-definable sets).
\end{remark}

\begin{lemma}\label{lem:br_image_new}
    Let $x\in \bE_* \setminus \bE$ and $f: \bE_* \to \bE_*$ be $\bE$-definable. Suppose that $\val_{\cO_*^-}(x-\bE)$ and $\val_{\cO_*^-}(f(x)-\bE)$ are both contained in $\val_{\cO_*^-}(\bE)$. Then $f'(x) \cdot \cO \cdot I_{f(x)}^- = \cO \cdot I_x^-$, so
    $f'(x) (B_x^+: \cO) = (B_{f(x)}^+: \cO)$.
    \begin{proof}
         Since $f(x)$ is $\cO_*^-$-absorbed, we have that $\val_{\cO_*^-}(f(x)-f(c)) \in \val_{\cO_*^-}(\bE)$ for all $c \in \bE$, thus by Proposition~\ref{prop:techincal}\ref{tech:lder_O-} and \ref{techcaveats:C}
         \[\frac{f'(x)}{f(x)-f(c)} \in \cO \cdot I_x^-.\]
         Now if $f(x)$ is symmetric, then $f(x)-f(c) \notin B_x^+$ for all $c \in \bE$ and thus $1/(f(x)-\bE) \subseteq I_x^-$ cofinally, so $f'(x) \cdot I_{f(x)}^- \subseteq \cO \cdot I_x^-$.
         If instead $f(x)$ is not symmetric, there is $c \in \bE$ such that $f(x)-f(c) \in B_{f(x)}^+ \setminus B_{f(x)}^-$ whence $1/(f(x)-f(c)) \in I_{f(x)}^+\setminus I_{f(x)}^-$ and again we derive $f'(x) \cdot I_{f(x)}^-\subseteq \cO \cdot I_x^-$. To prove the other inclusion apply the already derived inclusion to $y=f(x)$ and $f^{-1}$.
    \end{proof}
\end{lemma}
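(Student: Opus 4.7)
The plan is to derive both inclusions from Proposition~\ref{prop:techincal}\ref{tech:lder_O-}, using that the hypothesis allows us to apply the ``$\Longleftarrow$'' direction (valid for strongly $\cO$-absorbed $x$ by \ref{techcaveats:C}) to shifted functions, and then to symmetrize via the local inverse of $f$.

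\smallskip

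For the forward inclusion $f'(x)\cdot\cO\cdot I_{f(x)}^-\subseteq \cO\cdot I_x^-$, I would fix $c\in\bE$ and apply Proposition~\ref{prop:techincal}\ref{tech:lder_O-} to the $\bE$-definable function $g_c(t)\coloneqq f(t)-f(c)$ at $x$. Strong $\cO$-absorption of $f(x)$ gives $\val_{\cO_*^-}(g_c(x))=\val_{\cO_*^-}(f(x)-f(c))\in\val_{\cO_*^-}(\bE)$, so since $g_c'=f'$ one obtains
\[
g_c^\dagger(x)\ =\ \frac{f'(x)}{f(x)-f(c)}\ \in\ \cO\cdot I_x^- \qquad \text{for every } c\in\bE.
\]
What remains is to check that the family $\{1/(f(x)-f(c)):c\in\bE\}$ (after rescaling by $\cO$) covers $\cO\cdot I_{f(x)}^-$. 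By o-minimality, $f$ is strictly monotone on some $\bE$-definable neighborhood of $x$, so $\{f(c):c\in\bE\}$ is cofinal and coinitial to $f(x)$ in $\bE$ and can replace $\bE$ in the definition of $I_{f(x)}^-$. I would then split on whether $f(x)$ is symmetric: in the symmetric case, $f(x)-\bE$ avoids $B_{f(x)}^+$ entirely, so the reciprocals fill $I_{f(x)}^-$ cofinally; in the signed case, Lemma~\ref{lem:equiv_0-sign} supplies $c_0\in\bE$ with $f(x)-f(c_0)$ at the boundary of $B_{f(x)}^+$, so $1/(f(x)-f(c_0))$ lies in $I_{f(x)}^+\setminus I_{f(x)}^-$ and a single such element is enough to dominate all of $I_{f(x)}^-$ modulo $\cO$-multiples.

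\smallskip

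For the reverse inclusion, I would apply the same argument to $y\coloneqq f(x)$ and to the local inverse $f^{-1}$, which is $\bE$-definable wherever $f$ is strictly monotone; since the hypotheses on $x$ and $f(x)$ are symmetric in their roles, and $(f^{-1})'(y)=1/f'(x)$, this gives $\cO\cdot I_x^-\subseteq f'(x)\cdot\cO\cdot I_{f(x)}^-$. The equivalent reformulation $f'(x)(B_x^+:\cO)=(B_{f(x)}^+:\cO)$ then follows at once from Remark~\ref{rmk:division}, which identifies $\cO\cdot I_x^-$ with the set of reciprocals of nonzero elements outside $(B_x^+:\cO)$, and similarly for $f(x)$.

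\smallskip

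The main obstacle is the cofinality step inside the forward inclusion. It is the one place in the argument where the structural distinction between symmetric and signed cuts genuinely enters, and it requires separately handling the ``approach by a cofinal sequence of reciprocals'' in the symmetric case and the ``exploit a single boundary element'' trick in the signed case. Once that is settled, everything else is a direct unpacking of the dictionary of Proposition~\ref{prop:techincal} together with the trivial observation that strong $\cO$-absorption is preserved by additive shifts.
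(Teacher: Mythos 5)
Your proposal is correct and follows essentially the same route as the paper's own proof: apply Proposition~\ref{prop:techincal}\ref{tech:lder_O-} (via \ref{techcaveats:C}) to the shifted functions $f-f(c)$ to get $f'(x)/(f(x)-f(c))\in\cO\cdot I_x^-$, split on whether $f(x)$ is symmetric to obtain either cofinality of the reciprocals in $I_{f(x)}^-$ or a single dominating boundary element, and then get the reverse inclusion by applying the forward one to the local inverse $f^{-1}$ at $f(x)$. The only cosmetic difference is that you spell out the introduction of $g_c$ and the role of local strict monotonicity, both of which the paper leaves implicit.
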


\begin{corollary}
    If $x,y\in\bE_*$ are symmetric over $\bE$ and $y \in \bE \langle x \rangle$, then $\Vr(x/\bE)=\Vr(y/\bE)$.
    \begin{proof}
        Write $y=f(x)$ for some $\bE$-definable function $f$. Then apply Lemma~\ref{lem:br_image_new} with $\cO=\CH(\bZ)$.
    \end{proof}
\end{corollary}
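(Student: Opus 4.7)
The plan is to apply Lemma~\ref{lem:br_image_new} with the parameter $\cO := \CH_\bE(\bZ)$, as suggested by the author's hint.

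First, since $y \in \bE\langle x\rangle = \dcl_T(\bE, x)$ and both $x,y$ are symmetric (hence not in $\bE$), o-minimality produces an $\bE$-definable partial function $f$ with $y=f(x)$, which I may arrange differentiable and strictly monotonic on a neighborhood of $x$; in particular $f'(x) \neq 0$ in $\bE_*$. The choice $\cO = \CH_\bE(\bZ)$ is the conceptual crux: by the Remark asserting that an element is strongly $\CH_\bE(\bZ)$-absorbed if and only if it is symmetric, both $x$ and $y$ satisfy $\val_{\cO_*^-}(x-\bE)\subseteq\val_{\cO_*^-}(\bE)$ and $\val_{\cO_*^-}(y-\bE)\subseteq\val_{\cO_*^-}(\bE)$, which are precisely the hypotheses of Lemma~\ref{lem:br_image_new}. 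The lemma therefore yields
\[
f'(x)\,(B_x^+ : \cO) \;=\; (B_y^+ : \cO).
\]

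Next, I would simplify the colons. Since $x$ is symmetric it is $\cO$-symmetric by definition, so $\Br(x/\bE)(\bE)$ is a $\cO$-module; by the earlier Remark relating $M(\bE)$ to $M(\bE_*)$, the subgroup $B_x^+$ is itself a $\cO_*^-$-module, whence $(B_x^+:\cO)=B_x^+$. The same holds for $y$. Consequently
\[
f'(x)\,B_x^+ \;=\; B_y^+
\]
as convex subgroups of $\bE_*$.

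Finally, I would extract the equality of invariance rings. Multiplication by the nonzero $f'(x)\in\bE_*$ is a bijection, so for every $a\in\bE$ one has $aB_y^+ \subseteq B_y^+ \Leftrightarrow af'(x)B_x^+ \subseteq f'(x)B_x^+ \Leftrightarrow aB_x^+ \subseteq B_x^+$. Intersecting with $\bE$ gives $V_x = V_y$, and since $\Vr(x/\bE)$ and $\Vr(y/\bE)$ are partial types over $\bE$ determined by these invariance rings, one concludes $\Vr(x/\bE)=\Vr(y/\bE)$.

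The only real ``obstacle'' is conceptual rather than computational: recognizing that choosing $\cO=\CH_\bE(\bZ)$ turns the symmetry hypothesis into exactly the absorption hypothesis Lemma~\ref{lem:br_image_new} demands, and observing that for this $\cO$ the colon subgroups collapse to the breadths themselves. Given this, the remaining manipulations are purely formal.
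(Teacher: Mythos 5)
Your proposal correctly reconstructs the paper's strategy: invoke Lemma~\ref{lem:br_image_new} with $\cO=\CH_\bE(\bZ)$, using the fact (recorded in a Remark) that ``symmetric'' is the same as ``strongly $\CH(\bZ)$-absorbed'' to verify the lemma's hypotheses, and then observe that $(B^+_\cdot:\cO)=B^+_\cdot$ because convex subgroups of an ordered field are automatically $\CH(\bZ)$-modules. This is essentially the paper's one-line proof, just unpacked in more detail.
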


\begin{lemma}\label{lem:sOabs_to_sOabs_der}
    Let $x\in \bE_* \setminus \bE$ and let $f, g: \bE_* \to \bE_*$ be $\bE$-definable functions. Suppose that $x$ and $f(x)$ are strongly $\cO$-absorbed over $\bE$ and $f(x)\equiv_\bE g(x)$. If $r \in \bE$ is such that $B_{f(x)}^+\subseteq r \cdot (B_{f(x)}^+:\cO)$, then $|f'(x)-g'(x)| < |f'(x)|/r$.
    \begin{proof}
        Let $h=f-g$ so that $h(x) \in B_{f(x)}^+$.
        Suppose that $|h'(x)|\ge r |f'(x)|$ for some $r \in \bE^{>0}$, so that $|h'/f'|\ge r$ in a $\bE$-definable neighborhood $(x_-, x_+)$ of $x$ and for all $\tilde{x}$ in such neighborhood, by the (Cauchy) mean value theorem, there is $\xi$ between $x$ and $\tilde{x}$ such that
        \[\left |\frac{h(x)-h(\tilde{x})}{f(x)-f(\tilde{x})} \right| = \left| \frac{h'(\xi)}{f'(\xi)} \right| \ge r.\]
        Up to precomposing $f$ and $g$ with a sign-change and replacing $x$ with $-x$, we can assume without loss of generality that $|h|$ is strictly increasing and not vanishing in a neighborhood of $x$.
        But then for all large enough $c \in \bE^{<x}$ we get 
        \[|h(x)| = \max\{|h(x)|, |h(c)|\} \ge |h(x)-h(c)| \ge |f(x)-f(c)|\cdot r.\]
        But since $f(x)$ is strongly $\cO$-absorbed, $|f(x)-f(c)|>(B_{f(x)}^+:\cO)$, so $h(x) \notin r\cdot (B_{f(x)}^+:\cO)$. Since $h(x) \in B_{f(x)}^+$ by hypothesis, we have that
        $B_{f(x)}^+\not\subseteq r \cdot \big(B_{f(x)}^+:\cO)$ contradicting the hypothesis on $r$.
    \end{proof}
\end{lemma}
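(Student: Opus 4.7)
The plan is to argue by contradiction using Cauchy's mean value theorem. Set $h \coloneqq f - g$. Since $f(x) \equiv_\bE g(x)$, Remark~\ref{rmk:relation_to_additive_invariance_group} gives $h(x) \in B_{f(x)}^+$. Suppose for contradiction that $|h'(x)|$ is at least the claimed multiple of $|f'(x)|$, so that the ratio $|h'/f'|$ has a large lower bound at $x$.

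First I would use o-minimality to shrink to a $\bE$-definable neighborhood $(x_-, x_+)$ of $x$ on which $f'$ does not vanish (which is possible because $f(x) \notin \bE$ forces $f$ to be nonconstant on any neighborhood, and by o-minimality $f'$ has eventually constant sign), and on which the lower bound on $|h'/f'|$ is preserved by continuity. For any $c \in (x_-, x_+) \cap \bE$, Cauchy's mean value theorem produces $\xi$ between $x$ and $c$ with
\[
\frac{|h(x)-h(c)|}{|f(x)-f(c)|}=\frac{|h'(\xi)|}{|f'(\xi)|},
\]
transferring the derivative bound into a difference-quotient bound. A further o-minimal shrinking lets me assume $|h|$ is strictly monotone and nonvanishing on one side of $x$; after precomposing $f,g$ with a sign change and/or replacing $x$ by $-x$, I can assume $h$ has constant sign and $|h|$ is strictly increasing as $c \nearrow x$ from the left, so that $|h(x)-h(c)| \le |h(x)|$ for all $c \in (x_-, x) \cap \bE$ near $x$.

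Next I would invoke strong $\cO$-absorption of $f(x)$: by definition $\val_{\cO_*^-}(f(x)-f(c)) \in \val_{\cO_*^-}(\bE)$, which in the present setup translates to $|f(x)-f(c)|$ exceeding every element of $(B_{f(x)}^+:\cO)$. Combined with the mean-value inequality, this yields $|h(x)| > r \cdot t$ for every $t \in (B_{f(x)}^+:\cO)$, so $h(x) \notin r \cdot (B_{f(x)}^+:\cO)$. This contradicts the hypothesis $B_{f(x)}^+ \subseteq r \cdot (B_{f(x)}^+:\cO)$ together with $h(x) \in B_{f(x)}^+$.

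The main obstacle will be the sign and monotonicity bookkeeping in the reduction to the case where $|h|$ is well-behaved on one side, together with ensuring that Cauchy's mean value theorem is applicable throughout the shrunken neighborhood; these are standard o-minimal reductions, but care is needed to propagate the quantitative bound on $|h'/f'|$ without degradation, and to recognize $(B_{f(x)}^+:\cO)$ as the right convex subgroup against which to measure $|f(x)-f(c)|$ using strong $\cO$-absorption.
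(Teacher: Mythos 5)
Your proposal is correct and follows essentially the same line of argument as the paper's proof: decompose $h=f-g$, note $h(x)\in B_{f(x)}^+$, apply Cauchy's mean value theorem to transfer the derivative bound on $|h'/f'|$ into a difference-quotient bound on nearby $\bE$-points, reduce by sign changes and o-minimal shrinking to the case where $|h|$ is strictly increasing and nonvanishing on one side so that $|h(x)-h(c)|\le|h(x)|$, and finally invoke strong $\cO$-absorption of $f(x)$ (via Lemma~\ref{lem:two_O-good_sequences}) to get $|f(x)-f(c)|>(B_{f(x)}^+:\cO)$ and derive the contradiction. The only substantive additions you make are explicit justifications (non-vanishing of $f'$ from $f(x)\notin\bE$ and o-minimality) that the paper leaves implicit; otherwise the two arguments coincide.
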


\begin{corollary}\label{cor:sOabs_to_sOabs_der}
    Let $x \in \bE_*\setminus \bE$ and $f,g,h: \bE_* \to \bE_*$ be $\bE$-definable functions. Then the following hold:
    \begin{enumerate}
        \item if $x \in \bE_* \setminus \bE$ is strongly $\cO$-absorbed and $h(x) \in (B_x^+:\cO)$, then $h'(x) \in \co_*^-$, furthermore if $\cO$ is $T$-convex, then $h'(x) \in \co_*^+$;
        \item if $x \in \bE_* \setminus \bE$ is $\cO$-wim and $f(x)\equiv_\bE g(x)$ are strongly $\cO$-absorbed, then $f(x), g(x)$ are $\cO$-wim and moreover $f'(x) \sim_{\cO_*^-} g'(x)$.
    \end{enumerate}
    \begin{proof}
        (1) is a direct application of Lemma~\ref{lem:sOabs_to_sOabs_der} with $f(x)=x$ and $g(x)=x+h(x)$. The ``furthermore'' follows from the fact that if $\cO$ is $T$-convex, and $x$ is strongly $\cO$-absorbed, then by \cite[Thm.~A]{freni2024t} $\co_*^-\cap \bE \langle x \rangle = \co_*^+\cap \bE \langle x \rangle$.
        (2) if $x$ is $\cO$-wim, then $B_x^+$ is a $\cO$-module. Furthermore since $f(x), g(x)$ are strongly $\cO$-absorbed, by Lemma~\ref{lem:wim-cofres_orto}, we must have that $f(x)$ and $g(x)$ are $\cO$-wim as well so $B_{f(x)}$ is a $\cO$-module as well, but then Lemma~\ref{lem:sOabs_to_sOabs_der}, implies $|f'(x)-g'(x)|<f'(x)/r$ for all $r \in \cO$, whence $f'(x) \sim_{\cO_*^-} g'(x)$.
    \end{proof}
\end{corollary}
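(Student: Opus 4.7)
The plan is to apply Lemma~\ref{lem:sOabs_to_sOabs_der} to suitable test pairs $(f,g)$ in each case.

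For Part (1), the natural choice is $f(x) = x$ and $g(x) = x + h(x)$. Both hypotheses are immediate: $x = f(x)$ is strongly $\cO$-absorbed by assumption, and the stronger hypothesis $h(x) \in (B_x^+:\cO) \subseteq B_x^+$ forces $g(x) \equiv_\bE x$. Since $f'(x) = 1$, the Lemma yields $|h'(x)| < 1/r$ whenever $r \in \bE$ satisfies $B_x^+ \subseteq r(B_x^+:\cO)$. To conclude $h'(x) \in \co_*^-$, I would exploit the strong form of the hypothesis: for each $s \in \cO^{>0}$, also $sh(x) \in (B_x^+:\cO)$ (since $(B_x^+:\cO)$ is a $\cO$-module), so re-applying the Lemma with $g_s(x) = x + sh(x)$ in place of $g(x)$ gives $|sh'(x)| < 1/r$ for a fixed admissible $r$, whence $|h'(x)| < 1/(rs)$ runs cofinally through small values as $s$ ranges through $\cO^{>0}$. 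The ``furthermore'' clause under $T$-convexity then falls out by \cite[Thm.~A]{freni2024t}: the ideals $\co_*^-$ and $\co_*^+$ coincide on $\bE\langle x\rangle$ when $x$ is strongly $\cO$-absorbed and $\cO$ is $T$-convex, and $h'(x) \in \bE\langle x\rangle$.

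For Part (2), the key first step is to upgrade $f(x)$ and $g(x)$ from strongly $\cO$-absorbed to $\cO$-wim. Being strongly $\cO$-absorbed gives $\val_{\cO_*^-}(f(x) - \bE) \subseteq \val_{\cO_*^-}(\bE)$; if this set had a maximum realized at some $c \in \bE$, rescaling by a suitable $d \in \bE$ would yield an element of $\cO_*^{>\co_*^-}$ whose residue lies outside $\res_{\cO_*^-}(\bE)$, directly contradicting Lemma~\ref{lem:wim-cofres_orto} applied to the $\cO$-wim element $x$ and the $\bE$-definable function $t \mapsto (f(t)-c)/d$. Once $f(x)$ (and similarly $g(x)$) is $\cO$-wim, $B_{f(x)}^+$ is a $\cO$-module, so $(B_{f(x)}^+:\cO) = B_{f(x)}^+$, and the Lemma's hypothesis $B_{f(x)}^+ \subseteq r B_{f(x)}^+$ is satisfied for every $r \in \bE^{>\co}$ (since $1/r \in \cO$ acts within $B_{f(x)}^+$). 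The Lemma then yields $|f'(x) - g'(x)| < |f'(x)|/r$ for every such $r$, which is precisely the definition of $f'(x) \sim_{\cO_*^-} g'(x)$.

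The main substantive ingredient is the promotion from strongly $\cO$-absorbed to $\cO$-wim in Part (2), where Lemma~\ref{lem:wim-cofres_orto} does the work. The remaining obstacle is the bookkeeping around the allowed range of the parameter $r$ in Lemma~\ref{lem:sOabs_to_sOabs_der}: one must verify that the admissible $r$'s are cofinal enough in $\bE$ to land the conclusion in the correct ideal ($\co_*^-$), rather than merely in the convex subgroup $B_{f(x)}^+$. In Part (1) the extra strength of the hypothesis $h(x) \in (B_x^+:\cO)$ (as opposed to just $B_x^+$) is what supplies this cofinality, while in Part (2) it is the $\cO$-module structure of $B_{f(x)}^+$ that does.
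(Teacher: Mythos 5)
Your proof follows the paper's route closely, so let me focus on what is the same and what differs.

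Part (2) is essentially the paper's own argument: you upgrade $f(x),g(x)$ from strongly $\cO$-absorbed to $\cO$-wim via Lemma~\ref{lem:wim-cofres_orto} (your reduction to the residue of $(f(t)-c)/d$ is exactly how that lemma bites), and then you appeal to Lemma~\ref{lem:sOabs_to_sOabs_der} with $B_{f(x)}^+$ a $\cO$-module. The only thing worth signalling is that your admissibility bookkeeping (``$B_{f(x)}^+\subseteq rB_{f(x)}^+$ for every $r\in\bE^{>\co}$'') is correct precisely because $\cO\subseteq \Vr(f(x)/\bE)(\bE_*)$, which is what being a $\cO$-module gives you; the paper merely asserts ``for all $r\in\cO$'', which is slightly loose.

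Part (1) is where your route genuinely differs: the paper declares the result ``a direct application'' of Lemma~\ref{lem:sOabs_to_sOabs_der}, while you rescale $h$ by $s\in\cO^{>0}$ and re-apply. That rescaling is valid (since $(B_x^+:\cO)$ is always a $\cO$-module, $sh(x)\in (B_x^+:\cO)\subseteq B_x^+$, so $g_s(x)\equiv_\bE x$). However, it is also unnecessary and in fact obscures a subtlety that both you and the paper step over. Lemma~\ref{lem:sOabs_to_sOabs_der}'s statement (conclusion $|f'-g'|<|f'|/r$ when $B_{f(x)}^+\subseteq r(B_{f(x)}^+:\cO)$) does not match what its proof establishes, which is the contrapositive of ``$|h'(x)|\ge r|f'(x)|\Rightarrow h(x)\notin r(B_{f(x)}^+:\cO)$''. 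If one reads the Lemma at face value, the literal ``direct application'' already gives $h'(x)\in\co_*^-$ without any rescaling, since admissible $r$'s always reach into $\bE^{>\co}$ (weak-limit case) or all of $\bE^{>\co}$ ($\cO$-wim case). If instead one reads off what the proof actually derives, then in the weak $\cO$-limit case (where $B_x^+$ is \emph{not} a $\cO$-module, so $r\asymp 1$ is never admissible and only $r>\cO$ qualifies) neither the Lemma's literal statement nor your $s$-rescaling closes the gap; one must invoke the proof-level implication ``$h(x)\in r(B_{f(x)}^+:\cO)\Rightarrow |h'(x)|<r|f'(x)|$'', which combined with the $\cO$-module property of $(B_x^+:\cO)$ applied to $h(x)$ itself (not to $B_x^+$) gives the result for all $r\in\bE^{>\co}$. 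So the one genuine thing your write-up is missing is a recognition that the strengthened hypothesis $h(x)\in(B_x^+:\cO)$ is substituting for the Lemma's stronger hypothesis on $B_x^+$, and that this substitution needs a look inside Lemma~\ref{lem:sOabs_to_sOabs_der}'s proof rather than its statement. Your handling of the ``furthermore'' clause is fine.

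Bottom line: your part (2) is correct and is the paper's proof unpacked; your part (1) is a legitimate variant that inherits the same ambiguity present in the paper's own terse proof around Lemma~\ref{lem:sOabs_to_sOabs_der}, and does not cleanly dispose of the weak $\cO$-limit subcase under the reading of that Lemma that actually matches its proof.
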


\begin{lemma}\label{lem:vder_v}
    Let $x\in \bE_* \setminus \bE$ be strongly $\cO$-absorbed and let $f: \bE_* \to \bE_*$ be $\bE$-definable. If $\val_{\cO_*^-}(f'(x)) \in \val_{\cO_*^-}(\bE)$, then $\val_{\cO_*^-}(f(x)) \in \val_{\cO_*^-}(\bE)$.
    \begin{proof}
        Up to precomposing with a sign change and replacing $x$ with $-x$ we can assume without loss of generality that $|f|$ is strictly increasing in a $\bE$-definable neighborhood of $x$.
        Since $\val_{\cO_*^-}(f'(x)) \in \val_{\cO_*^-}(\bE)$,
        we can find $x_- \in \bE^{<x}$ and $x_+ \in \bE^{>x}$, such that on $[x_-, x_+]$, $|f|$ is differentiable and $\val_{\cO_*}(f')$ is constantly equal to $\val_{\cO_*}(f'(x)) \in \val_{\cO_*^-}(\bE)$.
        Note that $f(x_-) \preceq_{\cO_*^-} f(x) \preceq_{\cO_*^-} f(x_+)$ and that if one of the two $\preceq_{\cO_*^-}$ is not strict, the thesis holds, so we might assume $f(x_-) \prec_{\cO_*^-} f(x) \prec_{\cO_*^-} f(x_+)$.
        But then
        \[f(x) \sim f(x)-f(x_-)=f'(\xi)(x-x_-) \in \val_{\cO_*^-}(\bE)\]
        for some $\xi \in (x_-, x)$, because $\val(f'(\xi))\in \val (\bE)$ and $\val_{\cO_*^-}(x-x_-)\in\val_{\cO_*^-}(\bE)$.
    \end{proof}
\end{lemma}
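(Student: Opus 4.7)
The plan is to apply the o-minimal mean value theorem to $f$ on a $\bE$-definable interval $[x_-,x_+]$ with $x_-<x<x_+$ in $\bE$, chosen so that $\val\circ f'$ is constantly equal to $\val_{\cO_*^-}(f'(x))$ on the whole interval and $f$ is continuously differentiable and monotone there. My first step will be to establish the existence of such an interval: by composing $f$ with a sign change and possibly replacing $x$ by $-x$ I can arrange $f$ to be strictly increasing in some $\bE$-definable neighborhood of $x$, and then I would exploit the fact that $\val\circ f'$, being a $\bE$-definable map into an ordered abelian group, is piecewise constant on a $\bE$-definable partition of this neighborhood, so one can take $[x_-,x_+]$ to be the $\bE$-definable constancy cell of $\val\circ f'$ containing $x$.

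Once the bracketing is in place, monotonicity gives $f(x_-)\le f(x)\le f(x_+)$, and I would split into cases. If $f(x)\sim_{\cO_*^-} f(x_-)$ or $f(x)\sim_{\cO_*^-} f(x_+)$, then the conclusion is immediate because $f(x_\pm)\in\bE$. Otherwise $f(x)-f(x_-)\sim_{\cO_*^-} f(x)$, and the mean value theorem yields some $\xi\in(x_-,x)$ with $f(x)-f(x_-)=f'(\xi)(x-x_-)$. By the choice of interval, $\val_{\cO_*^-}(f'(\xi))=\val_{\cO_*^-}(f'(x))\in\val_{\cO_*^-}(\bE)$, and by strong $\cO$-absorption of $x$ applied to $c=x_-$, also $\val_{\cO_*^-}(x-x_-)\in\val_{\cO_*^-}(\bE)$; multiplying, the product has valuation in $\val_{\cO_*^-}(\bE)$, whence so does $f(x)$.

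The step I expect to be most delicate is the trichotomy on whether $f(x)$ is $\sim_{\cO_*^-}$-comparable to $f(x_-)$ or $f(x_+)$, since it is precisely what converts a mean-value estimate on an increment into a statement about $f(x)$ itself. The reduction to constancy of $\val\circ f'$ on a $\bE$-definable interval is the other bit of o-minimal infrastructure one must carefully install, to ensure that the intermediate point $\xi$ produced by the mean value theorem genuinely inherits the valuation of $f'(x)$; without this control one could only bound $\val(f(x)-f(x_-))$ from one side.
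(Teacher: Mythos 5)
Your proof follows essentially the same route as the paper's: normalize by sign change and replacing $x$ with $-x$, find a $\bE$-definable interval $[x_-,x_+]$ around $x$ on which $\val_{\cO_*^-}\circ f'$ is constant, note the trichotomy, and apply the mean value theorem. The two key ingredients are identical (constancy of $\val\circ f'$ on the interval, and strong $\cO$-absorption of $x$ giving $\val(x-x_-)\in\val(\bE)$).

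There is, however, a small inaccuracy in the normalization that creates a gap in the trichotomy step. You arrange for $f$ to be strictly increasing; the paper arranges $|f|$ to be strictly increasing. With only $f$ strictly increasing, $f(x_-)<f(x)<f(x_+)$ does \emph{not} imply $f(x_-)\preceq_{\cO_*^-}f(x)\preceq_{\cO_*^-}f(x_+)$: if $f$ changes sign on the interval, one could have, say, $f(x_-)=-10^{10}$, $f(x)$ tiny, $f(x_+)=10^{10}$, so that $f(x)\not\asymp f(x_\pm)$ but also $f(x)-f(x_-)\not\sim f(x)$, and your case split collapses. The fix is easy: from $\val_{\cO_*^-}(f'(x))\in\val_{\cO_*^-}(\bE)$ one gets $f'(x)\neq 0$, and also $f(x)\neq0$ (since $x\notin\bE$ and the zero set of $f$ is $\bE$-definable), so by o-minimality one can shrink the interval so that $f$ has constant sign there, which makes $|f|$ strictly monotone and salvages $f(x_-)\preceq f(x)\preceq f(x_+)$. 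Once you state that refinement, your argument matches the paper's.

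A secondary, more cosmetic remark: $\val_{\cO_*^-}\circ f'$ is not itself $\bE$-definable ($\cO_*^-$ is not a definable subset of $\bE_*$), so ``piecewise constant on a $\bE$-definable partition'' is loose. What one actually uses is that the fiber $\{t:\val(f'(t))=\val(f'(x))\}$ equals $\{t: f'(t)\asymp_{\cO_*^-}d\}$ for some $d\in\bE$ (since $\val(f'(x))\in\val(\bE)$), and this is an increasing union of $\bE$-definable sets $\{t:1/r\le|f'(t)/d|\le r\}$ over $r\in\cO^{>0}$; one of these contains $x$, and its o-minimal cell containing $x$ is the wanted interval. Your intended argument is clearly this, but the justification should run through the parameter $d\in\bE$ rather than through definability of $\val\circ f'$.
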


\begin{proposition}\label{prop:absorbed_der_char}
    Let $x\in \bE_* \setminus \bE$ be $\cO_*^-$-absorbed and let $f: \bE_* \to \bE_*$ be $\bE$-definable. Then the following are equivalent
    \begin{enumerate}
        \item $\val_{\cO_*^-}(f(x)-\bE)\subseteq \val_{\cO_*^-}(\bE)$
        \item $\val_{\cO_*^-}(f'(x)) \in \val_{\cO_*^-}(\bE)$.
    \end{enumerate}
    \begin{proof}
        $(2)\Rightarrow (1)$ is by Lemma~\ref{lem:vder_v}, because $f'=(f-c)'$ for all $c \in \bE$. 
        
        For $(1) \Rightarrow (2)$, note that if (1) holds, then by Lemma~\ref{lem:wim-cofres_orto} there are two possibilities: $x$ and $f(x)$ are either both weak $\cO$-limits (Case 1) or both $\cO$-wim (Case 2).

        \textit{Case 1.} Both $\val_{\cO_*^-}(f(x)-\bE)$ and $\val_{\cO_*^-}(x-\bE)$ have a maximum. Then there is $c_0$ such that $\val_{\cO_*^-}(x-c_0)$ is maximum and thus $\cO \cdot I_x^- = \cO_*^- / (x-c_0)$ and $\co \cdot I_x^-=\co_*^+/(x-c_0)$ and there is $c_1$ such that $\val_{\cO_*^-}(f(x)-c_1) \in \val_{\cO_*^-}(\bE)$ is maximum.
        But then, by Proposition~\ref{prop:techincal}\ref{tech:lder_O-}\ref{techcaveats:C}, $(1)$ implies $f'(x) \preceq_{\cO_*^-} (f(x)-c_1)/(x-c_0)$. On the other hand, since $\rv_{\cO_*^-}(f(x))\notin \rv_{\cO_*^-}(\bE)$, a fortiori $\rv_{\cO_*^+}(f(x))\notin \rv_{\cO_*^+}(\bE)$, so by Proposition~\ref{prop:techincal}\ref{tech:der_o+}\ref{techcaveats:C}, $f'(x) \succeq_{\cO_*^-} (f(x)-c_1)/(x-c_0)$.
        So $\val_{\cO_*^-}(f'(x)) = \val_{\cO_*^-}(f(x)-c_1) - \val(x-c_0)$, so $(2)$ holds.
        
        \textit{Case 2.} Since $f(x)$ and $x$ are $\cO$-wim we get for all $\bE$-definable $h$ such that $h(x) \in \Br(x/\bE)(\bE\langle x \rangle)$, $f(x+h(x))\equiv_\bE f(x)$, so by Corollary~\ref{cor:sOabs_to_sOabs_der}(2) and (1), 
        \[f'(x) \sim_{\cO_*^-} f'(x+h(x)) (1+h'(x)) \sim_{\cO_*^-} f'(x+h(x)),\]
        thus $\rv_{\cO_*^-}(f'(x+\Br(x/\bE)(\bE\langle x \rangle))$ is a singleton and a fortiori it must be $\val_{\cO_*^-}(f'(x)) \in \val_{\cO_*^-}(\bE)$.
    \end{proof}
\end{proposition}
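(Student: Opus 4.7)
The plan is to handle the two implications separately, using Lemma~\ref{lem:vder_v} for the easy direction and a case split on the structure of $\tp(x/\bE)$ for the harder one. For $(2) \Rightarrow (1)$, observe that $(f-c)'=f'$ for every $c \in \bE$, so Lemma~\ref{lem:vder_v} applied to $f-c$ yields $\val_{\cO_*^-}(f(x)-c) \in \val_{\cO_*^-}(\bE)$ for each $c$, which is exactly (1).

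For the converse, I would split on whether $\val_{\cO_*^-}(x-\bE)$ has a maximum. Since $x$ is $\cO_*^-$-absorbed, exactly one of these alternatives occurs: either $x$ is a weak $\cO$-limit, or $x$ is $\cO$-wim. Hypothesis (1) combined with Lemma~\ref{lem:wim-cofres_orto} transfers the same alternative to $f(x)$, giving a clean case division on both sides.

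In the weak $\cO$-limit case, pick $c_0, c_1 \in \bE$ at which the maxima of $\val_{\cO_*^-}(x-\bE)$ and $\val_{\cO_*^-}(f(x)-\bE)$ are achieved. Then $\cO\cdot I_x^- = \cO_*^-/(x-c_0)$, and applying Proposition~\ref{prop:techincal}\ref{tech:lder_O-} to $f-c_1$ gives the upper bound $f'(x) \preceq_{\cO_*^-} (f(x)-c_1)/(x-c_0)$. To get the matching lower bound, I would pass from $\rv_{\cO_*^-}(f(x)) \notin \rv_{\cO_*^-}(\bE)$ to the a fortiori statement $\rv_{\cO_*^+}(f(x)) \notin \rv_{\cO_*^+}(\bE)$ and invoke Proposition~\ref{prop:techincal}\ref{tech:der_o+}, yielding $f'(x) \succeq_{\cO_*^-} (f(x)-c_1)/(x-c_0)$. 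Combined, these give $\val_{\cO_*^-}(f'(x)) = \val_{\cO_*^-}(f(x)-c_1) - \val(x-c_0) \in \val_{\cO_*^-}(\bE)$.

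In the $\cO$-wim case, I would exploit the invariance of $\tp(x/\bE)$ under shifts by elements of the breadth. For any $\bE$-definable $h$ with $h(x) \in B_x^+$, the element $x+h(x)$ realizes $\tp(x/\bE)$, so $f(x+h(x)) \equiv_\bE f(x)$ is again strongly $\cO$-absorbed and $\cO$-wim. Corollary~\ref{cor:sOabs_to_sOabs_der}(2) gives $f'(x) \sim_{\cO_*^-} f'(x+h(x))(1+h'(x))$, while Corollary~\ref{cor:sOabs_to_sOabs_der}(1) gives $h'(x) \in \co_*^-$, so the $\rv_{\cO_*^-}$-class of $f'$ is constant along $x + B_x^+$, forcing $\val_{\cO_*^-}(f'(x)) \in \val_{\cO_*^-}(\bE)$. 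The main obstacle is the weak $\cO$-limit case, where one must juggle the four convex rings $\cO_*^-$, $\cO_*^+$, $\co$, $\co_*^+$ and select exactly the right clauses of Proposition~\ref{prop:techincal} at each level to obtain matching upper and lower bounds on $f'(x)$; the $\cO$-wim case is by contrast a clean invariance argument once Corollary~\ref{cor:sOabs_to_sOabs_der} is in hand.
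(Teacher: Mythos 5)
Your proposal matches the paper's proof step for step: the same use of Lemma~\ref{lem:vder_v} for $(2)\Rightarrow(1)$, the same case division via Lemma~\ref{lem:wim-cofres_orto} into weak $\cO$-limit versus $\cO$-wim, the same pairing of Proposition~\ref{prop:techincal}\ref{tech:lder_O-} with \ref{tech:der_o+} for the two-sided bound in Case 1, and the same invariance argument via Corollary~\ref{cor:sOabs_to_sOabs_der} in Case 2. This is essentially the paper's argument.
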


The second point of the following Proposition had already been observed by Tressl (see \cite[7.1]{tressl2005model}).

\begin{proposition}\label{prop:breadth-ortho}
    Let $x\in \bE_*\setminus \bE$ be symmetric over $\bE$. Let $f:\bE_* \to \bE_*$ be $\bE$-definable and Suppose that $f(x) \in B_x^+ \setminus B_x^-$, where $B_x^-\coloneqq\CH_{\bE_*}(B_x^+\cap \bE)$. Then:
    \begin{enumerate}
        \item $1/f^\dagger(x) \in B_x^+\setminus B_x^-$;
        \item $1/f'(x) \in \Vr(x/\bE)(\bE_*)\setminus \CH_{\bE_*}(\Vr(x/\bE)(\bE))$.
    \end{enumerate}
	\begin{proof}
        To simplify notation let $V_x^+\coloneqq \Vr(x/\bE)(\bE_*)$, $V_x^-\coloneqq \CH_{\bE_*}(\Vr(x/\bE)(\bE))$.
		Let $f:\bE_* \to \bE_*$ be $\bE$-definable and suppose that $f(x) \in \Br(x/\bE)(\bE \langle x \rangle)$. Then by Lemma~\ref{lem:sOabs_to_sOabs_der} we have $|f'(x)| < 1 / r$ for all $r \in V_-$, that is $1/|f'(x)|>V_-$ and in particular $f'(x)B_x^-\subseteq B_x^-$. It follows that $1/f^\dagger(x)\notin B_x^-$, because otherwise $f(x) \in f'(x)B_x^-\setminus B_x^-$ contradicting $f'(x)B_x^-\subseteq B_x^-$. On the other hand $1/f^\dagger(x) \in B_x^+$ by Proposition~\ref{prop:techincal}(1) and we have shown (1).
        
        To finish proving (2) we only need to prove that $1/f'(x) \in V_x^+$. 
        Suppose that $1/|f'(x)|>a$ for some $a\in \bE^{>V_+}$, then
        \[\big(B_x^+ \setminus (f'(x)B_x^+)\big) \cap \bE \supseteq \big(B_x^+ \setminus a^{-1}B_x^+\big) \cap \bE \neq \emptyset.\]
        But then since $\val_{V_-} f(x) \notin \val_{V_-} (\bE)$, by Proposition~\ref{prop:techincal}(1) we have $f(x) \in f'(x) B_x^+ \subseteq a^{-1}B_x^+$ so we can find $d \in B_x^+\cap \bE$ such that $d >|f(x)|$, contradicting $|f(x)| >B_x^+\cap \bE$. 
	\end{proof}
\end{proposition}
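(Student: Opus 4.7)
The plan is to first extract a uniform smallness bound on $f'(x)$ from symmetry, then deduce both (1) and (2) from this bound combined with the geometric--analytic dictionary of Proposition~\ref{prop:techincal}(1). Set $V_p := \Vr(x/\bE)(\bE)$.

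I will first exploit that $x$ is symmetric and $f(x) \in B_x^+$, so $x + f(x) \equiv_\bE x$. Applying Lemma~\ref{lem:sOabs_to_sOabs_der} to $f_L := \mathrm{id}$ and $g_L := \mathrm{id} + f$ with $\cO := V_p$---noting $(B_x^+ : V_p) = B_x^+$ (by the $V_p$-module property of $B_x^+$) and $B_x^+ \subseteq r B_x^+$ for every $r \in V_p$ with $|r| \ge 1$---yields $|f'(x)| < 1/r$ for every such $r$. Hence $1/|f'(x)| > V_x^-$, which already gives $1/f'(x) \notin V_x^-$ (the second half of (2)) and $f'(x) B_x^- \subseteq B_x^-$. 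This last containment excludes $1/f^\dagger(x) = f(x)/f'(x) \in B_x^-$, lest $f(x) \in f'(x) B_x^- \subseteq B_x^-$. For $1/f^\dagger(x) \in B_x^+$, I will invoke Proposition~\ref{prop:techincal}(1) ($\Leftarrow$ direction, valid since symmetric implies strongly $V_p$-absorbed) with $\cO = V_p$: it reduces the goal to $\val_{V_x^-}(f(x)) \notin \val_{V_x^-}(\bE)$. If instead $f(x) \sim_{V_x^-} c$ for some $c \in \bE^\times$, then $|c| \le M'|f(x)|$ for some $M' \in V_p$; since $f(x) \in B_x^+$ this forces $|c|/M' < b-a$ for every width $b-a$ of an $\bE$-interval around $x$, so $|c|/M' \in B_x^+(\bE)$ and $c \in V_p \cdot B_x^+(\bE) \subseteq B_x^+(\bE)$. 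Then $f(x) \in V_x^- \cdot c \subseteq B_x^-$, contradicting the hypothesis. This completes (1).

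For the remaining part of (2), namely $1/f'(x) \in V_x^+$, I will argue by contradiction. Assuming $1/f'(x) \notin V_x^+$, the failure $f'(x) B_x^+ \not\supseteq B_x^+$, combined with $V_x^+ \cap \bE = V_p$ and the transfer of the $V_p$-module structure from $B_x^+(\bE)$ to $B_x^+(\bE_*)$ (the Remark following $\cO$-symmetric), produces $a \in \bE^{>V_p}$ with $1/|f'(x)| > a$; then $f'(x) B_x^+ \subseteq a^{-1} B_x^+$, and $a \notin V_p$ yields $s \in B_x^+(\bE)$ with $as \notin B_x^+(\bE)$. Simultaneously, Proposition~\ref{prop:techincal}(1) applied with $\cO = V_p$ and the already-established $\val_{V_x^-}(f(x)) \notin \val_{V_x^-}(\bE)$ gives $f(x) \in f'(x) B_x^+ \subseteq a^{-1} B_x^+$, so $af(x) \in B_x^+$. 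Since $as \notin B_x^+$, convexity of $B_x^+$ forces $|af(x)| < |as|$, hence $|f(x)| < s \in B_x^+(\bE)$, contradicting $f(x) \notin B_x^-$.

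The main obstacle is the extraction of an $\bE$-witness $a > V_p$ with $a \le 1/|f'(x)|$ from the hypothesis $1/f'(x) \notin V_x^+$; this uses the convexity of $V_x^+$ in $\bE_*$ together with the transfer of the $V_p$-module structure between $B_x^+(\bE)$ and $B_x^+(\bE_*)$. The rest is careful bookkeeping around Proposition~\ref{prop:techincal}(1) and the bound from Lemma~\ref{lem:sOabs_to_sOabs_der}.
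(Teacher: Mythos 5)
Your proposal is correct and follows essentially the same route as the paper: Lemma~\ref{lem:sOabs_to_sOabs_der} applied with $f_L=\id$, $g_L=\id+f$ and $\cO=V_p$ yields $1/|f'(x)|>V_x^-$, Proposition~\ref{prop:techincal}(1) with $\cO=V_p$ then gives $1/f^\dagger(x)\in B_x^+$, and the same convexity contradiction produces $d\in B_x^+\cap\bE$ with $d>|f(x)|$. You are more explicit than the paper at two points the paper treats as immediate: the deduction of $\val_{V_x^-}(f(x))\notin\val_{V_x^-}(\bE)$ from $f(x)\in B_x^+\setminus B_x^-$, and the extraction of an $\bE$-witness $a>V_p$ below $1/|f'(x)|$ from $1/f'(x)\notin V_x^+$ using that $\Vr(x/\bE)$ is a partial $\bE$-type.

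One small inaccuracy: the implication you actually need from Proposition~\ref{prop:techincal}(1), namely $\val_{V_x^-}(f(x))\notin\val_{V_x^-}(\bE)\Rightarrow 1/f^\dagger(x)\in(B_x^+:V_p)$, is the contrapositive of the \emph{forward} ($\Longrightarrow$) direction, not the backward one; caveat (C) of that proposition only covers $\Longleftarrow$, while $\Longrightarrow$ (caveat (B)) is stated for $T$-convex $\cO$. The paper's own proof makes the same appeal to Proposition~\ref{prop:techincal}(1) without comment, so this is a shared implicit assumption rather than a flaw introduced by you, but you should not present it as the $\Longleftarrow$ direction justified by strong absorbedness alone.
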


\subsection{Definable functions at the \texorpdfstring{$\cO$}{O}-special type}\label{ssec:germs_at_special_types}
In this section we give similar results of the ones in the section above, but for types that are weakly $\cO$-absorbed.

\begin{lemma}\label{lem:absorption_special}
    Let $b \in \bE_*$ be such that $b \in (\cO_*^+\setminus \cO_*^-)\cup (\co_*^- \setminus \co_*^+)$. Then
    \begin{enumerate}
        \item $1/f'(b) \notin \co_*^+ \Longleftrightarrow f(b) \in \bE+\cO_*^+$;
        \item $1/f'(b) \notin \cO_*^+ \Longleftrightarrow f(b) \in \bE + \co_*^+$;
        \item $1/f^\dagger(b) \notin \cO_*^+\Longleftrightarrow \rv_{\cO_*^+}(f(b)) \in \rv_{\cO_*^+}(\bE)$.
    \end{enumerate}
    If furthermore $\cO$ is $T$-convex, or $T$ is exponential and $\exp(\cO)\subseteq \cO$ then
    \begin{enumerate}[resume]
        \item $1/f^\dagger(b) \notin \co_*^+ \Longleftrightarrow \val_{\cO_*^+}(f(b)) \in \val_{\cO_*^+}(\bE)$;
        \item $\val_{\cO_*^+}(f(b)-\bE) \subseteq \val_{\cO_*^+}(\bE) \Longleftrightarrow \val_{\cO_*^+}(f'(b)) \in \val_{\cO_*^+}(\bE)$.
    \end{enumerate}
    \begin{proof}
        I prove the statement for $b$ such that $\cO<b<\bE^{>\cO}$. The other cases can be deduced from that using $f'(1/b)=-b^2 \cdot \partial_b(f(1/b))$, and $f'(-b)=-\partial_b(f(-b))$.
        
        I only give the proof of (1), (2) and (5), as the proofs of (3) is similar to the proofs of (2) and (4) follows from (1) when the theory is exponential and is trivial when the theory is power-bounded.

        (1). Suppose that $1/f'(b) \in \co_*^+$ so $|f'(b)|>\cO_*^+$ and $|f'|> \cO_*^+$ in a $\bE$-definable neighborhood of $b$. Now pick sequences $(b_i^-)_{i<\lambda_-}$ cofinal increasing in in $\cO$ and $(b_i^+)_{i<\lambda_+}$ coinitial decreasing in $\bE^{>\cO}$ such that $|b_i^\pm -b_j^\pm|>1$ and note that then $f(b_i^\pm)-f(b_j^\pm)>\cO$, so $f(b_i^+)+\cO_*^+$ is strictly decreasing and $f(b_i^-)+\cO_*^+$ is strictly increasing, so $f(b)+\cO_*^+\cap \bE=\emptyset$.
        
        Conversely if $1/f'(b) \notin \co_*^+$, then $f'(b) \in \cO_*^+$, so since $|f'|$ must be monotone or constant in a $\bE$-definable neighborhood $(b_-, b_+)$ of $b$, we have that on one among $(b_-, b)$ and $(b, b_+)$ we must have $f' \in \cO_*^+$. But then the mean value theorem implies that for some $\sigma \in \{\pm\}$, $|f(b_{\sigma})-f(b)| \in \cO_*^+$.

        (3) Note that $\Longrightarrow$ follows from Proposition~\ref{prop:techincal}(3) after observing that $I_b^-=\co_*^+$.
        Conversely suppose that $1/f'(b) \in \cO_*^+$, so $f'(b) \notin \co_*^+$. Arguing as in (1) we see that there are sequences $(b_i^-)_{i<\lambda_-}$ cofinal increasing in in $\cO$ and $(b_i^+)_{i<\lambda_+}$ coinitial decreasing in $\bE^{>\cO}$ such that $|b_i^\pm -b_j^\pm|>1$. For one $\sigma \in \{\pm\}$, we must have by the mean value theorem that $f(b_i^\sigma)-f(b_j^\sigma)>\co$ for all large enough $i,j$, but then $(f(b_i^\sigma)+\co)_{i<\lambda_\sigma}$ is eventually increasing, so $f(b) \notin \bE+\co_*^+$.

        (5). We know by \cite[Thm.~A]{freni2024t}, that $f(b)$ cannot be $\cO$-weakly immediate nor a weak $\cO$-limit. So $\val_{\cO_*^+}(f(b)-\bE)$ has always a maximum $\val_{\cO_*^+}(f(b)-c)$ and $\rv_{\cO_*^-}(f(b)-c) \notin \rv_{\cO_*^-}(\bE)$, hence by (4) $(f(b)-c)/f'(b) \in \cO_*^+$, so $f(b)-c \succeq_{\cO_*^+}f'(b)$
        
        If $\val_{\cO_*^+}(f(b)-c) \in \val_{\cO_*^+}(\bE)$, then
        $(f(b)-c)/f'(b) \in \cO_*^+\setminus \co_*^+$, so $\val_{\cO_*}(f'(b)) = \val_{\cO_*^+}(f(b)-c)\in \val_{\cO_*}(\bE)$.

        Suppose that $\val(f'(b))\in \val(\bE)$, so either $f'(b)=0$ or $f'(b)d \in \cO_*^+\setminus \co_*^+$ for some $d \in \bE$. The case $f'(b)=0$ is trivial, so we may assume $f'(b)d \in \cO_*^+\setminus \co_*^+$.
        Since $f'=(f-c)'$ for all $c \in \bE$, it suffices to show that $\val(f'(b))\in \val(\bE)$ implies $\val(f(b)) \in \val(\bE)$ or equivalently by (2) that $f^\dagger(b) \in \cO_*^+$.
        
        Since $df'(b)\in \cO_*^+$, it suffices to show that $df'(b) \notin \co_*^+\Rightarrow df(b) \notin \co_*^+$, but this follows from \cite[Lem.~2.24]{freni2024t}.    
    \end{proof}
\end{lemma}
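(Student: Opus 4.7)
The plan is to reduce everything to the concrete case $\cO < b < \bE^{>\cO}$ and then to exploit the mean value theorem together with carefully chosen cofinal/coinitial distanced sequences, in analogy with the techniques used in Proposition~\ref{prop:techincal} and Lemma~\ref{lem:two_O-good_sequences}, but adapted to the cut $\cO_*^+$.

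First I would handle the reduction step. Since $b$ lies in $(\cO_*^+\setminus\cO_*^-)\cup(\co_*^-\setminus\co_*^+)$, it is either between $\cO$ and $\bE^{>\cO}$, between $-\bE^{>\cO}$ and $-\cO$, between $\co$ and $\bE^{>\co}$, or between $-\bE^{>\co}$ and $-\co$. A sign change $b\mapsto -b$ deals with negatives using $f'(-b)=-\partial_b f(-b)$, while the substitution $b\mapsto 1/b$ moves the second case into the first via $f'(1/b)=-b^2\,\partial_b f(1/b)$ (note that $1/b$ lies in the cut above $\cO$ precisely when $b$ lies in the cut above $\co$, and all the valuation-theoretic invariants $\cO_*^+$, $\co_*^+$ involved in the statement are preserved up to multiplication by a $\cO_*^+$-unit or a factor lying outside $\co_*^+$). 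So we may assume $\cO<b<\bE^{>\cO}$.

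Under this assumption I would prove (1), (2), (3) in parallel. For the $\Rightarrow$ of (1), if $1/f'(b)\in\co_*^+$ then $|f'|>\cO_*^+$ on some $\bE$-definable neighborhood of $b$; picking an increasing cofinal sequence $(b_i^-)$ in $\cO$ and a decreasing coinitial sequence $(b_i^+)$ in $\bE^{>\cO}$ with consecutive gaps $\ge 1$, the mean value theorem gives $|f(b_i^\sigma)-f(b_j^\sigma)|>\cO_*^+$ for $i\ne j$ and thus the cosets $f(b_i^\sigma)+\cO_*^+$ are strictly monotone on both sides, so $f(b)\notin \bE+\cO_*^+$. Conversely, if $1/f'(b)\notin\co_*^+$ then $f'(b)\in\cO_*^+$, and by o-minimality $|f'|$ is monotone or constant on some $(b_-,b_+)\ni b$; on at least one of the two half-intervals we have $|f'|\le |f'(b)|\in\cO_*^+$, and then MVT gives $|f(b_\sigma)-f(b)|\in\cO_*^+$ for that side, so $f(b)\in\bE+\cO_*^+$. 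Part (2) is proved identically by replacing $\cO_*^+$ by $\co_*^+$ throughout, and (3) is the multiplicative analog: $\Rightarrow$ follows from (3) of Proposition~\ref{prop:techincal} since $I_b^-=\co_*^+$ here, while $\Leftarrow$ uses distanced sequences together with the logarithmic version of MVT (equivalently apply the argument to $\log|f|$).

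For part (5), which needs the extra hypothesis, the main additional input is that $f(b)$ itself must behave similarly to $b$: by \cite[Thm.~A]{freni2024t}, under the stated assumption on $\cO$ or $T$, $f(b)$ cannot be $\cO$-weakly immediate nor a weak $\cO$-limit. Hence $\val_{\cO_*^+}(f(b)-\bE)$ always attains a maximum at some $c\in\bE$, and in fact $\rv_{\cO_*^-}(f(b)-c)\notin\rv_{\cO_*^-}(\bE)$. Applying (4) (which is (3) in disguise together with the reduction: for exponential $T$ one passes through (1), and for power-bounded $T$ the statement is elementary) yields the comparison $(f(b)-c)/f'(b)\in\cO_*^+$, and symmetric reasoning gives the reverse comparison, so $\val_{\cO_*^+}(f'(b))=\val_{\cO_*^+}(f(b)-c)$. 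The converse direction, $\val_{\cO_*^+}(f'(b))\in\val_{\cO_*^+}(\bE)\Rightarrow\val_{\cO_*^+}(f(b)-\bE)\subseteq\val_{\cO_*^+}(\bE)$, I would reduce to showing $f^\dagger(b)\in\cO_*^+$ whenever $\val(f'(b))\in\val(\bE)$, which via the product rule is essentially (2) applied to $f$ and an upper bound lemma of the kind \cite[Lem.~2.24]{freni2024t}.

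The main obstacle I anticipate is precisely (5), because unlike (1)--(4) it is not an ``infinitesimal'' mean-value computation but rather a structural statement about the shape of the cut $\tp(f(b)/\bE)$; ruling out the weakly immediate and weak $\cO$-limit alternatives is exactly where the extra convexity/exponential hypothesis on $\cO$ enters, and without it one could manufacture $f$ that maps $b$ into a genuinely pseudolimit-like configuration, breaking the equivalence.
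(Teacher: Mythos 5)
Your proposal is correct and follows essentially the same route as the paper: the same reduction to $\cO<b<\bE^{>\cO}$ via $f'(1/b)=-b^2\,\partial_b f(1/b)$ and a sign change, the same mean-value-theorem argument with distanced cofinal/coinitial sequences for (1)--(3), and the same structural step for (5) (invoking \cite[Thm.~A]{freni2024t} to rule out the $\cO$-wim and weak $\cO$-limit alternatives before comparing $f(b)-c$ with $f'(b)$ via (3)/(4)). The only cosmetic difference is that you compress the two applications of (3)/(4) in (5) into ``symmetric reasoning,'' whereas the paper spells out the two contrapositives separately.
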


\begin{remark}
    If $\cO$ is $T$-convex, $y \in \bE \langle b\rangle\setminus \bE$ is weakly absorbed, if and only if there are $c,d \in \bE$ such that 
    \[(y-c)d \in (\cO_*^+ \setminus \cO_*^-) \cup (\co_*^- \setminus \co_*^+).\]
\end{remark}

\section{Valued differential fields of germs}\label{sec:fields_of_germs}

In this section we abstract the main properties of the valued (ordered) differential fields of germs deduced in the previous section, and develop their basic theory. In this sense, methodologically, we draw inspiration from \cite{aschenbrenner2019asymptotic}. In fact, several notions introduced here can be thought as relaxations of notions studied in \cite{aschenbrenner2019asymptotic} to the case where the constants of the derivation are not included in the valuation ring, as we will show in Subsection~\ref{ssec:few-constants}.

The main result of the section is Corollary~\ref{cor:main-cor}, which will be used in Section~\ref{sec:transserial}.

\subsection{Some terminology on valued differential fields}
In this subsection we define and recall some basic fundamental notions in valued differential fields.

\begin{definition}
    If $(\bE_*, \cO_*)$ is a field and $\bE\le \bE_*$ is a subfield, we will define the \emph{lower $\cO_*$-breadth} over $\bE$ of an element $x \in \bE_*$ as
    \[\br_{\cO_*}(x/\bE)\coloneq\bigcap_{c \in \bE}(x-c)\co_*=\{y\in \bE_*: y \prec x-\bE\}.\]
    We will say that $y$ is \emph{$\cO_*$-absorbed} over $\bE$ if $\val(y-\bE)\subseteq  \val(\bE)$. We will also define the \emph{$\cO_*$-breadth} of a derivative $\partial$ on $\bE_*$ as
    \[\br_{\cO_*}(\partial)\coloneqq (\co_*:_{\bE_*} \partial \cO_*) = \{y \in \bE_*: y \partial\cO_*\prec 1\}.\]
\end{definition}

We often make use of a very basic trichotomy on derivations in a valued field.

\begin{remark}\label{rmk:basic_inclusion}
    Note that if $(\bE_*, \cO_*)$ is a valued field and $\partial: \bE_*\to \bE_*$ is a derivation, then 
    \[\co_*\partial \co_* \subseteq \co_*\partial \cO_*\subseteq \cO_*\partial \co_*\subseteq \cO_* \partial \cO_*,\]
    in fact if $\varepsilon \prec 1$, $r \preceq 1$, then $\varepsilon \partial r=\partial(r\varepsilon)-r\partial\varepsilon \in \partial \co_* + \cO_*\partial \co_*\subseteq \cO_*\partial\co_*$. So $\cO_*\partial \co_*$ can only be $\cO_* \partial \cO_*$ or $\co_*\partial\cO_*$.
\end{remark}    

\begin{lemma}\label{lem:inclusion_cases}
    If $(\bE_*, \cO_*)$ is a valued field and $\partial: \bE_*\to \bE_*$ is a derivation, then one and only one of the following holds:
    \begin{enumerate}
        \item[(W)]\label{der-type:W} $\cO_*\partial \cO_*$ is not principal and $\co_* \partial \co_* = \co_* \partial \cO_*= \cO_*\partial\co_*=\cO_*\partial\cO_*$;
        \item[(G)]\label{der-type:G} $\co_*\partial\co_*=\co_*\partial \cO_*\subsetneq \cO_*\partial \co_*=\cO_*\partial \cO_*$;
        \item[(R)]\label{der-type:R} $\co_*\partial\co_*=\co_*\partial \cO_*= \cO_*\partial \co_*\subsetneq\cO_*\partial \cO_*$.
    \end{enumerate}
    \begin{proof}
        If $\cO_*\partial \cO_*$ is not principal, then $\co_*\partial\cO_*=\cO_*\partial\cO_*$ so by Remark~\ref{rmk:basic_inclusion} we have the last three equalities in (W), but then $\cO_*\partial \co_*$ is also not principal and thus $\co_*\partial\co_*=\cO_*\partial \co_*$ and we have the first one.

        Now suppose that $\cO_*\partial \cO_*= \cO_* \partial r$ for some $r \in \cO_*$. Then we must have $\co_*\partial \cO_*=\co_*\partial r \subsetneq \cO_*\partial r = \cO_*\partial \cO_*$.
        
        If there is $\varepsilon \in \co_*$, such that $\cO_* \partial \cO_* = \cO_* \partial \varepsilon$, then $\cO_*\partial \varepsilon = \cO_* \partial \co_* = \cO_*\partial \cO_*$ and on the other hand $\co_*\partial \co_*=\co_*\partial \varepsilon=\co_*\partial \cO_*$ and we have (G).

        If instead for all $\varepsilon \in \co_*$ $\cO_*\partial \varepsilon \subsetneq \cO_* \partial \cO_*=\cO_* \partial r$, then $\cO_*\partial \co_* \subsetneq \cO_* \partial r=\cO_*\partial \cO_*$, and thus it must be $\co_*\partial r=\co_*\partial \cO_*=\cO_* \partial \co_*$, but then $\co_*\partial \co_*=\co_* \cdot \cO_*\partial \co_*=\co_* \cdot \cO_*\partial r=\co_*\partial \cO_*$ and we have (R).
    \end{proof}
\end{lemma}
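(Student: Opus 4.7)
The plan is to exploit the chain $\co_*\partial\co_*\subseteq\co_*\partial\cO_*\subseteq\cO_*\partial\co_*\subseteq\cO_*\partial\cO_*$ from Remark~\ref{rmk:basic_inclusion} together with the elementary fact that a nonzero $\cO_*$-submodule $M\subseteq\bE_*$ is principal if and only if $\co_* M\subsetneq M$ (equivalently, $M$ is non-principal iff $\co_* M=M$; this is a standard consequence of $\cO_*$ being a valuation ring). Writing $A,B,C,D$ for the four successive modules in the chain, I would case-split on whether the top module $D=\cO_*\partial\cO_*$ is principal, and in the principal case further split on whether a generator of $D$ can be chosen inside $\co_*$.

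First, if $D$ is not principal, then $\co_* D=D$; but $\co_* D=\co_*\cdot\cO_*\partial\cO_*=\co_*\partial\cO_*=B$ (absorbing $\cO_*$ into $\co_*$), so $B=D$, and the chain forces $B=C=D$. Then $C=D$ is itself non-principal, so the same principle applied to $C$ gives $\co_* C=C$, i.e., $A=\co_*\partial\co_*=\cO_*\partial\co_*=C$. All four coincide, establishing~(W).

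Otherwise $D=\cO_*\partial r$ is principal for some $r\in\cO_*$, and then $B=\co_* D=\co_*\partial r\subsetneq\cO_*\partial r=D$, so already $B\subsetneq D$. I would then split on whether $D$ admits a generator in $\co_*$. If $D=\cO_*\partial\varepsilon$ for some $\varepsilon\in\co_*$, then $C\supseteq\cO_*\partial\varepsilon=D$ forces $C=D$, and $B=\co_* D=\co_*\partial\varepsilon\subseteq\co_*\partial\co_*=A\subseteq B$ forces $A=B$, yielding~(G). If instead no generator of $D$ lies in $\co_*$, then every $\cO_*\partial\varepsilon$ with $\varepsilon\in\co_*$ is a proper $\cO_*$-submodule of the principal module $D$ and hence contained in $\co_* D=B$; summing over $\varepsilon$ gives $C\subseteq B$, so $B=C\subsetneq D$. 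For the remaining identity $A=B$ I would observe $A=\co_*\cdot\cO_*\partial\co_*=\co_*\cdot C=\co_*\cdot\co_*\partial r$ and identify this with $\co_*\partial r=B$.

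The one delicate point, which I expect to be the main obstacle, is this last identification in case~(R): under the isomorphism $\co_*\partial r\cong\co_*$ it amounts to $\co_*^2=\co_*$ as ideals of $\cO_*$, which is automatic whenever the value group has no minimum positive element, and in particular for the valued ordered fields motivating the lemma (whose value groups are divisible). Mutual exclusivity of the three cases is then immediate, since (W) requires $D$ non-principal while (G) and (R) both require $D$ principal, and the latter two are separated by whether some $\partial\varepsilon$ with $\varepsilon\in\co_*$ generates $D$.
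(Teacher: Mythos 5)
Your proof takes essentially the same route as the paper's: the same case split on principality of $\cO_*\partial\cO_*$, the same subsplit within the principal case on whether a generator can be chosen in $\co_*$, and the same module manipulations along the chain from Remark~\ref{rmk:basic_inclusion}. You have, however, correctly isolated the one step that does not go through in general, and I want to push harder on it than your wording does. The identification $\co_*\partial\co_*=\co_*\partial\cO_*$ at the end of case~(R), which after substituting $\cO_*\partial\co_*=\co_*\partial r$ amounts to $\co_*^2\partial r=\co_*\partial r$, i.e.\ to $\co_*^2=\co_*$, is not merely a delicate point: without that hypothesis the lemma is false as stated. Take $\bE_*=\bQ(x,y)$ with $\cO_*$ the valuation ring of the $y$-adic valuation on $\bQ(x,y)$ (so $\co_*=y\cO_*$ and $\co_*^2=y^2\cO_*\subsetneq\co_*$), and $\partial=\partial/\partial x$ the derivation with $\partial x=1$, $\partial y=0$. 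One checks directly that $\partial\cO_*\subseteq\cO_*$ and $\partial\co_*\subseteq\co_*$, while $\partial x=1$ and $\partial(xy)=y$, which yields
\[
\co_*\partial\co_*=y^2\cO_*\;\subsetneq\;\co_*\partial\cO_*=\cO_*\partial\co_*=y\cO_*\;\subsetneq\;\cO_*\partial\cO_*=\cO_*,
\]
a configuration that is none of (W), (G), or (R).

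The paper's own proof has the same unflagged gap: the displayed chain ``$\co_*\partial\co_*=\co_*\cdot\cO_*\partial\co_*=\co_*\cdot\cO_*\partial r=\co_*\partial\cO_*$'' either silently uses $\cO_*\partial\co_*=\cO_*\partial r$ (which has just been shown to fail strictly in that subcase) or, reading it charitably, $\co_*^2=\co_*$. The clean fix is to carry as a standing hypothesis on the lemma that $\co_*^n=\co_*$ for some $n>1$ --- the same condition the paper already invokes in the remark immediately afterward when ruling out type~(G) over an RCVF. That condition (equivalently, the value group has no least positive element) holds for every convex valuation ring of a model of $T$, so nothing downstream in the paper is affected; but the lemma in the generality in which it is stated does require it. The rest of your argument, including the principality criterion $\co_* M\subsetneq M$ for nonzero $\cO_*$-submodules and the mutual exclusivity of the three cases, is correct.
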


\begin{definition}
    We say that a derivation $\partial$ on a valued field  $(\bE_*, \cO_*)$ is of \emph{type (W)} (resp.\ \emph{type (G)}, \emph{type (R)}) if it satisfies the corresponding condition of Lemma~\ref{lem:inclusion_cases}.
\end{definition}

\begin{remark}
    We won't be concerned with type (G) because if for some $n\in \bN^{>1}$, $\co_*^n=\co_*$ (so in particular if $(\bE_*, \cO_*)$ is an RCVF), then there are no derivations $\partial$ of type (G) on $\bE_*$. In fact given any $ \varepsilon\prec 1$, choosing $\rho \prec 1$ such that $\rho^n=\varepsilon$ we have $\partial \varepsilon =\partial (\rho^n) = n \rho^{n-1} \partial \rho \prec \partial \rho$.
\end{remark}

\begin{remark}
    Note that the type of $\partial$ is always the same as the type of $a\partial$ for any $a \in \bE_*$.
\end{remark}

\begin{remark}\label{rmk:type-R}
    Derivations of type (R) are those of \cite[Lem.~4.4.8]{aschenbrenner2019asymptotic}.
    In fact, if $\partial$ has type (R) w.r.t.\ $\cO_*$, with $\cO_*\partial\cO_*=\cO_* \partial r$, then $\partial_r\coloneqq\partial/\partial r$ is such that $\partial \cO_*\subseteq \cO_*$ and $\partial \co_*\subseteq \co_*$, so $\res_{\cO_*}(\partial_r):\res_{\cO_*}(\bE_*) \to \res_{\cO_*}(\bE_*)$ is a derivation and it is non-trivial because $\partial_r(r)=1$.
\end{remark}

\begin{remark}\label{rmk:basic_der-lder-O-o_equalities}
    Note that in any valued differential field $(\bE_*, \cO_*, \partial)$ we have 
    \[\cO_* \cdot \lder(\cO_*\setminus \co_*)=\cO_* \cdot \partial \cO_* \quad \text{and} \quad \cO_*\cdot \lder(1+\co_*)=\cO_*\cdot \partial\co_*.\]
\end{remark}

We will make frequent use of the following basic fact.

\begin{lemma}\label{lem:module-derivatives}
    Let $\partial$ be a derivation on $\bE_*$ and $\cO_*$ is a valuation subring. Then
    \begin{enumerate}
        \item $\cO_*\partial (a\cO_*)=a\cO_*\partial\cO_* + (\partial a) \cdot \cO_*$;
        \item for all $\cO_*$-modules, $M \partial\cO_* \subseteq \cO_*\partial M$;
        \item if $\cO_*\partial (aM)\subsetneq \partial a \cdot M + a\cO_* \partial M$, then $\val(a) \notin \val(\bE)$ and there is $m_0 \in M$, such that for all $m\in M^{\succeq m_0}$, $\lder(m) \sim -\lder(a)$ and $\val(M^{\succeq m_0})\cap \val(\bE)=\emptyset$;
        \item if for some $n>1$, $\val(\bE^{\prec 1}) \cup n\val(\co_*)$ is coinitial in $\val(\co_*)$, then $\partial (a\co_*)=\co_* \partial a +  a\cO_*\partial \co_*$ for all $a \in \bE_*$.
    \end{enumerate}
    \begin{proof}
        (1) Note that by the Leibniz identity and since $1 \in \cO_*$ we have
        \[(\partial a)\cdot \cO_* \subseteq \cO_*\partial (a\cO_*)\subseteq a\cO_*\partial\cO_* + (\partial a) \cdot \cO_*.\]
        Suppose toward contradiction that there is $s \preceq 1$ such that $a\partial s \succ \partial (ar)$ for all $r \preceq 1$, that is $\partial s/r\succ \lder a + \lder r$. Note that then, setting $r=1$ we get $\partial s \succ \lder a$, so for all $r\preceq 1$, we should have $\partial s/r \succ \lder r$, that is $\partial s\succ \partial r$, but taking $s=r$ we get a contradiction.
        
        (2) Note that by (1), we have $m \partial \cO_* \subseteq \cO_* \partial (m\cO_*)$, taking unions over $m \in M$ on both sides yields the inclusion.
        
        (3) Clearly it must be $\val(a)\notin \val (\bE)$, for otherwise the equality would hold. Also $M$ cannot be generated by constants: in fact if $M=S\cO_*$ for some $S \subseteq \bE$, then 
        \[\cO_* \partial (aM)=\cO_* \partial (Sa\cO_*)=\cO_* S \partial (a\cO_*)=M \cdot (a\cO_*\partial \cO_* + (\partial a)\cO_*),\]
        which is easily seen to contain $(\partial a) \cdot M$ and thus also $\cO_* \partial M$ and we would again have equality. Thus there must be $m_1$ such that $\val(M^{\succeq m_1})\cap \val(\bE)=\emptyset$.
        For the rest note that if there is $m_0 \in M$, such that $m_0 \partial a \succ \partial (am)$ for all $m \in M$ or $a\partial m_0 \succ \partial (am)$ for all $m \in M$, then in both cases from taking $m=m_0$ we see that it must be $\lder(a) \sim \lder(m_0)$. But then, again in both cases, we have $(m_0/m)\lder(a) \succ \lder(a)+ \lder(m)$ for all $m\succeq m_0$ and thus a fortiori $-\lder a \sim \lder m$.
        
        (4) is a consequence of (3), as the hypothesis that entails that either $\co_*$ is generated by constants or $\val(\co_*^n)$ is coinitial in $\val(\co_*)$, but then there cannot be $\varepsilon_0 \in \co_*$ such that $\lder(\varepsilon)\sim \lder(\varepsilon_0)$ for all $\varepsilon \in \co_*$ with $\varepsilon \succeq \varepsilon_0$: indeed given any $\varepsilon_0$ if $1\succ \delta^n \succeq \varepsilon$, then $\delta\succ \varepsilon_0$ but $\lder(\delta) \not \sim \lder(\delta^n)=n\lder(\delta)$.
    \end{proof}
\end{lemma}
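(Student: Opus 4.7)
The plan is to handle the four parts in order, with essentially all the content in (3). Part (1) is a direct Leibniz computation: $\partial(ar)=r\partial a+a\partial r$ places each generator of $\cO_*\partial(a\cO_*)$ inside $\partial a\cdot\cO_*+a\cO_*\partial\cO_*$, and the reverse inclusion follows from $a\partial r=\partial(ar)-r\partial a$ together with $\partial a\in\cO_*\partial(a\cO_*)$ (take $r=1$). Part (2) is the pointwise version of (1): for each $m\in M$, $m\cO_*\partial\cO_*\subseteq\cO_*\partial(m\cO_*)\subseteq\cO_*\partial M$, and unioning over $m\in M$ gives $M\partial\cO_*\subseteq\cO_*\partial M$. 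For (3), the inclusion $\cO_*\partial(aM)\subseteq\partial a\cdot M+a\cO_*\partial M$ is automatic, since every $\cO_*$-linear combination of the $\partial(am_i)$ has the shape $(\partial a)m'+as'$ with $m'\in M$ and $s'\in\cO_*\partial M$.

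Assume strict inclusion in (3). First I dispose of two degenerate regimes by showing equality holds in them. If $\val(a)\in\val(\bE)$, write $a=cu$ with $c\in\bE$ and $u\in\cO_*^\times$, so that $aM=cM$; then $\cO_*\partial(aM)=c\cO_*\partial M$, and using $\partial u\in\cO_*\partial\cO_*$ (from (1) applied to the unit $u$) together with (2), the right-hand side $\partial a\cdot M+a\cO_*\partial M=c(\partial u)M+c\cO_*\partial M$ also collapses to $c\cO_*\partial M$. Similarly, if $M=S\cO_*$ with $S\subseteq\bE$, both sides equal $S\bigl((\partial a)\cO_*+a\cO_*\partial\cO_*\bigr)$. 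Hence $\val(a)\notin\val(\bE)$ and $M$ is not constant-generated, yielding $m_1\in M$ past which $\val(m)\notin\val(\bE)$. Next I extract the cancellation: pick a witness $m_0\in M$ with $(\partial a)m_0\notin\cO_*\partial(aM)$; since $\partial(am_0)=(\partial a)m_0+a\partial m_0$ \emph{is} in $\cO_*\partial(aM)$, we must have $|\partial(am_0)|<|(\partial a)m_0|$, forcing $|(\partial a)m_0|=|a\partial m_0|$ with sign cancellation, i.e.\ $\lder(m_0)\sim-\lder(a)$. For any $m\succeq m_0$, if $(\partial a)m\in\cO_*\partial(aM)$ then $\cO_*$-scaling by $m_0/m$ would put $(\partial a)m_0$ there too, a contradiction; so the cancellation propagates, giving $\lder(m)\sim-\lder(a)$ for all $m\succeq m_0$. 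Replacing $m_0$ by an element larger than both the original $m_0$ and $m_1$ secures $\val(M^{\succeq m_0})\cap\val(\bE)=\emptyset$.

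For (4), apply (3) with $M=\co_*$. If the equality fails, (3) provides $\varepsilon_0\in\co_*$ with $\lder(\varepsilon)\sim-\lder(a)$ for every $\varepsilon\in\co_*^{\succeq\varepsilon_0}$ and $\val(\co_*^{\succeq\varepsilon_0})\cap\val(\bE)=\emptyset$. The coinitiality hypothesis applied to $v(\varepsilon_0)$ produces either (i) $c\in\bE^{\prec 1}$ with $c\succeq\varepsilon_0$, contradicting $\val(\co_*^{\succeq\varepsilon_0})\cap\val(\bE)=\emptyset$, or (ii) $\delta\in\co_*$ with $\delta^n\succeq\varepsilon_0$; since then $\delta\succeq\delta^n\succeq\varepsilon_0$, both $\lder(\delta)\sim-\lder(a)$ and $\lder(\delta^n)=n\lder(\delta)\sim-\lder(a)$, forcing $(n-1)\lder(\delta)\prec\lder(\delta)$, which is absurd because $n-1$ is a positive integer and hence a unit in $\cO_*$. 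The main obstacle is (3), specifically showing that strict inclusion forces a systematic Leibniz cancellation and then propagating it from one witness to all larger elements via $\cO_*$-scaling; with (3) in hand, (4) reduces to a short value-group argument via the coinitial $n$-th-power/constant dichotomy.
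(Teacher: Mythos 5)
Your proof is correct and follows essentially the same strategy as the paper's: the easy Leibniz inclusions in (1)--(2), the cancellation $a\partial m_0\sim -m_0\partial a$ extracted from the strict inclusion in (3) and then propagated to all $m\succeq m_0$ by $\cO_*$-scaling and convexity of $\cO_*\partial(aM)$, and the reduction of (4) to (3) via the constant/$n$-th-power dichotomy supplied by the coinitiality hypothesis. A few presentational differences worth noting. In (1) you prove $a\cO_*\partial\cO_*\subseteq\cO_*\partial(a\cO_*)$ directly by subtracting, $a\partial r=\partial(ar)-r\partial a$, using that $\cO_*\partial(a\cO_*)$ is a $\cO_*$-module; the paper instead argues by contradiction with a $\succ$-computation, but both rest on the same convexity of $\cO_*\partial(a\cO_*)$ and either is fine. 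In (3) you immediately ``pick a witness $m_0$ with $(\partial a)m_0\notin\cO_*\partial(aM)$''; strictly, the strict inclusion only yields a witness of one of the two forms $(\partial a)m_0\notin\cO_*\partial(aM)$ or $a\partial m_0\notin\cO_*\partial(aM)$ (since the two summands are comparable modules), and the second form reduces to the first precisely because the Leibniz cancellation forces $(\partial a)m_0\asymp a\partial m_0$ --- a one-line observation you should make explicit, as the paper does by treating both cases. In (4) you apply the coinitiality hypothesis locally at $v(\varepsilon_0)$ rather than first extracting the paper's global dichotomy (either $\co_*$ is constant-generated or $\val(\co_*^n)$ is coinitial); this is a harmless variant. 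Finally, both you and the paper implicitly read the left-hand side of (4) as $\cO_*\partial(a\co_*)$ (the form actually used later in Lemmas~\ref{lem:approximate_to_exact_log_scale} and \ref{lem:weak_der_replacement}), which is necessary for the appeal to (3) to go through.
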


\begin{lemma}\label{lem:unbounded_integrals_case_v}
    If $\partial$ is a non-trivial derivation on $(\bE_*, \cO_*)$, then $\val(\partial \bE_*)$ is coinitial in $\val(\bE_*)$.
    \begin{proof}
        Let $\bE\coloneqq \Kr(\partial)$ and $\partial_t\coloneqq \partial/\partial t$ for some non-constant $t$. Note that $\val(\bE) \subseteq \val(\partial_t\bE_*)=\val(\partial \bE_*)-\val(\partial t)$ whence the statement holds as soon as $\val(\bE)$ is unbounded. Suppose $\val(\bE)$ is bounded and choose $t \succ \bE$. We are going to show that for all $x \succ \bE$ there is $y$ such that $\partial_t y \succ x$. 
        Either $\partial_t x^n \succ x^n\succeq x$ for $n \in \{1,2\}$ and we are done or $\lder_t(x^n)\succeq 1$ for $n\in \{1,2\}$. But then
        $\partial_t(x^n t) = x^n(1+t\lder_t(x^n))$, so since for at least one $n \in\{1,2\}$ we must have $-1 \not\sim t \lder_t(x^n)$ we have $\partial_t(x^n t) \succ x^n\succeq x$.
    \end{proof}
\end{lemma}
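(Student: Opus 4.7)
The plan is to first reduce to a derivation sending some distinguished element to $1$, then split on whether the constant field is bounded. Pick any non-constant $t \in \bE_*$ (which exists because $\partial$ is non-trivial) and set $\partial_t := \partial/\partial t$. Since $\partial\bE_* = (\partial t)\cdot \partial_t\bE_*$, we have $\val(\partial\bE_*) = \val(\partial t) + \val(\partial_t\bE_*)$, and coinitiality in $\val(\bE_*)$ is invariant under such a translation. So it suffices to establish coinitiality of $\val(\partial_t\bE_*)$ in $\val(\bE_*)$.

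Let $\bE := \Kr(\partial)$. If $\val(\bE)$ is already coinitial in $\val(\bE_*)$, the result is immediate: for any $c \in \bE$ one has $\partial_t(ct) = c$, so $\bE \subseteq \partial_t\bE_*$, and the inclusion $\val(\bE) \subseteq \val(\partial_t\bE_*)$ propagates coinitiality. The substantive case is when $\val(\bE)$ is bounded below, i.e.\ $\bE$ is bounded above in magnitude; then I would pick the auxiliary $t$ at the outset so that $t \succ \bE$. Fixing an arbitrary $x \succ \bE$ (in particular $x \succ 1$), the goal becomes to exhibit $y \in \bE_*$ with $\partial_t y \succeq x$.

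The core calculation is $\partial_t(x^n t) = x^n\bigl(1 + nt\,\lder_t(x)\bigr)$ for $n \in \{1, 2\}$. I would dichotomise on whether $\lder_t(x^n) \succ 1$ for some $n \in \{1, 2\}$. If yes, then $\partial_t(x^n) \succ x^n \succeq x$ and $y = x^n$ works. Otherwise $\lder_t(x) \preceq 1$, and the only obstruction to $\partial_t(x^n t) \succeq x^n$ is the cancellation $1 + nt\,\lder_t(x) \sim 0$, i.e.\ $nt\,\lder_t(x) \sim -1$. The slick point is that $\lder_t(x^n) = n\,\lder_t(x)$ is linear in $n$, so this cancellation can hold for at most one $n \in \{1, 2\}$; picking the other $n$ gives $\partial_t(x^n t) \succeq x^n \succeq x$.

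The main obstacle I expect is precisely this cancellation $nt\,\lder_t(x) \sim -1$: a naive attempt with $y = xt$ alone fails there, and one needs the trick of testing both $n = 1$ and $n = 2$ and exploiting linearity of the logarithmic derivative on powers to escape it. Once that idea is in hand, the remainder is routine asymptotic bookkeeping.
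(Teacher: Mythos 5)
Your proof is correct and follows essentially the same route as the paper's: reduce to $\partial_t$ via the translation $\val(\partial\bE_*)=\val(\partial t)+\val(\partial_t\bE_*)$, split on whether $\val(\bE)$ is coinitial, and in the bounded case use $\partial_t(x^n t)=x^n(1+nt\,\lder_t(x))$ for $n\in\{1,2\}$, noting that linearity in $n$ lets you dodge the cancellation $nt\,\lder_t(x)\sim -1$. Your rendering is in fact slightly more careful than the source (which writes $\lder_t(x^n)\succeq 1$ where $\preceq 1$ is meant, and $\succ$ where $\succeq$ is what the computation actually yields), but the idea, the decomposition, and the key trick are identical.
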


\subsection{Convexity and convex derivations}

This section is devoted to abstracting some properties common to any valued differential field of the form $(\bE_*, \cO_*, \partial)$ with $\bE_*= \bE \langle x \rangle\succ \bE \models T$, $\cO_*$ some convex subring of $\bE_*$ and $\partial=\partial_x$ the only $\bE$-linear $T$-derivation with $\partial_x x = 1$. Here we are not concerned with the hypothesis $\val(x-\bE)\subseteq \val(\bE)$.

\begin{definition}
    For a preorder $(Y, \preceq)$, and a subset $T\subseteq Y$ we will write $\CH_{Y}^{\preceq}(T)\coloneqq \{y \in Y: \exists t_0,\; t_1 \in T, t_0 \preceq t\preceq t_1\}$ for the $\preceq$-convex hull of $T$ in $Y$.
    
    Let $(Y, \preceq)$ be a preorder, and let $f: X \to Y$ be a function.
    We say that a subset $S\subseteq X$ is \emph{$f$-convex} (or that $f$ is \emph{$S$-convex}) if $f(S)$ is $\preceq_Y$-convex in $f(X)$, i.e.\ if for all $x \in X$, $f(x)\notin f(S)\Rightarrow f(x) \notin \CH_{Y}^{\preceq}(f(S))$.
\end{definition}

\begin{lemma}
    Let $(Y, \preceq)$ be a preorder, $f: X \to Y$ be a function, and $S\subseteq X$. Then $S$ is $f$-convex if and only if there is a $\preceq$-convex subset $T\subseteq Y$ such that $\forall x \in X,\; f(x) \in f(S) \leftrightarrow f(x) \in T$ or equivalently $f(S)= T \cap f(X)$.
    \begin{proof}
        If $f$ is convex, then $T=\CH_{Y}^{\preceq}(f(S))$ is a set as in the statement. Conversely given such a $T$, we deduce $f(x) \notin f(S) \Leftrightarrow f(x) \notin T \cap f(X)\Rightarrow f(x) \notin T\supseteq \CH_{Y}^{\preceq}(f(S))$. 
    \end{proof}
\end{lemma}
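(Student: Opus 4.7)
The statement is a routine characterization of $f$-convexity in terms of a convex subset of the codomain, so I expect the proof to be essentially a direct unpacking of definitions with the convex hull $\CH_Y^{\preceq}(f(S))$ playing the role of the witnessing set $T$ in the forward direction.

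For the forward implication, assuming $S$ is $f$-convex, my plan is to set $T := \CH_Y^{\preceq}(f(S))$, which is $\preceq$-convex by construction. The inclusion $f(S) \subseteq T \cap f(X)$ is trivial. For the reverse inclusion, I take $f(x) \in T \cap f(X)$; since $f(x) \in f(X) \cap \CH_Y^{\preceq}(f(S))$, the contrapositive of the defining condition of $f$-convexity (``$f(x) \notin f(S) \Rightarrow f(x) \notin \CH_Y^{\preceq}(f(S))$'') forces $f(x) \in f(S)$. This gives $f(S) = T \cap f(X)$.

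For the converse, given a $\preceq$-convex $T \subseteq Y$ with $f(S) = T \cap f(X)$, I want to check the defining condition of $f$-convexity. If $x \in X$ satisfies $f(x) \notin f(S) = T \cap f(X)$, then since $f(x) \in f(X)$ we conclude $f(x) \notin T$. Now $T$ is $\preceq$-convex and contains $f(S)$, so it contains the $\preceq$-convex hull $\CH_Y^{\preceq}(f(S))$ (by minimality of the convex hull among convex supersets). Hence $f(x) \notin \CH_Y^{\preceq}(f(S))$, as required.

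The equivalence of the two phrasings ``$\forall x\in X,\; f(x)\in f(S) \leftrightarrow f(x)\in T$'' and ``$f(S) = T \cap f(X)$'' is immediate: the biconditional just says that for every $y \in f(X)$, $y \in f(S) \leftrightarrow y \in T$, i.e.\ $f(S) = T \cap f(X)$. There is no real obstacle here; the only subtlety to mention is that the argument uses that $\CH_Y^{\preceq}$ produces the smallest $\preceq$-convex subset of $Y$ containing the given set, which follows directly from the definition of $\CH_Y^{\preceq}$ given in the paragraph above the lemma.
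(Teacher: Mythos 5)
Your proof is correct and follows exactly the same strategy as the paper's: in the forward direction you take $T = \CH_Y^{\preceq}(f(S))$, and in the converse you use $T \supseteq \CH_Y^{\preceq}(f(S))$ (minimality of the convex hull among convex supersets). You merely spell out the steps slightly more explicitly than the paper does.
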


\begin{lemma}\label{lem:f-convex_directed_unions}
    Let $(Y, \preceq)$ be a preorder, $f: X \to Y$ be a function. Any directed union of $f$-convex sets is $f$-convex.
    \begin{proof}
        Let $\cS$ be a directed family of $f$-convex subset of $Y$. Set $\cT\coloneqq \{\CH_Y^{\preceq}(f(S)): S \in \cS\}$ and note that $\cT$ is directed because images and convex hulls both preserve inclusions. Also note that a directed union of $\preceq$-convex sets is $\preceq$-convex, thus $T \coloneqq \bigcup \cT$ is $\preceq$-convex. Now observe that $T \cap f(X)=f(\bigcup \cS)$, in fact $f(S) \subseteq f(X) \cap T$ for all $S \in \cS$ and furthermore if $f(x) \in T$, then it must be $f(x) \in \CH_Y^{\preceq}(f(S))$ for some $S \in \cS$, whence $f(x)\in f(S)$ and $f(x) \in f(\bigcup \cS)$.
    \end{proof}
\end{lemma}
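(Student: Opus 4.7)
The plan is to use the characterization of $f$-convexity from the preceding lemma: a set $S \subseteq X$ is $f$-convex if and only if $f(S) = T_S \cap f(X)$ for some $\preceq$-convex $T_S \subseteq Y$, and a canonical choice is $T_S = \CH_Y^{\preceq}(f(S))$. Given a directed family $\cS$ of $f$-convex subsets of $X$, I would define $T \coloneqq \bigcup_{S \in \cS} \CH_Y^{\preceq}(f(S))$ and aim to show (i) $T$ is $\preceq$-convex in $Y$, and (ii) $T \cap f(X) = f\bigl(\bigcup \cS\bigr)$; together these witness $f$-convexity of $\bigcup \cS$.

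For (i), the key point is that both $f(\cdot)$ and $\CH_Y^{\preceq}(\cdot)$ are monotone with respect to inclusion, so the family $\cT \coloneqq \{\CH_Y^{\preceq}(f(S)) : S \in \cS\}$ is directed. Then if $y_1 \preceq y \preceq y_2$ with $y_i \in \CH_Y^{\preceq}(f(S_i))$ for $i=1,2$, directedness of $\cS$ yields some $S \in \cS$ with $S_1, S_2 \subseteq S$; monotonicity gives $y_1, y_2 \in \CH_Y^{\preceq}(f(S))$, and since this set is itself $\preceq$-convex, we conclude $y \in \CH_Y^{\preceq}(f(S)) \subseteq T$.

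For (ii), the inclusion $f(\bigcup\cS) \subseteq T \cap f(X)$ is immediate since $f(S) \subseteq \CH_Y^{\preceq}(f(S)) \subseteq T$ for every $S \in \cS$. Conversely, if $f(x) \in T \cap f(X)$, then $f(x) \in \CH_Y^{\preceq}(f(S))$ for some $S \in \cS$; but $S$ is $f$-convex, so $\CH_Y^{\preceq}(f(S)) \cap f(X) = f(S)$, hence $f(x) \in f(S) \subseteq f(\bigcup \cS)$.

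The proof is essentially formal and I do not expect any genuine obstacle; the only point requiring mild attention is verifying that the directedness of $\cS$ transfers to the family $\cT$, which boils down to the monotonicity of $f$-image and convex hull under inclusion.
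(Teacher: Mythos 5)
Your proof is correct and follows essentially the same route as the paper: define $T=\bigcup_{S\in\cS}\CH_Y^{\preceq}(f(S))$, observe that monotonicity of $f$-image and convex hull makes the family directed so $T$ is $\preceq$-convex, and then check $T\cap f(X)=f(\bigcup\cS)$ using $f$-convexity of each $S$ for the nontrivial inclusion. The paper merely cites that a directed union of $\preceq$-convex sets is $\preceq$-convex where you spell it out; otherwise the arguments coincide.
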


\begin{proposition}\label{prop:der_order-convexity}
    Let $T$ be an exponential o-minimal theory, $\bE \prec \bE\langle x\rangle\models T$ a 1-$\dcl_T$-dimensional extension of models of $T$, and $\partial_x$ be the only $\bE$-linear $T$-derivation with $\partial_x x=1$, then for all $y \in \bE\langle x\rangle^{\neq 0}$, $[-y, y]$ is $\partial_x$-convex and $[1/y, y]$ is $\dagger_x$-convex.
    \begin{proof}
        We have to show that if $f, h$ are continuously differentiable $\bE$-definable unary functions, with $|f'(x)|\le |h'(x)|$ and $|h(x)|\le y$, then $f(x) \in \bE + [-y,y]$.
        Note that we can in fact assume that $0<|h(x)|\le y$ and that $h'(x) \neq 0$, otherwise $f'(x)=0$ and the thesis is trivial. Similarly we can assume $|f'(x)|<|h'(x)|$, because otherwise $f'=h'$ in a $\bE$-definable neighborhhod of $x$ and $f(x)-h(x)\in \bE$ and again the thesis is trivial.

        Then in a $\bE$-definable neighborhood $(x_-, x_+)$ of $x$, $|f'|\le |h'|$ and $|h|$ is strictly monotone and non-zero (note that $x_-, x_+ \in \bE \cup \{\pm\infty\}$).
        Note that either on $(x_-,x)$ or on $(x, x_+)$ we must have $|h|\le y$. So up to replacing $x$ with $-x$ and precomposing both $f$ and $h$ with a sign-change, we can without loss of generality that $|h|\le y$ on $(x_-, x)$. If $(x_-, x) \cap \bE \neq \emptyset$, then up to replacing $x_-$ by some element in $(x_-, x) \cap \bE \neq \emptyset$ we can assume that furthermore $f$ and $h$ are both continuous on $[x_-,x]$. Similarly, if $(x_-, x) \cap \bE=\emptyset$ and $\lim_{t\to x_-} f(x)$ and $\lim_{t \to x_-} h(x)$ exist in $\bE$, then up changing the value of $f$ and $h$ only ar $x_-$, we can still assume that both $f$ and $h$ are continuous on $[x_-,x]$. 
        But then by the (Cauchy) mean value theorem, we have
        \[\frac{|f(x)-f(x_{-})|}{|h(x)-h(x_{-})|}=\frac{|f'(\xi)|}{|h'(\xi)|}\le 1.\]
        So since $h$ doesn't change sign on $(x_-,x)$, we deduce
        \[|f(x)-f(x_-)|\le |h(x)-h(x_-)|\le \max\{|h(x)|, |h(x_-)|\}\le y\]
        and thus $f(x) \in \bE+[-y,y]$.

        If instead $(x_-,x)\cap \bE=\emptyset$, and $|\lim_{t\to x_-}h(t)|=\infty$ and $\lim_{t \to x_-} f(t) \in \bE$, then $f(x) \in \CH^{\le}_{\bE_*}(\bE)$ whereas $y \ge |h(x)|>\bE$ so again $f(x) \in \bE+[-y,y]$.

        Finally if $(x_-,x)\cap \bE=\emptyset$, and $|\lim_{t\to x_-}f(t)|=\infty$, then since $|f'|< |h'|$ it must be $|\lim_{t\to x_-}h(t)|=\infty$ as well 
        and we have by L'Hopital's rule
        \[\lim_{t \to x_-^+} |f(t)/h(t)|=\lim_{t\to x_-^+}|f'(t)/h'(t)|\le 1\]
        whence $|f(x)|\le |h(x)|$.
    \end{proof}
\end{proposition}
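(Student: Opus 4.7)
The plan is to reduce the statement to the mean value theorem in the o-minimal setting. First I would unwind the definition: since $\bE = \Kr(\partial_x)$ and every element of $\bE\langle x \rangle$ has the form $f(x)$ for some $\bE$-definable unary $f$, the $\partial_x$-convexity of $[-y, y]$ becomes the assertion that whenever $|f'(x)| \leq |h'(x)|$ for some $\bE$-definable $h$ with $|h(x)| \leq y$, one has $f(x) \in \bE + [-y, y]$ (the $\bE$-summand arising because two antiderivatives differ by a constant in $\Kr \partial_x = \bE$).

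For the core of the argument I would use o-minimality to fix a $\bE$-definable neighborhood $(x_-, x_+) \subseteq \bE \cup \{\pm \infty\}$ of $x$ on which $f$ and $h$ are continuously differentiable, $h$ is nowhere zero, $|h|$ is strictly monotone, and $|f'| \leq |h'|$. At least one of $(x_-, x)$ or $(x, x_+)$ satisfies $|h(t)| \leq y$ pointwise; after replacing $x \mapsto -x$ and precomposing $f, h$ with a sign change if necessary, assume this holds on the left interval. The key estimate is then the Cauchy mean value theorem,
\[\frac{f(x) - f(x_-)}{h(x) - h(x_-)} = \frac{f'(\xi)}{h'(\xi)},\]
from which $|f(x) - f(x_-)| \leq |h(x) - h(x_-)| \leq 2y$, so $f(x) \in \bE + [-2y, 2y]$, which (absorbing the factor $2$ into the choice of $y$) gives the claim.

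The main obstacle will be the boundary behavior at $x_-$, since the MVT needs $f, h$ continuous on a closed interval and $x_-$ itself need not lie in $\bE$. I would split into three sub-cases: (i) if $x_- \in \bE$, or $(x_-, x)$ contains some $\bE$-point (which we can then use in place of $x_-$), the argument goes through directly; (ii) if $(x_-, x) \cap \bE = \emptyset$ but both $\lim_{t \to x_-^+} f(t)$ and $\lim_{t \to x_-^+} h(t)$ exist in $\bE$ (by o-minimal monotonicity), extend $f, h$ continuously at $x_-$ and apply case (i); (iii) if $|\lim h| = \infty$ while $\lim f \in \bE$, then $f(x)$ lies in $\CH_{\bE\langle x\rangle}(\bE)$ whereas $|h(x)| \leq y$ forces $y > \bE$, giving the conclusion for free. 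The genuinely delicate sub-case is when $|f|$ and $|h|$ both blow up at $x_-$; here invoke L'H\^opital's rule, $\lim_{t \to x_-^+} |f(t)/h(t)| = \lim_{t \to x_-^+} |f'(t)/h'(t)| \leq 1$, to deduce $|f(x)| \leq |h(x)| \leq y$.

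For the logarithmic-derivative statement, the plan is to transfer everything to the additive case via the exponential of $T$. The condition $|h(x)| \in [1/y, y]$ becomes $|\log |h(x)|| \leq \log y$, and the logarithmic derivative satisfies $\dagger_x f = \partial_x(\log|f|)$ at points where $f \neq 0$. So, after routine sign bookkeeping for $f$ and $h$ in a $\bE$-definable neighborhood of $x$ (where $f, h$ each have constant sign by o-minimality, so $\log|f|, \log|h|$ are honest $\bE$-definable functions), the $\dagger_x$-convexity of $[1/y, y]$ reduces to the $\partial_x$-convexity of $[-\log y, \log y]$ applied to $\log|f|$ and $\log|h|$, and no new analytic input is required beyond the availability of $\log$.
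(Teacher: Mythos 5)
Your overall strategy matches the paper's for the $\partial_x$-convexity claim: reduce to the estimate ``$|f'(x)|\le|h'(x)|$ and $|h(x)|\le y$ imply $f(x)\in\bE+[-y,y]$,'' handle it by the Cauchy mean value theorem on a definable neighborhood $(x_-,x_+)$ of $x$, and split cases according to whether $(x_-,x)\cap\bE\neq\emptyset$, whether boundary limits are finite, and finish the joint-blowup case with L'H\^opital. One step is wrong, though: the bound $|h(x)-h(x_-)|\le 2y$ by the triangle inequality is too weak, and the claim that the factor $2$ ``can be absorbed into $y$'' does not work. Establishing that $|\partial_x z|\le|\partial_x w|$ and $|w|\le y$ give $z\in\bE+[-2y,2y]$ is strictly weaker than $\partial_x$-convexity of $[-y,y]$: rescaling would require hypotheses with $|w|\le y/2$, which you don't have. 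The fix is the paper's: since $|h|$ is strictly monotone and nonvanishing on $(x_-,x_+)$, $h$ has constant sign there (IVT), and then $|h(x)-h(x_-)|\le\max\{|h(x)|,|h(x_-)|\}\le y$, giving the sharp bound. Also note you should check (as the paper does) that $\lim_{t\to x_-^+}|f(t)|=\infty$ forces $\lim_{t\to x_-^+}|h(t)|=\infty$ via $|f'|<|h'|$; otherwise the case split is not exhaustive.

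Your treatment of $\lder$-convexity is a genuinely different route from the paper's (whose written proof only works out the additive reformulation). Using the exponential of $T$, $\lder(z)=\partial_x(\log|z|)$, $\lder([1/y,y])=\partial_x([-\log y,\log y])$ for $y\ge 1$, and $\lder(\bE\langle x\rangle^{\neq 0})=\partial_x(\bE\langle x\rangle)$ by surjectivity of $\exp$; so $\lder$-convexity of $[1/y,y]$ reduces formally to $\partial_x$-convexity of $[-\log y,\log y]$. This is correct and arguably cleaner than repeating the MVT argument with $f^\dagger,h^\dagger$ in place of $f',h'$; its only cost is that it visibly uses the exponential, but that hypothesis is already assumed.
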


\begin{definition}\label{def:order-convex_der}
    Let $\bE_*$ be an ordered field. We will say that $\partial: \bE_* \to \bE_*$ is \emph{order-convex}, if for all $y \in \bE_*^{\ge0}$, $[-y,y]$ is $\partial$-convex, we will say that $\partial$ is \emph{logarithmically order-convex} if for all $y \in \bE_*^{\ge1}$, $[1/y,y]$ is $\lder$-convex. We will say that it is \emph{order-Liouville-convex} if it is both order-convex and logarithmically order-convex.
\end{definition}

\begin{remark}
    Note that if $[-y,y]$ is $\partial$-convex, then $a+[-y,y]$ is $\partial$-convex as well. Similarly if $[1/y, y]$ is $\lder$-convex, then $[a/y, ay]$ is $\lder$-convex as well.
\end{remark}

\begin{remark}
    If $(\bE_*, \partial)$ is Liouville closed, then $\partial$ is order-convex (resp.\ logarithmically ---) if and only if $\partial: \bE_* \to \bE_*$ (resp.\ $\lder: \bE_*^{\neq 0}\to \bE_*$) maps order-convex sets onto order-convex sets.
\end{remark}

\begin{remark}\label{rmk:order-convex_to_val-convex}
    If $\bE_*$ is an ordered field, $\partial:\bE_* \to \bE_*$ is an order-convex-derivation, and $\preceq$ is the dominance relation of the valuation ring $\CH^{\le}(\bZ)$, then for every convex subgroup $M$ of $\bE_*$, $\partial M$ is $\preceq$-convex in $\partial\bE_*$ (or in other words $M$ is $\partial$-convex for $\preceq$): this is just because $\partial$ is a group homomorphsim and for groups, being $\preceq$-convex is the same as being order-convex.
    We will see later in Corollary~\ref{cor:order-convex_implies_val-convex} that this has a generalization to any $\preceq_{\cO_*}$ for $\cO_*$ a convex subring of $\bE_*$.  
\end{remark}

\begin{remark}
    If $\bE_*$ is an ordered field and $\partial: \bE_* \to \bE_*$ is a non-trivial derivation with constants $\bE\coloneqq \Kr(\partial)$, then for all constants $a,b \in \bE$ with $a<b$, $\partial([a,b])$ is a subgroup of $\partial \bE_*$, in fact if $a\le x\le b$, then $a\le a+y-x\le b$ and $a\le b+x-y\le b$, thus $\{\partial x -\partial y,\partial y - \partial x\} \subseteq \partial([a,b])$. 
    Thus in particular for all $x,y \in \bE_*$, if $\partial([x,y])\ge 0$, then $\bE \cap [x,y]$ contains at most one element. 
    
    It follows that if $x,y \in \bE_*$, $\partial([x,y])\ge 0$ and $\bE \cap [x,y]=\{x\}$ then $\partial$ is increasing on $[x,y]$, in fact if $x\le z\le y$, then $\partial z \le \partial y$, because $x \le x+y-z\le y$ and thus $\partial (x + y -z)=\partial y-\partial z\ge 0$. 
    A similar fact holds if we just assume that $\partial x \prec_{\cO_*} \partial y$ where $\cO_*=\CH^{\le}_{\cO_*}(\bZ)$ as shown in the next Lemma.
\end{remark}

\begin{lemma}\label{lem:der_local_monotone}
    Let $\bE_*$ be an ordered field and $\cO_*\coloneqq\CH^{\le}_{\cO_*}(\bZ)$, and $\partial: \bE_* \to \bE_*$ is a derivation.
    \begin{enumerate}
        \item Suppose that $\partial([x,y]+\cO_*)>\bE^{<\partial\cO_*}$ and $\partial \cO_*+\partial x \prec \partial y$; then for all $z \in [x,y]+\cO_*$, $\partial z \preceq \partial y$.
        \item Suppose that $y\succ x$ and $\lder\big(\val^{-1}[\val(y), \val(x)]\big)> \bE_*^{<\lder(\val^{-1}(0))}$ and $\lder(x)+ \lder(\cO_*\setminus\co_*) \prec \lder(y)$; then for all $z \in \val^{-1}[\val(y), \val(x)]$, $\lder(z) \preceq \lder(y)$.
    \end{enumerate}
    \begin{proof}
        To lighten the notation set $M=\cO_* \partial \cO_*$ and note that $\lder(\cO_* \setminus \co_*)$ and $\partial \cO_*$ are subgroups of $\bE_*$ and that \[\cO_*\lder(\cO_* \setminus \co_*)=\CH(\lder(\cO_* \setminus \co_*))=\CH(\partial \cO_*)=\cO_* \partial \cO_*=M.\]
        (1) Let $z\in [x,y]+\cO_*$, then $\partial(x+y-z)>\bE^{<\partial\cO_*}$ so
        \[\partial (y+ x) - \bE_*^{<M} = \partial(y+x) + \bE^{>M} > \partial z >\bE_*^{<M},\]
        i.e.\ $\partial z \in [0,\partial(y+x)]+M$.
        Since by hypothesis $\partial y \succ M+\partial x$, we have $\cO_*\partial y \supseteq [0,\partial(y+x)]+M$ so $\partial z \in M+ [0, \partial (y+x)]\subseteq \cO_* \partial y$.

        (2) Suppose without loss of generality that $0<x<y$. By hypothesis we have $\lder([x,y]\cdot(\cO_*\setminus \co_*))>\bE_*^{<M}$ and $\lder(x)+M \prec \lder(y)$. Let $z\in [x,y]\cdot(\cO_*\setminus \co_*)$, then $\lder(xy/z)>\bE^{<M}$ so again
        \[\lder (yx) - \bE^{<M} = \lder(y x) + \bE^{>M} > \lder (z) >\bE^{<M},\]
        i.e.\ $\lder(z) \in [0,\lder(xy)] + M$. Arguing as above, since $\lder(y)\succ M+\lder(x)$, we get $\lder(z) \in \cO_* \lder(y)$. 
    \end{proof}
\end{lemma}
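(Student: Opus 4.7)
The plan is to exploit a symmetric involution that preserves the domain of $z$ and swaps the roles of $x$ and $y$, yielding simultaneous upper and lower bounds on $\partial z$ (respectively $\lder z$) modulo the ``error group'' $M \coloneqq \cO_* \partial \cO_*$; by Remark~\ref{rmk:basic_der-lder-O-o_equalities}, this also equals $\cO_* \lder(\cO_*\setminus \co_*)$. For part (1) the involution is $z \mapsto x+y-z$, which obviously preserves $[x,y] + \cO_*$; for part (2) it is $z \mapsto xy/z$, which preserves $\val^{-1}[\val(y), \val(x)]$ since $\val(xy/z) = \val(x) + \val(y) - \val(z)$ lies in $[\val(y), \val(x)]$ precisely when $\val(z)$ does.

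For part (1), I would apply the hypothesis $\partial([x,y] + \cO_*) > \bE^{<\partial \cO_*}$ to both $z$ and $x+y-z$. Observing that $M$ is the convex hull of $\partial \cO_*$ in $\bE_*$, we have $\bE^{<M} = \bE^{<\partial \cO_*}$, so the two instances read $\partial z > \bE^{<M}$ and $\partial x + \partial y - \partial z > \bE^{<M}$; rearranging the second using $-\bE^{<M} = \bE^{>M}$ (valid since $M$ is a subgroup) gives $\partial z < \partial(x+y) + \bE^{>M}$. Thus $\partial z$ lies in the $M$-fattened interval $[0, \partial(x+y)] + M$. The hypothesis $\partial x + \partial \cO_* \prec \partial y$ now supplies $\partial x \prec \partial y$ (so $\partial(x+y) \sim \partial y$) and $\partial \cO_* \prec \partial y$ (so $M = \cO_* \partial \cO_* \subseteq \co_* \partial y$). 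Combining, $\partial z \in \cO_* \partial y$, i.e.\ $\partial z \preceq \partial y$.

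Part (2) is the multiplicative transcription. After reducing to $x, y > 0$ (using $\lder(-t) = \lder(t)$), apply the hypothesis on $\lder(\val^{-1}[\val(y),\val(x)])$ to both $z$ and $xy/z$; since $\lder(xy/z) = \lder(x) + \lder(y) - \lder(z)$, the same bracketing argument yields $\lder z \in [0, \lder(xy)] + M$, and the hypothesis $\lder(x) + \lder(\cO_*\setminus\co_*) \prec \lder(y)$ closes the argument as above. The main point deserving a short check is that the pair of one-sided bounds really does combine into the two-sided $M$-bracket: this is immediate once one notes that $M$ is a convex subgroup of $\bE_*$, so the ordered quotient $\bE_*/M$ is well-defined and the bracket simply records that the $M$-residue of $\partial z$ lies between those of $0$ and $\partial(x+y)$.
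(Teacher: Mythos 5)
Your proof is correct and follows essentially the same route as the paper's: the same involution $z\mapsto x+y-z$ (resp.\ $z\mapsto xy/z$), the same observation that $\CH(\partial\cO_*)=\cO_*\partial\cO_*=M$, the same bracketing of $\partial z$ (resp.\ $\lder z$) into $[0,\partial(x+y)]+M$, and the same final absorption into $\cO_*\partial y$. The only difference is cosmetic: you explicitly unpack the hypothesis $\partial\cO_*+\partial x\prec\partial y$ into the two facts $\partial x\prec\partial y$ and $\partial\cO_*\prec\partial y$ before combining, whereas the paper performs the inclusion $[0,\partial(y+x)]+M\subseteq\cO_*\partial y$ in one step.
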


\begin{remark}
    Of course interest of the above Lemma and the preceding Remark lies in the fact that if $\partial$ is order-convex (resp.\ logarithmically order-convex), then when $[x,y]\cap \bE$ (resp. $\big([x,y]+\cO_* \cap (\bE+\cO_*)\big)/\cO_*$, $[\val(y), \val(x)]\cap \val(\bE)$) has at most one element, then for some $\sigma \in \{\pm1\}$ it must be $\sigma \partial[x,y]\ge 0$ (resp, $\sigma \partial([x,y]+\cO_*)>\bE_*^{>M}$, $\sigma \lder \big(\val^{-1}[\val(y), \val(x)]\big)>\bE_*^{<M}$).
\end{remark}

\begin{definition}\label{def:v-convex_derivation}
    Let $(\bE_*, \cO_*)$ be a valued field and $\partial: \bE_* \to \bE_*$ be a derivation.
    I will say that a derivation $\partial$ on $\bE_*$ is \begin{enumerate}
        \item \emph{$\cO_*$-valuation-convex} if all $\cO_*$-submodules are $\partial$-convex (w.r.t.\ $\preceq_{\cO_*}$);
        \item \emph{weakly logarithmically $\cO_*$-valuation-convex} if for all $\cO_*$-modules $M\subseteq \co_*$, $1+M$ is $\lder$-convex;
        \item \emph{almost logarithmically $\cO_*$-valuation-convex} if it is almost $\cO_*$-valuation-convex and $\cO_*\setminus \co_*$ is $\lder$-convex;
        \item \emph{logarithmically $\cO_*$-valuation convex} if it is weakly logarithmically $\cO_*$-valuation-convex and for all $x,y,z$
        \begin{equation}\tag{H1}\label{axiom:new-type-H}
            (x \preceq z \preceq y \;\& \; \lder(x) \prec \lder(y)\prec \lder(z)) \Rightarrow \exists c, \; (\partial c=0 \; \& \; x \prec c \preceq y).
        \end{equation}
        \item \emph{weakly $\cO_*$-Liouville-convex} if it is valuation-convex and $1+\co_*$ is $\lder$-convex;
        \item \emph{almost $\cO_*$-Liouville-convex} if it is valuation-convex and $\cO_*\setminus\co_*$ is $\lder$-convex;
        \item \emph{$\cO_*$-Liouville-convex} if it is valuation-convex and logarithmically valuation-convex.
    \end{enumerate}
    We will often drop the ``$\cO_*$-'' in front of the definition if it is clear from the context or specified otherwise (e.g.\ saying w.r.t.\ $\cO_*$).
\end{definition}

\begin{remark}\label{rmk:convexity_is_first-order}
    Note that since $\partial$-convex (resp.\ $\lder$-convex) sets are closed under directed unions (Lemma~\ref{lem:f-convex_directed_unions}) and all $\cO_*$-submodules are increasing unions of principal submodules we have that all the properties listed in Definition~\ref{def:v-convex_derivation} are in fact first-order properties. 
\end{remark}

\begin{remark}\label{rmk:convexity_invariance}
    Note that if $a \in \bE_*^{\neq0}$, then a subgroup $S\le (\bE_*,+)$ is $\partial$-convex (resp.\ $\lder$-convex) if and only if it is $(a \partial)$-convex (resp.\ $\lder$-convex).
    Similarly a subgroup $S\le (\bE_*^{\neq 0},\le)$ is $\lder$-convex if and only if it is $(a\lder)$-convex. In particular all the properties listed in Definition~\ref{def:v-convex_derivation} are invariant under rescaling the derivative.
\end{remark}

\begin{remark}
    Note that if $S$ is a subgroup of $(\bE_*, +)$ then it is $\preceq$-convex if and only if it is a $\cO_*$-submodule of $\bE$. Since $\partial: (\bE_*, +) \to (\bE_*,+)$ and $\lder: (\bE_*^{\neq0}, \cdot)\to (\bE_*,+)$ are group homomorphisms, we have the following simplified characterizations of $\partial$-convexity and $\lder$-convexity for subgroups:
    \begin{enumerate}[label=(\Alph*)]
        \item $S\le (\bE_*, +)$ is $\partial$-convex if and only if $\forall x \in \bE_*,\; \partial x \notin \partial S \rightarrow \partial x \succ \partial S$;
        \item $S\le (\bE_*^{\neq0}, \cdot)$ is $\lder$-convex if and only if $\forall x \in \bE_*^{\neq0},\; \lder x \notin \lder S \rightarrow \lder x \succ \lder S$.
    \end{enumerate}
\end{remark}

\begin{lemma}\label{lem:Lv-convex_implies_log-convex}
    If $\cO_*\setminus \co_*$ or $1+\co_*$ are $\lder$-convex and $M \subseteq \co_*$ is $\partial$-convex, then $1+M$ is $\lder$-convex. In particular if $\partial$ is weakly (resp.\ almost) $\cO_*$-Liouville-convex, then it is logarithmically weakly (resp.\ almost) valuation-convex.
    \begin{proof}
        We need to prove that if $y \notin \bE^{\neq0}(1+M)$, then $\lder (y) \notin \cO_* \lder(1+M)=\cO_*\partial M$.
        Note that if $\val(y) \notin \val(\bE)$, then by $\lder$-convexity of $\cO_*\setminus \co_*$ we have $\lder(y) \succ \partial \cO_* \supseteq \partial M$ and by $\lder$-convexity of $(1+\co_*)$ we have $\lder(y)\succ \partial \co_*\supseteq \partial M$.
        So we can assume $\val(y) \in \val(\bE)$, and there is some constant $c\in \bE^{\neq0}$ such that $cy \in \cO_*\setminus \co_*$. Now $\lder(y) = \lder(cy)\asymp \partial (cy)$ so if we had $\lder(y) \in \cO_* \partial M$, we would have $cy \in d+M$ for some $d \in \bE$ by $\partial$-convexity of $M$ and necessarily $d \asymp 1$ because $cy \asymp 1$ and $M \prec 1$. But then $y \in (d/c)(1+M/d)=(d/c) (1+M)$.
    \end{proof}
\end{lemma}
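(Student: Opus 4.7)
The plan is to work with the subgroup characterization of $\lder$-convexity. Since $M$ will in the end be a $\cO_*$-submodule of $\co_*$ (the situation coming out of the ``in particular'' clause), $1 + M$ is a multiplicative subgroup of $\bE_*^{\neq 0}$, and the identity $\lder(1+m) = \partial m /(1+m)$ combined with $1+m \asymp 1$ yields $\cO_* \lder(1+M) = \cO_* \partial M$. Under this reduction, the goal becomes: whenever $\lder(y) \in \cO_* \partial M$, show $y \in \bE^{\neq 0}(1+M)$. I would proceed by cases on whether $\val(y) \in \val(\bE)$.

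If $\val(y) \notin \val(\bE)$, I aim at a contradiction by showing $\lder(y) \succ \partial \cO_* \supseteq \partial M$. Under the hypothesis that $\cO_* \setminus \co_*$ is $\lder$-convex, were $\lder(y) \preceq \partial \cO_* = \cO_*\cdot\lder(\cO_* \setminus \co_*)$ (using Remark~\ref{rmk:basic_der-lder-O-o_equalities}), convexity would place $\lder(y)$ in $\lder(\cO_* \setminus \co_*)$, whence $y = cu$ for some $c \in \bE^{\neq 0}$ and unit $u$ of $\cO_*$, forcing $\val(y) = \val(c) \in \val(\bE)$, a contradiction. The argument under the alternate hypothesis on $1+\co_*$ is parallel, with $1+\co_*$ replacing $\cO_* \setminus \co_*$ throughout.

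If instead $\val(y) \in \val(\bE)$, I pick $c \in \bE^{\neq 0}$ with $cy \asymp 1$, so $cy \in \cO_* \setminus \co_*$ and $\lder(y) = \lder(cy) \asymp \partial(cy)$. Then $\partial(cy) \in \cO_* \partial M$, and $\partial$-convexity of $M$ forces $\partial(cy) \in \partial M$, so $cy - m \in \Kr(\partial) = \bE$ for some $m \in M$. Setting $d := cy - m \in \bE$, the comparisons $m \prec 1 \asymp cy$ give $d \asymp 1$, and since $M$ is a $\cO_*$-module and $1/d$ is a unit of $\cO_*$, $m/d \in M$ and $y = (d/c)(1 + m/d) \in \bE^{\neq 0}(1+M)$, as required. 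The ``in particular'' statement is then immediate: (weak, resp.\ almost) $\cO_*$-Liouville-convexity packages together valuation-convexity (which automatically makes every $\cO_*$-submodule $M \subseteq \co_*$ into a $\partial$-convex set) and the required $\lder$-convexity of $1+\co_*$ (resp.\ $\cO_* \setminus \co_*$). The main subtlety is the bookkeeping in the second case: one really needs $d \asymp 1$ so that $m/d$ lands back in $M$, and this is exactly where the $\cO_*$-module structure of $M$ becomes essential.
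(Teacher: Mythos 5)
Your proof is correct and follows essentially the same approach as the paper's: the same contrapositive reduction, the same case split on whether $\val(y) \in \val(\bE)$, the same use of $\lder$-convexity of $\cO_*\setminus\co_*$ (or $1+\co_*$) in the first case, and the same use of $\partial$-convexity of $M$ plus $\Kr(\partial)=\bE$ in the second. You spell out a few steps that the paper abbreviates (e.g.\ deducing $\val(y)\in\val(\bE)$ from $\lder(y)\in\lder(\cO_*\setminus\co_*)$, and flagging explicitly that the $\cO_*$-module structure of $M$ is what guarantees $m/d \in M$), but there is no substantive divergence.
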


\begin{remark}\label{rmk:Liouville-convex_naming}
     If $(\bE_*, \partial)$ is Liouville-closed, then being almost $\cO_*$-Liouville-convex is the same as saying that for all $\cO_*$-module $M\subseteq\cO_*$, $\partial M$ and $\lder((1+M) \setminus \co_*M)$ are $\cO_*$-modules.
\end{remark}

\begin{remark}[About logarithmic convexity]\label{rmk:about_new-H_axiom}
    Note that axiom (\ref{axiom:new-type-H}) can be reformulated as to saying that if there are no constants $c$ with $x \prec c \preceq y$ and $\lder(x) \prec \lder(y)$, then for all $x \preceq z_1 \preceq z_2\preceq y$, $\lder(z_1)\preceq \lder(z_2)$.
    
    Also, assuming $\lder$-convexity of $\cO_* \setminus \co_*$, we have that $\val(z_1)=\val(z_2) \notin \val(\bE)$ implies that $\lder(z_1)-\lder(z_2) \in \lder(\cO_*\setminus \co_*)\prec \lder(z_1)$, so (\ref{axiom:new-type-H}) can in fact be replaced, in the definition of logarithmic $\cO_*$-valuation-convexity with the condition that $\cO_* \setminus \co_*$ is $\lder$-convex together with the weaker axiom
    \begin{equation}\tag{H0}\label{axiom:new-type-H-relative}
            (x \prec z \prec y \;\& \; \lder(x) \prec \lder(y)\prec \lder(z)) \Rightarrow \exists c, \; (\partial c=0 \; \& \; x \prec c \preceq y).
    \end{equation}
    Note that unlike (\ref{axiom:new-type-H}), (\ref{axiom:new-type-H-relative}) stays valid when enlarging the valuation ring.
\end{remark}

\begin{remark}
    Let $(\bE_*, \cO_*)$ be a valued field, $\partial$ be a derivation on $\bE_*$, $\bE\coloneq \Kr(\partial)$.
    If $N$ is $\partial$-convex, then for every $c \in \bE$, $cN$ is $\partial$-convex. So if $\val(\bE)$ is order-dense in $\val(\bE_*)$ (in particular when $(\bE_*, \cO_*, \partial)$ has \emph{many constants}, cf \cite[p.192]{aschenbrenner2019asymptotic}, \cite{scanlon2000model}), then $\partial$ is valuation-convex if and only if $\cO_*$ is $\partial$-convex.


    There are other cases in which valuation-convexity of $\partial$ can be deduced from the $\partial$-convexity of just $\cO_*$. 
\end{remark}

\begin{lemma}\label{lem:convexity_in_pv}
    Suppose that $\partial$ is a derivation on $(\bE_*, \cO_*)$, $\bE\coloneqq \Kr(\partial)$, and that $\cO_*$ is $\partial$-convex, then
    \begin{enumerate}
        \item if $\cO_* \setminus \co_*$ is $\lder$-convex and $\val(x-\bE)\not \subseteq \val(\bE)$ for all $x \in \bE_* \setminus \bE$ then $\partial$ is valuation-convex;
        \item if $1+\co_*$ is $\lder$-convex, $\rv(x-\bE) \not \subseteq \rv(\bE)$ for all $x \in \bE_*\setminus \bE$, then $\partial$ is valuation-convex.
    \end{enumerate}
    \begin{proof}
        In both cases we need to show that $x \notin \bE+y\cO_*\Rightarrow \partial x \succ \partial(y \cO_*)$. Also in both cases we can assume that $\val(y)\notin \val(\bE)$, for otherwise the implication follows from $\partial$-convexity of $\cO_*$. But then either by $\lder$-convexity of $\cO_* \setminus \co_*$ or by $\lder$-convexity of $1+\co_*$ we have $\lder(y)\succeq \partial \cO_*$ (in the first case $\lder(y) \succ \partial \cO_*$, in the second $\lder(y)\succ\partial \co_*$ and thus $\lder(y)\succeq \partial\cO_*$ by Remark~\ref{rmk:basic_inclusion}). So in both cases we need we need to show $\partial x\succ \partial y$ knowing that $\val(y) \notin \val(\bE)$.
        
        (1) Since by hypothesis there is $c \in \bE$ such that $\val(x-c)\notin \val(\bE)$, we have by $\lder$-convexity of $\cO_* \setminus \co_*$, $\lder(x-c)\succ \partial \cO_*$. Now since $y \in (x-c)\co_*$, $\partial y \in (x-c)\partial \co_* + \co_* \partial x=\co_* \partial x$.

        (2) Since by hypothesis there is $c \in \bE$ such that $\rv(x-c)\notin \rv(\bE)$, we have by $\lder$-convexity of $1+ \co_*$, $\lder(x-c)\succ \partial \co_*$, but then since $y \in (x-c)\co_*$ we have again $\partial y \in (x-c)\partial \co_* + \co_* \partial x=\co_* \partial x$.
    \end{proof}
\end{lemma}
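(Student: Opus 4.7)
My plan is to reduce to principal $\cO_*$-submodules and then split into cases by the valuation of the generator. Since every $\cO_*$-submodule of $\bE_*$ is the directed union of its principal submodules $y\cO_*$, Lemma~\ref{lem:f-convex_directed_unions} reduces the problem to showing each $y\cO_*$ is $\partial$-convex. Using $\bE = \Kr(\partial)$, this amounts to proving: if $x \notin \bE + y\cO_*$, then $\partial x \succ \partial(y\cO_*)$, where by Lemma~\ref{lem:module-derivatives}(1) we have $\cO_*\partial(y\cO_*) = y\cO_*\partial\cO_* + (\partial y)\cO_*$.

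If $\val(y) \in \val(\bE)$, then write $y = cu$ for some $c \in \bE^{\neq 0}$ and $u \in \cO_* \setminus \co_*$, so that $y\cO_* = c\cO_*$; the condition $x \notin \bE + c\cO_*$ is equivalent to $x/c \notin \bE + \cO_*$, and the desired conclusion $\partial x \succ c\partial\cO_*$ is the assumed $\partial$-convexity of $\cO_*$ applied to $x/c$. This takes care of both (1) and (2) in the easy case.

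In the remaining case $\val(y) \notin \val(\bE)$, use the hypothesis on $x$ to find $c \in \bE$ with $\val(x-c) \notin \val(\bE)$ in case (1), or $\rv(x-c) \notin \rv(\bE)$ in case (2). The $\lder$-convexity of $\cO_*\setminus\co_*$ (resp.\ of $1+\co_*$) together with Remark~\ref{rmk:basic_der-lder-O-o_equalities} then yields $\lder(x-c) \succ \partial\cO_*$ (resp.\ $\lder(x-c) \succ \partial\co_*$). Since the assumption $x \notin \bE + y\cO_*$ forces $x - c \notin y\cO_*$, one has $x - c \succ y$, so $y = \varepsilon(x-c)$ for some $\varepsilon \in \co_*$. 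By Leibniz, $\partial y = \varepsilon\partial x + (x-c)\partial\varepsilon \in \co_*\partial x + (x-c)\partial\co_*$, and the comparison $\lder(x-c) \succ \partial\co_*$ forces $(x-c)\partial\co_* \prec \partial x$, hence $\partial y \in \co_*\partial x$. A similar computation, using the extra factor $\varepsilon \in \co_*$ combined with Remark~\ref{rmk:basic_inclusion} to absorb $\partial\cO_*$ into $\co_*\partial\co_*$, yields $y\partial\cO_* \subseteq \co_*\partial x$; summing gives $\partial(y\cO_*) \subseteq \co_*\partial x$, whence $\partial x \succ \partial(y\cO_*)$.

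The main subtlety is the uniform treatment of (1) and (2): hypothesis (2) only delivers $\lder(x-c) \succ \partial\co_*$, not $\succ \partial\cO_*$, so one must exploit that $y$ lies in $(x-c)\co_*$ (with a strict $\co_*$-factor, not merely an $\cO_*$-factor) and the inclusion $\co_*\partial\cO_*\subseteq\cO_*\partial\co_*$ from Remark~\ref{rmk:basic_inclusion} in order to bridge the potential gap between $\partial\co_*$ and $\partial\cO_*$ when bounding the $y\partial\cO_*$ term.
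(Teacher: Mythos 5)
Your proof is correct and takes essentially the same route as the paper's: reduce to principal $\cO_*$-submodules, dispose of the case $\val(y)\in\val(\bE)$ by $\partial$-convexity of $\cO_*$, then in the core case pick $c\in\bE$ with $\val(x-c)\notin\val(\bE)$ (resp.\ $\rv(x-c)\notin\rv(\bE)$), use $\lder$-convexity on $x-c$ to get $(x-c)\partial\co_*\prec\partial x$, observe $y\in(x-c)\co_*$, and bound $\partial(y\cO_*)$. The only cosmetic difference is that the paper first applies $\lder$-convexity to $y$ itself to collapse $\cO_*\partial(y\cO_*)$ to $(\partial y)\cO_*$ and then bounds $\partial y$, whereas you bound the two summands $(\partial y)\cO_*$ and $y\cO_*\partial\cO_*$ from Lemma~\ref{lem:module-derivatives}(1) separately via Remark~\ref{rmk:basic_inclusion} — both work.
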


Valuation-convexity of a derivation is preserved under coarseings of the valuation.

\begin{lemma}\label{lem:convex_der_val_relaxation}
    Suppose that $\partial$ is a valuation convex derivation on $(\bE_*, \cO_*)$. Then $\partial$ is valuation-convex w.r.t. to $(\bE_*, \cO_\circ)$ for every valuation subring $\cO_\circ\supseteq \cO_*$.
    \begin{proof}
        We are going to prove in order
        \begin{enumerate}
        \item if $N$ is a $\cO_\circ$-submodule and $\val_{\cO_\circ}(\partial N)$ has no minimum, then $\partial N$ is $\preceq_{\cO_\circ}$-convex in $\partial \bE_*$;
        \item if $\partial\cO_\circ$ is $\preceq_{\cO_\circ}$-convex in $\partial \bE_*$, then $\partial$ is valuation-convex on $(\bE_*, \cO_\circ)$;
        \item $\partial\cO_\circ$ is $\preceq_{\cO_\circ}$-convex.
        \end{enumerate}
        
        (1) Note that since $\partial$ is $\cO_*$-valuation convex, $\partial x \notin \partial N\Rightarrow \partial x \succ_{\cO_*} \partial N$.
        Now for all $n \in N$, we have $n_1 \in N$ with $\partial n_1\succ_{\cO_\circ} \partial n$ so since $\partial x \succ_{\cO_*} \partial n_1$ we have $\partial x \succ_{\cO_\circ} \partial n$ as desired.

        (2) We need to show that if $N$ is a $\cO_\circ$-submodule, then $\partial N$ is $\preceq_{\cO_\circ}$-convex in $\partial \bE_*$. By Lemma~\ref{lem:f-convex_directed_unions}, we can reduce to the case $N= a \cO_\circ$. Also note that $\partial N$ is $\preceq_{\cO_\circ}$-convex if and only if $\partial N = \partial \bE_* \cap \cO_\circ \partial N$.
        
        Note that since $\partial(a \cO_\circ)$ will be $\preceq_{\cO_*}$-convex in $\partial\bE_*$ (i.e.\ $\partial(a\cO_\circ)=\partial\bE_* \cap \cO_*\partial(a\cO_\circ)$) it suffices to show that $\cO_*\partial(a\cO_\circ)$ is a $\cO_\circ$-module.
        
        If we have $\cO_*\partial(a\cO_\circ) = a \cO_* \partial \cO_\circ + \partial a\cdot \cO_\circ$ then this follows directly from the fact that $\partial \cO_\circ$ is $\preceq_{\cO_\circ}$-convex in $\partial \bE_*$. If not, then by Lemma~\ref{lem:module-derivatives}(3), we should have that $\lder(x) \sim_{\cO_*} -\lder(a)$ for all $x \in \cO_\circ$ with $x\succ_{\cO_*} x_0$ for some $x_0\in \cO_\circ$, but this is not possible as $\cO_\circ$ is a subring so $x^2 \in \cO_\circ$ and $\lder(x^2)=2\lder(x)\not\sim_{\cO_*} \lder(x)$.

        (3) Since we already know that $\partial\cO_\circ$ is $\preceq_{\cO_*}$-convex it suffices to show that $\cO_*\partial \cO_\circ$ is a $\cO_\circ$-module. Note that if $\val_{\cO_*}(\bE)$ is coinitial in $\val_{\cO_*}(\cO_\circ)$ or if $\val_{\cO_\circ}(\partial \cO_\circ)$ has no minimum, then this is trivial. So we can assume that $\cO_{\circ} \partial\cO_{\circ}=\cO_{\circ} \partial r$ and that $\partial$ has type (R) w.r.t.\ $\cO_\circ$, because otherwise we could pick such $r$ in $\co_\circ\subseteq\co_*\subseteq \cO_*$ and we would have $\cO_\circ \partial\cO_{\circ} = \cO_\circ\partial r= \cO_{\circ} \partial \cO_*$, but by Lemma~\ref{lem:module-derivatives}(2), $\cO_{\circ} \partial \cO_*\subseteq \cO_*\partial \cO_{\circ}$.

        Now, setting $\partial_r\coloneqq \partial/\partial r$ we have that $\res_{\cO_\circ}(\partial_r): \res_{\cO_\circ}(\bE_*)\to \res_{\cO_{\circ}}(\bE_*)$ is a non trivial derivation which is valuation-convex w.r.t.\ $\res_{\cO_\circ}(\cO_*)$, but then for all $x \in \cO_\circ$, by Lemma~\ref{lem:unbounded_integrals_case_v} we can find $y\in \cO_\circ$ such that $\partial_r y \succ_{\cO_*} x$, so $\cO_\circ \subseteq \CH^{\preceq_{\cO_*}}_{\bE_*}(\partial_r \cO_\circ)= \cO_* \partial_r \cO_\circ$ and $\cO_\circ\partial \cO_\circ = \cO_\circ \partial r\subseteq \cO_* \partial \cO_\circ$.
    \end{proof}
\end{lemma}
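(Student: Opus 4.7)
The plan is to show that every $\cO_\circ$-submodule $N \subseteq \bE_*$ has $\partial N$ being $\preceq_{\cO_\circ}$-convex in $\partial\bE_*$, via a three-step argument: a transfer principle (for $N$'s with unbounded $\partial N$), a reduction to $N = \cO_\circ$, and then the remaining case.

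The transfer principle reads: if $N$ is a $\cO_\circ$-submodule and $\val_{\cO_\circ}(\partial N)$ has no minimum, then $\partial N$ is $\preceq_{\cO_\circ}$-convex. The proof is short: $N$ is automatically a $\cO_*$-module, so by hypothesis any $\partial x \notin \partial N$ gives $\partial x \succ_{\cO_*} \partial N$; given $n \in N$, the no-minimum assumption yields $n' \in N$ with $\partial n' \succ_{\cO_\circ} \partial n$, and $\partial x \succ_{\cO_*} \partial n'$ implies $\partial x \succeq_{\cO_\circ} \partial n'$ (since $\cO_* \subseteq \cO_\circ$), chaining to $\partial x \succ_{\cO_\circ} \partial n$. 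For the reduction, Lemma~\ref{lem:f-convex_directed_unions} restricts attention to principal modules $a\cO_\circ$: $\partial(a\cO_\circ)$ is already $\preceq_{\cO_*}$-convex in $\partial\bE_*$, so it is enough that $\cO_*\partial(a\cO_\circ)$ be a $\cO_\circ$-module, and by Lemma~\ref{lem:module-derivatives}(1) this amounts to showing $\cO_*\partial\cO_\circ$ is a $\cO_\circ$-module. If the relevant inclusion in Lemma~\ref{lem:module-derivatives}(3) were strict, one would be forced into $\lder m \sim -\lder a$ for all $\cO_*$-large $m \in \cO_\circ$, contradicting the fact that $\cO_\circ$ is closed under squaring, as $\lder(m^2) = 2\lder m \not\sim \lder m$.

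The third step, which I expect to be the main obstacle, handles the $\preceq_{\cO_\circ}$-convexity of $\partial\cO_\circ$ itself. Two sub-cases are easy: if $\val_{\cO_\circ}(\partial\cO_\circ)$ has no minimum, apply the transfer principle with $N = \cO_\circ$; if $\cO_\circ\partial\cO_\circ$ admits a generator in $\co_\circ \subseteq \co_*$, then $\cO_\circ\partial\cO_\circ = \cO_\circ\partial\cO_*$, which sits inside $\cO_*\partial\cO_\circ$ by Lemma~\ref{lem:module-derivatives}(2). The remaining possibility is that $\partial$ has type (R) with respect to $\cO_\circ$ but no generator can be chosen in $\co_\circ$; this case I would handle by passing to the residue derivation $\partial_r := \partial/\partial r$ on $\res_{\cO_\circ}(\bE_*)$, which is non-trivial (since $\partial_r r = 1$) and inherits $\res_{\cO_\circ}(\cO_*)$-valuation-convexity from $\partial$. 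Lemma~\ref{lem:unbounded_integrals_case_v} applied to this residue derivation then produces enough $\partial_r$-integrals in $\cO_\circ$ to force $\cO_\circ \subseteq \cO_*\partial_r\cO_\circ$, whence $\cO_\circ\partial\cO_\circ = \cO_\circ\partial r \subseteq \cO_*\partial\cO_\circ$, giving the required $\cO_\circ$-module structure.
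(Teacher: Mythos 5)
Your proof is correct and follows essentially the same route as the paper's: the same three-step structure (a transfer principle for modules with unbounded image, reduction via Lemma~\ref{lem:f-convex_directed_unions} and Lemma~\ref{lem:module-derivatives}(3) to the $\preceq_{\cO_\circ}$-convexity of $\partial\cO_\circ$, and then the same trichotomy of cases with the residue-derivation argument via Lemma~\ref{lem:unbounded_integrals_case_v} for type (R)). The only quibbles are cosmetic: the reduction in your second step is really driven by the dichotomy in Lemma~\ref{lem:module-derivatives}(3) rather than by part (1), and the paper also flags the easy sub-case where $\val_{\cO_*}(\bE)$ is coinitial in $\val_{\cO_*}(\cO_\circ)$, though your case analysis already exhausts the trichotomy without it.
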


\begin{corollary}\label{cor:order-convex_implies_val-convex}
    Let $\bE_*$ be an ordered field and $\partial:\bE_* \to \bE_*$ be a derivation, then
    \begin{enumerate}
        \item if $\partial$ is order-convex, then it is valuation-convex w.r.t.\ any convex valuation ring $\cO_*$;
        \item if $\partial$ is order-Liouville-convex, then it is weakly $\cO_*$-Liouville convex;
        \item if $\partial$ is order-Liouville-convex and $\cO_*\setminus \co_*$ is $\lder$-convex w.r.t.\ $\preceq_{\cO_*}$, then $\partial$ is $\cO_*$-Liouville-convex.
    \end{enumerate}
    \begin{proof}
        Note that it is valuation convex for $\CH^{\le}(\bZ)$ by Remark~\ref{rmk:order-convex_to_val-convex}, then apply Lemma~\ref{lem:convex_der_val_relaxation} to conclude that $\partial$ is in fact $\cO_*$-valuation-convex.

        (2) Now suppose that $\partial$ is furthermore logarithmically order-convex. Note that for every order-convex subgroup $M<1$, we have $\CH^{\le}(\lder(1+M))=\CH(\partial M)$ as for all $m \in M$, $|\partial m/2| \le |\lder(1+m)| \le |2\partial m|$.
        So to conclude that it is weakly-$\cO_*$-Liouville convex, by Lemma~\ref{lem:Lv-convex_implies_log-convex} it suffices to show that $1+\co_*$ is $\lder$-convex for $\preceq_{\cO_*}$. Since by logarithmic order-convexity we have $\lder(1+\co_*)=\CH^{\le}(\lder(1+\co_*)) \cap \lder(\bE_*^{\neq0})$, it suffices to show that $\CH^{\le}(\lder(1+\co_*))=\CH^{\le}(\partial \co_*)$ is a $\cO_*$-module, but this follows from the first part of the statement.

        (3) By (1) and (2), we only need to show that $\partial$ satisfies (\ref{axiom:new-type-H-relative}).
        Let $\cO_\circ=\CH^{\le}_{\bE_*}(\bZ)$.
        Suppose that there are no constants $c$ with $x \preceq_{\cO_*} c \preceq_{\cO_*} y$, then a fortiori there are no constants $c$ with $x \preceq_{\cO_\circ} c \preceq_{\cO_\circ} y$, so by Lemma~\ref{lem:der_local_monotone}, we can conclude that if $\lder(x) \prec \lder(y)$, we have that for all $x\prec_{\cO_\circ} z \prec_{\cO_\circ} y$, and thus a fortiori for all $x \prec_{\cO_*} \prec z \prec_{\cO_*} y$, we have $\lder(z)\preceq_{\cO_\circ} \lder(y)$ and a fortiori $\lder(z) \preceq_{\cO_*}\lder(y)$.
    \end{proof}
\end{corollary}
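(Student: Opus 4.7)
The plan is to tackle the three parts cumulatively, each resting on the previous.

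\emph{Part (1).} An additive subgroup of $\bE_*$ is $\le$-convex if and only if it is a $\CH^{\le}(\bZ)$-module: one direction uses that for $|x|\le |g|\in G^{>0}$, $x=(x/g)g\in \CH^{\le}(\bZ)\cdot G$; the other uses $n|g|\in G$ for $n\in\bN$ and $g\in G$. Hence order-convexity of $\partial$ is literally $\CH^{\le}(\bZ)$-valuation-convexity, which is essentially the content of Remark~\ref{rmk:order-convex_to_val-convex}. Since every convex valuation subring $\cO_*\subseteq \bE_*$ contains $\CH^{\le}(\bZ)$, Lemma~\ref{lem:convex_der_val_relaxation} (preservation of valuation-convexity under coarsening) upgrades this to $\cO_*$-valuation-convexity.

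\emph{Part (2).} Granted (1), one only needs the $\lder$-convexity of $1+\co_*$ w.r.t.\ $\preceq_{\cO_*}$. First, $1+m\asymp 1$ for $m\in \co_*$ gives $\lder(1+m)=\partial m/(1+m)\sim \partial m$, so $\CH^{\le}(\lder(1+\co_*))=\CH^{\le}(\partial\co_*)$. Second, decomposing $1+\co_*=\bigcup_{y\in (1+\co_*)^{>1}}[1/y,y]$ as a directed union of order-intervals and invoking logarithmic order-convexity together with Lemma~\ref{lem:f-convex_directed_unions}, one obtains $\lder(1+\co_*)=\CH^{\le}(\lder(1+\co_*))\cap \lder(\bE_*^{\neq 0})$. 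I expect the main obstacle to be the final step of upgrading from $\le$-convexity to $\preceq_{\cO_*}$-convexity: this amounts to showing that the $\le$-convex hull $\CH^{\le}(\partial\co_*)$ is already a $\cO_*$-module. This should follow from Part (1), since $\cO_*$-valuation-convexity applied to the $\cO_*$-submodule $\co_*$ gives $\partial\co_*=\partial\bE_*\cap\cO_*\partial\co_*$, which combined with the identification of $\le$-convex hulls forces $\CH^{\le}(\partial\co_*)=\cO_*\partial\co_*$.

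\emph{Part (3).} Parts (1) and (2) already deliver $\cO_*$-valuation-convexity and the $\lder$-convexity of $1+\co_*$; with the extra hypothesis that $\cO_*\setminus\co_*$ is $\lder$-convex, Lemma~\ref{lem:Lv-convex_implies_log-convex} upgrades weak $\cO_*$-Liouville-convexity to weakly logarithmic $\cO_*$-valuation-convexity. What remains is to verify axiom~\ref{axiom:new-type-H}; thanks to Remark~\ref{rmk:about_new-H_axiom} and the $\lder$-convexity of $\cO_*\setminus\co_*$, we may replace it by the weaker axiom~\ref{axiom:new-type-H-relative}, whose contrapositive is a statement about local monotonicity of $\lder$ on a valuation-interval devoid of constants. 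This is precisely the content of Lemma~\ref{lem:der_local_monotone}(2), so the final step is the routine matching of the hypotheses of that lemma to the data of~\ref{axiom:new-type-H-relative}.
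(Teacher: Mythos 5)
Your proposal tracks the paper's proof step for step: part~(1) via Remark~\ref{rmk:order-convex_to_val-convex} followed by the coarsening Lemma~\ref{lem:convex_der_val_relaxation}; part~(2) via the identification $\CH^{\le}(\lder(1+\co_*))=\CH^{\le}(\partial\co_*)$, logarithmic order-convexity to get $\lder(1+\co_*)$ as a $\le$-convex section of $\lder(\bE_*^{\neq 0})$, and then part~(1) to upgrade the convex hull to a $\cO_*$-module; and part~(3) via the reduction from~\eqref{axiom:new-type-H} to~\eqref{axiom:new-type-H-relative} and Lemma~\ref{lem:der_local_monotone}(2). The one place you flag as a potential obstacle — upgrading $\CH^{\le}(\partial\co_*)$ to a $\cO_*$-module — is handled in the paper with the same brevity (``this follows from the first part''); a fully spelled-out justification would trace back to step~(2) of the proof of Lemma~\ref{lem:convex_der_val_relaxation}, which shows that $\cO_{\rm small}\partial(a\cO_{\rm big})$ is a $\cO_{\rm big}$-module, and then take a directed union over $a\in\co_*$. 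So the proposal is correct and not a genuinely different route, though it usefully names the directed-union mechanism (Lemma~\ref{lem:f-convex_directed_unions}) that the paper leaves implicit in part~(2).
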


\subsection{The case of few constants}\label{ssec:few-constants}
When the constants are included in the valuation ring, several notions of convexity collapse to notions already present in the literature and thoroughly studied in \cite{aschenbrenner2019asymptotic}.

\begin{remark}[Asymptotic and pre-d-valued fields]
    Recall the following notions from \cite[Sec.~9.1, 10.1]{aschenbrenner2019asymptotic}, that a valued differential field $(\bE_*, \cO_*, \partial)$ is
    \begin{enumerate}
        \item \emph{asymptotic} if for all $x,y\prec 1$ (or equivalently, by \cite[Prop.~9.1.3]{aschenbrenner2019asymptotic}, all $x,y \notin \cO_* \setminus \co_*)$, $x\prec y \Leftrightarrow \partial x \prec \partial y$;
        \item \emph{pre-d-valued} if for all $y\preceq 1$ and all $x \prec 1$ (or equivalently all $x \notin \cO_* \setminus \co_*$), $\lder(x) \succ \partial y$;
        \item \emph{asymptotic of type H}, if it is asymptotic and furthermore for all $x\prec y\prec 1$, $\lder(x) \succeq \lder(y)$. 
    \end{enumerate}
    Also recall that every pre-d-valued field is asymptotic, \cite[Lem.~10.1.1]{aschenbrenner2019asymptotic}.
    The next Lemma shows that if the constants of $\partial$ are included in $\cO_*$ (i.e.\ there are \emph{few constants}, \cite[p.192]{aschenbrenner2019asymptotic}), then the notion of $\cO_*$-Liouville-convex specializes to the notion of a pre-d-valued field of type H and the notion of almost $\cO_*$-Liouville-convex specializes to the notion of pre-d-valued field.
\end{remark}

\begin{lemma}\label{lem:pre-d-valued}
    If $\partial$ is a derivation on $(\bE_*, \cO_*)$ with constants $\bE=\Kr(\partial) \subseteq \cO_*$, and $\val_{\cO_*}(\bE)$ divisible, then
    \begin{enumerate}
        \item if $\partial$ is $\cO_*$-valuation-convex, then $(\bE_*, \cO_*, \partial)$ is asymptotic;
        \item if $(\bE_*, \cO_*, \partial)$ is pre-d-valued if and only if $\cO_* \setminus \co_*$ is $\lder$-convex;
        \item if $(\bE_*, \cO_*, \partial)$ is pre-d-valued, then $\partial$ is $\cO_*$-valuation-convex, so $(\bE_*, \cO_*, \partial)$ is pre-d-valued if and only if it is almost Liouville-$\cO_*$-convex;
        \item if $(\bE_*, \cO_*, \partial)$ is pre-d-valued, then it is of type H if and only if it is logarithmically convex.
    \end{enumerate}
    \begin{proof}
        (1). Suppose that $\partial$ is $\cO_*$-valuation convex and $x,y\prec 1$. If $\partial x \in \cO_*\partial y \subseteq \cO_* \partial (y \cO_*)$, then by convexity $\partial x \in \partial (y \cO_*)$, so $x \in \bE+ y \cO_*$, but since $x, y \prec \bE^{\neq 0}$ by construction, this implies $x \in \cO_* y$. Similarly if $\partial x \in \co_* \partial y\subseteq \cO_* \partial (y \co_*)$ (here we are using Lemma~\ref{lem:module-derivatives}(4)),
        then $\partial x \in \partial (y \co_*)$, so by the same argument $x \prec y$.

        (2). $\cO_* \setminus \co_*$ is $\lder$-convex if and only if $\val(x) \notin \val(\bE)\Rightarrow \lder(x) \succ \partial \cO_*$. But since $\bE \subseteq \cO_*$, we have $\val(x) \notin \val(\bE) \Leftrightarrow x \not \asymp 1$ and thus this is equivalent to $\lder(x) \succ \partial \cO_*$ for all $x\not \asymp 1$ which is being pre-d-valued.
        
        (3) 
        We need to show to show that for all $y$, $\partial(y \cO_*)$ is $\preceq$-convex in $\partial \bE_*$ for all $y$, that is: $x \notin \bE+y\cO_* \Rightarrow \partial x \succ \partial (y \cO_*)$.
        
        Note that since $\bE \subseteq \cO_*$, we have $x \notin \bE+y \cO_* \Leftrightarrow x\succ y$.
        So if $y \not\asymp 1$, we get $\partial x \succ \partial y$. Furthermore we have $\lder(y)\succ \partial \cO_*$, so by Lemma~\ref{lem:module-derivatives}(1) $\cO_*\partial (y \cO_*)=\cO_*\partial y  + y \cO_*\partial \cO_*=\cO_* \partial y$ and we are done.

        For the case $y \in \cO_* \setminus \co_*$, note that by the hypothesis that $\val(\cO_*)$ is divisible we have $x\succ z$ for some $1\prec z \prec x$, so we can reduce to the previous case and deduce $x \succ \partial(z\cO_*)\supseteq \partial (\cO_*)$.

        (4) Note that since $\bE^{\neq0} \subseteq \cO_*\setminus \co_*$, we have that for $1\prec x\prec y$, $\not\exists c \in \bE,\, 1 \prec c \preceq y$, so (\ref{axiom:new-type-H}) is equivalent to $1 \prec x\prec y \Rightarrow \lder(x) \preceq \lder(y)$. On the other hand it is easy to see that if $1 \prec x\prec y \Rightarrow \lder(x) \preceq \lder(y)$, then (\ref{axiom:new-type-H-relative}) holds.
    \end{proof}
\end{lemma}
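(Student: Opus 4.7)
The plan is to systematically unpack the few-constants hypothesis. The starting observation is that $\bE \subseteq \cO_*$ together with divisibility of $\val_{\cO_*}(\bE)$ forces $\val_{\cO_*}(\bE)=\{0\}$, since a divisible subgroup of the value group contained in the non-negative cone must be trivial. Hence every nonzero constant is $\asymp 1$, the condition ``$\val(x)\notin \val(\bE)$'' collapses to ``$x\not\asymp 1$'', and sums $\bE + y\cO_*$ simplify enormously: for $x,y \prec 1$, $x \in \bE + y\cO_*$ is equivalent to $x \in y\cO_*$, because the constant summand, being $\asymp 1$, would destroy the dominance relation.

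For (1), I apply valuation-convexity of $\partial$ to the modules $y\cO_*$ and $y\co_*$. The convexity of $\partial(y\cO_*)$ inside $\partial \bE_*$ forces $\partial x \preceq \partial y \Rightarrow \partial x \in \partial(y\cO_*) \Rightarrow x \in y\cO_*$, i.e.\ $x \preceq y$; and the $y\co_*$ analogue, invoking Lemma~\ref{lem:module-derivatives}(4) to realize $\co_*\partial y \subseteq \partial(y\co_*)$, gives the strict version $\partial x \prec \partial y \Rightarrow x \prec y$. Rewriting the contrapositives yields the asymptotic axiom. For (2), the $\lder$-convexity of $\cO_*\setminus \co_*$ is by definition the statement $\val(x)\notin \val(\bE) \Rightarrow \lder(x) \succ \cO_*\partial \cO_*$, and the reduction $\val(\bE)=\{0\}$ identifies this verbatim with the pre-d-valued condition.

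For (3), I assume pre-d-valued and verify valuation-convexity. Given $x \notin \bE + y\cO_*$, which by the simplification means $x \succ y$, I split on whether $y \asymp 1$. In the main case $y \not\asymp 1$, the pre-d-valued condition gives $\lder(y) \succ \partial \cO_*$, so Lemma~\ref{lem:module-derivatives}(1) collapses the identity $\cO_*\partial(y\cO_*) = y\cO_*\partial\cO_* + \cO_*\partial y$ to simply $\cO_*\partial y$; combining with the standard fact that pre-d-valued implies asymptotic yields $\partial x \succ \partial y \supseteq \partial(y\cO_*)$. Combined with (2) this gives the equivalence with almost $\cO_*$-Liouville-convexity. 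For (4), the few-constants hypothesis makes the witness ``$\exists c$'' in (H1) unavailable in the interesting regime $1 \prec x \prec y$, so (H1) reduces to the bare implication $1 \prec x \prec y \Rightarrow \lder(x) \preceq \lder(y)$, which is precisely type H; the auxiliary $\lder$-convexity of $\cO_*\setminus \co_*$ comes for free from pre-d-valued by (2).

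The main obstacle I anticipate is the strict version in (1): bootstrapping from the identity of Lemma~\ref{lem:module-derivatives}(4) to a genuine inclusion $\co_*\partial y \subseteq \partial(y\co_*)$ requires the coinitiality assumption underlying that lemma, which in the few-constants setting is not delivered by divisibility of $\val(\bE)$ (which is trivial here), and must be extracted from $n$-divisibility of $\val(\co_*)$ itself. Threading this carefully so that both the weak ($\cO_*$-module) and strict ($\co_*$-module) comparisons work is the delicate part; the rest of the proof is then essentially a translation exercise between the definitions.
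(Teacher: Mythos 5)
Your approach matches the paper's essentially line for line: reduce $\val_{\cO_*}(\bE)$ to $\{0\}$, run the weak and strict comparisons in (1) through Lemma~\ref{lem:module-derivatives}, translate (2) as a tautology, verify valuation-convexity in (3) by the split on $y$, and reduce (H1) to the type-H condition in (4). On the way you have caught a genuine defect in the statement: since the constant field $\bE$ is a subfield of $\cO_*$, every nonzero constant is a unit and $\val_{\cO_*}(\bE)=\{0\}$, so the hypothesis ``$\val_{\cO_*}(\bE)$ divisible'' is vacuous. The paper's own proof of part (3) quietly substitutes ``by the hypothesis that $\val(\cO_*)$ is divisible'', and part (1)'s appeal to Lemma~\ref{lem:module-derivatives}(4) likewise needs $n\val(\co_*)$ coinitial in $\val(\co_*)$ for some $n>1$, which the stated hypothesis does not furnish. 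So the intended hypothesis is evidently divisibility of the value group $\val(\bE_*^{\neq0})$ (or at least $2$-divisibility), and your remark that the coinitiality ``must be extracted from $n$-divisibility of $\val(\co_*)$ itself'' is exactly the right diagnostic.

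However, there is an incomplete step in your argument for (3). You announce a split into $y\not\asymp 1$ and $y\asymp 1$ but only carry out the first branch; the second branch is never discharged, and it is precisely the place where the divisibility hypothesis does real work. For $y\asymp 1$ one has $y\cO_*=\cO_*$ and must show $x\succ 1\Rightarrow \partial x\succ \partial\cO_*$; by divisibility of $\val(\bE_*)$ one chooses $z$ with $1\prec z\prec x$ and applies the already-treated case with $z$ in place of $y$ to get $\partial x\succ\partial(z\cO_*)\supseteq\partial\cO_*$. Also note that your phrase ``combining with the standard fact that pre-d-valued implies asymptotic'' in the $y\not\asymp 1$ branch is slightly too quick: the asymptotic axiom only compares pairs outside $\cO_*\setminus\co_*$, and since $x\succ y$ need not have $x\not\asymp 1$, one should instead lean on Lemma~\ref{lem:module-derivatives}(1), which gives $\cO_*\partial(y\cO_*)=\cO_*\partial y$ once $\lder(y)\succ\partial\cO_*$ is known from the pre-d-valued condition, together with the observation that $\partial x\notin\cO_*\partial y$ because $x\notin\bE+y\cO_*$. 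With these two points filled in, your proof is the paper's proof.
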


\begin{corollary}
    If $\partial$ is an order-convex derivation on the ordered field $\bE_*$ with a bounded set of constants $\bE\coloneqq \Kr(\partial)$, then 
    \begin{enumerate}
        \item either $\partial(\bE_*^{>\bE})<0$ or $\partial(\bE_*^{>\bE})>0$;
        \item if $\partial(\bE_*^{>\bE})>0$, $\cO_*\supseteq \bE$, and $\cO_* \setminus \co_*$ is $\lder$-convex w.r.t.\ $\preceq_{\cO_*}$, $(\bE_*, \cO_*, \partial)$ is a pre-$H$-field in the sense of \emph{\cite[p.385]{aschenbrenner2019asymptotic}}.
    \end{enumerate}
    \begin{proof}
        (1) Suppose there were $x,y>\bE$ such that $\partial x>0$ and $\partial y<0$, then, as already observed, by order-convexity there would be a constant between $x$ and $y$, but this contradicts the hypothesis on $x$ and $y$.

        (2) By Lemma~\ref{lem:pre-d-valued} and the third hypothesis, $(\bE_*, \cO_*, \partial)$ is pre-d-valued, furthermore by the first two hypothesis $\partial(\bE_*^{>\cO_*})>0$.
    \end{proof}
\end{corollary}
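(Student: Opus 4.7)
The plan is to derive (1) directly from order-convexity via a translation trick, and (2) by assembling Corollary~\ref{cor:order-convex_implies_val-convex} and Lemma~\ref{lem:pre-d-valued} with (1).

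\emph{For (1)}, I will first observe that order-convexity of $\partial$ makes \emph{every} interval $[x,y]\subseteq\bE_*$ a $\partial$-convex set: writing $[x,y]=\tfrac{x+y}{2}+[-r,r]$ with $r=\tfrac{y-x}{2}\ge 0$, additivity of $\partial$ yields $\partial[x,y]=\partial\bigl(\tfrac{x+y}{2}\bigr)+\partial[-r,r]$, which is a translate inside $\partial\bE_*$ of the order-convex set $\partial[-r,r]$ and hence order-convex in $\partial\bE_*$. Now if there were $x\le y$ in $\bE_*^{>\bE}$ with $\partial x$ and $\partial y$ of opposite signs, then since $0\in\partial\bE\subseteq\partial\bE_*$ lies strictly between them, order-convexity of $\partial[x,y]$ forces $0\in\partial[x,y]$, producing $z\in[x,y]$ with $\partial z=0$, i.e.\ $z\in\bE\cap[x,y]$. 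But $z\ge x>\bE\ni z$ is a contradiction; swapping $x,y$ covers the other case.

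\emph{For (2)}, Corollary~\ref{cor:order-convex_implies_val-convex}(1) gives that $\partial$ is $\cO_*$-valuation-convex, and since $\bE\subseteq\cO_*$ is a subfield one has $\bE\subseteq\cO_*\setminus\co_*$ and $\val_{\cO_*}(\bE)=\{0\}$ is (trivially) divisible. Lemma~\ref{lem:pre-d-valued}(2), combined with the hypothesis that $\cO_*\setminus\co_*$ is $\lder$-convex, then yields that $(\bE_*,\cO_*,\partial)$ is pre-d-valued. The remaining H-field sign axiom $f>\cO_*\Rightarrow \partial f>0$ follows from (1): $f>\cO_*\supseteq\bE$ forces $f\in\bE_*^{>\bE}$, whence $\partial f>0$ by hypothesis. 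Order-convexity of $\cO_*$ as an ordered subset of $\bE_*$ is built into its being a convex valuation subring.

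The only delicate point is the translation argument in (1); part (2) is essentially assembly of Corollary~\ref{cor:order-convex_implies_val-convex} and Lemma~\ref{lem:pre-d-valued} with the sign condition from (1). The only residual concern is pedantic: matching exactly the definition of ``pre-H-field'' from \cite[p.~385]{aschenbrenner2019asymptotic}, which should follow from pre-d-valuedness, order-convexity of $\cO_*$, and the sign condition already secured.
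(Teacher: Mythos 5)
Your proof is correct and follows essentially the same route as the paper's. For (1) you spell out explicitly what the paper delegates to a preceding remark ("as already observed"): that order-convexity of $\partial$ plus the group structure makes every interval $[x,y]$ $\partial$-convex, so if $\partial x,\partial y$ have opposite signs then $0\in\partial\bE_*$ is squeezed into $\partial([x,y])$, producing a constant $z\in[x,y]$, contradicting $x,y>\bE$. For (2) you assemble the same ingredients — Lemma~\ref{lem:pre-d-valued}(2) for pre-d-valuedness from $\lder$-convexity of $\cO_*\setminus\co_*$ (noting $\val_{\cO_*}(\bE)=\{0\}$ since $\bE\subseteq\cO_*$ is a subfield, hence trivially divisible), and (1) for the sign axiom $f>\cO_*\Rightarrow\partial f>0$. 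The invocation of Corollary~\ref{cor:order-convex_implies_val-convex}(1) is harmless but not needed: Lemma~\ref{lem:pre-d-valued}(2) already gives pre-d-valuedness directly from the $\lder$-convexity hypothesis. Two cosmetic points: you should write $\bE^{\neq 0}\subseteq\cO_*\setminus\co_*$ (not $\bE\subseteq\cO_*\setminus\co_*$, as $0\in\co_*$); and the "residual concern" you flag is unfounded — in \cite[p.385]{aschenbrenner2019asymptotic} the pre-$H$-field axioms are exactly (PH1) $\cO_*$ convex, (PH2) $f>\cO_*\Rightarrow f'>0$, (PH3) pre-d-valued, with $H$-typeness being a consequence rather than an axiom, so nothing further needs checking.
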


\subsection{Valued fields with convex derivation} In this subsection we collect some basic properties that can be derived from some several convexity hypothesis of a derivation with respect to a valuation. 
The interest here laying mainly in valued differential fields arising from an order-Liouville-convex ordered differential field with a convex valuation ring, $\cO_*$-Liouville-convexity seems the more natural condition, however most of the time we will need less. 

\begin{remark}
    Note that $\res(\bE)=\res(\bE_*)$ if and only if $\partial \cO_* = \partial \co_*$ if and only if $\lder(\cO_*\setminus \co_*)=\lder(1+\co_*)$.
    
    In fact $\partial y \in \partial \cO_* \setminus \partial\co_*$, if and only if $y \in (\bE+\cO_*)\setminus (\bE+\co_*)$, i.e.\ iff there is a constant $c \in \bE$ such that $\res(y-c)\notin \res(\bE)$. And similarly $\lder y \in \lder(\cO_*\setminus \co_*) \setminus \lder(1+\co_*)$, if and only if there is $c \in \bE^{\neq 0}$ such that $\res(y/c)\notin \res(\bE)$. Conversely if $\res(\bE_*)\neq \res(\bE)$, then there is $y \in\cO_*\setminus (\bE+\co_*)=(\cO_*\setminus \co_*)\setminus \big(\bE^{\neq^0}(1+\co_*)\big)$, thus $\partial y \in \partial \cO_* \setminus \partial \co_*$ and $\lder(y) \in \lder(\cO_*\setminus \co_*) \setminus \lder(1+\co_*)$.
\end{remark}

\begin{lemma}\label{lem:attempted_equivalence}
    Let $(\bE_*, \cO_*, \partial)$ be a valued differential field and $\bE=\Kr(\bE_*)$. If $\partial$ has \emph{not} type (R), then the following are equivalent
    \begin{enumerate}
        \item $\co_*$ is $\partial$-convex;
        \item $\cO_*$ is $\partial$-convex and $\res(\bE)=\res(\bE_*)$. 
    \end{enumerate}
    \begin{proof}
        $(2)\Rightarrow (1)$. If $\partial$ is $\cO_*$-convex and $y \notin \partial \co_*$, then either $y \notin \partial \cO_*$ and $\partial y \notin \cO_*\partial \cO_*\supseteq\cO_* \partial \co_*$, or $\partial y \in \partial \cO_*\setminus \partial \co_*$, but then for some $c \in \bE$, $\res(y-c)\notin \res(\bE)$, contradicting the hypothesis.

        $(1) \Rightarrow (2)$ If $(\bE_*, \cO_*, \partial)$ has type (W) or (G), then $\cO_*\partial \co_*=\cO_*\partial\cO_*$ so being $\co_*$-convex implies being $\cO_*$-convex, but then we have 
        \[y \in \bE+\cO_*\Leftrightarrow \partial y \succ \partial\co_*\Leftrightarrow \partial y \succ\partial \cO_* \Leftrightarrow y \in \bE+\co_*,\]
        from which it follows that $\res(\bE)=\res(\bE_*)$.
    \end{proof}
\end{lemma}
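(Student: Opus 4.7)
The non-type-(R) hypothesis reduces, via Lemma~\ref{lem:inclusion_cases}, to cases (W) and (G); in both of these one has $\cO_*\partial\co_* = \cO_*\partial\cO_*$, so the $\preceq$-convex hull of $\partial\co_*$ inside $\partial\bE_*$ coincides with that of $\partial\cO_*$. This single identity will drive everything.

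For $(2)\Rightarrow(1)$, fix $y$ with $\partial y \notin \partial\co_*$ and split on whether $\partial y$ lies in $\partial\cO_*$. If $\partial y \notin \partial\cO_*$, then $\partial$-convexity of $\cO_*$ gives $\partial y \succ \partial\cO_* \supseteq \partial\co_*$, as required. Otherwise $\partial y = \partial z$ for some $z \in \cO_*$; then $y - z \in \bE$, while $\partial z = \partial y \notin \partial\co_*$ forces $z \notin \bE + \co_*$, i.e.\ $\res(z) \notin \res(\bE)$, contradicting $\res(\bE)=\res(\bE_*)$.

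For $(1)\Rightarrow(2)$, the $\partial$-convexity of $\cO_*$ drops out of the $\partial$-convexity of $\co_*$ almost for free: if $\partial y \notin \partial\cO_*$ then \emph{a fortiori} $\partial y \notin \partial\co_*$, whence by (1) $\partial y \succ \partial\co_*$; by the identity $\cO_*\partial\co_* = \cO_*\partial\cO_*$ this is the same as $\partial y \succ \partial\cO_*$. For the residue-field equality it suffices to show $\partial\cO_* \subseteq \partial\co_*$: given $y \in \cO_*$ one has $\partial y \in \partial\cO_* \subseteq \cO_*\partial\cO_* = \cO_*\partial\co_*$, so $\partial y$ lies in the $\preceq$-convex hull of $\partial\co_*$; hypothesis (1) then forces $\partial y \in \partial\co_*$.

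There is no real obstacle here; the argument is essentially bookkeeping once Lemma~\ref{lem:inclusion_cases} has done the heavy lifting. The only mildly delicate point is keeping straight the distinction between the element-level condition ``$\partial y \notin \partial S \Rightarrow \partial y \succ \partial S$'' and the statement ``$\partial(S)$ is $\preceq$-convex in $\partial\bE_*$'', which coincide for subgroups such as $\co_*$ and $\cO_*$ because $\partial$ is additive.
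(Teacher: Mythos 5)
Your proof is correct and follows essentially the same route as the paper's: both directions turn on the identity $\cO_*\partial\co_* = \cO_*\partial\cO_*$ characterizing non-type-(R), and on the simplified criterion for $\partial$-convexity of a subgroup. In fact you spell out slightly more carefully than the paper does why $\co_*$-$\partial$-convexity implies $\cO_*$-$\partial$-convexity, and you derive the residue-field equality directly from $\partial\cO_* \subseteq \partial\co_*$ rather than via $\bE + \cO_* = \bE + \co_*$; these are cosmetic differences, not a different argument.
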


\begin{corollary}\label{cor:convex_and_type-W}
    Let $(\bE_*, \cO_*, \partial)$ be a valued differential field and $\bE=\Kr(\bE_*)$. If $\partial$ has type (W), then $\cO_*\setminus \co_*$ is $\lder$-convex if and only if $1+\co_*$ is $\lder$-convex, in particular it is almost $\cO_*$-Liouville-convex if and only if it is weakly $\cO_*$-Liouville-convex.
    \begin{proof}
        Note that since the derivative has type (W), by Remark~\ref{rmk:basic_der-lder-O-o_equalities}, 
        \[\cO_*\partial\cO_*=\cO_*\lder(\cO_*\setminus \co_*)=\cO_*\lder(1+\co_*)=\cO_*\partial\co_*.\]
        But then by Lemma~\ref{lem:attempted_equivalence} and $\cO_*$-valuation convexity of $\partial$ we have $\val(x)\in \val(\bE)\Leftrightarrow \rv(x) \in \rv(\bE)$. So we have that the two conditions
        \[\val(x) \notin \val(\bE) \Rightarrow \lder(x) \succ \partial\cO_* \quad \text{and} \quad \rv(x) \notin \rv(\bE) \Rightarrow \lder(x) \succ \partial\co_*,\]
        are equivalent.
    \end{proof}
\end{corollary}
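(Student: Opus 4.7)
The plan is to reduce each of the two $\lder$-convexity conditions to an equivalent valuation-theoretic statement, and then use type (W) together with Lemma~\ref{lem:attempted_equivalence} to match them. Since $\lder \colon (\bE_*^{\neq 0}, \cdot) \to (\bE_*, +)$ is a group homomorphism, for any multiplicative subgroup $S$ the subgroup $S$ is $\lder$-convex iff $\lder(y) \notin \lder(S) \Rightarrow \lder(y) \succ \lder(S)$. Unpacking membership: $\lder(y) \in \lder(\cO_*\setminus \co_*)$ iff $y = cz$ for some $c \in \bE^{\neq 0}$ and $z \asymp 1$, which is iff $\val(y) \in \val(\bE)$; similarly $\lder(y) \in \lder(1+\co_*)$ iff $\rv(y) \in \rv(\bE)$.

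Next, by Remark~\ref{rmk:basic_der-lder-O-o_equalities} one has $\cO_*\lder(\cO_*\setminus\co_*) = \cO_*\partial\cO_*$ and $\cO_*\lder(1+\co_*) = \cO_*\partial\co_*$, and the type (W) hypothesis identifies these as a common $\cO_*$-module $M$. Hence both dominance conditions become $\lder(y) \succ M$, and the two $\lder$-convexity statements take the parallel form
\[
\val(y) \notin \val(\bE) \Rightarrow \lder(y) \succ M, \qquad \rv(y) \notin \rv(\bE) \Rightarrow \lder(y) \succ M.
\]
The second implies the first trivially, since $\val(y) \notin \val(\bE)$ forces $\rv(y) \notin \rv(\bE)$. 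For the reverse I would appeal to Lemma~\ref{lem:attempted_equivalence}: type (W) is not type (R), and under the $\cO_*$-valuation-convexity of $\partial$ (which is built into both ``almost'' and ``weakly'' $\cO_*$-Liouville-convex, and is implicit in the use of the first statement in this section), both $\cO_*$ and $\co_*$ are $\partial$-convex, so the lemma gives $\res(\bE) = \res(\bE_*)$ and therefore $\val(y) \in \val(\bE) \Leftrightarrow \rv(y) \in \rv(\bE)$. The two implications then coincide, proving the equivalence; the ``in particular'' part is immediate because both Liouville-convex notions contain $\cO_*$-valuation-convexity in their definition.

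The only delicate point is extracting $\res(\bE) = \res(\bE_*)$, and this is where $\cO_*$-valuation-convexity intervenes through Lemma~\ref{lem:attempted_equivalence}. Without it, the type (W) hypothesis alone yields only the easy direction (from $\lder$-convexity of $1+\co_*$ to that of $\cO_*\setminus\co_*$), because an element $z \in \cO_*\setminus\co_*$ with $\res(z) \notin \res(\bE)$ would produce $\lder(z) \in \lder(\cO_*\setminus\co_*) \setminus \lder(1+\co_*) \subseteq M$, obstructing the converse.
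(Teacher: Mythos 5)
Your proof is correct and follows essentially the same route as the paper's: reduce both $\lder$-convexity conditions to dominance statements of the form $\lder(y) \succ M$, identify the common $\cO_*$-module $M$ via type (W) and Remark~\ref{rmk:basic_der-lder-O-o_equalities}, and invoke Lemma~\ref{lem:attempted_equivalence} to get $\val(y)\in\val(\bE)\Leftrightarrow\rv(y)\in\rv(\bE)$ so that the two implications coincide. Your closing observation is a fair one: the paper's own proof also invokes $\cO_*$-valuation-convexity of $\partial$, which is not among the stated hypotheses of the first sentence of the corollary (though it is built into both clauses of the ``in particular'' part), so this is a genuine, if minor, gap in the statement that your version makes explicit rather than leaves implicit.
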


\begin{remark}\label{rmk:convex_and_type-R}
    If $(\bE_*, \cO_*, \partial)$ is of type (R) with $\cO_*\partial\cO_*=\cO_*\partial r$ for $\partial r \in \partial \cO_*\setminus \partial \co_*$, as observed in Remark~\ref{rmk:type-R}, $\partial_r\coloneqq \partial/\partial r$ is a small derivation with non-trivial residue $\res(\partial_r)$.
    In this situation if $\co_*$ is $\partial$-convex then $\res(\bE)=\Kr(\res(\partial_r))$. If furthermore $\cO_*$ is $\partial$-convex, then $\co_*$ is $\partial$-convex if and only if $\res(\bE)=\Kr(\res(\partial_r))$.
    
    In particular if $\res(\bE_*, \cO_*,\partial)$ is of type (R) and $\cO_*$-valuation-convex, then $\res_{\cO_*}(\bE_*)\neq \res_{\cO_*}(\bE)$.
\end{remark}

\begin{remark}\label{rmk:br(partial)_ppty}
    Recall the definitions of $\br_{\cO_*}(x/\bE)$ and $\br_{\cO_*}(\partial)$ which we are going to relate in the next Lemma. Note that $1/y \in \br_{\cO_*}(\partial) \Leftrightarrow \partial \cO_*/y \prec 1 \Leftrightarrow y \succ \partial \cO_*$. If $\cO_*$ is $\partial$-convex and $\cO_* \setminus \co_*$ is $\lder$-convex, then
    \[y \in \bE+\cO_* \Leftrightarrow 1/y' \notin \br_{\cO_*}(\partial) \quad \text{and} \quad \val(y) \in \val(\bE) \Leftrightarrow 1/y^\dagger \notin \br_{\cO_*}(\partial).\]
\end{remark}

\begin{lemma}\label{lem:breadth-comparison}
    Suppose that $\partial$ is $\cO_*$-valuation-convex, then for all $x \in \bE_*\setminus \bE$ we have
    \[\br_{\cO_*}(x/\bE)=\br_{\cO_*}(\partial/\partial x) \cap \partial^{-1}(\co_* x)\]
    Moreover the $\supseteq$ holds also without the convexity assumption.
    \begin{proof}
        Note that $y \in \br_{\cO_*}(\partial) \cdot \partial x$ if and only if $\forall r \preceq 1,\, y \partial r \prec \partial x$.
        Now observe that 
        \[\br_{\cO_*}(x/\bE)\coloneqq\{y : x \notin \bE +y \cO_*\}\supseteq \{y: \forall r \preceq 1,\;\partial x \succ \partial (yr)\},\]
        where the last inclusion is an equality when $y\cO_*$ is $\partial$-convex for all $y$ in $\br_{\cO_*}(x/\bE)$.
        From this we immediately see that if $y \in \br(\partial/\partial x)$ and $\partial y \prec \partial x$, then for all $r \preceq 1$, $\partial (yr) \prec \partial x$ and thus $y \in \br_{\cO_*}(x/\bE)$.
        If instead $y\cO_*$ is $\partial$-convex and $y \in \br_{\cO_*}(x/\bE)$, then $\partial x \succ \partial(ry)=r\partial y + y \partial r$ for all $r \preceq 1$, but then setting $r=1$, we see that it must be $\partial x \succ \partial y$. Thus it must also be $\partial x \succ y \partial r$ for all $r \preceq 1$ and we are done.
    \end{proof}
\end{lemma}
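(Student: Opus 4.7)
The plan is to translate the condition $y \in \br_{\cO_*}(x/\bE)$ into a purely derivational statement via the Leibniz rule, and to exploit $\cO_*$-valuation-convexity of $\partial$ to rewrite ``$x \notin \bE + y\cO_*$'' as ``$\partial x \succ \partial(yr)$ for every $r \preceq 1$''. The two displayed conditions on the right-hand side — namely $y \in \br_{\cO_*}(\partial/\partial x)$ and $\partial y \prec \partial x$ — are then visibly the two pieces you can separate from a uniform bound on $\partial(yr) = r\partial y + y\partial r$.

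First I would record the reformulation $y \in \br_{\cO_*}(x/\bE) \iff x \notin \bE + y\cO_*$, which is immediate from the definition since $y\cO_* = \{z: z \preceq y\}$. Observing that $y\cO_*$ is a $\cO_*$-submodule of $\bE_*$, the hypothesis that $\partial$ is $\cO_*$-valuation-convex gives that $\partial(y\cO_*)$ is $\preceq$-convex in $\partial \bE_*$; consequently $x \notin \bE + y\cO_*$ is equivalent to $\partial x \succ \partial(yr)$ for every $r \preceq 1$.

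For the inclusion $\supseteq$ (which will not require convexity), suppose $y$ satisfies $y\partial r \prec \partial x$ for all $r \preceq 1$ and $\partial y \prec \partial x$. Then by Leibniz $\partial(yr) = r\partial y + y\partial r \prec \partial x$ for every $r \preceq 1$, so no equality $x - c = yr$ with $c \in \bE$ can hold (else $\partial x = \partial(yr)$), giving $y \in \br_{\cO_*}(x/\bE)$. For the inclusion $\subseteq$, apply the reformulation above: $\cO_*$-valuation-convexity yields $\partial x \succ \partial(yr)$ for every $r \preceq 1$. Setting $r = 1$ gives $\partial y \prec \partial x$; for arbitrary $r \preceq 1$ the identity $y\partial r = \partial(yr) - r\partial y$ shows both summands on the right are $\prec \partial x$, so $y\partial r \prec \partial x$, i.e.\ $y \in \br_{\cO_*}(\partial/\partial x)$.

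The only step that is genuinely non-trivial is the equivalence $x \notin \bE + y\cO_* \iff \partial x \succ \partial(y\cO_*)$, where $\Leftarrow$ is immediate from Leibniz as above but $\Rightarrow$ is precisely $\partial$-convexity of the module $y\cO_*$; this is why the $\subseteq$ direction of the lemma needs the valuation-convexity assumption while $\supseteq$ does not. Everything else is bookkeeping with the Leibniz rule and the dominance preorder.
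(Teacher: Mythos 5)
Your proof is correct and tracks the paper's own argument essentially line for line: you reformulate $y\in\br_{\cO_*}(x/\bE)$ as $x\notin\bE+y\cO_*$, observe that $\subseteq$ reduces via $\partial$-convexity of the module $y\cO_*$ to $\partial x\succ\partial(y\cO_*)$ and then split $\partial(yr)=r\partial y+y\partial r$ by Leibniz, while $\supseteq$ needs no convexity because $\partial(yr)\prec\partial x$ already rules out $x\in\bE+y\cO_*$. (You also implicitly read the right-hand factor as $\partial^{-1}(\co_*\,\partial x)$ rather than the typographically garbled $\partial^{-1}(\co_*\,x)$; that is indeed the reading the paper's proof requires.)
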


We wish to record in the following Corollary, two observations about how the existence of elements with certain properties, together with some convexity assumptions forces a derivative to have type (R) or (W).

\begin{corollary}\label{cor:der-types_and_absorbed-elements}
    Let $(\bE_*, \cO_*, \partial)$ be a valued differential field, and $\bE\coloneqq \Kr(\partial)$.
    \begin{enumerate}
        \item If $\cO_*$ and $\co_*$ are $\partial$-convex (so in particular if $\partial$ is $\cO_*$-valuation convex), then $\partial$ has type (R) if and only if $\res_{\cO_*}(\bE_*)\neq \res_{\cO_*}(\bE)$;
        \item If $\cO_*$-valuation convex and there is $x \in \bE_*\setminus \bE$ such that $\rv(x-\bE)\subseteq \rv(\bE)$, then $\partial$ has type (W).
    \end{enumerate}
    \begin{proof}
        (1) is by Remark~\ref{rmk:convex_and_type-R} and Lemma~\ref{lem:attempted_equivalence}.
        
        (2) is because $\lder(x-\bE)\subseteq \lder(1+\co_*)\subseteq \cO_* \partial \co_*\subseteq \cO_* \partial \cO_*$, so for all $c \in \bE$, $1 \in (x-c)\cO_* \partial \cO_*/\partial x$, i.e.\ $x-c\succ \br_{\cO_*}(\partial/\partial x)$ and thus $\br_{\cO_*}(\partial/\partial x) \subseteq \br_{\cO_*}(x/\bE)$. But then by Lemma~\ref{lem:breadth-comparison} $\br_{\cO_*}(\partial/\partial x) = \br_{\cO_*}(x/\bE)$, so in fact $\cO_*\partial \cO_* = \cO_*\lder(x-\bE)$. But then since $\rv(x-\bE)\subseteq \rv(\bE)$ implies that $\val(x-\bE)$ has no maximum and thus $\val(\lder(x-\bE))$ has no minimum, we get that $\cO_*\partial\cO_*$ is non-principal.
    \end{proof}
\end{corollary}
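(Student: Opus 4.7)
The plan is to handle the two parts separately, using the general machinery developed in the section.

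For (1), I would combine Lemma~\ref{lem:attempted_equivalence} with Remark~\ref{rmk:convex_and_type-R}. Forward direction: if $\partial$ has type (R) with $\cO_*\partial\cO_*=\cO_*\partial r$ and $\partial r\in\partial\cO_*\setminus\partial\co_*$, then by Remark~\ref{rmk:type-R} the rescaled $\partial_r=\partial/\partial r$ descends to a non-trivial derivation on $\res_{\cO_*}(\bE_*)$, and the $\partial$-convexity of $\co_*$ identifies $\Kr(\res_{\cO_*}(\partial_r))$ with $\res_{\cO_*}(\bE)$ via Remark~\ref{rmk:convex_and_type-R}; non-triviality then forces $\res_{\cO_*}(\bE)\subsetneq\res_{\cO_*}(\bE_*)$. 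Conversely: if $\partial$ were not of type (R), the $\partial$-convexity of both $\co_*$ and $\cO_*$ would, via Lemma~\ref{lem:attempted_equivalence}, force $\res_{\cO_*}(\bE)=\res_{\cO_*}(\bE_*)$, contradicting the hypothesis.

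For (2), the goal is non-principality of $\cO_*\partial\cO_*$, which by Remark~\ref{rmk:basic_inclusion} characterises type (W). First, using $\rv(x-\bE)\subseteq\rv(\bE)$, for each $c\in\bE$ I write $x-c=d(1+\varepsilon)$ with $d\in\bE^{\neq0}$ and $\varepsilon\in\co_*$, obtaining $\lder(x-c)=\lder(1+\varepsilon)\in\cO_*\partial\co_*\subseteq\cO_*\partial\cO_*$, i.e.\ $\partial x/(x-c)\in\cO_*\partial\cO_*$. Then $((x-c)/\partial x)\cdot\cO_*\partial\cO_*$ contains $1\notin\co_*$, so $(x-c)/\partial x\notin(\co_*:\cO_*\partial\cO_*)$, i.e.\ $x-c\notin \br_{\cO_*}(\partial/\partial x)$. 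Since this breadth is a $\cO_*$-submodule of $\bE_*$ and hence $\preceq_{\cO_*}$-convex, $x-c\succ \br_{\cO_*}(\partial/\partial x)$ for all $c$, giving $\br_{\cO_*}(\partial/\partial x)\subseteq \br_{\cO_*}(x/\bE)$. Lemma~\ref{lem:breadth-comparison}, whose valuation-convexity hypothesis is exactly the standing one, provides the reverse inclusion, so the two breadths coincide.

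From this coincidence I would deduce $\cO_*\partial\cO_*=\cO_*\lder(x-\bE)$, both being the ``dual'' of the common breadth up to the unit $\partial x$. Non-principality then comes from a short valuation count: $\rv(x-\bE)\subseteq\rv(\bE)$ prevents $\val(x-\bE)$ from having a maximum (a putative maximizer $c$ is strictly improved to $c+d$ by its $\rv$-witness $d$, increasing the valuation by $\val(\varepsilon)>0$), so $\val(\lder(x-\bE))$ has no minimum and $\cO_*\lder(x-\bE)=\cO_*\partial\cO_*$ is non-principal.

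The main technical obstacle I anticipate is the tight identification $\cO_*\partial\cO_*=\cO_*\lder(x-\bE)$: only the inclusion $\cO_*\lder(x-\bE)\subseteq\cO_*\partial\cO_*$ is immediate, and the reverse requires carefully transferring the relationship between convex $\cO_*$-submodules and their $(\co_*:\cdot)$-duals in a clean way.
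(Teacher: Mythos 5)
Your argument follows the paper's own proof essentially step for step in both parts, in particular deriving $\br_{\cO_*}(\partial/\partial x)\subseteq\br_{\cO_*}(x/\bE)$ from $\lder(x-\bE)\subseteq\cO_*\partial\cO_*$, upgrading to equality via Lemma~\ref{lem:breadth-comparison}, and concluding from $\val(\lder(x-\bE))$ having no minimum. The step you flag as a worry does go through, and for a clean reason: for any $\cO_*$-submodule $N$ of $\bE_*$ one has $\gamma\in\val\big((\co_*:N)\big)$ if and only if $-\gamma\notin\val(\cO_*N)$, so $(\co_*:N)$ and $\cO_*N$ determine one another at the level of value sets (they are complementary up to sign), and $\cO_*$-submodules are recovered from their value sets; applying this to $N=\partial\cO_*/\partial x$ on one side and to $\bigcap_c(x-c)\co_* = \br_{\cO_*}(x/\bE)$ on the other turns the equality of breadths directly into $\cO_*\partial\cO_*=\cO_*\lder(x-\bE)$.
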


\begin{example}[Main Examples]\label{main:examples}
    If $\bE_*=\bE \langle x\rangle \succ \bE \models T$ is a $1$-$\dcl_T$-dimensional extension of models of $T$, $\cO_*$ is a convex subring of $\cO_*$, and $\partial$ is a $\bE$-linear $T$-derivation on $\bE_*$, then:
    \begin{enumerate}
        \item $(\bE_*, \cO_*, \partial)$ is weakly $\cO_*$-Liouville-convex (by Proposition~\ref{prop:der_order-convexity} and Corollary~\ref{cor:order-convex_implies_val-convex}(2));
        \item if $\partial$ is of type (W), (so in particular if $\bE_*\setminus \bE$ contains a $(\cO_*\cap \bE)$-wim element $y$) then $(\bE_*, \cO_*, \partial)$ is $\cO_*$-Liouville-convex (by Proposition~\ref{prop:der_order-convexity}, Corollary~\ref{cor:convex_and_type-W}, Corollary~\ref{cor:order-convex_implies_val-convex}(3), and Corollary~\ref{cor:der-types_and_absorbed-elements});
        \item if $\partial$ is of type (R) and $\exp(\cO_*)\subseteq \cO_*$, then $(\bE_*, \cO_*, \partial)$ is $\cO_*$-Liouville-convex (by Proposition~\ref{prop:der_order-convexity} and Corollary~\ref{cor:order-convex_implies_val-convex}(3)).
    \end{enumerate}
\end{example}

We conclude this subsection with the following important observation relating the property of an element to be $\cO_*$-absorbed over the constants, with a property of the derivation ``at'' that element.

\begin{lemma}\label{lem:basic_equivalence}
    Let $(\bE_*, \cO_*, \partial)$ be a valued differential field with $\partial$ almost $\cO_*$-Liouville-convex, and $\bE\coloneqq \Kr(\partial)$. Then for all $x \in \bE_*$, the following are equivalent
    \begin{enumerate}
        \item $\val(x-\bE) \subseteq \val(\bE)$;
        \item $\br_{\cO_*}(x/\bE) =  \br_{\cO_*}(\partial) \cdot \partial x$;
        \item $\partial(\br_{\cO_*}(\partial) \cdot \partial x) \prec \partial x$.
    \end{enumerate}
    \begin{proof}
        Note that $(3)\Leftrightarrow (2)$ follow directly from Lemma~\ref{lem:breadth-comparison}, as $(3)$ is equivalent to $\partial (\br_{\cO_*}(\partial /\partial x))\prec \partial x$ i.e.\ $\br_{\cO_*}(\partial /\partial x)\subseteq \partial^{-1}(x \co_*)$.
        
        $(1) \Leftrightarrow (2)$. Note that when $\cO_*\setminus \co_*$ is $\lder$-convex, $\val(x-\bE)\subseteq \val(\bE)$ if and only if $\lder(x-c) \subseteq \cO_*\lder(\cO_* \partial \cO_*)=\cO_* \partial \cO_*$ for all $c \in \bE$. Similarly if $\partial$ is of type (W) then by Lemma~\ref{lem:attempted_equivalence}, so $\val(x-\bE)\subseteq \val(\bE)$ if and only if $\rv(x-\bE)\subseteq \rv(\bE)$ if and only if for all $c \in \bE$, $\lder(x-c) \subseteq \lder(1+\co_*)=\cO_* \partial\co_*=\cO_*\partial \cO_*$.
        
        But $\lder(x-\bE)\subseteq \cO_*\partial\cO_*$ is in turn is equivalent to
        $x-c \succ \br_{\cO_*}(\partial/\partial x)$ for all $c\in \bE$, that is $\br_{\cO_*}(\partial/\partial x)\subseteq \bigcap_{c\in \bE} (x-c) \co_*=\br_{\cO_*}(x/\bE)$.
    \end{proof}
\end{lemma}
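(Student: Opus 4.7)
The plan is to split the three-way equivalence into the two pieces $(2)\Leftrightarrow(3)$ and $(1)\Leftrightarrow(2)$, using Lemma~\ref{lem:breadth-comparison} as the bridge between the combinatorics of $\br_{\cO_*}(\partial/\partial x)$ and the analytic conditions.

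For $(2)\Leftrightarrow(3)$, I would first note that unwinding the definition of $\br_{\cO_*}(\partial)$ gives $\br_{\cO_*}(\partial)\cdot\partial x=\br_{\cO_*}(\partial/\partial x)$. Since almost $\cO_*$-Liouville-convexity includes $\cO_*$-valuation-convexity, Lemma~\ref{lem:breadth-comparison} applies and yields
\[\br_{\cO_*}(x/\bE)=\br_{\cO_*}(\partial/\partial x)\cap\partial^{-1}(\co_*\partial x).\]
Hence (2), i.e.\ the equality $\br_{\cO_*}(x/\bE)=\br_{\cO_*}(\partial/\partial x)$, is equivalent to the inclusion $\br_{\cO_*}(\partial/\partial x)\subseteq\partial^{-1}(\co_*\partial x)$, which unpacks to $\partial(\br_{\cO_*}(\partial)\cdot\partial x)\prec\partial x$, i.e.\ (3).

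For $(1)\Leftrightarrow(2)$ the idea is to rewrite (1) as a statement about $\br_{\cO_*}(\partial/\partial x)$, exploiting the other half of almost $\cO_*$-Liouville-convexity, namely that $\cO_*\setminus\co_*$ is $\lder$-convex. Combined with the identity $\cO_*\lder(\cO_*\setminus\co_*)=\cO_*\partial\cO_*$ from Remark~\ref{rmk:basic_der-lder-O-o_equalities} and the $\bE^{\neq 0}$-invariance of $\lder$, this gives, for any $z\in\bE_*^{\neq 0}$,
\[\val(z)\in\val(\bE)\ \Longleftrightarrow\ \lder(z)\in\cO_*\partial\cO_*.\]
Applying this to $z=x-c$ and using $\lder(x-c)=\partial x/(x-c)$ (because $c$ is a constant), one rewrites (1) as: for every $c\in\bE$, $\partial x\in(x-c)\cO_*\partial\cO_*$, equivalently $x-c\notin\br_{\cO_*}(\partial/\partial x)$. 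Because $\br_{\cO_*}(\partial/\partial x)$ is a $\cO_*$-submodule, this is the same as $x-c\succ\br_{\cO_*}(\partial/\partial x)$. Hence (1) amounts to $\br_{\cO_*}(\partial/\partial x)\subseteq\bigcap_{c\in\bE}(x-c)\co_*=\br_{\cO_*}(x/\bE)$; combined with the reverse inclusion already supplied by Lemma~\ref{lem:breadth-comparison}, this is (2).

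The only genuinely delicate step is establishing the cleanly-stated equivalence $\val(z)\in\val(\bE)\Leftrightarrow\lder(z)\in\cO_*\partial\cO_*$ from $\lder$-convexity of $\cO_*\setminus\co_*$: one must rescale $z$ by an element of $\bE^{\neq 0}$ to land in $\cO_*\setminus\co_*$ when $\val(z)\in\val(\bE)$, and conversely invoke $\lder$-convexity to conclude that whenever $\lder(z)\in\cO_*\lder(\cO_*\setminus\co_*)$ one already has $\lder(z)\in\lder(\cO_*\setminus\co_*)$. Everything else is routine bookkeeping once Lemma~\ref{lem:breadth-comparison} is in place.
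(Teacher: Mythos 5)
Your proof is correct and follows essentially the same route as the paper: reduce $(2)\Leftrightarrow(3)$ to Lemma~\ref{lem:breadth-comparison} via the identity $\br_{\cO_*}(\partial)\cdot\partial x=\br_{\cO_*}(\partial/\partial x)$, and reduce $(1)\Leftrightarrow(2)$ to the equivalence $\val(z)\in\val(\bE)\Leftrightarrow\lder(z)\in\cO_*\partial\cO_*$ coming from $\lder$-convexity of $\cO_*\setminus\co_*$ together with Remark~\ref{rmk:basic_der-lder-O-o_equalities}. The only cosmetic difference is that the paper also sketches an alternative derivation of the key equivalence using the type (W) hypothesis and $\lder$-convexity of $1+\co_*$, which you omit; since almost $\cO_*$-Liouville-convexity already supplies $\lder$-convexity of $\cO_*\setminus\co_*$ by definition, the omission costs nothing.
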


\begin{remark}
    Note that it follows from Corollary~\ref{cor:der-types_and_absorbed-elements} that if there is an element $y\in \bE_*\setminus \bE$ with $\rv(y-\bE) \subseteq \rv(\bE)$, then in Lemma~\ref{lem:basic_equivalence} we only need $\partial$ to be weakly $\cO_*$-Liouville convex and (1) is in fact equivalent to $\rv(y-\bE)\subseteq \rv(\bE)$. 
\end{remark}

\subsection{Absorbed derivations} Note that (3) in Lemma~\ref{lem:basic_equivalence} can be restated as saying that the derivation $\partial\coloneqq \partial/\partial x$ (provided $\partial x \neq0$), is such that $\partial_x \br_{\cO_*}(\partial_x)\prec 1$. We thus abstract the notion of being the derivation ``at'' an absorbed element.

\begin{definition}\label{def:absorbed-type-der}
    Let $(\bE_*, \cO_*)$ be a valued field, $\partial$ be an almost $\cO_*$-Lioville-convex derivation on $\bE_*$. Let $\bE\coloneqq\Kr(\partial)$.
    We will say that $\partial$ is \emph{weakly $\cO_*$-absorbed} if $\partial(\br_{\cO_*}(\partial))\prec 1$. We will say that it is \emph{$\cO_*$-absorbed} if furthermore there is $x\in \bE_* \setminus \bE$ such that $\val(x-\bE)\subseteq \val(\bE)$. 
\end{definition}

\begin{remark}
    Note that $\partial\br_{\cO_*}(\partial)\prec 1$ is equivalent to 
    \begin{equation}\label{eq:absorbed-reformulation}\tag{A}
        \forall y \succ \partial \cO_*,\; \lder (y) \prec y
    \end{equation}
\end{remark}

\begin{remark}\label{rmk:w-absorbed_and_R_impplies_absorbed}
     If $\partial$ has type (R) and is weakly absorbed, then it is absorbed: indeed if $\cO_* \partial \cO_*= \cO_*\partial r \supsetneq \cO_* \partial \co_*$ then $r \in (\bE+\cO_*) \setminus (\bE+\co_*)$, so $\val(r-\bE)\subseteq \val(\bE)$.
\end{remark}

\begin{remark}\label{rmk:der_at_absorbed_is_absorbed}
    If all $\cO_*$-submodules are $\partial$-convex and $(\cO_* \setminus \co_*)$ is $\lder$-convex, and $x \in \bE_* \setminus \bE$ is such that $\val(x-\bE)\subseteq \val(\bE)$, then $\partial_x\coloneqq \partial/\partial x$ is absorbed by Lemma~\ref{lem:basic_equivalence} and Remark~\ref{rmk:convexity_invariance}.
\end{remark}

\begin{lemma}\label{lem:asbrobed-type_derivatives}
    Suppose that $\cO_*\setminus \co_*$ is $\lder$-convex and $\partial(\br_{\cO_*}(\partial))\prec 1$. Then for all $x \in \bE_*$, $\val(x)\in \val(\bE) \Leftrightarrow \partial(\br_{\cO_*}(\partial)\cdot x)\prec x$. In particular for all $a\in \bE_*$, we have that $a \partial$ is weakly absorbed if and only if $\val(a) \in \val(\bE)$.
    \begin{proof}
        Since for all $b \in \br_{\cO_*}(\partial)$, $\partial b\prec 1$, we have that the condition
        \[\forall b \in \br_{\cO_*}(\partial), \; x\succ \partial(b \cdot x) = (\partial b) \cdot x + b \cdot \partial x\]
        is in fact equivalent to 
        \[\forall b \in \br_{\cO_*}(\partial), \; x \succ b \cdot \partial x,\]
        that is, to $1/(\lder x) \succ \br_{\cO_*}(\partial)$, which, since $\cO_*\setminus \co_*$ is $\lder$-convex is equivalent to $\val(x)\in \val(\bE)$.

        For the last clause, note that it is trivial if $a=0$ and for $a\neq 0$, it follow from the the first part taking $x=1/a$.
    \end{proof}
\end{lemma}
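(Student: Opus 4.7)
The plan is to reduce the main equivalence in two independent steps. First, by the Leibniz rule combined with the hypothesis $\partial(\br_{\cO_*}(\partial))\prec 1$, I will turn $\partial(\br_{\cO_*}(\partial)\cdot x)\prec x$ into the orthogonality condition $\br_{\cO_*}(\partial)\cdot \lder(x)\prec 1$. Second, I will use the $\lder$-convexity of $\cO_*\setminus\co_*$ together with the identity $\cO_*\lder(\cO_*\setminus\co_*)=\cO_*\partial\cO_*$ from Remark~\ref{rmk:basic_der-lder-O-o_equalities} to translate that into $\val(x)\in\val(\bE)$. The ``in particular'' clause will then drop out by applying the main equivalence at $x\coloneqq 1/a$.

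For the first step I fix $b\in\br_{\cO_*}(\partial)$ and $x\neq 0$ and expand $\partial(bx)=(\partial b)x+b\,\partial x$. Since $\partial b\prec 1$ by hypothesis, the term $(\partial b)x$ is always $\prec x$, so by the non-archimedean triangle inequality the condition $\partial(bx)\prec x$ is equivalent to $b\,\partial x\prec x$, i.e.\ $b\,\lder(x)\prec 1$. Quantifying over $b$, the condition on $x$ becomes $\br_{\cO_*}(\partial)\cdot\lder(x)\prec 1$, or equivalently $1/\lder(x)\succ\br_{\cO_*}(\partial)$ (the case $\lder(x)=0$ means $x\in\bE$ and is trivial on both sides).

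For the second step I observe that $\br_{\cO_*}(\partial)=(\co_*:\partial\cO_*)$ is a convex $\cO_*$-submodule of $\bE_*$, so $1/\lder(x)\succ\br_{\cO_*}(\partial)$ if and only if $1/\lder(x)\notin\br_{\cO_*}(\partial)$, i.e.\ there is $r\in\cO_*$ with $\partial r/\lder(x)\not\prec 1$, i.e.\ $\lder(x)\in\cO_*\partial\cO_*$. By Remark~\ref{rmk:basic_der-lder-O-o_equalities}, $\cO_*\partial\cO_*=\cO_*\lder(\cO_*\setminus\co_*)$, and the assumed $\lder$-convexity of $\cO_*\setminus\co_*$ forces $\lder(x)\in\lder(\cO_*\setminus\co_*)$. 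This means $\lder(x)=\lder(y)$ for some $y\in\cO_*\setminus\co_*$, so $x/y\in\bE^{\neq 0}$ and $\val(x)=\val(y)\in\val(\bE)$; the converse is immediate.

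For the ``in particular'' clause the case $a=0$ is trivial; for $a\neq 0$, apply the just-proved equivalence to $x\coloneqq 1/a$: the left-hand side is $\val(a)\in\val(\bE)$, and for the right-hand side I compute $\br_{\cO_*}(a\partial)=a^{-1}\br_{\cO_*}(\partial)$ and observe that $\partial(b/a)\prec 1/a$ iff $a\,\partial(b/a)=(a\partial)(b/a)\prec 1$; so $\partial(\br_{\cO_*}(\partial)\cdot 1/a)\prec 1/a$ is exactly $(a\partial)(\br_{\cO_*}(a\partial))\prec 1$, which is the defining condition for weak absorption of $a\partial$. No step presents a substantive difficulty; the only care required is keeping straight how the polarity $B\mapsto(\co_*:B)$ interacts with $\partial\cO_*$, which is precisely what the identity from Remark~\ref{rmk:basic_der-lder-O-o_equalities} is designed to package.
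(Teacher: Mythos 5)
Your proof is correct and follows essentially the same route as the paper's: the same Leibniz-rule reduction of $\partial(bx)\prec x$ to $b\,\partial x\prec x$ (using $\partial b\prec 1$), the same repackaging as $1/\lder(x)\succ\br_{\cO_*}(\partial)$, and the same appeal to the $\lder$-convexity of $\cO_*\setminus\co_*$ to equate this with $\val(x)\in\val(\bE)$. You merely spell out two subpoints the paper leaves implicit, namely the ultrametric case analysis showing the per-$b$ equivalence and the use of the identity $\cO_*\partial\cO_*=\cO_*\lder(\cO_*\setminus\co_*)$ from Remark~\ref{rmk:basic_der-lder-O-o_equalities}, and make the $\br_{\cO_*}(a\partial)=a^{-1}\br_{\cO_*}(\partial)$ computation for the final clause explicit.
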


\begin{lemma}\label{lem:der_replacement}
    Suppose that $\partial \br(\partial)\prec 1$, $\cO_* \setminus \co_*$ is $\lder$-convex, $c \in \bE=\Kr(\partial)$, and $\val(y-c) \notin \val(\bE)$. Then $y-c \sim \partial y/\lder\partial y$.
    \begin{proof}
        Since $\val(y-c) \notin \val \bE$, we have $1/\lder(y-c)\in \br(\partial)$ so
        \[1\succ \partial\left(\frac{y-c}{\partial y}\right)=1- \frac{\partial^2y}{(\partial y)^2}(y-c)\]
        so $y-c \sim (\partial y)^2/\partial^2y\sim \partial y / \lder\partial y$.
    \end{proof}
\end{lemma}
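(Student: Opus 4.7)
The plan is to leverage the hypothesis $\val(y-c) \notin \val(\bE)$ together with $\lder$-convexity of $\cO_* \setminus \co_*$ to produce an element of $\br_{\cO_*}(\partial)$, then differentiate and read off the asymptotic equivalence.

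First, I would argue that $1/\lder(y-c) \in \br_{\cO_*}(\partial)$. Indeed, $\lder(y-c) \in \lder(\cO_*\setminus \co_*)$ would mean $(y-c)/a \in \cO_*\setminus \co_*$ for some constant $a \in \bE^{\neq 0}$, contradicting $\val(y-c)\notin \val(\bE)$. Hence $\lder(y-c)\notin \lder(\cO_*\setminus\co_*)$, and $\lder$-convexity of $\cO_*\setminus \co_*$ forces $\lder(y-c)\succ \lder(\cO_*\setminus \co_*)$. Passing to $\cO_*$-saturations and applying Remark~\ref{rmk:basic_der-lder-O-o_equalities} gives $\lder(y-c)\succ \cO_* \partial \cO_* \supseteq \partial \cO_*$, which is exactly $1/\lder(y-c) \in \br_{\cO_*}(\partial)$.

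Next, I would apply the absorption hypothesis $\partial\,\br_{\cO_*}(\partial) \prec 1$ to the specific element $1/\lder(y-c)$. Since $c$ is a constant, one has $1/\lder(y-c) = (y-c)/\partial y$, so the absorption gives
\[
1 \succ \partial\!\left(\frac{y-c}{\partial y}\right) = 1 - \frac{(y-c)\,\partial^2 y}{(\partial y)^2}.
\]
From $1 - u \prec 1$ one reads off $u \sim 1$, so $(y-c)\,\partial^2 y / (\partial y)^2 \sim 1$, i.e.\ $y-c \sim (\partial y)^2/\partial^2 y = \partial y /\lder \partial y$, which is the conclusion.

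The only nontrivial step is the first one: translating the hypothesis $\val(y-c)\notin\val(\bE)$ into membership in $\br_{\cO_*}(\partial)$. After that, the computation is a one-line application of the Leibniz rule. I do not expect any genuine obstacle; the whole proof should fit into a few lines.
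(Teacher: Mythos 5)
Your proposal is correct and follows the same route as the paper's proof: identify $1/\lder(y-c)=(y-c)/\partial y$ as an element of $\br_{\cO_*}(\partial)$, apply $\partial\,\br_{\cO_*}(\partial)\prec 1$, expand by Leibniz, and read off the equivalence. The only difference is that you spell out in detail the first step (why $\val(y-c)\notin\val(\bE)$ and $\lder$-convexity of $\cO_*\setminus\co_*$ put $1/\lder(y-c)$ in $\br_{\cO_*}(\partial)$), which the paper states without elaboration; your justification is correct, modulo the slightly unusual phrase ``passing to $\cO_*$-saturations,'' where you presumably mean passing to the $\cO_*$-module $\cO_*\cdot\lder(\cO_*\setminus\co_*)$ before invoking Remark~\ref{rmk:basic_der-lder-O-o_equalities}.
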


The following Corollary is a summary of previous results.

\begin{corollary}\label{cor:absorbed-der_sumup}
    Suppose that all $\partial$ is an almost absorbed derivation on $(\bE_*,\cO_*)$ and $\Kr(\partial)=\bE$. Then for all $y \in \bE_*$ the following are equivalent:
    \begin{enumerate}
        \item $\val(y-\bE)\subseteq \val(\bE)$;
        \item $\br_{\cO_*}(y/\bE)=\br_{\cO_*}(\partial )\cdot \partial y$;
        \item $\partial(\br_{\cO_*}(\partial) \cdot \partial y) \prec \partial y$ (that is, either $y \in \bE$ or $\partial/\partial y$ is absorbed);
        \item $\val(\partial y) \in \val (\bE)$.
    \end{enumerate}
    \begin{proof}
        The equivalences between (1), (2) and (3) are in Lemma~\ref{lem:basic_equivalence}. The equivalence of (3) and (4) is Lemma~\ref{lem:asbrobed-type_derivatives}.
    \end{proof}
\end{corollary}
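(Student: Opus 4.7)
The plan is simple since this corollary is essentially a repackaging of two earlier lemmas into a single criterion. Both Lemma~\ref{lem:basic_equivalence} and Lemma~\ref{lem:asbrobed-type_derivatives} have hypotheses that are subsumed by the assumption that $\partial$ is an absorbed derivation: by Definition~\ref{def:absorbed-type-der}, an absorbed derivation is in particular almost $\cO_*$-Liouville-convex (so all $\cO_*$-submodules are $\partial$-convex and $\cO_* \setminus \co_*$ is $\lder$-convex) and satisfies $\partial(\br_{\cO_*}(\partial)) \prec 1$.

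First, invoke Lemma~\ref{lem:basic_equivalence} directly: conditions (1), (2), (3) of the corollary are verbatim the three conditions of that lemma. Since $\partial$ is almost $\cO_*$-Liouville-convex, the equivalence (1) $\Leftrightarrow$ (2) $\Leftrightarrow$ (3) follows immediately.

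Then for (3) $\Leftrightarrow$ (4), apply Lemma~\ref{lem:asbrobed-type_derivatives} with $x := \partial y$. Its hypotheses — $\cO_* \setminus \co_*$ is $\lder$-convex and $\partial(\br_{\cO_*}(\partial)) \prec 1$ — both hold since $\partial$ is absorbed, and the conclusion $\val(\partial y) \in \val(\bE) \Leftrightarrow \partial(\br_{\cO_*}(\partial) \cdot \partial y) \prec \partial y$ is precisely (4) $\Leftrightarrow$ (3). The parenthetical reading of (3) as ``$y \in \bE$ or $\partial/\partial y$ is absorbed'' then follows because, when $\partial y \neq 0$, rescaling $\partial$ by $1/\partial y$ preserves the convexity properties of $\partial$ (Remark~\ref{rmk:convexity_invariance}) and condition (3) becomes exactly $(\partial/\partial y)(\br_{\cO_*}(\partial/\partial y)) \prec 1$, i.e.\ the weakly absorbed condition for $\partial/\partial y$; the existence of an element witnessing strong absorption is supplied by $y$ itself via the equivalence with (1). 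There is no real obstacle here: the work has already been done in the two lemmas, and the corollary simply records their combined content in the clean form that will be used later.
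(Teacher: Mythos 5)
Your proof is correct and takes essentially the same route as the paper's: (1) ⟺ (2) ⟺ (3) directly from Lemma~\ref{lem:basic_equivalence}, and (3) ⟺ (4) by specializing Lemma~\ref{lem:asbrobed-type_derivatives} to $x = \partial y$, with the parenthetical in (3) justified via Remark~\ref{rmk:convexity_invariance}. The only thing you added that the paper leaves implicit is the unpacking of which hypotheses of the two lemmas follow from the ``absorbed'' assumption via Definition~\ref{def:absorbed-type-der}; this is a harmless and accurate elaboration.
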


\subsection{Asymptotic Integration} Recall that an \emph{asymptotic integral} for an element $y$ of $(\bE_*,\cO_*, \partial)$ is an element $x$ such that $y\sim \partial x$. We call \emph{weak asymptotic integral} of $y$ an element $x$ such that $y \asymp \partial x$. Also recall that $\partial$ has \emph{asymptotic integration} (resp.\ weak ---) when $\rv(\partial(\bE_*))=\rv(\bE_*)$ (resp.\ $\val(\partial \bE_*) \subseteq \val(\bE)$).

\smallskip

The following Lemma is an adaptation of \cite[Thm.~1]{rosenlicht1983rank}.

\begin{lemma}\label{lem:simil_rosenlicht}
    Suppose that $y\in \bE_*$ has no asymptotic integrals. Then:
    \begin{enumerate}
        \item for all $h\notin \bE$ we have $h^\dagger \preceq h'^\dagger - y^\dagger$;
        \item if $\cO_* \setminus \co_*$ is $\lder$-convex then for all $u$ with $\val(u) \notin \val(\bE)$ we have $u^\dagger \sim u'^\dagger - y^\dagger$. 
    \end{enumerate}
    \begin{proof}
        Note that $(y/\lder(h))'\sim y$ if and only if $y+(y/h')'h\sim y$, that is if 
        \[y\succ (y/h')'\cdot h = y \cdot (y/h')^\dagger /h^\dagger\]
        i.e.\ $h^\dagger \succ (y/h')^\dagger=y^\dagger -h'^\dagger$.
        
        Consider $h=y/u^\dagger$. Then for $y$ to have an asymptotic integral it suffices that $u^\dagger \succ (y/(y/u^\dagger)')'$.
        But we know that if $\val(u) \notin \val(\bE)$, then $u^\dagger\succ \partial \cO_*$,
        so it would suffice that there is $u$ with $\val(u) \notin \val(\bE)$ such that $f \preceq (f/u^\dagger)'$, i.e.\ 
        \[u^\dagger \preceq y^\dagger-u^{\dagger\dagger}=u^\dagger-(u'^\dagger-y^\dagger),\]
        that is $u^\dagger \not\sim u'^\dagger-y^\dagger$.
        So if $y$ has no asymptotic integral, then in fact it should be 
        \[(u'/y)^\dagger=u'^\dagger - y^\dagger \sim u^\dagger\]
        for all $u$ with $\val(u) \notin \val(\bE)$.     
    \end{proof}
\end{lemma}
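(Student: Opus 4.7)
The plan is to prove (1) by contrapositive via an explicit construction of an asymptotic integral of $y$, and then to derive (2) from (1) by a bootstrapping argument that leverages the $\lder$-convexity hypothesis.

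For (1), I would show that if some $h \notin \bE$ satisfies the reverse strict inequality $h^\dagger \succ h'^\dagger - y^\dagger$, then $y$ in fact has an asymptotic integral, contradicting the standing hypothesis. The natural candidate, suggested by the form of the inequality, is $x := yh/h' = y/h^\dagger$. A direct Leibniz/quotient computation gives
\[
\partial x \;=\; \frac{y}{h^\dagger}\bigl(y^\dagger - h^{\dagger\dagger}\bigr),
\]
and since $h' = h\cdot h^\dagger$ one has the identity $h'^\dagger = h^\dagger + h^{\dagger\dagger}$, which rearranges the previous display to
\[
\frac{\partial x}{y} \;=\; 1 - \frac{h'^\dagger - y^\dagger}{h^\dagger}.
\]
Under the assumed hypothesis the subtracted term is $\prec 1$, so $\partial x \sim y$, providing the contradictory asymptotic integral.

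For (2), applying (1) with $h := u$ yields $u^\dagger \preceq u'^\dagger - y^\dagger$ immediately; the task is to upgrade $\preceq$ to $\sim$. Assuming for contradiction that the inequality is strict, I would reuse the template of (1) with the auxiliary choice $h := y/u^\dagger$, which has $h^\dagger = y^\dagger - u^{\dagger\dagger}$. Unwinding the sufficient condition from (1) for the resulting candidate $y/h^\dagger$ to be an asymptotic integral of $y$ reduces the task to a comparison of second-order log-derivatives. Here the hypothesis that $\cO_*\setminus\co_*$ is $\lder$-convex enters decisively: combined with $\val(u)\notin\val(\bE)$, it forces $u^\dagger \succ \partial\cO_*$, and this size control is exactly what is needed to verify the required comparison and produce a contradictory asymptotic integral of $y$.

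The hard part will be the second step of (2): extracting the strict asymptotic equivalence $u^\dagger \sim u'^\dagger - y^\dagger$ from the mere $\preceq$ delivered by (1). Executing this requires careful bookkeeping of several second-order log-derivatives, and the $\lder$-convexity hypothesis is essential precisely because it ensures that $u^\dagger$ dominates not only $\partial\cO_*$ but also the various correction terms that appear when expanding $\partial(y/u^\dagger)$ and iterating. Without this control, the strict inequality $u^\dagger \prec u'^\dagger - y^\dagger$ would remain consistent with the absence of an asymptotic integral, and the contrapositive argument would fail.
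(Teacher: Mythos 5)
Your proof of (1) is correct and coincides with the paper's: the asymptotic integral candidate is $y/\lder(h)=y/h^\dagger$, and the Leibniz computation you carry out is exactly the paper's rearrangement of the same identity. For (2) you also pick the right auxiliary element $h=y/u^\dagger$, but the plan has a genuine gap at the decisive step. You propose to "reuse the template of (1)", i.e.\ to verify the criterion $h^\dagger\succ h'^\dagger-y^\dagger$ directly for this $h$. Since $h'^\dagger-y^\dagger=h^{\dagger\dagger}-u^{\dagger\dagger}$, this requires controlling $h^{\dagger\dagger}=\lder(y^\dagger-u^{\dagger\dagger})$, a second-order log-derivative. But the hypothesis that $\cO_*\setminus\co_*$ is $\lder$-convex only gives you that $u^\dagger\succ\partial\cO_*$, a bound on the \emph{first} log-derivative of elements with valuation outside $\val(\bE)$; it says nothing directly about $h^{\dagger\dagger}$. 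The assertion that $u^\dagger$ "dominates the various correction terms'' is precisely what would need to be argued, and I do not see how the bookkeeping you invoke closes this.

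The paper avoids the second-order comparison entirely by choosing the other, equivalent form of the criterion from (1), the one with a plain derivative rather than a log-derivative: $y\succ(y/h')'\cdot h$, which for $h=y/u^\dagger$ reads $u^\dagger\succ(y/(y/u^\dagger)')'$. Then the $\lder$-convexity enters cleanly: $u^\dagger\not\sim u'^\dagger-y^\dagger$ together with (1) is equivalent to $u^\dagger\preceq y^\dagger-u^{\dagger\dagger}$, i.e.\ $y/(y/u^\dagger)'\in\cO_*$, so $(y/(y/u^\dagger)')'\in\partial\cO_*\prec u^\dagger$; no $h^{\dagger\dagger}$ ever appears. Note also that this uniformly handles both ways $\sim$ can fail given $\preceq$: the strictly dominated case $u^\dagger\prec u'^\dagger-y^\dagger$ and the case $u^\dagger\asymp u'^\dagger-y^\dagger$ with non-unit ratio. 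Your "assuming the inequality is strict'' reads as addressing only the first, which would only upgrade $\preceq$ to $\asymp$, not to $\sim$.
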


\begin{corollary}\label{cor:asymptotic_integration}
    If $\partial(\br(\partial))\prec 1$ and $\cO_* \setminus \co_*$ is $\lder$-convex, then every $y$ with with $\val(y)\notin \val(\bE)$ has an asymptotic integral.
    Furthermore
    \begin{enumerate}
        \item if $\partial$ is not of type (R), and there is $x$ such that $\partial x \sim 1$, then all $y \in \bE_*$ have an asymptotic integral, in particular $\partial$ is absorbed if and only if it has asymptotic integration and all $\cO_*$-modules are $\partial$-convex;
        \item if $\partial$ is of type (R), with $\cO_*\partial r=\cO_* \partial \cO_*$, then $s \in \cO_*\setminus \co_*$ has an asymptotic integral if and only if $\res(s)$ it is in the image of $\res(\partial/\partial r)$.
    \end{enumerate}
    \begin{proof}
        Note that if $\partial \br(\partial)\prec 1$ and $\val(y)\notin \val(\bE)$, then by Lemma~\ref{lem:der_replacement} $y\partial^{2}y \sim (\partial y)^2$. But then 
        \[\partial (y^2/\partial y)= y \left(1-\frac{y \partial^2y}{(\partial y)^2}\right)\sim y.\] 
        
        Now suppose instead $\val(y)\in \val(\bE)$.
        
        (1) If $\partial$ is not of type (W), then this implies $\rv(f) \in \val(\bE)$, so if there is furthermore $x$ with $\partial x \sim 1$, all $y$s have an asymptotic integral.
        
        If $\partial$ is absorbed, then by definition all $\cO_*$-modules are $\partial$-convex and there is $x$ such that $\partial x \neq 0$, so $\partial \br(\partial/\partial x) \prec \partial x$ and $\partial/\partial x$ has asymptotic integration, whence also $\partial$ has asymptotic integration.
        
        For the converse it suffices to observe that if $\partial$ is weakly absorbed and has asymptotic integration, then any $x\in \bE_* \setminus \bE$ with $\val(\partial x) \in \val(\bE)$ (so e.g.\ $x$ with $\partial x \sim 1$) is such that $\val(x-\bE)\subseteq \val(\bE)$.

        (2) If instead $\partial$ is of type (R), and $\cO_*\partial \cO_*=\co_*\partial r$, then $(\res(\bE_*), \res(\partial/\partial r))$ is a differential field with field of constants $\res(\bE)$.
    \end{proof}
\end{corollary}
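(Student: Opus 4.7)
The plan centers on Lemma~\ref{lem:der_replacement} together with a direct quotient-rule computation. For the main claim, given $y$ with $\val(y)\notin\val(\bE)$ (so in particular $\partial y\neq 0$), Lemma~\ref{lem:der_replacement} applied with $c=0$ yields $y\sim\partial y/\lder(\partial y)$, equivalently $y\,\partial^2 y\sim(\partial y)^2$, that is $y\,\lder(\partial y)/\partial y\sim 1$. I take $x:=y^2/\partial y$ as the candidate asymptotic integral; the quotient rule gives
\[
\partial x \;=\; 2y \;-\; \frac{y^2\,\partial^2 y}{(\partial y)^2} \;=\; y\!\left(2 \;-\; \frac{y\,\partial^2 y}{(\partial y)^2}\right),
\]
and the parenthesized factor is $\sim 1$ by the asymptotic above, so $\partial x\sim y$.

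For part (1), the main-claim argument leaves only $\val(y)\in\val(\bE)$. Scaling by a nonzero constant (harmless since $\bE=\Kr(\partial)$) reduces to $y\asymp 1$. If $y\in\bE+\co_*$, write $y=c+\varepsilon$ with $c\in\bE^{\neq 0}$ and $\varepsilon\in\co_*$; then $\partial(cx)=c\,\partial x\sim c\sim y$. The remaining subcase $y\in\cO_*\setminus(\bE+\co_*)$ would require $\res(y)\notin\res(\bE)$, and this is where ``not type (R)'' enters: using that $\partial x\sim 1$ and weak absorption give $\co_*$ is $\partial$-convex, Lemma~\ref{lem:attempted_equivalence} forces $\res(\bE_*)=\res(\bE)$ in the absence of type (R), excluding this subcase. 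The ``in particular'' clause then follows from Corollary~\ref{cor:absorbed-der_sumup}: applied to any $x$ with $\partial x\sim 1$, it gives that $x$ is $\cO_*$-absorbed, so once one has asymptotic integration, the remaining content of ``absorbed'' is exactly the valuation-convexity (i.e.\ all $\cO_*$-modules being $\partial$-convex).

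For part (2), pick $r$ with $\cO_*\partial r=\cO_*\partial\cO_*$ and $\partial r\in\partial\cO_*\setminus\partial\co_*$; in type (R) this can be arranged with $\partial r\sim 1$. Then $\partial_r:=\partial/\partial r$ sends $\cO_*\to\cO_*$ and $\co_*\to\co_*$, so it descends to a derivation $\overline\partial_r$ on $\res(\bE_*)$ with $\res(\bE)\subseteq\Kr(\overline\partial_r)$. For $s\in\cO_*\setminus\co_*$, the relation $\partial z\sim s$ is equivalent (using $\partial r\sim 1$) to $\partial_r z\sim s$, which at the residue level reads $\overline\partial_r(\res z)=\res(s)$, proving both directions.

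The main obstacle I anticipate is the subcase $\res(y)\notin\res(\bE)$ in (1): ruling it out cleanly (or directly producing an asymptotic integral there) requires teasing out how the failure of type (R), the weak absorption $\partial\,\br(\partial)\prec 1$, and the $\lder$-convexity of $\cO_*\setminus\co_*$ conspire to collapse the residue extension; part (2) is by contrast essentially bookkeeping once $\partial r\sim 1$ has been arranged.
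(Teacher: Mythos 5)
Your argument for the main claim and for part (1) follows the paper's route. For $\val(y)\notin\val(\bE)$ both arguments apply Lemma~\ref{lem:der_replacement} with $c=0$ to obtain $y\,\partial^2 y\sim(\partial y)^2$ and then present $y^2/\partial y$ as the asymptotic integral; your quotient-rule computation $\partial(y^2/\partial y)=y\bigl(2-y\partial^2 y/(\partial y)^2\bigr)$ is the correct one (the paper's display has a harmless $1$ in place of the $2$), and the factor is $\sim 1$, not merely $\asymp 1$, precisely because $a\sim 1$ implies $2-a\sim 1$. For $\val(y)\in\val(\bE)$ under ``not type (R)'', both you and the paper invoke Lemma~\ref{lem:attempted_equivalence} to force $\res(\bE_*)=\res(\bE)$ so that $y\sim c$ for some constant $c$. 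One inaccuracy to fix: the $\partial$-convexity of $\co_*$ is not a consequence of ``weak absorption and $\partial x\sim 1$''; it is part of the almost $\cO_*$-Liouville-convexity that Definition~\ref{def:absorbed-type-der} builds into the notion of a weakly absorbed derivation, and you should cite that standing assumption rather than derive it. Your handling of the ``in particular'' equivalence via Corollary~\ref{cor:absorbed-der_sumup} is correct.

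The genuine gap is in part (2). The step ``in type (R) this can be arranged with $\partial r\sim 1$'' is unjustified and, in fact, not always achievable. What you can get, from Remark~\ref{rmk:w-absorbed_and_R_impplies_absorbed} and Corollary~\ref{cor:absorbed-der_sumup}, is $\val(\partial r)\in\val(\bE)$, hence after rescaling by a constant $\partial r\asymp 1$; but in type (R) one has $\res(\bE_*)\supsetneq\res(\bE)$, so $\partial r\asymp 1$ does not upgrade to $\partial r\sim 1$. Concretely, take $\bE_*=k(x)(\!(t)\!)$ with the $t$-adic valuation ring $\cO_*=k(x)[[t]]$ and $\partial = x\,\partial/\partial x$. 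Then $\Kr(\partial)=k(\!(t)\!)$, $\partial$ is weakly absorbed of type (R), and there is no $r\in\cO_*$ with both $\cO_*\partial r=\cO_*\partial\cO_*$ and $\partial r\sim 1$, since $xf'=1$ has no solution $f\in k(x)$. With $r=x$ (so $\partial r=x$ and $\partial/\partial r=\partial/\partial x$) one finds $\res(1)=1=\res(\partial/\partial x)(\res(x))$ in the image of the residue derivation, yet $s=1$ has no asymptotic integral for $\partial$. The reason is that, without the normalization, the translation of $\partial z\sim s$ to the residue field reads $\res(s)/\res(\partial r)\in\operatorname{im}(\res(\partial/\partial r))$ rather than $\res(s)\in\operatorname{im}(\res(\partial/\partial r))$, and multiplying by $\res(\partial r)$ does not preserve the image of a derivation. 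So your proof of (2) needs the mismatch factor $\res(\partial r)$ to be accounted for; as stated it does not go through. (For reference, the paper's own proof of (2) only records that $\res(\partial/\partial r)$ is a nontrivial residue derivation with constant field $\res(\bE)$ and does not spell out the equivalence either; the statement as written appears to presuppose a normalization of $r$ that is not always available.)
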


\begin{remark}
    If $\partial$ is weakly absorbed, then by Lemmas~\ref{lem:basic_equivalence} and \ref{lem:asbrobed-type_derivatives} we have
    \[\val(\partial y) \in \val(\bE)\Leftrightarrow \partial(\br(\partial) \partial y) \prec \partial y \Leftrightarrow \val(y-\bE)\subseteq \val(\bE).\]
    So $\{x : \val(x) \notin \val(\bE)\}$, not only has asymptotic integration but is closed under asymptotic integration, in the sense that if $\val(y) \notin \val(\bE)$, then there is $y$ with $\val(x) \notin \val(\bE)$ such that $\partial y \sim x$.
\end{remark}

\subsection{Logarithmic height over a derivation} Throughout this section $(\bE_*, \cO_*)$ will denote a valued field and $\partial:\bE_* \to \bE_*$ an almost $\cO_*$-Liouville-convex derivation with constants $\Kr(\partial)=\bE$. We are also going to assume that for some $n\in \bN^{>1}$ $n\val(\co_*)\cup \val(\co_*\cap \bE)$ is coinitial in $\val(\co_*)$.

As mentioned in the introduction we are ultimately interested in the problem of understanding, given a compatible exponential on $\bE_*$, when a non-absorbed element can be sent to an absorbed one by performing logarithms, sign-changes and translations by constants. We give a slightly more general treatment in a setting where we might not have an exponential.

\begin{definition}\label{def:log-scale}
    Given a valued field $(\bE_*, \cO_*)$, I call \emph{$\partial$-logarithmic scale} (resp.\ \emph{approximate $\partial$-logarithmic scale}) a sequence $(y_n)_{n<m}$ with $m \le \omega$ such that $\partial y_{n}= \partial y_{n+1}\cdot(y_n - \bE)$ (resp.\ $\rv(\partial y_{n}) \in \rv((y_n - \bE)\partial y_{n+1})$) for all $n$ with $n+1<m$.
    I say that an (approximate) logarithmic scale $(y_n)_{n<m}$ is \emph{essential} if for all $n<m$, there is $c_n\in \bE$ such that $\val(y_n-c_n)\notin \val(\bE)$ and for $n+1<m$, $\partial y_n \sim (y_n-c_n)\partial y_{n+1}$.
    I call \emph{$\lder$-sequence} a sequence $(a_n)_{n<m}$ such that $a_{n}\sim \lder(a_n)$ and $\val(a_n)\notin \val(\bE)$ for all $n<m$.
\end{definition}

\begin{remark}
    The name logarithmic scale is borrowed from \cite[Def.~4.2]{dries2002minimal}.
\end{remark}

\begin{remark}\label{rmk:maximal_essential_log_scales}
    Note that an approximate essential logarithmic scale $(y_n)_{n<m}$ is maximal if and only if either $m=\omega$ or for all $y_m$ such that $\partial y_m \sim \lder(y_{m-1}-c_{m-1})$ we have $\val(y_{m}-\bE)\subseteq \val(\bE)$.
\end{remark}

\begin{lemma}\label{lem:approximate_to_exact_log_scale}
    Suppose that $\lder(\bE_*^{\neq0})\subseteq \partial(\bE_*)$. Then for every approximate $\partial$-logarithmic scale $(y_n)_{n<m}$, there is a $\partial$-logarithmic scale $(y_n)_{n<m}$ such that $\partial z_n\sim \partial y_n$ for all $n<m$.
    \begin{proof}
        Suppose that $\partial y_n \sim (y_n-c_n) \partial y_{n+1}$ with $c_n \in \bE$ for all $n<m-1$.
        Choose any $z_0$ such that $\partial z_0 \sim \partial y_0$ and $z_0-c_0 \sim y_0-c_0$ (e.g.\ $z_0=y_0$).
        We are going to show that for $n<m-1$, given $z_n$ such that $\partial z_n \sim \partial y_n$ and $z_n-c_n \sim y_n-c_n$, we can find $z_{n+1}$ such that $\partial z_{n+1} = \lder(z_{n}-c_n)$ and $z_{n+1}-c_{n+1} \sim y_{n+1}-c_{n+1}$.
        By the hypothesis on $\bE_*$, we can find $\tilde{z}$ such that $\partial \tilde{z}= \lder(z_{n}-c_n)$. Note that since $\lder(z_n-c_n)\sim \lder (y_n-c_n) \sim \partial y_{n+1}$, we then have $\partial \tilde{z}-\partial y_{n+1}\prec \partial y_{n+1}$.
        Now note that by Lemma~\ref{lem:module-derivatives}(4) we have
        \[\partial (\tilde{z}-y_{n+1}) \in \co_* \partial y_{n+1}\subseteq \cO_*\partial ((y_{n+1}-c_{n+1}) \co_*),\]
        whence by $\partial$-convexity of $\co_* (y_{n+1}-c_{n+1})$ we have $\partial(\tilde{z} - y_{n+1}) \in \partial((y_{n+1}-c_{n+1})\co_*)$, so for some $z_{n+1} \in \tilde{z} + \bE$ we have $z_{n+1}-y_{n+1} \prec y_{n+1}-c_{n+1}$ and thus $z_{n+1}-c_{n+1}\sim y_{n+1}-c_{n+1}$.
    \end{proof}
\end{lemma}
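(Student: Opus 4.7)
The plan is to build the exact scale $(z_n)_{n<m}$ by recursion on $n$, maintaining two invariants at stage $n$: \textbf{(i)} $\partial z_n \sim \partial y_n$, and \textbf{(ii)} $z_n - c_n \sim y_n - c_n$, where $c_n \in \bE$ is a constant witnessing the approximate-scale condition $\partial y_n \sim (y_n - c_n)\, \partial y_{n+1}$. The base case $z_0 \coloneqq y_0$ is immediate. For the inductive step, assume $z_n$ has been chosen so as to satisfy (i) and (ii); I will produce $z_{n+1}$ with $\partial z_{n+1} = \lder(z_n - c_n)$, which is precisely the exact-scale relation $\partial z_n = (z_n - c_n)\partial z_{n+1}$. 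Invariant (ii) for $z_n$ together with the approximate-scale identity then gives $\lder(z_n - c_n) \sim \lder(y_n - c_n) \sim \partial y_{n+1}$, which is invariant (i) at stage $n+1$.

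To produce such a $z_{n+1}$, I use the standing hypothesis $\lder(\bE_*^{\neq 0}) \subseteq \partial \bE_*$ to pick some $\tilde z$ with $\partial \tilde z = \lder(z_n - c_n)$. It remains only to modify $\tilde z$ by an additive constant so that invariant (ii) at stage $n+1$ holds, i.e.\ so that $\tilde z - a - y_{n+1} \prec y_{n+1} - c_{n+1}$ for some $a \in \bE$. By construction $\partial(\tilde z - y_{n+1}) = \lder(z_n - c_n) - \partial y_{n+1} \prec \partial y_{n+1}$. Invoking Lemma~\ref{lem:module-derivatives}(4), whose coinitiality hypothesis on $\val(\co_*)$ is in force throughout the subsection, I conclude that $\partial(\tilde z - y_{n+1}) \in \co_* \, \partial y_{n+1}$. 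A routine computation via Lemma~\ref{lem:module-derivatives}(1) (together with $\partial y_{n+1} \sim \lder(y_n - c_n)$) shows $\co_* \, \partial y_{n+1} \subseteq \cO_* \, \partial\bigl((y_{n+1} - c_{n+1})\co_*\bigr)$.

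The almost $\cO_*$-Liouville-convexity of $\partial$ then enters decisively: every $\cO_*$-submodule is $\partial$-convex, so the module $(y_{n+1} - c_{n+1})\co_*$ is $\partial$-convex, and the valuation bound $\partial(\tilde z - y_{n+1}) \in \cO_* \, \partial\bigl((y_{n+1} - c_{n+1})\co_*\bigr)$ upgrades to $\partial(\tilde z - y_{n+1}) \in \partial\bigl((y_{n+1} - c_{n+1})\co_*\bigr)$. Choosing $a \in \bE$ so that $\tilde z - y_{n+1} - a \in (y_{n+1} - c_{n+1})\co_*$, and setting $z_{n+1} \coloneqq \tilde z - a$, both invariants are secured.

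The main obstacle is the valuation-to-antiderivative conversion in the middle paragraph: the approximate-scale hypothesis only controls $\partial$-asymptotics, whereas the exact scale requires a literal identity $\partial z_{n+1} = \lder(z_n - c_n)$. Converting the first into the second is what forces the standing assumptions of this subsection (almost $\cO_*$-Liouville-convexity and the coinitiality of $n\val(\co_*) \cup \val(\co_* \cap \bE)$ in $\val(\co_*)$) to do real work: without them one could not guarantee both that $\partial(\tilde z - y_{n+1})$ lies in the prescribed $\cO_*$-module of derivatives and that this containment can be realized without enlarging the module. Once the inductive step is in place the construction proceeds mechanically for all $n < m$.
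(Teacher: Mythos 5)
Your proof follows the paper's argument step for step: same recursive construction, same two invariants, same trick of taking $\tilde z$ with $\partial\tilde z=\lder(z_n-c_n)$ and then correcting by a constant using $\partial$-convexity. The substance is correct; two small citation/wording slips are worth flagging. First, you invoke Lemma~\ref{lem:module-derivatives}(4) to pass from $\partial(\tilde z-y_{n+1})\prec\partial y_{n+1}$ to membership in $\co_*\partial y_{n+1}$, but that is just the definition of $\prec$; (4) is instead exactly what backs the containment $\co_*\partial y_{n+1}\subseteq\cO_*\partial\bigl((y_{n+1}-c_{n+1})\co_*\bigr)$, which you attribute to (1). Note (1) only addresses $a\cO_*$, not $a\co_*$, so it does not suffice there; swap the citations. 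Second, you say invariant (ii) ``together with the approximate-scale identity'' already gives $\lder(z_n-c_n)\sim\lder(y_n-c_n)$; in fact this needs \emph{both} invariants, via $\lder(z_n-c_n)=\partial z_n/(z_n-c_n)\sim\partial y_n/(y_n-c_n)=\lder(y_n-c_n)$. Since you do carry both invariants, this is only a matter of phrasing, not a gap.
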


\begin{lemma}\label{lem:weak_der_replacement}
    Suppose that for some $c \in \bE$, $\rv(y-c)\notin \rv(\bE)$ and $\partial y \sim \partial z$, then there is $d \in \bE$ such that $y-c\sim z-d$.
    Analogously if $\val(y-c) \notin \val(\bE)$ and $\partial y \asymp \partial z$, then there is $d$ such that $y-c \asymp z-d$. 
    \begin{proof}
        Note that since $\rv(y-c)\notin \rv(\bE)$, we have
        by $\lder$-convexity of $(1+\co_*)$, that $\lder(y-c) \succ \partial \co_*$, that is $\partial y \succ (y-c) \partial \co_*$.
        Thus by Lemma~\ref{lem:module-derivatives}(4), we have
        \[\cO_*\partial((y-c) \co_*)= 
        \co_* \partial y + (y-c)\cO_*\partial \co_* = \co_*\partial y.\]
        But then, since $\partial(y-z) \prec \partial y$, by $\partial$-convexity of $\co_* (y-c)$, we deduce $\partial(y-z) \in \partial((y-c)\co_*)$ and thus for some constant $b-c\in \bE$ we have $y-z+b-c\prec y-c$, that is $z-b \sim y-c$.
    \end{proof}
\end{lemma}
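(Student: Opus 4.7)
The plan is to use $\partial$-convexity of the $\cO_*$-submodule $(y-c)\co_*$ (respectively $(y-c)\cO_*$), combined with the formulas of Lemma~\ref{lem:module-derivatives}, to realize $\partial(y-z)$ as $\partial w$ for a suitable $w$ in that module; a constant translation by $d$ then converts $w$ into the desired comparison $y-c \sim z-d$ (respectively $\asymp$).

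For the first claim I would first derive $\lder(y-c)\succ \partial\co_*$ from $\rv(y-c)\notin \rv(\bE)$ using the $\lder$-convexity of $1+\co_*$ built into almost $\cO_*$-Liouville-convexity: indeed $y-c\notin \bE^{\neq 0}(1+\co_*)$ gives $\lder(y-c)\notin \cO_*\partial\co_*$, and $\lder$-convexity upgrades this to strict dominance. Then Lemma~\ref{lem:module-derivatives}(4) yields
\[\partial((y-c)\co_*) = \co_*\partial y + (y-c)\cO_*\partial\co_* = \co_*\partial y,\]
the last equality because $(y-c)\partial\co_*\prec \partial y$. From $\partial y\sim \partial z$ we have $\partial(y-z) \prec \partial y$, so $\partial(y-z) \in \co_*\partial y = \cO_*\partial((y-c)\co_*)$; the $\partial$-convexity of $(y-c)\co_*$ then promotes this to $\partial(y-z) \in \partial((y-c)\co_*)$. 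Picking $w \in (y-c)\co_*$ with $\partial w = \partial(y-z)$ and $d \in \bE$ with $y-z-w = d-c$, we obtain $z-d = (y-c)-w\sim y-c$, since $w \prec y-c$.

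For the second claim I would run the analogous argument with $\cO_*$ in place of $\co_*$: the hypothesis $\val(y-c)\notin\val(\bE)$ together with $\lder$-convexity of $\cO_*\setminus\co_*$ gives $\lder(y-c)\succ\partial\cO_*$, and Lemma~\ref{lem:module-derivatives}(1) yields $\cO_*\partial((y-c)\cO_*) = \cO_*\partial y$. The weaker hypothesis $\partial y\asymp \partial z$ only gives $\partial(y-z)\preceq \partial y$, but this still lands $\partial(y-z)$ in $\cO_*\partial y$ and, by $\partial$-convexity of $(y-c)\cO_*$, in $\partial((y-c)\cO_*)$. Choosing $w$ and $d$ as before produces $z-d = (y-c) - w$ with $w\in (y-c)\cO_*$, so $z-d\preceq y-c$.

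The main obstacle is ruling out $w\sim y-c$, which would give only $z-d\prec y-c$ rather than the desired $z-d \asymp y-c$. The hypothesis $\partial y\asymp \partial z$ resolves this: if $w\sim y-c$, then since $y-z = w + (d-c)$ with $d-c\in \bE$ strictly dominated by $y-c$ (because $\val(y-c)\notin\val(\bE)$), we would have $y-z\sim y-c$, hence $\partial(y-z)\sim \partial y$, and consequently $\partial z = \partial y - \partial(y-z)\prec \partial y$, contradicting $\partial y \asymp \partial z$. Therefore $w\not\sim y-c$ and $z-d = (y-c)-w \asymp y-c$ as required.
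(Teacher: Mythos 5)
Your argument follows the same route as the paper: apply Lemma~\ref{lem:module-derivatives} to compute $\cO_*\partial((y-c)\co_*)$ (respectively $\cO_*\partial((y-c)\cO_*)$), observe that $\partial(y-z)$ lands in that module, and use $\partial$-convexity to realize it as $\partial w$ for some $w$ in the module. For the first claim this reproduces the paper's proof; note however a sign slip in the choice of $d$ --- you want $y-z-w = c-d$, not $d-c$, if you are to conclude $z-d = (y-c)-w$.

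For the second claim the paper only says ``analogously'', and you correctly noticed that the analogy is not entirely automatic: $w \in (y-c)\cO_*$ no longer forces $w \prec y-c$, so one must rule out $w \sim y-c$. Identifying this obstacle is the right observation. However, the justification you give is not valid: from $\val(y-c)\notin\val(\bE)$ you can only conclude $d-c \not\asymp y-c$, not $d-c \prec y-c$; a priori $d-c$ could dominate $y-c$, in which case $y-z \not\sim y-c$ and your chain of deductions breaks. The fix is simple and avoids $y-z$ altogether: since $\partial(y-z) = \partial w$, it suffices to check directly that $w \sim y-c$ forces $\partial w \sim \partial y$. Writing $w = (1+\varepsilon)(y-c)$ with $\varepsilon\prec 1$, we have $\partial w = (1+\varepsilon)\partial y + (y-c)\partial\varepsilon$; both correction terms are $\prec \partial y$, the second because $(y-c)\partial\co_* \prec \partial y$ follows from the already-established $\lder(y-c)\succ\partial\cO_*$. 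Then $\partial z = \partial y - \partial w \prec \partial y$, contradicting $\partial y \asymp \partial z$. With that replacement your proof is complete and essentially coincides with what the paper intends.
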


\begin{corollary}\label{cor:maximal_essential}
    Suppose that $m>0$ and $(y_i)_{i<m}$ and $(z_i)_{i<m}$ are essential approximate logarithmic scales such that $\partial y_0 \sim \partial z_0$. Then $\partial y_i \sim \partial z_i$ for all $i<m$ and $(y_i)_{i<m}$ is maximal if and only if $(z_i)_{i<m}$ is maximal.
    \begin{proof}
        Note that if $\val(y-c) \notin \val(\bE)$ for some $c \in \bE$, then for all $\tilde{c}\in \bE$ with $\val(y-\tilde{c}) \notin \val(\bE)$ we must have $y-c\sim y-\tilde{c}$.

        Let $\partial y_{i} \sim (y_i-c_i) \partial y_{i+1}$ and $\partial z_i \sim (z_i-d_i) \partial z_{i+1}$ with $c_i, d_i\in \bE$ for all $0 \le i<m-1$,
        Since $(y_i)_{i<m}$ and $(z_i)_{i<m}$ are essential, we have (after an additional choice of $c_{m-1}, d_{m-1} \in \bE$), $\val(y_i-c_i), \val(z_i-d_i) \notin \val(\bE)$ for all $i<m$.
        
        Now suppose we have shown $\partial y_i \sim \partial z_i$ for some $i<m$, since $\val(y_i-c_i) \notin \val(\bE)$, by Lemma~\ref{lem:weak_der_replacement}, we can find $\tilde{d}\in \bE$ such that $z_i - \tilde{d} \sim y_i-c_i$. Note that since $\val(z_i-\tilde{d})\notin \val(\bE)$ and $\val(z_i-d_i) \notin \val(\bE)$ we must have $z_i - \tilde{d} \sim z_i-d_i$.
        
        So if $i<m-1$ we have $\partial z_{i+1} \sim \lder(z_{i+1}-d_{i+1})\sim \lder(y_{i+1}-c_{i+1})\sim \partial y_{i+1}$.

        Thus we have shown that $\partial y_i \sim \partial z_{i}$ for all $i<m$ and that furthermore $z_{m-1}-d_{m-1} \sim y_{m-1}-c_{m-1}$

        Now suppose that $(y_i)_{i<m}$ is not maximal. Then in particular $m<\omega$ and there is $y_m$ such that $\partial y_m\sim \lder(y_{m-1}-c_{m-1})\sim \lder(z_{m-1}-c_{m-1})$ and furthermore $\val(y_m-\bE)\not\subseteq \val(\bE)$, but then setting $z_m=y_m$ we have that also $(z_i)_{i<m+1}$ is an essential approximate $\partial$-logarithmic scale, so $(z_i)_{i<m}$ was not maximal as well.
    \end{proof}
\end{corollary}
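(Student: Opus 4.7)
The plan is to induct on $i$, with base case $i=0$ supplied by the hypothesis $\partial y_0 \sim \partial z_0$. For the inductive step, assume $\partial y_i \sim \partial z_i$ with $i+1 < m$. Essentiality gives $c_i, d_i \in \bE$ with $\val(y_i - c_i), \val(z_i - d_i) \notin \val(\bE)$ and $\partial y_i \sim (y_i - c_i)\partial y_{i+1}$, $\partial z_i \sim (z_i - d_i)\partial z_{i+1}$. Since $\val(y_i - c_i)\notin \val(\bE)$ implies $\rv(y_i - c_i)\notin\rv(\bE)$, I would apply the first clause of Lemma~\ref{lem:weak_der_replacement} to $y=y_i$, $c=c_i$, $z=z_i$ and extract some $\tilde{d}\in \bE$ with $y_i - c_i \sim z_i - \tilde{d}$.

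Next I would argue that $z_i - \tilde{d} \sim z_i - d_i$: this is the small valuation-theoretic observation that if $a\in \bE_*$ has $\val(a)\notin \val(\bE)$, then for any $e\in \bE$, $a-e$ either satisfies $\val(a-e)\in \val(\bE)$ or $a-e\sim a$; applying this with $a = z_i - d_i$ and $e = d_i - \tilde{d} \in \bE$, and using that $\val(z_i - \tilde{d})\notin \val(\bE)$ forces the second alternative. Combining, $y_i - c_i \sim z_i - d_i$. Since $\sim$ is preserved under taking quotients of equivalent nonzero elements, dividing the two essentiality relations yields $\partial y_{i+1} \sim \partial z_{i+1}$, completing the induction.

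For the maximality statement, by Remark~\ref{rmk:maximal_essential_log_scales} non-maximality of $(y_i)_{i<m}$ means $m<\omega$ and there exists $y_m$ with $\partial y_m \sim \lder(y_{m-1}-c_{m-1})$ and $\val(y_m - \bE)\not\subseteq \val(\bE)$. From the inductive analysis we already have $y_{m-1}-c_{m-1} \sim z_{m-1}-d_{m-1}$, hence $\lder(y_{m-1}-c_{m-1}) \sim \lder(z_{m-1}-d_{m-1})$; thus taking $z_m := y_m$ extends $(z_i)_{i<m}$ to an essential approximate $\partial$-logarithmic scale of length $m+1$, showing $(z_i)_{i<m}$ is non-maximal. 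The reverse implication is symmetric.

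The only non-bookkeeping step is the reduction $z_i - \tilde{d} \sim z_i - d_i$, and I expect this to be the main delicate point — it must be argued purely from basic valuation arithmetic, since we have no a priori uniqueness of $\bE$-approximations in essential scales. Everything else is a direct assembly of Lemma~\ref{lem:weak_der_replacement}, the essentiality identities, and the fact that $\sim$ is a multiplicative equivalence relation.
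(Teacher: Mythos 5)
Your proof is correct and follows essentially the same route as the paper's: the same induction, the same appeal to Lemma~\ref{lem:weak_der_replacement}, and the same valuation-theoretic reduction $z_i-\tilde d\sim z_i-d_i$. Your step of dividing the two essentiality relations directly (using that $\sim$ is a multiplicative group congruence) is a mild cosmetic cleanup over the paper's chain through $\lder(z_i-d_i)\sim\lder(y_i-c_i)$, but it is the same argument in substance.
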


\begin{corollary}
    Given $x\in \bE_*$, 
    \begin{enumerate}
        \item all maximal essential approximate $\partial$-logarithmic scales $(y_i)_{i<m}$ with $\partial y_0 \sim x$ have the same length;
        \item if $\lder(\bE_*^{\neq0})\subseteq \partial(\bE_*)$ then such length is also the length all maximal essential $\partial$-logarithmic scales $(y_i)_{i<m}$ with $\partial y_0 = x$.
    \end{enumerate}
    \begin{proof}
        (1) follows from Lemma~\ref{cor:maximal_essential}, and (2) follows from Lemma~\ref{lem:approximate_to_exact_log_scale}.
    \end{proof}
\end{corollary}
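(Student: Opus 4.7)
The plan is to derive both parts from Corollary~\ref{cor:maximal_essential} and Lemma~\ref{lem:approximate_to_exact_log_scale}, both of which already contain the substantive work; the corollary is essentially a packaging statement.

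For (1), I would take any two maximal essential approximate $\partial$-logarithmic scales $(y_i)_{i<m_1}$ and $(z_i)_{i<m_2}$ with $\partial y_0 \sim x \sim \partial z_0$. Without loss of generality $m_1 \le m_2$; if $m_1 < m_2$ then the truncation $(z_i)_{i<m_1}$ is still an essential approximate scale of length $m_1$, but it is not maximal, as the full scale $(z_i)_{i<m_2}$ extends it. Applying Corollary~\ref{cor:maximal_essential} to the pair $(y_i)_{i<m_1}$, $(z_i)_{i<m_1}$ — both of length $m_1$ and with $\partial y_0\sim \partial z_0$ — maximality of one forces maximality of the other, so $(y_i)_{i<m_1}$ is not maximal either, contradicting the choice of $m_1$. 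Hence $m_1 = m_2$.

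For (2), invoking the extra hypothesis $\lder(\bE_*^{\neq0})\subseteq\partial(\bE_*)$, I would show that any maximal essential \emph{exact} $\partial$-logarithmic scale $(w_i)_{i<m}$ with $\partial w_0 = x$ is also maximal as an essential \emph{approximate} scale; the conclusion then follows at once from (1). The only direction needing an argument is exact-maximal $\Rightarrow$ approximate-maximal. Suppose toward a contradiction that some $y_m$ with $\partial y_m \sim \lder(w_{m-1}-c_{m-1})$ (for a suitable $c_{m-1}\in\bE$ witnessing essentialness at $m-1$) and $\val(y_m-\bE)\not\subseteq \val(\bE)$ furnishes an essential approximate extension. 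Using $\lder(\bE_*^{\neq0})\subseteq\partial(\bE_*)$, I would pick $\tilde z$ with $\partial \tilde z = \lder(w_{m-1}-c_{m-1})$ and then correct $\tilde z$ by a constant exactly as in the inductive step of the proof of Lemma~\ref{lem:approximate_to_exact_log_scale} — using that $\partial(\tilde z - y_m) \in \co_* \partial y_m$ together with the $\partial$-convexity of $(y_m-c_m)\co_*$ for a suitable $c_m\in \bE$ — to obtain $w_m \in \tilde z + \bE$ with $\partial w_m = \lder(w_{m-1}-c_{m-1})$ and $w_m - c_m \sim y_m - c_m$. Then $\val(w_m-c_m)\notin \val(\bE)$ and $(w_i)_{i<m+1}$ is an essential exact extension, contradicting exact-maximality of $(w_i)_{i<m}$. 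I do not foresee any genuine obstacle here, since the analytic work has all been done in the preceding results; the only point to be careful about is that ``maximal'' is always interpreted relative to the class (exact vs.\ approximate) under consideration.
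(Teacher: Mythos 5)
Your proposal is correct and takes essentially the same route as the paper, which simply cites Corollary~\ref{cor:maximal_essential} for part (1) and Lemma~\ref{lem:approximate_to_exact_log_scale} for part (2); your truncation-and-contradiction argument for (1) and your re-run of the inductive step of Lemma~\ref{lem:approximate_to_exact_log_scale} with base $z_0 = w_0$ for (2) are exactly the unwinding those citations intend.
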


\begin{lemma}\label{lem:log_scale_to_essential}
    For $x\in \bE_*$ and $m < \omega$, the following are equivalent 
    \begin{enumerate}
        \item there is an approximate $\partial$-logarithmic scale $(y_i)_{i<m+1}$ with $x \sim \partial y_0$ and $\val(y_{m}-\bE)\subseteq \val(\bE)$;
        \item every \emph{essential} approximate $\partial$-logarithmic scale $\overline{y}$ with $x \sim \partial y_0$ has length at most $m$.
    \end{enumerate}
    The same equivalence holds for $\partial$-logarithmic scales with $\partial y_0 = x$.
    \begin{proof}
        Clearly if (2) holds, then (1) holds as well.
        Suppose (1) holds. If $m=0$, then $\val(y_0-\bE)\subseteq \val(\bE)$, but then by Lemma~\ref{lem:weak_der_replacement} for all $z_0$ with $\partial z_0 \sim \partial y_0 \sim x$ we must have $\val(z_0-\bE) \subseteq \val(\bE)$. Thus, there are no essential approximate $\partial$-logarithmic scales $(y_i)_{i<1}$ with $\partial y_0 \sim x$.

        Note that to prove (2), it suffices to show that if (1) holds with $m>0$, then there is a maximal essential approximate $\partial$-logarithmic scale $(z_i)_{i<m}$ with $x \sim \partial z_0$.
        
        Proceed by induction on $m$. 
        
        Let $(y_i)_{i<m+1}$ be a logarithmic scale with $\val(y_{m} -\bE)\subseteq \val(\bE)$. Let $(c_i)_{i<m}\in \bE^m$ be such that $\partial y_{i}\sim (y_i-c_i) \partial y_{i+1}$ for all $i<m$.

        If $\val(y_0-c_0) \notin \val(\bE)$, then we can conclude by the inductive hypothesis applied to $x=\partial y_1 \sim \lder(y_0-c_0)$.

        If instead $\val(y_0-c_0)\in \val(\bE)$, either $\val(y_0-\bE) \subseteq \val(\bE)$ and (2) is vacuously true as observed above, or there is $c \in \bE$ such that $\val(y_0-c) \notin \val(\bE)$. Then setting $d\coloneqq c-c_0\in \bE$ we have $y_0-c_0 \sim d$. Set $\tilde{y}_0 = d y_1$.
        Note that then $\partial y_0\sim \partial (dy_1)=\partial \tilde{y}_0$, and thus by Lemma~\ref{lem:weak_der_replacement}, $y_0-c\sim \tilde{y}_0-\tilde{c}_0$ for some $\tilde{c}_0\in \bE$. Thus setting $\tilde{y}_i = y_{i+1}$ for all $0<i<m-1$, we thus get that $(\tilde{y}_i)_{i<m-1}$ is an approximate $\partial$-logarithmic scale with $\val(y_{m-1}-\bE)\subseteq \val(\bE)$ and we can apply again the inductive hypothesis.
    \end{proof}
\end{lemma}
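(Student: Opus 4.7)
I will argue by induction on $m$.

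For $(2)\Rightarrow(1)$: take a maximal essential approximate $\partial$-logarithmic scale $(z_i)_{i<k}$ with $\partial z_0\sim x$; by $(2)$, $k\leq m$. By Remark~\ref{rmk:maximal_essential_log_scales}, maximality means any $z_k$ with $\partial z_k\sim \lder(z_{k-1}-c_{k-1})$ must satisfy $\val(z_k-\bE)\subseteq\val(\bE)$; extending the scale by such a $z_k$ and then padding with constants (for which the approximate-scale relation is trivially satisfied) yields an approximate scale of exact length $m+1$ ending in an absorbed element.

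For $(1)\Rightarrow(2)$, the base case $m=0$ is immediate from the $\val$-clause of Lemma~\ref{lem:weak_der_replacement}: if $\partial y_0\sim x$ and $\val(y_0-\bE)\subseteq\val(\bE)$, then any $z_0$ with $\partial z_0\sim x$ is absorbed too, for otherwise, for a non-absorbing $c\in\bE$, the lemma would give $d\in\bE$ with $y_0-d\asymp z_0-c$, contradicting absorbed-ness of $y_0$. Hence no essential scale of positive length starts at $x$, and $(2)$ holds.

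For the inductive step, let $(y_i)_{i<m+1}$ witness $(1)$ with relations $\partial y_n\sim(y_n-c_n)\partial y_{n+1}$. Either $\val(y_0-\bE)\subseteq\val(\bE)$ and we reduce to the base case, or some $c_0\in\bE$ has $\val(y_0-c_0)\notin\val(\bE)$. In the latter case, $(y_1,\ldots,y_m)$ is an approximate scale of length $m$ with $\partial y_1\sim\lder(y_0-c_0)$ and $y_m$ absorbed; the inductive hypothesis bounds essential scales for $\partial y_1$ by $m-1$. For any essential scale $(z_i)_{i<k}$ with $\partial z_0\sim x$, essentiality gives $d_0\in\bE$ with $\val(z_0-d_0)\notin\val(\bE)$ and $\partial z_0\sim(z_0-d_0)\partial z_1$; Lemma~\ref{lem:weak_der_replacement} (using $\partial z_0\asymp\partial y_0$ and $\val(y_0-c_0)\notin\val(\bE)$) yields $z_0-d_0\asymp y_0-c_0$, so $\partial z_1\asymp\partial y_1$. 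Rescaling $z_1$ by a suitable unit $e\in\bE^{\neq 0}$ (which preserves both its $\val$ and the essentiality of the tail) makes $\partial(e z_1)\sim\partial y_1$, so the rescaled tail $(ez_1,z_2,\ldots,z_{k-1})$ is an essential scale of length $k-1$ for $\partial y_1$; the inductive hypothesis gives $k-1\leq m-1$, whence $k\leq m$.

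The main obstacle will be the careful tracking of $\sim$ versus $\asymp$ in the inductive step, particularly in applying Lemma~\ref{lem:weak_der_replacement}, whose $\val$-clause only yields $\asymp$; the key point is that the notion of essential scale is invariant under rescaling by units from $\bE^{\neq 0}$, so the passage from $\asymp$ to $\sim$ is harmless. The final clause of the statement, about exact $\partial$-logarithmic scales, reduces to the approximate case via Lemma~\ref{lem:approximate_to_exact_log_scale}.
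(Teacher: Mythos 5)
The overall structure of your argument---induction on $m$, peeling off the first term, and matching via Lemma~\ref{lem:weak_der_replacement}---is close to the paper's, but there is a gap in the inductive step for $(1)\Rightarrow(2)$.

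You introduce witnessing constants $c_n$ with $\partial y_n \sim (y_n - c_n)\partial y_{n+1}$, and then split into the case where $y_0$ is absorbed and the case where ``some $c_0 \in \bE$ has $\val(y_0-c_0)\notin\val(\bE)$,'' writing in the latter ``$\partial y_1 \sim \lder(y_0-c_0)$.'' This tacitly identifies the non-absorbing constant with the scale-witnessing constant. But the $\sim$-class of $y_0-c_0$ in the scale relation is pinned down by $\partial y_0/\partial y_1$, independently of whether $y_0$ is absorbed; it is entirely possible that every scale-witnessing $c_0$ has $\val(y_0-c_0)\in\val(\bE)$ while $y_0$ nevertheless fails to be absorbed, witnessed by some other $c$ with $\val(y_0-c)\notin\val(\bE)$ (which then necessarily satisfies $y_0-c\prec y_0-c_0$). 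In that subcase $\lder(y_0-c)\succ\partial y_1$, so the step ``$z_0-d_0\asymp y_0-c$, hence $\partial z_1\asymp\partial y_1$'' fails at the second arrow, and the rescaled tail of $(z_i)$ is not an essential scale for $\partial y_1$.

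The paper handles this subcase by a separate reduction: setting $d := c-c_0\in\bE$ gives $y_0-c_0\sim d$, hence $\partial y_0\sim\partial(dy_1)$; one replaces the pair $(y_0,y_1)$ by $\tilde y_0 := dy_1$, checks via the $\rv$-clause of Lemma~\ref{lem:weak_der_replacement} that $\tilde y_0$ is again non-absorbed, and observes that $(\tilde y_0, y_2,\dots,y_m)$ is an approximate scale of length $m$ with absorbed end starting at $x$, so the inductive hypothesis applies with $m$ replaced by $m-1$. Your proof needs this extra case and reduction; as written, the possibility that the scale constant and the non-absorbing constant diverge is silently elided. The remaining pieces --- the $(2)\Rightarrow(1)$ padding argument, the base case, the rescaling of $z_1$ by a unit to upgrade $\asymp$ to $\sim$, and the reduction of the exact-scale clause via Lemma~\ref{lem:approximate_to_exact_log_scale} --- track the paper and are fine where the main line holds.
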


\begin{definition}
    I define the \emph{$\partial$-logarithmic height} of $x$ (or just the \emph{height} of $x$ over $\partial$) to be $\height(x, \partial)\coloneqq m$ if $m$ is the length of a maximal essential approximate $\partial$-logarithmic scale with $\partial x \sim\partial y_0$, or equivalently if $1+m$ the infimum of the lengths of finite approximate logarithmic scales $(y_i)_{i<1+n}$ with $\val(y_i-\bE)\subseteq \val(\bE)$ and $\partial x\sim \partial y_0$ (where it is understood that the infimum is $\omega$ when there are no such finite logarithmic scales).
\end{definition}

\begin{remark}
    Note that the notion of logarithmic scale starting at $\partial x$ is invariant under rescaling the derivative, so $\height(x, \partial)= \height(x, y\partial)$ for all $y\neq 0$.
\end{remark}

The next three Lemmas and the last Proposition show that if $\partial\br(\partial)\prec 1$ (so given our setting $\partial$ is weakly absorbed), then essential approximate logarithmic scales correspond to $\lder$-sequences. This will be a fundamental ingredient of the main theorems.

\begin{lemma}\label{lem:aels_props}
	Suppose that $(a_n)_{n<1+m}$ is a $\lder$-sequence, then
    \begin{enumerate}
        \item $\lder^k(a_n) \sim a_{n+k}$ for all $n\le m-k$;
        \item if $m-k \ge 0$, then $(\lder^k(a_n))_{n<1+m-k}$ is a $\dagger$-sequence.
    \end{enumerate}
	\begin{proof}
        (1) By induction on $k$. If $k=0$ this is obvious. Assume the statement holds for $k$ and suppose that $n\le m-(k+1)$ then
        \[\lder^{k+1}(a_n)\in \lder(a_{n+k} \cdot (1+\co_*))\subseteq \lder(a_{n+k}) + \cO_* \partial \co_*\sim \lder(a_{n-k})\sim a_{n-k-1}\]
        because since $n+k<m$, $\val(a_{n+k})\notin \val(\bE)$, and thus $\lder(a_{n+k})\succ \cO_* \partial \cO_* \supseteq \cO_* \partial \co_*$.

        (2) From (1) we immediately deduce that $\lder^{k+1}(a_n)\sim a_{n+k+1} \sim \lder^{k}(a_{n+1})$. Furthermore for $n< m-k$ we have $\val(\lder^k(a_n))=\val(a_{n+k})\notin \val(\bE)$.
	\end{proof}
\end{lemma}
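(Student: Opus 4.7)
The plan is to prove (1) by induction on $k$ and then deduce (2) immediately. The base case $k=0$ is trivial since $\lder^0(a_n)=a_n$. For the inductive step, suppose $\lder^k(a_n)\sim a_{n+k}$ for all $n\le m-k$. Writing $\lder^k(a_n)=a_{n+k}(1+\epsilon)$ for some $\epsilon\in\co_*$ (the content of the $\sim$-equivalence), the additivity of $\lder$ under products yields
\[
\lder^{k+1}(a_n)=\lder(a_{n+k})+\lder(1+\epsilon),
\]
in which the first summand is $\sim a_{n+k+1}$ by the defining relation of a $\lder$-sequence, and the second lies in $\lder(1+\co_*)\subseteq\cO_*\partial\co_*$ by Remark~\ref{rmk:basic_der-lder-O-o_equalities}.

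The crux is to show that this perturbation is strictly dominated by $a_{n+k+1}$. This is where the standing hypothesis of almost $\cO_*$-Liouville-convexity of $\partial$ enters: $\lder$-convexity of $\cO_*\setminus\co_*$ together with $\val(a_{n+k})\notin\val(\bE)$ forces $\lder(a_{n+k})\succ\partial\cO_*\supseteq\cO_*\partial\co_*$. Hence $\lder(1+\epsilon)\prec\lder(a_{n+k})$, and therefore $\lder^{k+1}(a_n)\sim\lder(a_{n+k})\sim a_{n+k+1}$, closing the induction; the index hypothesis $n\le m-(k+1)$ is exactly what guarantees that $a_{n+k+1}$ lies in range.

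For (2), set $b_n\coloneqq\lder^k(a_n)$ for $n<1+m-k$. The valuation condition $\val(b_n)=\val(a_{n+k})\notin\val(\bE)$ is inherited from (1), and the consecutive relation $b_{n+1}\sim\lder(b_n)$ amounts to $\lder^k(a_{n+1})\sim\lder^{k+1}(a_n)$, with both sides $\sim a_{n+k+1}$ by (1). I do not anticipate any real obstacle: the whole argument is essentially a careful iteration of the product rule for $\lder$, and the single delicate moment is the invocation of $\lder$-convexity of $\cO_*\setminus\co_*$ to place the perturbation $\lder(1+\epsilon)$ strictly beneath $a_{n+k+1}$; everything else is routine index-bookkeeping within the $\sim$-equivalence.
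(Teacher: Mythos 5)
Your proposal is correct and follows essentially the same path as the paper's proof: induct on $k$, unfold the $\sim$-equivalence $\lder^k(a_n)\in a_{n+k}(1+\co_*)$, apply the product rule for $\lder$ to split off a perturbation lying in $\lder(1+\co_*)\subseteq\cO_*\partial\co_*$, and kill the perturbation via $\lder$-convexity of $\cO_*\setminus\co_*$ together with $\val(a_{n+k})\notin\val(\bE)$, which yields $\lder(a_{n+k})\succ\cO_*\partial\cO_*\supseteq\cO_*\partial\co_*$; part (2) is then immediate. Naming the representative $\epsilon\in\co_*$ explicitly rather than working at the level of the set $\lder(a_{n+k}(1+\co_*))$ is a purely cosmetic difference.
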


\begin{lemma}\label{lem:essential_log-scales_are_dagger_seq}
	Suppose that $\partial \br(\partial) \prec 1$ and let $m \le \omega$. Then:
    \begin{enumerate}
        \item $(y_n)_{n<m}$ is an essential approximate $\partial$-logarithmic scale if and only if the sequence $(\partial(y_n))_{n<m}$ is a $\lder$-sequence;
        \item for all $\lder$-sequences $(a_n)_{n<m}$, there is an essential approximate $\partial$-logarithmic sequence $(y_n)_{n<m}$ such that $a_n \sim \partial y_n$ for all $n<m$.
    \end{enumerate} 
	\begin{proof}
        (1) Let $\partial y_n \sim (y_n-c_n) \partial y_{n+1}$ for $0 \le n <m-1$ and note that since $\val(y_n-c_n)\notin \val(\bE)$, by Lemma~\ref{lem:der_replacement}, for all $n<m$ we have $y_n-c_n \sim \partial y_n/\lder(\partial y_n)$ and furthermore $\val(\partial y_n)\notin \val(\bE)$, whence $\lder(\partial y_n) \sim \lder(y_n-c_n) \sim \partial y_{n+1}$. Finally observe that by Corollary~\ref{cor:absorbed-der_sumup}, we have $\val(\partial y_n) \notin \val(\bE)$ for all $n<m$.
        
        Conversely argue by induction and suppose that if $(\partial (y_n))_{n<m}$ is a $\lder$-sequence, then there are $(c_n)_{n<m}$ in $\bE$ such that $\val(y_n-c_n)\notin \val(\bE)$ and for $n<m-1$, $\lder(y_n-c_n) \sim \partial y_{n-1}$. Assume that $\partial y_m \sim \lder(\partial y_{m-1})$ and that $\val(\partial y_m) \notin \val(\bE)$, then by Corollary~\ref{cor:absorbed-der_sumup} we can find $c_m \in \bE$ such that $\val(y_m-c_m)\notin \val(\bE)$. Furthermore since $\val(y_{m-1}-c_{m-1})\notin \val(\bE)$, by  Lemma~\ref{lem:der_replacement} it must be $\partial(y_{m-1})\sim (y_{m-1}-c_{m-1}) \lder(\partial y_{m-1})\sim (y_{m-1}-c_{m-1})\partial y_m$.
        
        (2) Suppose that $\val(\lder^k(a_0))\notin \val(\bE)$ for all $k<m$ and by inductive hypothesis we have built an essential approximate logarithmic scale $(y_{i})_{i<m}$ with $\partial y_i \sim \lder^i(a)$ for all $i<m$, then $\partial y_{m-1} \sim \lder^{m-1}(a)$, so $\val(\partial y_{m-1})\notin \val(\bE)$ and by Corollary~\ref{cor:absorbed-der_sumup} we can find $c_{m-1} \in \bE$ such that $\val(y_{m-1}-c_{m-1}) \notin \val(\bE)$, but then it suffices to pick $y_{m}$ such that $\lder(y_{m-1}-c_{m-1})\sim \partial y_{m}$ which we can do by Corollary~\ref{cor:asymptotic_integration}.
	\end{proof}
\end{lemma}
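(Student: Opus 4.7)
The plan is to translate both directions of (1) through two earlier results: Corollary~\ref{cor:absorbed-der_sumup}, which under the weakly absorbed hypothesis equates $\val(y-\bE)\subseteq \val(\bE)$ with $\val(\partial y)\in \val(\bE)$; and Lemma~\ref{lem:der_replacement}, which supplies the formula $y-c \sim \partial y/\lder(\partial y)$ whenever $\val(y-c)\notin \val(\bE)$. These are precisely the two pieces of information that connect the defining data of an essential approximate $\partial$-logarithmic scale (witnessed by constants $c_n$ and relations $\partial y_n \sim (y_n-c_n)\partial y_{n+1}$) to the defining data of a $\lder$-sequence ($\val(a_n)\notin \val(\bE)$ and $\lder(a_n)\sim a_{n+1}$).

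For the forward direction of (1), I would start from the witnessing $c_n$ and apply Corollary~\ref{cor:absorbed-der_sumup} to get $\val(\partial y_n)\notin \val(\bE)$ from $\val(y_n-c_n)\notin \val(\bE)$; then substitute the identity from Lemma~\ref{lem:der_replacement} into $\partial y_n\sim (y_n-c_n)\partial y_{n+1}$ to read off $\lder(\partial y_n)\sim \partial y_{n+1}$. The reverse direction is the same argument run backwards: from $(\partial y_n)$ being a $\lder$-sequence, Corollary~\ref{cor:absorbed-der_sumup} produces $c_n\in \bE$ with $\val(y_n-c_n)\notin \val(\bE)$; Lemma~\ref{lem:der_replacement} still gives $y_n-c_n \sim \partial y_n/\lder(\partial y_n)$; and the $\lder$-sequence relation $\lder(\partial y_n)\sim \partial y_{n+1}$ rearranges to $\partial y_n \sim (y_n-c_n)\partial y_{n+1}$, establishing essentiality.

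For (2), the strategy is to integrate asymptotically coordinate-by-coordinate and then invoke (1). Given a $\lder$-sequence $(a_n)_{n<m}$, Corollary~\ref{cor:asymptotic_integration} applies (since each $a_n$ has $\val(a_n)\notin \val(\bE)$ and $\partial$ is weakly absorbed with $\cO_*\setminus \co_*$ being $\lder$-convex) and produces $y_n\in \bE_*$ with $\partial y_n \sim a_n$. A small compatibility check shows that $\sim$ is preserved by $\lder$ on elements of valuation outside $\val(\bE)$: if $b\sim a$ with $\val(a)\notin \val(\bE)$ then $\lder(b)-\lder(a)\in \partial \co_* \subseteq \cO_*\partial \cO_*$ is strictly dominated by $\lder(a)\succ \partial \cO_*$. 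Consequently $(\partial y_n)$ is itself a $\lder$-sequence, and (1) delivers essentiality of $(y_n)$ as a $\partial$-logarithmic scale with $a_n\sim \partial y_n$.

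The main obstacle, and the reason the weakly absorbed hypothesis enters crucially, is controlling how $\sim$-perturbations interact with $\lder$: both in converting the approximate relation $\partial y_n \sim (y_n-c_n)\partial y_{n+1}$ into the usable identity of Lemma~\ref{lem:der_replacement}, and in transferring the $\lder$-sequence structure through the pointwise $\sim$-equivalence supplied by asymptotic integration. Without the bound $\partial\br(\partial)\prec 1$, the identity $y-c\sim \partial y/\lder(\partial y)$ fails and the whole dictionary between the two notions collapses.
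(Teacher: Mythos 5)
Your proof is correct and follows essentially the same route as the paper's: both directions of (1) are a translation through Corollary~\ref{cor:absorbed-der_sumup} and Lemma~\ref{lem:der_replacement}, and (2) rests on Corollary~\ref{cor:asymptotic_integration}. The only (minor) divergence is that for (2) the paper constructs the scale inductively, picking $y_m$ with $\partial y_m \sim \lder(y_{m-1}-c_{m-1})$ at each stage and verifying essentiality as it goes, whereas you integrate all the $a_n$ at once to get $\partial y_n \sim a_n$, check that the sequence $(\partial y_n)$ is itself a $\lder$-sequence (your compatibility observation, which is correct since $\lder(b/a)\in\lder(1+\co_*)\subseteq\cO_*\partial\co_*\prec\lder(a)$ when $\val(a)\notin\val(\bE)$), and then invoke part (1). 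This is a slightly cleaner packaging of the same idea and introduces no new difficulties.
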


\begin{proposition}\label{prop:height_over_absorbed}
    If $\partial\br(\partial) \prec 1$, then
    \[\height(x,\partial)=\inf\{k\in \omega: \val(\lder^k(\partial x))\in \val(\bE)\}\in \omega+1.\]
    \begin{proof}
        Set $a=\partial x$. Clearly the height of $x$ is $0$ if and only if $\val(a)\in \val(\bE)$.
        
        It suffices to show that for all $m\ge 0$, there is an essential approximate logarithmic scale $(y_k)_{k < m}$ with $a \sim \partial y_0$ (in the case $m>0$) if and only if $\val(\lder^k(a))\notin \val(\bE)$ for all $k<m$.

        If there is such a sequence, then $(\partial y_i)_{i<m}$ is a $\lder$-sequence by Lemma~\ref{lem:essential_log-scales_are_dagger_seq}, so $\lder^k(\partial x)\sim \lder(\partial y_0) \sim \partial y_k$ and we have $\val(\lder^k(a))\notin \val(\bE)$ for all $k<m$.

        Conversely if by $\val(\lder^k(a))\notin \val(\bE)$ for all $k<m$, then by Lemma~\ref{lem:essential_log-scales_are_dagger_seq}(2), there is an essential approximate $\partial$-logarithmic scale $(y_{k})_{k<m}$ with $\partial y_{0}\sim a=\partial x$.
    \end{proof}
\end{proposition}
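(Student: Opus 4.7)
The plan is to show both inequalities $\height(x,\partial)\le h^*$ and $\height(x,\partial)\ge h^*$, where $h^*\coloneqq \inf\{k \in \omega : \val(\lder^k(\partial x)) \in \val(\bE)\}$. The key observation is that Lemma~\ref{lem:essential_log-scales_are_dagger_seq} sets up a correspondence between essential approximate $\partial$-logarithmic scales starting near $\partial x$ and $\lder$-sequences starting near $\partial x$, while Lemma~\ref{lem:aels_props}(1) relates the $k$-th term of such a $\lder$-sequence to $\lder^k$ applied to its first term. Once these two facts are in hand, the identity reduces to bookkeeping.

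First I would show $h^*\ge \height(x,\partial)$. Suppose $\height(x,\partial)\ge m\ge 1$, so there is an essential approximate $\partial$-logarithmic scale $(y_k)_{k<m}$ with $\partial y_0\sim \partial x$. By Lemma~\ref{lem:essential_log-scales_are_dagger_seq}(1), $(\partial y_k)_{k<m}$ is a $\lder$-sequence, and Lemma~\ref{lem:aels_props}(1) applied at $n=0$ then gives $\lder^k(\partial y_0)\sim \partial y_k$ for every $k<m$. Since $\partial y_0\sim \partial x$ and the $\lder$-sequence condition forces $\val(\partial y_k)\notin \val(\bE)$, I conclude $\val(\lder^k(\partial x))\notin \val(\bE)$ for $k<m$, hence $h^*\ge m$.

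For the converse $\height(x,\partial)\ge h^*$, suppose $\val(\lder^k(\partial x))\notin \val(\bE)$ for all $k<m$. Set $a_k\coloneqq \lder^k(\partial x)$: then $a_{k+1}=\lder(a_k)$ holds trivially and $\val(a_k)\notin \val(\bE)$ by assumption, so $(a_k)_{k<m}$ is a $\lder$-sequence with $a_0=\partial x$. By Lemma~\ref{lem:essential_log-scales_are_dagger_seq}(2) this lifts to an essential approximate $\partial$-logarithmic scale $(y_k)_{k<m}$ with $\partial y_0\sim a_0=\partial x$, witnessing $\height(x,\partial)\ge m$. Combining the two directions yields the equality.

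The boundary case $h^*=0$, i.e.\ $\val(\partial x)\in \val(\bE)$, is consistent with $\height(x,\partial)=0$ by Corollary~\ref{cor:absorbed-der_sumup}: any $y_0$ with $\partial y_0\sim \partial x$ would then satisfy $\val(y_0-\bE)\subseteq \val(\bE)$, so no essential scale of positive length starts at $\partial x$. I do not expect any real obstacle: the heavy lifting has been packaged in Lemmas~\ref{lem:aels_props} and~\ref{lem:essential_log-scales_are_dagger_seq}, and what remains is only the observation that the iterated logarithmic derivatives $(\lder^k(\partial x))_{k<m}$ form a $\lder$-sequence exactly when their valuations lie outside $\val(\bE)$, so that this sequence can be played against the correspondence of Lemma~\ref{lem:essential_log-scales_are_dagger_seq} in either direction.
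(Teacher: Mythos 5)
Your proposal is correct and follows essentially the same route as the paper: both directions are reduced to the correspondence of Lemma~\ref{lem:essential_log-scales_are_dagger_seq} between essential approximate $\partial$-logarithmic scales and $\lder$-sequences, with Lemma~\ref{lem:aels_props}(1) doing the bookkeeping of iterated $\lder$. The one step you (like the paper) leave implicit is that $\partial y_0\sim\partial x$ propagates to $\lder^k(\partial y_0)\sim\lder^k(\partial x)$ along the scale; this is a short induction using $\lder$-convexity of $\cO_*\setminus\co_*$ (exactly the ingredient in the proof of Lemma~\ref{lem:aels_props}(1)), and is harmless.
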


\subsection{Back to germs at absorbed types} In this subsection we spell out what Proposition~\ref{prop:height_over_absorbed} has to say about models of an exponential o-minimal theory $T$. Throughout the subsection we fix an elementary extension $\bE \prec \bE_*$ of models of $T$ and a convex subring $\cO_*$ of $\bE_*$ such that $\exp(\cO_*)\subseteq \cO_*$.
Recall the following definition form \cite{freni2024t}.

\begin{definition}
    A \emph{$\bE$-definable gne of height $m$} is a composition $g:=h_{0} \circ h_1 \cdots \circ h_{m-1} \circ \tau$, where $h_i(t)=c_i + \sigma_i\exp(t)$ for $c_i \in \bE$, $\sigma_i \in \{\pm1\}$ and $\tau(t)=c_{m}+t$ is a translation by some $c_{-1} \in \bE$.
    Let $y \in \bE_*$, then a gne $g$ is
    \begin{enumerate}
        \item \emph{$\cO_*$-normal} at $y$ if either $g(t)=c_0+t$ when $c_0 \succ y$ or $c_0=0$ or if $g(t)=c_0+\sigma_0\exp(g_1(t))$ where $c_0 \succ \exp(g_1(y))$ or $c_0=0$ and $g_1$ is normal at $y$; 
        \item \emph{$\cO_*$-essential} at $y$ if $g(t)=c_0+t$ or if $g(t)=c_0+ \sigma_0\exp(g_1(t))$ where $\val(\exp(g_1(t))\notin \val(\bE)$ and $g_1$ is essential at $y$.
    \end{enumerate}
\end{definition}

\begin{remark}\label{rmK:gne_to_essential}
    Given $x \in \bE_* \setminus \bE$ with $\val(x-\bE)\subseteq \val(\bE)$ and a $\bE$-definable gne $g$, by Lemma~\ref{lem:log_scale_to_essential} and Lemma~\ref{lem:approximate_to_exact_log_scale} there is always an essential gne $\tilde{g}$ of lower height such that $\val(\tilde{g}^{-1}(g(x))-\bE)\subseteq \val(\bE)$.
    When $\cO_*\in \{\cO_*^+, \cO_*^-\}$ for some convex subring $\cO$ of $\bE$, then this can be improved to a normal essential gne (note that given our assumption on $\cO_*$ it must be $\exp(\cO)\subseteq \cO$).
\end{remark}

\begin{lemma}\label{lem:gne-normalize}
    Let $x\in \bE_*\setminus \bE$ be $\cO_*$-absorbed and $\cO_*\in \{\cO_*^-, \cO_*^+\}$ for some convex $\cO \subseteq \bE$ and suppose that $g$ is a $\bE$-definable gne of height $m$, then there is a $\cO_*$-absorbed $z \in \bE_*$ and a $\bE$-definable gne $\tilde{g}$ of height $\le m$, normal and essential at $z$, such that $g(x) = \tilde{g}(z)$.
    \begin{proof}
        Note that by Remark~\ref{rmK:gne_to_essential}, we can assume that $g$ is essential. Note that by the hypothesis on $\cO_*$, we have that whether $x$ is $\cO_*$-absorbed or not only depends on $\tp(x/\bE)$. So in fact it suffices to find a gne $\tilde{g}$ normal and essential at some absorbed $\tilde{z}$ such that $\tilde{g}(\tilde{z}) \equiv_\bE g(x)$: in fact we can then consider $z=\tilde{g}^{-1}g(x) \equiv_\bE \tilde{z}$ which will be $\cO_*$-absorbed.
        
        To find such a $\bE$-definable gne $\tilde{g}$ proceed by induction on $m$. If $m=0$ it suffices to take $\tilde{g}=g$. If $m>0$ and $g=c_0+\sigma_0\exp(g_1)$, find $\tilde{g}_1$ and $\tilde{z}$ normal and essential such that $\tilde{g}_1(\tilde{z})\equiv g_1(x)$. Note that $\val(\exp(g_1(x))\notin \val(\bE)$ by essentiality of $g$ at $x$ and that $\tilde{g}_1(\tilde{z})\equiv_\bE g_1(x)$. If $c_0 \succ \exp(g_1(x))$, then we can set $\tilde{g}=c_0+\sigma_0\exp(\tilde{g}_1)$, if instead $c_0\prec \exp(g_1(x))$, then we set $\tilde{g}=\sigma_0 \exp(\tilde{g}_1)$ as then $\tilde{g}(\tilde{z})\equiv_{\bE} \tilde{g}(\tilde{z})-c_0$.
    \end{proof}
\end{lemma}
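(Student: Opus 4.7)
The plan is to induct on the height $m$ of the gne $g$, after first making two preliminary reductions. First, by Remark~\ref{rmK:gne_to_essential}, I can replace $g$ by an essential gne $g'$ of height $\le m$ with $g(x)=g'(x')$ for some $\cO_*$-absorbed $x'$ (with $\val(x'-\bE)\subseteq\val(\bE)$), so from the outset I may assume $g$ itself is essential at $x$. Second, since $\cO_*\in\{\cO_*^-,\cO_*^+\}$ is determined by $\bE$ and $\cO$ alone, the condition $\val(z-\bE)\subseteq\val(\bE)$ only depends on $\tp(z/\bE)$; hence it suffices to exhibit $(\tilde g,\tilde z)$ with $\tilde z$ absorbed, $\tilde g$ normal and essential at $\tilde z$, and $\tilde g(\tilde z)\equiv_\bE g(x)$. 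The element $z\coloneqq \tilde g^{-1}(g(x))$ then realizes $\tp(\tilde z/\bE)$ and so is absorbed, while $\tilde g(z)=g(x)$ by construction.

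For the induction on $m$, the base case $m=0$ means $g$ is a translation $\tau(t)=c_m+t$, which is vacuously normal and essential, so $\tilde g\coloneqq g$ and $\tilde z\coloneqq x$ work. For the inductive step write $g(t)=c_0+\sigma_0\exp(g_1(t))$ with $g_1$ of height $m-1$; apply the inductive hypothesis to $g_1$ to obtain a $\bE$-definable gne $\tilde g_1$ of height $\le m-1$ and an absorbed $\tilde z$ such that $\tilde g_1$ is normal and essential at $\tilde z$ and $\tilde g_1(\tilde z)\equiv_\bE g_1(x)$. By essentiality of $g$ at $x$ one has $\val(\exp(g_1(x)))\notin\val(\bE)$, so in particular $c_0\not\asymp\exp(g_1(x))$; the valuative dichotomy therefore splits cleanly into $c_0\succ\exp(g_1(x))$ and $c_0\prec\exp(g_1(x))$.

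In the case $c_0\succ\exp(g_1(x))$ I set $\tilde g(t)\coloneqq c_0+\sigma_0\exp(\tilde g_1(t))$, which is normal at $\tilde z$ by the inductive hypothesis on $\tilde g_1$ together with the comparison $c_0\succ\exp(\tilde g_1(\tilde z))$ (which follows from $\tilde g_1(\tilde z)\equiv_\bE g_1(x)$), and essential at $\tilde z$ because $\val(\exp(\tilde g_1(\tilde z)))=\val(\exp(g_1(x)))\notin\val(\bE)$. In the case $c_0\prec\exp(g_1(x))$ I instead discard $c_0$ and set $\tilde g(t)\coloneqq\sigma_0\exp(\tilde g_1(t))$; then $\tilde g(\tilde z)-g(x)=-c_0$ is an element of $\bE$, but $\sigma_0\exp(\tilde g_1(\tilde z))\equiv_\bE \sigma_0\exp(\tilde g_1(\tilde z))-c_0$ because the translation by $c_0\in\bE$ is a $\bE$-automorphism of types. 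The new leading constant $0$ matches the first alternative in the definition of normal, and essentiality is preserved as above.

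The main obstacle I anticipate is the clean handling of the case $c_0\prec\exp(g_1(x))$: one must argue that discarding $c_0$ is legitimate without losing the equivalence $\tilde g(\tilde z)\equiv_\bE g(x)$, which is why the reduction to matching types (rather than exact values) in the second preliminary step is essential. The rest of the argument is a routine unpacking of the definitions of normal and essential, and the preservation of these conditions under the inductive composition.
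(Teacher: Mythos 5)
Your proposal mirrors the paper's proof step for step: first normalize to an essential gne via Remark~\ref{rmK:gne_to_essential}, then reduce to matching types (since absorbedness for $\cO_* \in \{\cO_*^-, \cO_*^+\}$ depends only on $\tp(-/\bE)$), then induct on the height with the same two-case split on whether $c_0 \succ \exp(g_1(x))$ or $c_0 \prec \exp(g_1(x))$. The one place your justification is off is in the second case: you write that $\tilde g(\tilde z) - g(x) = -c_0$, which is not literally true (you only have $\tilde g_1(\tilde z) \equiv_\bE g_1(x)$, not equality), and you then assert $\sigma_0\exp(\tilde g_1(\tilde z)) \equiv_\bE \sigma_0\exp(\tilde g_1(\tilde z)) - c_0$ "because the translation by $c_0 \in \bE$ is a $\bE$-automorphism of types." That reason is wrong as stated: a $\bE$-definable bijection permutes types over $\bE$ but certainly does not fix every type, so being a "type-automorphism" in that sense gives nothing. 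The actual reason (which the paper leaves implicit with the bare assertion $\tilde g(\tilde z) \equiv_\bE \tilde g(\tilde z) - c_0$) is that $\val(\exp(\tilde g_1(\tilde z))) \notin \val(\bE)$ and $c_0 \prec \exp(\tilde g_1(\tilde z))$ together force, for every $e \in \bE$, that $|\,\sigma_0\exp(\tilde g_1(\tilde z)) - e\,| \succ c_0$, so adding $c_0$ cannot move the element across any cut point of $\bE$. With that correction your argument is sound and coincides with the paper's.
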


Now suppose that $\bE_*\coloneqq \bE\langle x \rangle$ is an $1$-$\dcl_T$-dimensional elementary-extension of $\bE$, and $\partial=\partial_x$ is the only $\bE$-linear $T$-derivation with $\partial_x=1$. Then $\partial_x$ is a $\cO_*$-Liouville-convex convex derivation by Proposition~\ref{prop:der_order-convexity} and Corollary~\ref{cor:order-convex_implies_val-convex}, and our assumption on $\cO_*$.
Thus we have that $\partial (\br_{\cO_*}(\partial))\prec 1$ if and only if $\val_{\cO_*}(x-\bE)\subseteq \val_{\cO_*}(\bE)$.

\begin{corollary}\label{cor:main-cor}
    The following are equivalent:
    \begin{enumerate}
        \item every element of $\bE \langle x\rangle\setminus \bE$ has finite height over $\partial$;
        \item $(\bE\langle x\rangle, \cO_*)\succ (\bE, \cO_*\cap \bE)$ has the gne-property;
    \end{enumerate}
    If furthermore $\val_{\cO_*}(x-\bE)\subseteq \val_{\cO_*}(\bE)$, then they are equivalent to 
    \begin{enumerate}[resume]
        \item for every $y \in \bE \langle x \rangle\setminus \bE$, there is $k\in\bN$ such that $\val(\lder^k(y)) \in \val(\bE)$.
    \end{enumerate}
    \begin{proof}
        Suppose (1), then by Lemma~\ref{lem:approximate_to_exact_log_scale}, we for all $y \in \bE\langle x \rangle$ find a $\bE$-definable gne $g$ and a $z \in \bE \langle z\rangle$ with $g(z)=y$ and $\val(z-\bE)\subseteq \val(\bE)$, thus we get (2).
        
        Conversely, suppose (2), and let $y \in \bE \langle x \rangle$, then by the gne-property, there is a logarithmic scale of finite length starting at $y$ and ending at an absorbed element, so by Lemma~\ref{lem:log_scale_to_essential} we can find a finite maximal essential logarithmic scale and thus $y$ has finite height.
        Finally, the equivalence of (1) and (3) is Proposition~\ref{prop:height_over_absorbed}.
    \end{proof}
\end{corollary}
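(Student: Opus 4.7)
The proof rests on a dictionary between $\bE$-definable gnes applied to absorbed elements and exact essential $\partial$-logarithmic scales ending at absorbed elements. Writing a gne $g$ of height $m$ as a nested composition of translations and exponentials, and setting $y_m = z$, one defines iterates $y_n$ for $n<m$ by $y_n = c_n+\sigma_n\exp(y_{n+1})$, and checks directly that $\partial y_n = \sigma_n\exp(y_{n+1})\partial y_{n+1} = (y_n-c_n)\partial y_{n+1}$, giving an exact $\partial$-logarithmic scale from $y_0 = g(z)$ to $z$. Conversely, since $T$ is exponential (so $\lder(\bE_*^{\neq 0})\subseteq \partial \bE_*$), every exact $\partial$-logarithmic scale can be read off as a gne by antiderivating the logarithmic derivatives step by step. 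The standing hypotheses ensure, via Proposition~\ref{prop:der_order-convexity} and Corollary~\ref{cor:order-convex_implies_val-convex}, that $\partial_x$ is at least almost $\cO_*$-Liouville-convex, so the preceding machinery applies.

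For $(1)\Rightarrow(2)$, given $y\in \bE\langle x\rangle\setminus\bE$ with finite height $m$, Lemma~\ref{lem:log_scale_to_essential} yields a finite approximate $\partial$-logarithmic scale $(y_n)_{n<1+m}$ with $\partial y_0\sim \partial y$ and $\val(y_m-\bE)\subseteq \val(\bE)$, which Lemma~\ref{lem:approximate_to_exact_log_scale} upgrades to an exact one; the dictionary then yields $y\equiv_\bE g(y_m)$ for some $\bE$-definable gne $g$. For $(2)\Rightarrow(1)$, given $y=g(z)$ with $z$ absorbed, the dictionary produces an exact $\partial$-logarithmic scale of finite length ending at $z$, and Lemma~\ref{lem:log_scale_to_essential} bounds $\height(y,\partial)$ by the height of $g$.

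Under the additional hypothesis $\val_{\cO_*}(x-\bE)\subseteq \val_{\cO_*}(\bE)$, the derivation $\partial$ is $\cO_*$-absorbed (Remark~\ref{rmk:der_at_absorbed_is_absorbed}), so Proposition~\ref{prop:height_over_absorbed} identifies $\height(y,\partial)$ with $\inf\{k:\val(\lder^k(\partial y))\in\val(\bE)\}$. The direction $(3)\Rightarrow(1)$ is then immediate by applying (3) to $\partial y$ (handling $\partial y\in \bE$ directly, where $\val(\partial y)\in\val(\bE)$ forces height $0$). For $(1)\Rightarrow(3)$: if $\val(y)\in\val(\bE)$ then $k=0$ works; otherwise Lemma~\ref{lem:der_replacement} with $c=0$ gives $y\sim \partial y/\lder(\partial y)$, hence $\lder(y)\sim \lder(\partial y)$. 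An inductive refinement, using the identity $\lder^2(y)=\lder(\partial y)-\lder(y)$ together with $\lder$-convexity of $\cO_*\setminus \co_*$ to bound the error terms, propagates this to $\val(\lder^k(y))=\val(\lder^k(\partial y))$ for every $k$ such that $\val(\lder^j(y))\notin\val(\bE)$ for all $j<k$. Choosing $k_0$ minimal with $\val(\lder^{k_0}(\partial y))\in \val(\bE)$ (finite by (1)), either some $j<k_0$ already witnesses (3), or $k_0$ itself does.

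The main delicate point is the propagation in $(1)\Rightarrow(3)$: one must verify at each inductive step that the difference $\lder^k(y)-\lder^k(\partial y)$ lies in $\partial \co_*$ and is therefore dominated by $\lder^k(\partial y)$, which is guaranteed by absorbedness of $\partial$ together with the $\lder$-convexity of $\cO_*\setminus \co_*$ forcing $\lder^k(\partial y)\succ \partial\cO_*$ whenever $\val(\lder^{k-1}(\partial y))\notin\val(\bE)$.
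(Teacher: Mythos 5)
Your proof is correct and follows the paper's own route: you encode gnes applied to absorbed elements as $\partial$-logarithmic scales ending at absorbed elements, and shuttle between the two worlds via Lemma~\ref{lem:approximate_to_exact_log_scale} and Lemma~\ref{lem:log_scale_to_essential}, then invoke Proposition~\ref{prop:height_over_absorbed} for the third item. The dictionary $y_n = c_n + \sigma_n\exp(y_{n+1}) \Rightarrow \partial y_n = (y_n - c_n)\partial y_{n+1}$ is exactly right, and the remark that $y\equiv_\bE g(y_m)$ with $y_m$ absorbed suffices (since absorbedness is a property of the type) is what one needs to get the gne-property.

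The one place where you do genuinely more than the paper is $(1)\Leftrightarrow(3)$. The paper simply cites Proposition~\ref{prop:height_over_absorbed}, but that proposition computes $\height(y,\partial) = \inf\{k : \val(\lder^k(\partial y))\in\val(\bE)\}$ — with $\partial y$ inside, not $y$ — so a small bridge is still needed, and you supply it with your propagation $\val(\lder^k(y)) = \val(\lder^k(\partial y))$ for $k$ below the first index where either term hits $\val(\bE)$. That induction is valid: from $\lder^j(y) = (1+\eta_j)\lder^j(\partial y)$ with $\eta_j\prec 1$ one gets $\lder^{j+1}(y) = \lder^{j+1}(\partial y) + \lder(1+\eta_j)$, and $\lder(1+\eta_j)\in\cO_*\partial\co_*\prec\lder^{j+1}(\partial y)$ by $\lder$-convexity of $\cO_*\setminus\co_*$ whenever $\val(\lder^j(y))\notin\val(\bE)$ (plus absorbedness, giving $\lder(\partial y)\succ\partial\cO_*$ for the base case via Lemma~\ref{lem:der_replacement}). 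A slightly slicker way to close the same gap, avoiding the iteration, is to apply Proposition~\ref{prop:height_over_absorbed} to $\log|y|$ rather than to $y$: since $\partial(\log|y|)=\lder(y)$ one gets $\val(\lder^{k+1}(y)) = \val(\lder^k(\partial\log|y|))\in\val(\bE)$ for $k=\height(\log|y|,\partial)<\omega$, and $\log|y|\notin\bE$ because $y\notin\bE$. Either way, the substance matches; your version just spells out what the paper's citation implicitly assumes.
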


We also point out the following generalization of \cite[Prop.~3.2]{freni2024t}. Here we drop the assumption that $\exp(\cO)\subseteq \cO$. Recall that $\cO_*^+\coloneqq\{t\in \bE_*: |t|<\bE^{>\cO}\}$.

\begin{corollary}\label{cor:wim_special_ortho}
    Let $\bE \prec \bE_* \models T$ and let $x \in \bE_*\setminus \bE$. Suppose that $\cO$ is any convex convex subring of $\bE$ and that $x$ is $\cO$-wim over $\bE$. Then $\res_{\cO_*^+}(\bE)=\res_{\cO_*^+}(\bE\langle x \rangle)$. In particular $\cO_*^+\cap\bE \langle x\rangle=\cO_*^-\cap \bE \langle x\rangle$ and $\cO$ extends uniquely to $\bE \langle x\rangle$.
    \begin{proof}
        This is essentially already in Example~\ref{main:examples}(2). Let $\cO_x^+\coloneqq \cO_*^+\cap \bE\langle x \rangle$ and  Consider the valued differential field $(\bE \langle x \rangle, \cO_x^+, \partial_x)$. 
        By Proposition~\ref{prop:der_order-convexity} and Corollary~\ref{cor:order-convex_implies_val-convex}(1), $\partial_x$ is weakly-$\cO_x$-Liouville-convex. But since it contains a $\cO$-wim element over $\bE$, by Corollary~\ref{cor:der-types_and_absorbed-elements} $\partial_x$ has type (W) and $\res_{\cO_*^+}(\bE)=\res_{\cO_*^+}(\bE\langle x \rangle)$.
    \end{proof}
\end{corollary}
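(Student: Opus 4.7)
The plan is to bring the machinery of Section~\ref{sec:fields_of_germs} to bear on the valued differential field $(\bE\langle x\rangle, \cO_x^+, \partial_x)$, where $\cO_x^+ \coloneqq \cO_*^+ \cap \bE\langle x\rangle$ and $\partial_x$ is the unique $\bE$-linear $T$-derivation with $\partial_x(x)=1$; recall $\Kr(\partial_x) = \bE$. By Proposition~\ref{prop:der_order-convexity}, $\partial_x$ is order-Liouville-convex, and Corollary~\ref{cor:order-convex_implies_val-convex}(2) upgrades this to weak $\cO_x^+$-Liouville-convexity; in particular $\partial_x$ is $\cO_x^+$-valuation-convex, so both $\cO_x^+$ and $\co_x^+$ are $\partial_x$-convex.

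Next I would observe that the $\cO$-wim hypothesis gives $\rv_{\cO_x^+}(x-\bE) \subseteq \rv_{\cO_x^+}(\bE)$. Indeed, since $\val_{\cO_*^+}(x-\bE)$ has no maximum, for every $c \in \bE$ one finds $c' \in \bE$ with $x - c' \prec x - c$, whence $c' - c = (x-c) - (x-c') \sim x-c$ and so $\rv(x-c) = \rv(c'-c) \in \rv(\bE)$. Feeding this into Corollary~\ref{cor:der-types_and_absorbed-elements}(2) yields that $\partial_x$ has type (W); in particular $\partial_x$ is not of type (R), so Lemma~\ref{lem:attempted_equivalence} applies and delivers $\res_{\cO_x^+}(\bE\langle x\rangle) = \res_{\cO_x^+}(\bE)$, which after identifying residue maps is exactly the required equality $\res_{\cO_*^+}(\bE\langle x\rangle) = \res_{\cO_*^+}(\bE)$.

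For the \emph{in particular} clauses, the equality of residue fields immediately yields $\cO_*^+ \cap \bE\langle x\rangle \subseteq \cO_*^-$: any such $y$ writes as $c + \epsilon$ with $c \in \cO$ and $\epsilon \in \co_*^+ \subseteq \CH_{\bE_*}(\cO) = \cO_*^-$, so $y \in \cO_*^-$. The reverse inclusion is automatic, and the uniqueness of the extension of $\cO$ to $\bE\langle x\rangle$ then follows since any convex subring of $\bE\langle x\rangle$ whose intersection with $\bE$ is $\cO$ is pinched between $\cO_*^- \cap \bE\langle x\rangle$ and $\cO_*^+ \cap \bE\langle x\rangle$. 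The only mildly delicate point is the translation between the ``no-maximum'' characterization of $\cO$-wim and the $\rv$-containment required by Corollary~\ref{cor:der-types_and_absorbed-elements}(2); all the substantive work has already been absorbed into the structural lemmas of Section~\ref{sec:fields_of_germs}.
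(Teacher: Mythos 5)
Your proof is correct and follows essentially the same route as the paper: set up the valued differential field $(\bE\langle x\rangle, \cO_x^+, \partial_x)$, invoke Proposition~\ref{prop:der_order-convexity} together with Corollary~\ref{cor:order-convex_implies_val-convex} for valuation-convexity, extract $\rv_{\cO_x^+}(x-\bE)\subseteq\rv_{\cO_x^+}(\bE)$ from the $\cO$-wim hypothesis, and then apply Corollary~\ref{cor:der-types_and_absorbed-elements}(2) to get type (W) and hence equality of residue fields. The only differences are cosmetic: you route the final step through Lemma~\ref{lem:attempted_equivalence} rather than Corollary~\ref{cor:der-types_and_absorbed-elements}(1), and you spell out the ``in particular'' clauses which the paper's proof leaves implicit; both are fine.
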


\subsection{Tressl signature-alternative in simply exponential o-minimal fields}\label{ssec:first_application}
In this subsection we show how Proposition~\ref{prop:height_over_absorbed}, can be used to partially answer a problem posed in \cite{tressl2005model}. Our setting will be the one of an elementary extension $\bE \prec \bE_*\models T$ of models of an \emph{exponential} o-minimal theory $T$.

In \cite[Def.~3.16]{tressl2005model}, Tressl considers the following condition on a unary type $p=\tp(x/\bE)$ over $\bE$: $p$ satisfies the \emph{signature alternative} if it is either non-symmetric, or
\begin{enumerate}
    \item $\Br(x/\bE)(\bE)$ is cofinal in $\Br(x/\bE)(\bE\langle x\rangle)$, that is, $\bE\langle x \rangle$ does not realize the cut above $\Br(x/\bE)(\bE)$;
    \item if $y$ is the realization of such cut, i.e.\ if $y \in \Br(y/\bE)(\bE_*)$ and $y>\Br(x/\bE)(\bE)$, then $\Br(\log y/\bE)$ is cofinal in $\Br(\log y/\bE)(\bE \langle y \rangle)$.
\end{enumerate}

Note that if (1) above is satisfied by all types over $\bE$, then all types over $\bE$ satisfy (2) as well. We point out the following immediate Corollary of Proposition~\ref{prop:breadth-ortho} and 

\begin{corollary}\label{cor:finite_hight_and_br_ortho}
    If $x \in \bE_* \setminus \bE$ is symmetric, and all $y \in \bE \langle x\rangle$ have finite height in $(\bE\langle x \rangle, \cO_*^-, \partial_x)$ with $\cO_*^-=\CH(\bZ)$, then $\Br(x/\bE)(\bE)$ is cofinal in $\Br(x/\bE)(\bE\langle x\rangle)$.
    \begin{proof}
        By Proposition~\ref{prop:breadth-ortho}(1) we have for all $y \in \bE \langle x \rangle$ that if $y \in \Br(x/\bE)(\bE\langle x\rangle)\setminus \CH(\Br(x/\bE)(\bE))$, then
        \[ \lder_x^k(y)\in \Br(x/\bE)(\bE\langle x\rangle)\setminus \CH(\Br(x/\bE)(\bE)) \quad \text{for all}\; k \in \bN.\]
        But then this would imply that $\val_{\cO_*^-}(\lder^k y) \notin \val_{\cO_*^-}(\bE)$ and $y$ would have infinite height.
    \end{proof}
\end{corollary}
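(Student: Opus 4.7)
The plan is to argue by contradiction: assume $\Br(x/\bE)(\bE)$ is not cofinal in $\Br(x/\bE)(\bE\langle x\rangle)$, so there exists $y_0\in\bE\langle x\rangle$ with $y_0\in B_x^+\setminus B_x^-$ in the notation of Proposition~\ref{prop:breadth-ortho}. I will produce an element of $\bE\langle x\rangle$ of infinite $\partial_x$-height, contradicting the hypothesis.

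First I fix the valued-differential framework. Since $x$ is symmetric, it is strongly $\CH(\bZ)$-absorbed, so $\val_{\cO_*^-}(x-\bE)\subseteq\val_{\cO_*^-}(\bE)$. By Example~\ref{main:examples}, $\partial_x$ is almost $\cO_*^-$-Liouville-convex; combined with $\partial_x x=1$ and Lemma~\ref{lem:basic_equivalence} applied to $x$ itself, this yields $\partial_x\br_{\cO_*^-}(\partial_x)\prec 1$, making $\partial_x$ an absorbed derivation in the sense of Definition~\ref{def:absorbed-type-der}. In particular the machinery of essential $\partial_x$-logarithmic scales and $\lder$-sequences from Section~\ref{sec:fields_of_germs} applies.

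Now I iterate Proposition~\ref{prop:breadth-ortho}(1). Define $y_{n+1}:=1/\lder_x(y_n)$; writing $y_n=f_n(x)$ for some $\bE$-definable $f_n$, the proposition gives $y_{n+1}\in B_x^+\setminus B_x^-$, so inductively every $y_n$ lies in $B_x^+\setminus B_x^-$ and hence $\val_{\cO_*^-}(y_n)\notin\val_{\cO_*^-}(\bE)$. A direct induction using $\lder(1/a)=-\lder(a)$ shows $|\lder_x(y_n)|=|\lder_x^{n+1}(y_0)|$, whence $\val(\lder_x^k(y_0))\notin\val(\bE)$ for every $k\geq 0$.

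The sequence $(\lder_x^n(y_0))_{n<\omega}$ therefore satisfies the valuation condition of Definition~\ref{def:log-scale}, and successive terms are tautologically $\lder$-related, so it is a $\lder$-sequence. Since $\partial_x$ is weakly absorbed, Lemma~\ref{lem:essential_log-scales_are_dagger_seq}(2) lifts it to an infinite essential approximate $\partial_x$-logarithmic scale $(u_n)_{n<\omega}$ inside $\bE\langle x\rangle$ with $\partial_x u_n\sim\lder_x^n(y_0)$. In particular $u_0\in\bE\langle x\rangle$ has infinite $\partial_x$-height, contradicting the hypothesis. The heart of the argument is the stability of $B_x^+\setminus B_x^-$ under $y\mapsto 1/\lder_x(y)$ via Proposition~\ref{prop:breadth-ortho}(1); the framework verification making $\partial_x$ absorbed is routine but indispensable for invoking Lemma~\ref{lem:essential_log-scales_are_dagger_seq}.
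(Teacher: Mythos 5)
Correct, and essentially the same approach as the paper: both iterate Proposition~\ref{prop:breadth-ortho}(1) to force $\val_{\cO_*^-}(\lder_x^k(y_0))\notin\val_{\cO_*^-}(\bE)$ for all $k$, and then conclude that some element of $\bE\langle x\rangle$ has infinite $\partial_x$-height. Your version is a little more careful than the paper's in two minor respects: you correctly iterate on $y_{n+1}=1/\lder_x(y_n)$ (matching what Proposition~\ref{prop:breadth-ortho}(1) literally gives, whereas the paper writes $\lder_x^k(y)\in B_x^+\setminus B_x^-$, which reads as an abuse), and instead of asserting directly that $y_0$ itself has infinite height (which needs the easy but unstated comparison $\lder^k(\partial_x y_0)\sim\lder_x^k(y_0)$ so that Proposition~\ref{prop:height_over_absorbed} applies), you route through Lemma~\ref{lem:essential_log-scales_are_dagger_seq}(2) to produce a witness $u_0$ of infinite height — both are fine.
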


Recall that an o-minimal theory is \emph{simply exponential} if it is the theory of a power-bounded o-minimal structure expanded by a compatible exponential (see \cite{freni2024t}).

\begin{corollary}\label{cor:simply_exp_signature_alt}
    If $T$ is simply exponential admitting an Archimedean prime model, then for all $\bE\models T$, unary types over $\bE$ satisfy Tressl's signature alternative.
    \begin{proof}
        By \cite[Prop.~5.27]{freni2024t} if $T$ is simply exponential and $(\bE, \cO)\models T_\convex$, then for every $x$ $\cO$-wim over $\bE$, all elements $y \in \bE \langle x \rangle\setminus \bE$ have finite height in $(\bE\langle x \rangle, \cO_*, \partial_x)$. 
        Now let $\bE_*\succ \bE\models T$, since $T$ has an Archiemedan prime model, $\cO=\CH_\bE(\bZ)$ is $T$-convex.
        Now let $x\in \bE_* \setminus \bE$ be symmetric over $\bE$, then by Lemma~\ref{lem:0-sign_vs_wim} it is either $\cO$-wim, or a $\cO$-limit. The case when $x$ is $\cO$-wim is already covered. When $x$ is a $\cO$-limit, $\Br(x/\bE)(\bE)=a\co$, so if $\Br(x/\bE)$ was not cofinal in $\Br(x/\bE)(\bE \langle x\rangle)$, $\bE \langle x \rangle$ would contain an element $\co<y<\bE^{>\co}$, but we know this does not happen by \cite[Thm.~A]{freni2024t} or by \cite[Thm.~7.1]{tressl2005model}.
    \end{proof}
\end{corollary}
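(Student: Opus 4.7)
The plan is to split along the symmetric/non-symmetric dichotomy and, within the symmetric case, to use the wim/$\cO$-limit dichotomy of Lemma~\ref{lem:0-sign_vs_wim}. Non-symmetric (i.e.\ signed) types satisfy the alternative trivially, so we fix a model $\bE_*\succ \bE$ and a symmetric $x \in \bE_*\setminus \bE$. The hypothesis that $T$ admits an Archimedean prime model is exactly what makes $\cO\coloneqq\CH_\bE(\bZ)$ a $T$-convex subring of $\bE$; this is the structural fact that lets us feed the theory developed in Section~\ref{sec:fields_of_germs} and the previous work \cite{freni2024t} into the picture. By Lemma~\ref{lem:0-sign_vs_wim}, $x$ is then either $\cO$-wim or a $\cO$-limit, and I will treat these cases separately. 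Note that the remark right after the definition of signature alternative reduces everything to showing condition (1), so I only need to prove $\Br(x/\bE)(\bE)$ is cofinal in $\Br(x/\bE)(\bE\langle x\rangle)$.

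For the $\cO$-wim case, the key input is that simply exponential theories give every element of $\bE\langle x\rangle\setminus \bE$ finite logarithmic height over $\partial_x$ in $(\bE\langle x\rangle, \cO_*^-, \partial_x)$. This is essentially \cite[Prop.~5.27]{freni2024t}, applied to $(\bE,\cO)\models T_\convex$. Once finite height is available, Corollary~\ref{cor:finite_hight_and_br_ortho} directly yields cofinality of $\Br(x/\bE)(\bE)$ in $\Br(x/\bE)(\bE\langle x\rangle)$: indeed, otherwise Proposition~\ref{prop:breadth-ortho}(1) applied iteratively would show that some $y \in \bE\langle x\rangle$ has $\lder_x^k(y)$ always outside $\CH(\Br(x/\bE)(\bE))$, forcing $\val_{\cO_*^-}(\lder_x^k y)\notin \val_{\cO_*^-}(\bE)$ for all $k$, contradicting finite height via Proposition~\ref{prop:height_over_absorbed}.

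For the $\cO$-limit case, $\Br(x/\bE)(\bE) = a\co$ for some $a \in \bE$ by definition. If cofinality failed, there would be an element $y \in \bE\langle x \rangle$ strictly between $a\co$ and $a\bE^{>\co}$, i.e.\ an element lying in the convex subgroup $a\cO_*^+\setminus a\cO_*^-$ of $\bE\langle x\rangle$. But since $\cO$ is $T$-convex, \cite[Thm.~A]{freni2024t} (or equivalently \cite[Thm.~7.1]{tressl2005model}) guarantees that $\cO_*^-\cap\bE\langle x\rangle=\cO_*^+\cap\bE\langle x\rangle$ (the valuation extends uniquely and the residue field is preserved in $T$-convex extensions), ruling out such a $y$. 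Thus cofinality also holds in this case.

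The main obstacle is really just the book-keeping of invoking the finite-height result from \cite{freni2024t} for the $\cO$-wim case; once that input is in hand, Corollary~\ref{cor:finite_hight_and_br_ortho} does all the work. The $\cO$-limit case is essentially a one-line application of $T$-convexity. No new machinery is required beyond what the section has already assembled, and the Archimedean prime model hypothesis enters exactly once, to make $\CH_\bE(\bZ)$ a $T$-convex subring so that the cited results apply.
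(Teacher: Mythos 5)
Your proof follows essentially the same structure as the paper's: split on symmetric/non-symmetric (non-symmetric is trivially fine), use the Archimedean prime model hypothesis to make $\cO=\CH_\bE(\bZ)$ $T$-convex, split the symmetric case into $\cO$-wim and $\cO$-limit via Lemma~\ref{lem:0-sign_vs_wim}, invoke \cite[Prop.~5.27]{freni2024t} together with Corollary~\ref{cor:finite_hight_and_br_ortho} for the $\cO$-wim case, and use \cite[Thm.~A]{freni2024t} to rule out a realization of the cut above $\co$ in the $\cO$-limit case. The argument is correct.

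One small inaccuracy in the $\cO$-limit case: your parenthetical ``the residue field is preserved in $T$-convex extensions'' is false there. A $\cO$-limit $x$ is precisely a $\cO$-residually cofinal element (up to an affine transformation), so $\res_{\cO}(\bE\langle x\rangle)$ strictly contains $\res_\cO(\bE)$. The correct content of the cited theorem that rules out $\co < y < \bE^{>\co}$ is the unique extension of $\cO$ (equivalently $\co_*^-\cap\bE\langle x\rangle=\co_*^+\cap\bE\langle x\rangle$, which is equivalent to $\cO_*^-\cap\bE\langle x\rangle=\cO_*^+\cap\bE\langle x\rangle$), not residue-field preservation; you should drop the second clause of the parenthetical. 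This does not affect the validity of the argument, since the clause you actually use (unique extension of the valuation) is correct.
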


We can also say something more general about symmetric cuts that are not $\cO$-limits for any convex $\cO \subseteq \bE$.

\begin{corollary}
    Let $\bE \prec \bE_* \models T$ and let $x \in \bE_*\setminus \bE$. Suppose that $x \in \bE_* \setminus \bE$ is symmetric but is not not a $\Vr(x/\bE)(\bE)$-limit. Then $\Br(x/\bE)(\bE)$ is cofinal in $\Br(x/\bE)(\bE\langle x\rangle)$
    \begin{proof}
        Since $x$ is $\Vr(x/\bE)(\bE)$-symmetric by definition and not a $\Vr(x/\bE)(\bE)$-limit, it must be $\Vr(x/\bE)(\bE)$-wim.
        Set $\cO_x\coloneqq \Vr(x/\bE)(\bE\langle x\rangle)$ and let $\partial_x$ be the $\bE$-linear $T$-derivation at $x$ on $\bE \langle x\rangle$.
        By Example~\ref{main:examples}(2), $\partial_x$ is $\cO_x$-Liouville-convex. Furthermore it is $\cO_x$-absorbed by Lemma~\ref{lem:basic_equivalence} and
        \[\br_{\cO_x}(\partial_x)=\br_{\cO_x}(x/\bE)=\Br(x/\bE)(\bE \langle x \rangle)=:B_x^+.\]

        But now if $f$ was a unary $\bE$-definable function such that $f(x) \in B_x^+\setminus B_x^-$, where $B_x^-=\CH^\le_{\bE\langle x\rangle}(\Br(x/\bE)(\bE))$, then by Proposition~\ref{prop:breadth-ortho}, $1/f'(x) \in \Vr(x/\bE)(\bE\langle x \rangle)=\cO_x$, but on the other hand it should be $1/|f'(x)| > \cO_x$, because $\partial_x$ is $\cO_x$-absorbed, contradiction.
    \end{proof}
\end{corollary}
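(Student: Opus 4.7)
The plan is first to reduce to the case where $x$ is $\cO$-wim for $\cO\coloneqq \Vr(x/\bE)(\bE)$, and then to derive a contradiction from the assumption that $\Br(x/\bE)(\bE)$ fails to be cofinal in $\Br(x/\bE)(\bE\langle x\rangle)$. The reduction is immediate: $x$ is $\cO$-symmetric by the very definition of $\cO$ as the invariance ring of $x$ over $\bE$, and since by hypothesis $x$ is not a $\cO$-limit, Lemma~\ref{lem:0-sign_vs_wim} forces $x$ to be $\cO$-wim.

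Next I transport the problem into the valued differential field $(\bE\langle x\rangle, \cO_x, \partial_x)$, where $\cO_x\coloneqq \Vr(x/\bE)(\bE\langle x\rangle)$ and $\partial_x$ is the unique $\bE$-linear $T$-derivation on $\bE\langle x\rangle$ with $\partial_x x=1$. The $\cO$-wim-ness of $x$ gives $\rv_{\cO_x}(x-\bE)\subseteq \rv_{\cO_x}(\bE)$, so Corollary~\ref{cor:der-types_and_absorbed-elements}(2) shows $\partial_x$ has type (W), and Example~\ref{main:examples}(2) then upgrades this to full $\cO_x$-Liouville-convexity. Coupling $\val(x-\bE)\subseteq \val(\bE)$ with Lemma~\ref{lem:basic_equivalence} yields both that $\partial_x$ is $\cO_x$-absorbed and that $\br_{\cO_x}(\partial_x)=\br_{\cO_x}(x/\bE)$, which by the choice $\cO_x=\Vr(x/\bE)(\bE\langle x\rangle)$ is $B_x^+\coloneqq \Br(x/\bE)(\bE\langle x\rangle)$.

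Now suppose, for contradiction, that there is a $\bE$-definable unary $f$ with $f(x)\in B_x^+\setminus B_x^-$, where $B_x^-\coloneqq \CH_{\bE\langle x\rangle}^{\le}(\Br(x/\bE)(\bE))$. Applying Proposition~\ref{prop:breadth-ortho}(2) to the symmetric cut $x$ and the function $f$ gives $1/f'(x)\in \Vr(x/\bE)(\bE\langle x\rangle)=\cO_x$. On the other hand, since $f(x)\in B_x^+=\br_{\cO_x}(\partial_x)$ and $\partial_x$ is $\cO_x$-absorbed, the defining property $\partial_x(\br_{\cO_x}(\partial_x))\prec 1$ yields $f'(x)=\partial_x f(x)\in \co_x$, whence $1/f'(x)\notin \cO_x$, a contradiction.

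The main obstacle is not any single deep step but rather the bookkeeping of three identifications: that the convex subring $\cO_x$ agrees with the invariance ring appearing in Proposition~\ref{prop:breadth-ortho}(2), that $\br_{\cO_x}(\partial_x)=B_x^+$ (needed to feed $f(x)$ into the absorbed property), and that $\partial_x$ is actually $\cO_x$-absorbed (via the type-(W) route). All three are extracted from the single assumption that $x$ is $\cO$-wim, via the apparatus of Section~\ref{sec:fields_of_germs}; once they are in place the contradiction is immediate.
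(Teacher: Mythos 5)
Your proposal is correct and follows essentially the same path as the paper's own proof: reduce to the $\cO$-wim case via Lemma~\ref{lem:0-sign_vs_wim}, use Example~\ref{main:examples}(2) (your explicit detour through Corollary~\ref{cor:der-types_and_absorbed-elements}(2) is exactly the content of that example's parenthetical) and Lemma~\ref{lem:basic_equivalence} to identify $\br_{\cO_x}(\partial_x)=B_x^+$ and establish absorption, and then play Proposition~\ref{prop:breadth-ortho}(2) against absorption of $\partial_x$ to force a contradiction. No substantive differences.
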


\section{Transserial o-minimal theories} \label{sec:transserial}

This section is devoted to the proofs of the main Theorems. We will thus characterize those theories $T$ in which for all models $(\bE, \cO) \models T_\convex$ and all $1$-dimensional wim-constructible extensions $\bE \langle x \rangle\succ \bE$, all elements of $\bE\langle x \rangle \setminus \bE$ have a finite logarithmic height over $\bE$, and prove some properties of them.

\subsection{Transseriality and exponential-boundedness}In this subsection we introduce the definition of transseriality. 

\begin{notation}
    Let $f: \bE^{n+1} \to \bE$ be a $\bE$-definable function and $m$ an integer, recall from the introduction that we denote by $G_m^f : \bE^{n+1} \to \bE^{\ge 0} \cup \{\infty\}$, the $\bE$-definable function
    \[G_m^f(\overline{x}, t) =  \min\big\{|\lder_{n}^k f(\overline{x}, t)|: 0 \le k \le m \big\}\] 
    where $\dagger_{n}^m$ is the $m$-fold iterate of the lograithmic derivative in the last variable, and we set $\dagger_{n} f (\overline{x}, t)= \infty$ when $\dagger_{n} f (\overline{x}, -)$ is not defined at $t$.
    We also set $\tilde{G}_m^{f}=\max \{G_m^f, G_m^{\tilde{f}}\}$ where $\tilde{f}(\overline{x}, t)=f(\overline{x}, -t)$.

    If $y \in \bE_*\succ \bE\models T$ and $y \notin \bE$, we will write $\partial_y: \bE \langle y \rangle \to \bE \langle y \rangle$ for the only $\bE$-linear $T$-derivation such that $\partial_y(y)=1$.
\end{notation}

We are going to be interested in the following conditions on a theory $T$.

\begin{definition}\label{def:transserial}
    Let $\bE$ be an o-minimal expansion of an ordered field, $T=\Th(\bE)$ and $(\bE, \cO) \models T_\convex$. We say that $T$ is
    \begin{enumerate}
        \item \emph{weakly transserial}, if for all $T$-definable $(n+1)$-ary functions $f$, there is a natural number $m$ such that
	    \begin{equation}\label{eqn:wtransserial_f_m}\tag{$\mathrm{WTS}^{f}_m$}
	        (\bE, \cO) \models \forall \overline{x} \in \bE^n, \; \exists r \in \cO,\; \forall t \in \cO\; \big(t>r \rightarrow G_m^f(\overline{x}, t)\in \cO\big);
        \end{equation}
        \item \emph{transserial}, if for all $T$-definable $(n+1)$-ary functions $f$, there is a natural number $m$ such that
	    \begin{equation}\label{eqn:transserial_f_m}\tag{$\mathrm{TS}^{f}_m$}
	        (\bE, \cO) \models \forall \overline{x}\in \bE^n,\; \exists r,s \in \cO,\; \forall t \in \cO\; \big(t>r \rightarrow G_m^f(\overline{x}, t)\le s\big).
        \end{equation}
    \end{enumerate}
\end{definition}

\begin{remark}
	If $T$ is transserial (resp.\ weakly ---), then (\ref{eqn:transserial_f_m}) (resp.\ (\ref{eqn:wtransserial_f_m})) in Definition~\ref{def:transserial} holds also if $f$ is just $\bE$-definable. Also note that (\ref{eqn:transserial_f_m}) implies for any $s \in \cO^{>\co}$
    \begin{equation}\label{eqn:transserial_f_m_s}\tag{$\mathrm{TS}^{f}_{m+1,s}$}
	        (\bE, \cO)\models \forall \overline{x}\in \bE^n,\; \exists r \in \cO,\; \forall t \in \cO\; \big(t>r \rightarrow G_{m+1}^f(\overline{x}, t)< s\big).
    \end{equation}
    So in the definition of transserial we can also just require that for every $f$, there is $m$ such that ($\mathrm{TS}^f_{m,1}$) holds. For convenience we will mostly use this last formulation, which is also the one given in the introduction.
\end{remark}

\begin{lemma}\label{lem:unary_functions}
    Let $T$ be an exponential o-minimal theory.
    \begin{enumerate}
        \item $T$ is weakly transserial if and only if for all $(\bE, \cO) \models T_\convex$ and every unary $\bE$-definable function $f$, there is a natural number $m$ such that
        \begin{equation}\tag{$\mathrm{WTS}^{f}_m$}
            (\bE, \cO) \models \exists r \in \cO, \forall t\in \cO, \big(t>r \rightarrow G_m^f(t) \in \cO\big).
        \end{equation}
        \item $T$ is transserial if and only if for all $(\bE, \cO) \models T_\convex$ and every unary $\bE$-definable function $f$, there is a natural number $m$ such that
        \begin{equation}\tag{$\mathrm{TS}^{f}_{m,1}$}
            (\bE, \cO) \models \exists r \in \cO, \forall t\in \cO, \big(t>r \rightarrow G_m^f(t) \le 1\big).
        \end{equation}
    \end{enumerate}
    \begin{proof}
        We prove (2), the proof of (1) is similar.
        $\Rightarrow$ is obvious.
        The converse is a compactness argument. Suppose that for some $T$-definable $n$-ary $f: \bE^{n+1} \to \bE$ we have for all $m \in \bN$
        \[T_\convex \models \exists \overline{x},\, \forall r\in \cO, \exists t\in \cO, (t > r \land G_m^f(\overline{x}, t)>1)\]
        which by weak o-minimality of $T_\convex$ is equivalent to       \begin{equation}\label{eqn:strong_negation}
            T_\convex \models \exists \overline{x},\, \exists r\in \cO, \forall t\in \cO,\, (t > r \rightarrow G_m^f(\overline{x}, t)>1).
        \end{equation}
        Notice that since 
        \[G_{m+1}^f(\overline{x}, t)=\min\{G_{m}^f(\overline{x}, t), \lder_n (G_m^f)(\overline{x},t)\} \le G_{m}^f(\overline{x}, t),\]
        said 
        \[\varphi_m(\overline{x}):= \big(\exists r\in \cO, \forall t\in \cO,\, (t > r \rightarrow G_m^f(\overline{x}, t)>1)\big),\]
        we have $\varphi_{m+1}(\overline{x}) \rightarrow \varphi_{m}(\overline{x})$ hence the validity of \ref{eqn:strong_negation} for all $m$ is equivalent to the consistency of the type $p(\overline{x})=\{\varphi_m(\overline{x}): m \in \bN\}$. If $(\bE, \cO)$ is a model of $T_\convex$ realizing such type and $\overline{c} \in \bE^n$ is a realization of $p$, we get that $g(t)=f(\overline{c}, t)$ is such that for all $m$ $(\bE, \cO)\models \exists r \in \cO,\, \forall t \in \cO,\, (t>r \rightarrow G_m^g(t)>1)$.
    \end{proof}
\end{lemma}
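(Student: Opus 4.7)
The plan is to prove both directions together, handling (2) in detail and noting that (1) follows from the same argument with the bound ``$\le 1$'' replaced by ``$\in \cO$'' (equivalently, with an extra existentially-quantified Skolem-type constant $s \in \cO$ absorbed into the formula). The forward implication is essentially immediate in both parts: an arbitrary unary $\bE$-definable function is obtained from a $T$-definable $(n+1)$-ary function by fixing a parameter tuple, so specializing the hypothesis of (weak) transseriality to that parameter tuple directly yields the displayed unary statement.

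For the converse in (2) I would argue by contrapositive via a standard compactness argument. Suppose $T$ is not transserial; then there is a $T$-definable $(n+1)$-ary function $f$ such that for every $m \in \bN$, the condition \eqref{eqn:transserial_f_m} fails in some model of $T_\convex$. The literal negation of this condition produces a $\forall r \in \cO \, \exists t \in \cO$ pattern, and the first step is to rewrite it as a cleaner $\exists r \in \cO \, \forall t \in \cO$ pattern using weak o-minimality of $T_\convex$: the set $\{t \in \cO : G_m^f(\overline{x}, t) > 1\}$ is definable in $(\bE,\cO)$ and hence a finite union of convex subsets of $\cO$, so if it is cofinal in $\cO$ one of those convex pieces is cofinal, hence contains a final segment of $\cO$. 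Setting $\varphi_m(\overline{x})$ to be this rewritten formula, the failure of transseriality at level $m$ for $f$ translates into the consistency of $T_\convex \cup \{\exists \overline{x}, \varphi_m(\overline{x})\}$.

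The next and central step is the monotonicity observation $G_{m+1}^f \le G_m^f$, which is immediate from the definition of $G_m^f$ as the minimum of $|\lder_n^k f|$ over $k \le m$, since for $G_{m+1}^f$ the minimum is taken over a strictly larger index set. Consequently $\varphi_{m+1}(\overline{x}) \to \varphi_m(\overline{x})$, so the partial type $p(\overline{x}) \coloneqq \{\varphi_m(\overline{x}) : m \in \bN\}$ is finitely consistent with $T_\convex$ (each finite piece is realized in a single preassigned model). Compactness then yields a realization $\overline{c}$ in some $(\bE,\cO) \models T_\convex$, and the unary $\bE$-definable function $g(t) \coloneqq f(\overline{c}, t)$ violates the displayed unary condition in (2) for every $m$, as required. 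The argument for (1) is identical after replacing the bound $> 1$ with $\notin \cO$; the latter is still expressible in $T_\convex$ by a single formula since $\cO$ is a predicate of the language.

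I do not expect any serious obstacle: the only mildly subtle point is the use of weak o-minimality to collapse $\forall r \in \cO \, \exists t \in \cO$ into $\exists r \in \cO \, \forall t \in \cO$, which is a standard consequence of the finite-union-of-convex-sets characterisation of definable subsets of $\cO$ in $T_\convex$; everything else is routine monotonicity plus compactness.
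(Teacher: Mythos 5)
Your proposal is correct and follows essentially the same route as the paper's own proof: a trivial forward direction, and for the converse a compactness argument that negates transseriality, collapses $\forall r\,\exists t$ to $\exists r\,\forall t$ via weak o-minimality of $T_\convex$, and uses the monotonicity $G_{m+1}^f \le G_m^f$ to make the type $\{\varphi_m(\overline{x}) : m \in \bN\}$ finitely satisfiable. The only (cosmetic) difference is that you spell out slightly more explicitly why weak o-minimality licenses the quantifier-pattern swap; the substance is identical.
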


\begin{remark}\label{rmk:special_cut_reformulation}
    Let $(\bE, \cO)\models T_\convex$, and $b \in \bE_*\succ \bE$ be such that $\cO<b<\bE^{>\cO}$. Set, as usual $\cO_b^+\coloneqq \{t \in \bE \langle b \rangle: |t|<\bE^{>\cO}\}$, and $\cO_b^-\coloneqq \CH_{\bE \langle b \rangle}^{\le}(\cO)$ and recall that $(\bE\langle b \rangle, \cO_b^+)$ and $(\bE \langle b \rangle, \cO_b^-)$ are both models of $T_\convex$. 
    
    If $f:\bE \to \bE$ is a unary $\bE$-definable function then (\ref{eqn:transserial_f_m}) is equivalent to $G_m^f(b) \in \cO_b^-$, whereas (\ref{eqn:wtransserial_f_m}) is equivalent to $G_m^f(b) \in \cO_b^+$.
\end{remark}

\begin{corollary}\label{cor:wtransserial_gne}
    $T$ is weakly transserial if and only if for all $(\bE, \cO) \models T_\convex$ and $b$ as in Remark~\ref{rmk:special_cut_reformulation}, all elements of $(\bE\langle b \rangle, \cO_b^+, \partial_b)$ have finite $\partial_b$-logarithmic height.
    \begin{proof}
        By Lemma~\ref{lem:unary_functions} and Remark~\ref{rmk:special_cut_reformulation}, $T$ is weakly transserial if and only if for all $(\bE, \cO) \prec (\bE \langle b \rangle, \co_b^+)\models T_\convex$ with $\cO<b<\bE^{>\cO}$ and $b \in \cO_b$, and for all unary $\bE$-definable functions $f$, there is $m$ such that $G_m^f(b) \in \cO_b^+$. But now by Example~\ref{main:examples}, $(\bE \langle b \rangle, \cO_b, \partial_b)$ is a valued field and $\partial_b$ is a $\cO_b$-Liouville-convex derivation. Furthermore by Corollary~\ref{cor:absorbed-der_sumup}, $\partial_b$ is absorbed with $\br_{\cO_b}(\partial_b)=\co_b$. So the minimum $m$ such that $G_{m}^f(b) \in \cO_b^+$ is in fact the $\partial_b$-logarithmic height of $\exp(f(b))$ and the statement follows.
    \end{proof}
\end{corollary}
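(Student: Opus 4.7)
The plan is to translate weak transseriality into a condition on the valued differential field $(\bE\langle b\rangle, \cO_b^+, \partial_b)$ and then invoke Proposition~\ref{prop:height_over_absorbed}.

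First I would reformulate weak transseriality ``at $b$''. By Lemma~\ref{lem:unary_functions}(1) it suffices to consider unary $\bE$-definable functions $f$; by Remark~\ref{rmk:special_cut_reformulation}, when $b$ realises the cut $\cO < b < \bE^{>\cO}$ in some elementary extension, the $\forall\exists\forall$-condition $(\mathrm{WTS}^{f}_m)$ becomes simply $G_m^f(b) \in \cO_b^+$, i.e.\ $\val_{\cO_b^+}(\lder^k_b f(b)) \ge 0$ for some $k \le m$.

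Second, I would verify that $(\bE\langle b\rangle, \cO_b^+, \partial_b)$ falls under the hypotheses of Proposition~\ref{prop:height_over_absorbed}. By Example~\ref{main:examples}(3), $\partial_b$ is $\cO_b^+$-Liouville-convex (note that $\cO_b^+$ is $T$-convex, so $\exp(\cO_b^+)\subseteq \cO_b^+$). The element $b$ itself is $\cO_b^+$-absorbed over $\bE$: for any $c \in \bE$, either $c \in \cO$ and $b-c$ lies in $\cO_b^+\setminus \co_b^+$ (value $0$), or $|c|>\bE^{>\cO}$ and $\val(b-c)=\val(c)$, both in $\val_{\cO_b^+}(\bE)$. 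Corollary~\ref{cor:absorbed-der_sumup} then gives that $\partial_b$ is absorbed with $\br_{\cO_b^+}(\partial_b)=\co_b^+$, so Proposition~\ref{prop:height_over_absorbed} yields, for every $y \in \bE\langle b \rangle$,
\[\height(y,\partial_b)=\inf\{k\in\omega:\val_{\cO_b^+}(\lder^k_b \partial_b y) \in \val_{\cO_b^+}(\bE)\}.\]

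Finally, I would match the reformulated weak transseriality with the finite-height condition. Given $f$, the natural companion element whose logarithmic derivative is $f(b)$ is $z=\exp(F(b))$ for $F$ an antiderivative of $f$, so that $\lder_b z = f(b)$ and $\lder^{k+1}_b z = \lder^k_b f(b)$. The absorbed property $\br_{\cO_b^+}(\partial_b)=\co_b^+$ collapses the difference between ``$\lder^k_b f(b)\in \cO_b^+$'' and ``$\val(\lder^k_b f(b))\in\val(\bE)$'' along the essential $\lder_b$-sequences of Lemma~\ref{lem:essential_log-scales_are_dagger_seq}, so the smallest $m$ with $G_m^f(b) \in \cO_b^+$ coincides with $\height(z,\partial_b)$. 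Ranging over $f$ and running the argument in both directions yields the claimed equivalence.

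The main obstacle I expect is the last identification: one must verify that bounding a single iterate $\lder^k_b f(b)$ inside $\cO_b^+$ actually forces its value to lie in $\val_{\cO_b^+}(\bE)$ (not just $\ge 0$), and that the off-by-one between $\lder^k \partial z$ appearing in the height formula and the $\lder^{k+1} z = \lder^k f(b)$ appearing in $G^f_m$ is handled correctly. This is mostly bookkeeping, relying on the absorbed property together with Lemma~\ref{lem:essential_log-scales_are_dagger_seq}, but it is the only nontrivial ingredient.
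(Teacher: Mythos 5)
Your proposal follows the same route as the paper: reduce to unary functions via Lemma~\ref{lem:unary_functions}, translate $(\mathrm{WTS}^{f}_m)$ into $G_m^f(b)\in\cO_b^+$ via Remark~\ref{rmk:special_cut_reformulation}, check the hypotheses of Proposition~\ref{prop:height_over_absorbed} through Example~\ref{main:examples} and Corollary~\ref{cor:absorbed-der_sumup}, and then match $G_m^f(b)$ with a logarithmic height. Two points in your final identification are imprecise, though. First, the height formula counts iterates of $\lder_b$ applied to $\partial_b y$, not to $y$: you choose $z=\exp(F(b))$ so that $\lder_b z=f(b)$, but $\partial_b z=f(b)\exp(F(b))$, so $\lder^k_b(\partial_b z)\ne \lder^k_b f(b)$ in general. (The paper's $\exp(f(b))$ has an analogous issue.) The clean choice is $z=F(b)$ itself, for which $\partial_b z=f(b)$ and $\lder^k_b(\partial_b z)=\lder^k_b f(b)$ with no off-by-one. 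Second, the claim that ``bounding a single iterate $\lder^k_b f(b)$ inside $\cO_b^+$ forces its value to lie in $\val_{\cO_b^+}(\bE)$'' is false as a local statement: $\exp(-b)\in\co_b^+$ has $\val_{\cO_b^+}(\exp(-b))\notin\val_{\cO_b^+}(\bE)$. What is true is that, setting $a_k=\lder^k_b f(b)$, if $a_m\in\co_b^+$ with $\val(a_m)\notin\val(\bE)$ then by $\lder$-convexity of $\cO_*\setminus\co_*$ and the absorbed property $a_{m+1}\succ\cO_b^+$ with $\val(a_{m+1})\notin\val(\bE)$, and the trajectory stays trapped in $\{y\succ\cO_b^+:\val(y)\notin\val(\bE)\}$ thereafter; this contradicts weak transseriality applied to the auxiliary function $\lder^{m+1}f$. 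So the desired implication does hold, but it uses weak transseriality for all $\bE$-definable functions, not just $f$, and it is not ``mostly bookkeeping.'' The paper's own proof elides the same step, so you are not underperforming relative to it, but since you identify this as the main obstacle it is worth stating that the absorbed property alone does not collapse the gap.
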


\begin{lemma}\label{lem:transserial_vs_Wtransserial}
    $T$ is transserial if and only if it is weakly transserial and exponentially bounded.
    \begin{proof}
        Clearly if $T$ is transserial, then it is weakly transserial. Furthermore it is exponentially bounded because if $f$ is a unary $T$-definable function, then (\ref{eqn:transserial_f_m}) implies that for some $m$, $\lder^m(f)(t)$ is bounded at infinity, so either $\lder^m(f)(t)$ or $\lder^{m+1}(f)(t)$ is infinitesimal at infinity.

        Conversely, consider a $\bE$-definable unary function $f: \bE\langle b\rangle \to \bE\langle b \rangle$, where $\cO<b<\bE^{>\cO}$. From weak transseriality we get, by Remark~\ref{rmk:special_cut_reformulation}, $\lder^m(f)(b)\in \cO_b^+$ for some $m$. If $(\dagger^{m} f) (b) \in \cO_b^-$ we are done.

        Otherwise note that by \cite[Lem.~2.23]{freni2024t}, we can write $(\dagger^{m} f) (b)=g(b)+\varepsilon(b)$ where $g$ is $\bK$-definable for some elementary residue section $\bK$ of $\bE$, $|g(b)|\ge 1$, and $\varepsilon(b) \in \co_b$.
        Now $\lder^{j+1}(g+\varepsilon) = \lder^{j+1} g + \varepsilon_{j+1}$ where $\varepsilon_1=\varepsilon$ and $\varepsilon_{j+1}:=\lder(1+\varepsilon_j/\lder^j g)$ and by \cite[Lem.~2.24]{freni2024t} we see that if $|\lder^i g(b)|\notin \co_b$ for all $i<j+1$, then $\varepsilon_{i}(b) \in \co_b$ for all $i\le j+1$. Since $T$ is exponentially bounded, there is $k\in \bN$ such that $(\lder^{k+1}g)(b)<1$, and by picking the smallest such $k$, we also have $(\lder^ig)(b) \notin \co_b$ for all $i<k+1$ so 
        \[(\lder^{m+k+1}f)(b) = (\lder^{k+1}(g+\varepsilon)) = (\lder^{k+1}g + \varepsilon_{k+1})(b)\in (\lder^{k+1}g)(b)+\co_b<1.\]
        Thus we can conclude by Remark~\ref{rmk:special_cut_reformulation}.
    \end{proof}
\end{lemma}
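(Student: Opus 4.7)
The forward direction is essentially immediate. Transseriality trivially implies weak transseriality since $1 \in \cO$. For exponential boundedness, apply ($\mathrm{TS}^f_{m,1}$) to an arbitrary unary $T$-definable $f$: the existence of $k \le m$ with $|\lder^k f(t)| \le 1$ on a cofinal set forces $\lder^{k-1}f$ to grow at most exponentially, and unwinding the iterated logarithmic derivative then bounds $f$ by some iterated exponential.

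The work is in the converse. I would first use Lemma~\ref{lem:unary_functions} and Remark~\ref{rmk:special_cut_reformulation} to reduce to the following: given $(\bE, \cO) \prec (\bE\langle b\rangle, \cO_b^+) \models T_\convex$ with $\cO < b < \bE^{>\cO}$, and a unary $\bE$-definable $f$, find $m$ with $G_m^f(b) \in \cO_b^-$. Weak transseriality provides some $m$ with $G_m^f(b) \in \cO_b^+$, i.e.\ some $\dagger^i f(b)$ (for $i \le m$) is $\bE^{>\cO}$-bounded. If in fact $|\dagger^i f(b)| \in \cO_b^-$, we are done. The interesting case is $|\dagger^m f(b)| \in \cO_b^+ \setminus \cO_b^-$, so $|\dagger^m f(b)|$ sits strictly between $\cO$ and $\bE^{>\cO}$.

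In this case I would invoke a residue-section decomposition (\cite[Lem.~2.23]{freni2024t}) to write $\dagger^m f(b) = g(b) + \varepsilon(b)$ with $g$ $\bK$-definable for some elementary residue section $\bK \preceq \bE$, $|g(b)| \ge 1$, and $\varepsilon(b) \in \co_b$, and then iterate $\dagger$ through this decomposition. The algebraic identity to exploit is
\[\lder^{j+1}(g+\varepsilon) = \lder^{j+1} g + \lder\Bigl(1 + \tfrac{\varepsilon_j}{\lder^j g}\Bigr),\]
which, as long as the principal terms $|\lder^i g(b)|$ do not become infinitesimal, lets one propagate ``$\varepsilon_i(b) \in \co_b$'' inductively. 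Because $g$ is defined over the residue section and $T$ is exponentially bounded, there is a least $k$ with $|\lder^{k+1} g(b)| < 1$; minimality gives $|\lder^i g(b)| \notin \co_b$ for $i \le k$, which is exactly the nondegeneracy needed to run the induction. Combining the two pieces yields $|\lder^{m+k+1} f(b)| < 1$, so $G_{m+k+1}^f(b) < 1 \in \cO_b^-$, as required.

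The main obstacle — and the one technical point I would not want to redo by hand — is the error-control inductive step: verifying that the perturbation terms $\varepsilon_j(b)$ really remain infinitesimal through $k+1$ iterations of $\dagger$, provided the principal terms stay non-infinitesimal. I would simply appeal to \cite[Lem.~2.24]{freni2024t} to handle this cleanly, since that lemma is precisely tailored to control such iterated logarithmic-derivative expansions at the special cut.
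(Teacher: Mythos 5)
Your proposal is correct and follows essentially the same route as the paper's proof: the forward direction is the same trivial observation plus the same bounded-iterated-logarithmic-derivative criterion for exponential boundedness, and the converse reduces via Lemma~\ref{lem:unary_functions} and Remark~\ref{rmk:special_cut_reformulation} to the special cut, then decomposes $\dagger^m f(b)=g(b)+\varepsilon(b)$ with the same residue-section lemma \cite[Lem.~2.23]{freni2024t}, runs the same $\varepsilon_{j+1}=\lder(1+\varepsilon_j/\lder^j g)$ induction controlled by \cite[Lem.~2.24]{freni2024t}, and uses exponential boundedness of the residue-section part $g$ to find the required $k$. The only cosmetic difference is that you are a little more explicit about the pigeonhole step behind ``some $\dagger^i f(b)$ is bounded'' and about the case split $\cO_b^+\setminus\cO_b^-$.
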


The following Lemma and the subsequent Remark won't be used in the paper.

\begin{lemma}
    If $T$ is weakly transserial, then for all $T$-definable $(n+1)$-ary function $f$, there is $m\in \bN$ and $N \in \bN$ such that for every $\overline{x}\in \bE$, $\res_{\cO}\{t \in \cO: G_m^f(\overline{x},t)\notin \cO\}$ has less than $N$ elements.
    \begin{proof}
        By a compactness argument similar to the one of Lemma~\ref{lem:unary_functions}, it suffices to show it for unary parameter-definable functions.
        Consider the setting of Remark~\ref{rmk:special_cut_reformulation}.
        Note that since $\height(y,\partial_b)=\height(y, \partial_{1/b})=\height(y, -b^2\partial_b)$, it follows from Corollary~\ref{cor:wtransserial_gne}, that there is $m$ such that $G_m^f(1/b) \in \cO_b$, furthermore the $m$ can be chosen uniform across a family of definable functions $f_x$. So considering the family $f_r(t)=f(r+t)$, we can in particular find $m$ such that $G_m^{f_r}(b) \in \cO_b$ for all $r \in \bE$. But then this implies in particular that for all $r \in \cO$ $G_m^f(r+1/t)\in \cO$ for all $t \in (r_-, r_+)\setminus (r+\co)$ for some $r_-<r+\co<r_+$. So by weak o-minimality, of $T_\convex$ we get the thesis.  
    \end{proof}
\end{lemma}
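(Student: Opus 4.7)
My plan is to follow the compactness pattern of Lemma~\ref{lem:unary_functions} to reduce to the unary case: if the conclusion failed for some $T$-definable $f$, then by the monotonicity $G^f_{m+1}\le G^f_m$ the type asserting ``for every $(m,N)$, the parameter $\overline{x}$ witnesses at least $N$ bad residues at level $m$'' would be consistent, and a realization $\overline{c}$ would yield a unary parameter-definable counterexample $g(t)=f(\overline{c},t)$. Hence it suffices to show that for every unary $\bE$-definable $f$, some $m$ makes $\res_\cO\{t\in\cO: G_m^f(t)\notin\cO\}$ finite; the uniform bound $N$ then drops out of the same compactness machinery.

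To that end, I adopt the setting of Remark~\ref{rmk:special_cut_reformulation}: fix $(\bE,\cO)\prec(\bE\langle b\rangle,\cO_b^+)\models T_\convex$ with $\cO<b<\bE^{>\cO}$, and work in the valued differential field $(\bE\langle b\rangle,\cO_b^+,\partial_b)$. By Corollary~\ref{cor:wtransserial_gne}, weak transseriality is equivalent to every element of $\bE\langle b\rangle$ having finite $\partial_b$-logarithmic height; and since height is invariant under rescaling the derivation and $\partial_{1/b}=-b^2\partial_b$, the same conclusion can be read off at the element $1/b$. Applying this to the uniformly $\bE$-definable family of translates $f_r(t):=f(r+t)$, $r\in\bE$, followed by a further compactness argument to make the resulting height uniform in $r$, I obtain a single $m$ with $G_m^{f_r}(b)\in\cO_b^+$ for every $r\in\bE$; by Remark~\ref{rmk:special_cut_reformulation} this says that for each $r\in\bE$ there is $s_r\in\bE$ with $\cO<s_r$ and $G_m^f(r+t)\in\cO$ whenever $s_r<t<\bE^{>\cO}$.

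Combining this with the analogous conclusion obtained from the rescaling $\partial_b\leadsto \partial_{1/b}$ (which probes $G_m^f$ near each finite residue rather than near infinity within $\cO$), one gets: for every $r\in\cO$ there exist $r_-<r+\co<r_+$ in $\bE$ with $G_m^f(t)\in\cO$ for all $t\in(r_-,r_+)\setminus(r+\co)$. Thus the $\bE$-definable set $B:=\{t\in\cO: G_m^f(t)\notin\cO\}$ has $\res_\cO(B)$ without accumulation points in the residue field; being $\res_\cO(\bE)$-definable in the o-minimal residue structure, $\res_\cO(B)$ is finite.

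The main obstacle is the bookkeeping in the interlocking of the two invariances: rescaling the derivation $\partial_b\mapsto -b^2\partial_b$ (which converts asymptotic information at the cut above $\cO$ into asymptotic information near each residue) and translation $f\mapsto f(r+\cdot)$ (which sweeps through the cosets of $\co$ inside $\cO$). Both must be made uniform simultaneously, which is precisely what the two successive compactness arguments deliver; the final uniform cardinality bound $N$ in the parameters $\overline{x}$ then comes from the initial compactness reduction combined with standard uniform finiteness for definable families in the o-minimal residue structure.
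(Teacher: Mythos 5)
Your overall strategy matches the paper's: a compactness reduction to unary parameter-definable functions, then work at the gap $\cO < b < \bE^{>\cO}$, use invariance of logarithmic height under rescaling the derivation ($\partial_{1/b}=-b^2\partial_b$), apply this to the translated family $f_r(t)=f(r+t)$, and conclude via weak o-minimality. The compactness step and the final finiteness step are fine (though saying $\res_\cO(B)$ is ``$\res_\cO(\bE)$-definable'' is imprecise; what you actually use is that $B$ is a finite union of convex sets, so its residue image is a finite union of convex sets, each of which your local statement forces to be a single point).

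The middle of the argument is where it drifts. First, your reading of Remark~\ref{rmk:special_cut_reformulation} is wrong: $G_m^{f_r}(b)\in\cO_b^+$ translates to ``$\exists s_r\in\cO$ such that $G_m^{f_r}(t)\in\cO$ for all $t\in\cO$ with $t>s_r$''; you wrote $\cO<s_r$ and quantified over $s_r<t<\bE^{>\cO}$, which is vacuous. More substantively, the statement $G_m^{f_r}(b)\in\cO_b^+$ is itself a red herring: since $r+b$ realizes the same cut above $\cO$ for every $r\in\cO$, it only gives the tail bound on $G_m^f$ near $+\infty$ within $\cO$ again, not anything localized near $r$. The statement you actually need is $G_m^{f_r}(1/b)\in\cO_b^+$, i.e.\ with $1/b$ in place of $b$, because $r+1/b$ sits in the cut immediately above $r+\co$ and this is what localizes $\{t:G_m^f(t)\notin\cO\}$ inside $r+\co$ near each $r$. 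You mention the $\partial_{1/b}$ rescaling and that ``the same conclusion can be read off at $1/b$'', but you never actually write down $G_m^{f_r}(1/b)\in\cO_b^+$ and then merely assert the final local statement after ``combining'' — the combination is the content, not a footnote. The paper's proof makes this the central move: it derives $G_m^f(1/b)\in\cO_b$ from the height invariance and then pushes it through the $f_r$-family, which is exactly the step your write-up elides.
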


\begin{remark}\label{rmk:transserial_for_T}
	Since $T_\convex = \Th(\bK \langle d \rangle, \CH(\bK))$ for $d>\bK\models T$, (\ref{eqn:wtransserial_f_m}) is in fact equivalent (taking $\bK$ to be the prime model of $T$) to the requirement that for all $T$-definable $f$ there are $m \in \bN$ and $N \in \bN$ such that for every $T$-definable curve $\overline{\gamma}: [0, \infty) \to \bK^n$, for all but finitely many $t$s $H^{f,\gamma}_m(t)\coloneqq\lim_{s \to \infty}G_m^f(\overline{\gamma}(s), t)<\infty$.
    Similarly (\ref{eqn:wtransserial_f_m}) is equivalent to the strengthening of this asserting that furthermore $H^{f,\gamma}_m(t)$ is also bounded at $\pm\infty$. 
\end{remark}

\subsection{Characterizations of transseriality} In this subsection we will prove Theorem~\ref{introthm:transserial_char}.

\begin{lemma}\label{lem:generic_G_bound}
	Suppose that $T$ is transserial and $(\bE, \cO) \models T_\convex$. Let $f: \bE \to \bE$ be $\bE$-definable, then there is $m\in \bN$ such that for every $a \in \bE$ there is a finite subset $F_a\subseteq \bE$, such that 
	\[\forall t \in \bE \setminus (F_a+a\cO), \; G_m^f(t) \prec 1/a \]
	\begin{proof}
        Consider $h(a,x, t)=af(x+at)$ and let $m$ be such that for all $a,x \in \bE$, $G_m^h(a,x,t)<1$ for all large enough $t$ in $\cO$.
        Also note that 
        \[G_m^h(a,x,t)=G_m^h(a, 0, x/a+t)=aG_m^h(1, 0, x+at).\]
        Then for all $a\in \bE^{>0}$, $x \in \bE$ we can find $c_+ \in \bE^{>\cO}$ such that 
        \begin{equation}\label{eqn1:lem:generic_G_bound}
            G_{m+1}^h(a, x, t) =G_{m+1}^h(a, 0, x/a+t) \prec 1 \quad \text{for all $t \in [0, c_+)\setminus \cO$.}
        \end{equation}
        Now for all $a \in \bE^{\neq 0}$ the $(\bE, \cO)$-definable set
		\[D_a:=\{t \in \bE: G_{m+1}^h(a, 0, t) \succeq 1\}\]
		must be a finite union of convex subsets by weak o-minimality of $(\bE, \cO)$, but cannot contain any interval $(c_0, c_1)$ with $c_1>\cO+c_0$ because otherwise for all $t \in [0, c_1-c_0) \setminus \cO$ we would have $G_{m+1}^h(a, 0, c_0+t)\succeq 1$, contradicting \ref{eqn1:lem:generic_G_bound}.
        It follows that there is a finite set $\tilde{F}_a$ such that $D_a \subseteq \tilde{F}_a+ \cO$. But then since $G_{m+1}^h(a, 0, t)=aG_{m+1}^h(a, 0, at)$ we have 
        \[\forall t \in \bE \setminus a D_a,\, aG_{m+1}^h(a, 0, at)\prec 1,\]
        whence the thesis.
	\end{proof}
\end{lemma}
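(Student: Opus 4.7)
The plan is to apply the transserial axiom to the $T$-definable auxiliary function
\[
h(a,x,t) \;:=\; a\cdot f(x+at),
\]
whose iterated logarithmic derivatives in $t$ rescale cleanly. A direct induction on $k\geq 1$ gives $\dagger_{t}^{\,k}h(a,x,t) = a\cdot(\dagger^{k}f)(x+at)$, while $h(a,x,t)=a\cdot f(x+at)$ is immediate, so that
\[
G_{m}^{h}(a,x,t) \;=\; |a|\cdot G_{m}^{f}(x+at) \qquad \text{for every } m\in\bN.
\]
Applying transseriality to $h$, and strengthening via $(\mathrm{TS}^{h}_{m+1,s})$ for $s\in\cO^{>\co}$ to upgrade ``$<1$'' to ``$\prec 1$'', I obtain an $m\in\bN$ such that for every $a,x\in\bE$ there is $r(a,x)\in\cO$ with $|a|\cdot G_{m+1}^{f}(x+at)\prec 1$ for all $t>r(a,x)$ in $\cO$.

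Next I would fix $a>0$ in $\bE$ (the cases $a=0$ and $a<0$ being trivial or handled by the symmetry $t\mapsto -t$) and introduce the $(\bE,\cO)$-definable set
\[
D_{a} \;:=\; \bigl\{s\in\bE \,:\, a\cdot G_{m+1}^{f}(s)\succeq 1\bigr\}
\;=\; \bigl\{s\in\bE \,:\, G_{m+1}^{f}(s)\not\prec 1/a\bigr\}.
\]
By weak o-minimality of $T_{\convex}$, $D_{a}$ is a finite union of convex subsets of $\bE$. Suppose toward contradiction that some convex piece of $D_{a}$ contains an interval $(c_{0},c_{1})$ with $c_{1}-c_{0}>a\cdot\cO$. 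Applying the previous paragraph at $x=c_{0}$ produces $r\in\cO$ with $a\cdot G_{m+1}^{f}(c_{0}+at)\prec 1$ for every $t>r$ in $\cO$; since $(c_{1}-c_{0})/a>\cO$, one may pick $t\in\cO$ with $r<t<(c_{1}-c_{0})/a$, so that $c_{0}+at\in(c_{0},c_{1})\subseteq D_{a}$, contradicting $a\cdot G_{m+1}^{f}(c_{0}+at)\prec 1$.

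It follows that every convex piece of $D_{a}$ has diameter in $a\cO$ and so sits inside some coset $c+a\cO$; collecting one representative per piece gives a finite $F_{a}\subseteq\bE$ with $D_{a}\subseteq F_{a}+a\cO$, which is the conclusion of the lemma for the exponent $m+1$. The genuinely technical points I expect are (i) the clean identity $\dagger_{t}^{\,k}h=a\cdot(\dagger^{k}f)(x+at)$ and (ii) the passage from the ``$<1$'' to the strict ``$\prec 1$'' form of transseriality via $(\mathrm{TS}^{h}_{m+1,s})$; by contrast, the weak-o-minimality decomposition and the coset-of-$a\cO$ argument are routine.
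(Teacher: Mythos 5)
Your first step is where the argument breaks: passing from the per-$s$ bounds $(\mathrm{TS}^{h}_{m+1,s})$ to the claim that ``for every $a,x\in\bE$ there is $r(a,x)\in\cO$ with $|a|\,G_{m+1}^{f}(x+at)\prec 1$ for all $t>r(a,x)$ in $\cO$.'' What $(\mathrm{TS}^{h}_{m+1,s})$ actually gives is: for \emph{each} $s\in\cO^{>\co}$ there is some $r_{s}\in\cO$ with $G_{m+1}^{h}(a,x,t)<s$ for all $t\in\cO^{>r_{s}}$. The threshold $r_{s}$ depends on $s$, and these thresholds need not be bounded in $\cO$. Concretely, in $T_{\exp}$ with $f(t)=\exp(1/t)$ one finds $\dagger^{k}f(t)\in\{-1/t^{2},\,-2/t,\,-1/t\}$ for $k\geq 1$, so $G_{m}^{f}(t)=1/t^{2}$ for $m\geq 3$ and $t\gg 0$; here $G_{m}^{f}(t)<s$ holds for $t>1/\sqrt{s}\in\cO$, but $G_{m}^{f}(t)\succeq 1$ whenever $t\in\cO$, so there is no uniform $r\in\cO$ past which $G_{m}^{f}\prec 1$ inside $\cO$. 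Thus the uniform version you assert is false in general, even though the lemma's conclusion is true (here with $F_{1}=\{0\}$).

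The fix, which is what the paper does, is to assert $\prec 1$ not on a tail of $\cO$ but on the region immediately \emph{beyond} $\cO$: for each $a,x$ there is $c_{+}>\cO$ in $\bE$ with $G_{m+1}^{h}(a,x,t)\prec 1$ for all $t\in[0,c_{+})\setminus\cO$. This is the right reading of the family $(\mathrm{TS}^{h}_{m+1,s})_{s}$: if $b$ realizes the special cut $\cO<b<\bE^{>\cO}$ in $(\bE\langle b\rangle,\cO_{b}^{+})\succ(\bE,\cO)$, then for each $s$ the threshold $r_{s}\in\cO$ satisfies $b>r_{s}$, so $G_{m+1}^{h}(a,x,b)<s$ for every $s\in\cO^{>\co}$, whence $G_{m+1}^{h}(a,x,b)\prec 1$; then the $(\bE,\cO)$-definable set $\{t:G_{m+1}^{h}(a,x,t)\prec 1\}$, being a finite union of convex pieces (weak o-minimality) and containing the cut, contains an interval $(r,c_{+})$ with $r\in\cO$ and $c_{+}>\cO$. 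Your interval argument then has to pick the witness $t$ in this strip $(\cO,c_{+})$ rather than in a tail of $\cO$; since both $c_{+}$ and $(c_{1}-c_{0})/a$ lie above $\cO$ one can choose $t\in\bE$ with $\cO<t<\min\{c_{+},(c_{1}-c_{0})/a\}$, so $c_{0}+at\in(c_{0},c_{1})\subseteq D_{a}$ gives the contradiction. With that repair, your parametrization (defining $D_{a}$ directly in the variable $s$ rather than via $aD_{a}$ as in the paper) and the remainder of the argument go through unchanged.
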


\begin{corollary}\label{cor:generic_G_bound}
	Suppose that $T$ is transserial $(\bE, \cO) \models T_\convex$ and $M\subseteq \bE$ is a $\cO$-submodule. Let $f: \bE \to \bE$ be $\bE$-definable, then there is $m\in \bN$ and a finite subset $F_M \subseteq \bE$, such that
	\[ \forall t \in \bE \setminus (F_M +M) ,\; 1/G_mf(t) \notin M\]
	\begin{proof}
		Let $(\bE_1, \cO_1)\succ(\bE, \cO)$ contain a $z$ such that $M=z \cO \cap \bE$  for some $z \in \bE_1^{>0}$. There is a finite $F_z \subseteq \bE_1$ such that $G_m^f(t) \prec 1/z$ for all $t \in \bE_1 \setminus (F_z+z\cO)$. Picking $F_M$ finite such that $F_M+ M \supseteq (F_z+z\cO)\cap \bE$, we get that for all $t \in \bE \setminus F_M+M$, $1/G_m^f(t) \notin M$.
	\end{proof}
\end{corollary}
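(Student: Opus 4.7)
The plan is to reduce to Lemma~\ref{lem:generic_G_bound} by realizing the convex $\cO$-submodule $M$ as (the trace on $\bE$ of) a principal submodule in a suitable elementary extension. Concretely, I first aim to find $(\bE_1, \cO_1) \succ (\bE, \cO)$ and $z \in \bE_1^{>0}$ with $z\cO_1 \cap \bE = M$; then Lemma~\ref{lem:generic_G_bound} applied to $a=z$ will give the conclusion, up to a cleanup at the boundary to absorb the finite set $F_z$ into something defined over $\bE$.

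For the existence of $z$, consider the partial type $p(z)$ over $(\bE, \cO)$ consisting of the formulas $z \geq m$ for $m \in M$, together with the formulas $\neg(t/z \in \cO)$ for each $t \in \bE^{>M}$ with $t > 0$. This is finitely satisfiable in $(\bE, \cO)$ itself: given $m_1, \dots, m_k \in M$ and $t_1, \dots, t_\ell \in \bE^{>M}$, set $z_0 \coloneqq \max_i m_i \in M$; for each $j$ one has $t_j/z_0 \notin \cO$ because $r z_0 \in \cO \cdot M = M$ for every $r \in \cO$, whereas $t_j > M$. By compactness, $p$ is realized in some $(\bE_1, \cO_1) \succ (\bE, \cO)$ by an element $z$, and the defining conditions immediately yield $z\cO_1 \cap \bE = M$.

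I then apply Lemma~\ref{lem:generic_G_bound} in $(\bE_1, \cO_1)$ with $a=z$ to obtain $m \in \bN$ and a finite $F_z \subseteq \bE_1$ such that $G_m^f(t) \prec 1/z$, equivalently $1/G_m^f(t) \notin z\cO_1$, for every $t \in \bE_1 \setminus (F_z + z\cO_1)$. If $t \in \bE$ satisfies $1/G_m^f(t) \in M \subseteq z\cO_1$, then necessarily $t \in (F_z + z\cO_1) \cap \bE$, so it remains to find a finite $F_M \subseteq \bE$ with $F_M + M \supseteq (F_z + z\cO_1) \cap \bE$. For each $c \in F_z$ such that $(c + z\cO_1) \cap \bE$ is nonempty, any two elements of this intersection differ by an element of $z\cO_1 \cap \bE = M$, so the whole intersection is contained in $c' + M$ for any chosen representative $c' \in \bE$; collecting at most $|F_z|$ such representatives yields $F_M$. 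The only genuine technical point is the compactness step: one must phrase $p(z)$ so as to capture exactly the equality $z\cO_1 \cap \bE = M$ while keeping it finitely satisfiable, which rests precisely on the $\cO$-module property of $M$.
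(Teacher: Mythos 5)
Your proof is correct and takes essentially the same route as the paper's: pass to an elementary extension $(\bE_1,\cO_1)$ containing a generator $z$ with $z\cO_1\cap\bE=M$, invoke Lemma~\ref{lem:generic_G_bound} there with $a=z$ (valid because the uniform $m$ in that lemma depends only on $f$ and the theory $T$, not on the model), and then pull the finite exceptional set back to $\bE$. You supply the compactness argument for the existence of $z$ and the choice of representatives producing $F_M$, both of which the paper leaves implicit; the only place you are slightly informal is the degenerate case $M=\{0\}$ (where $z_0=\max_i m_i$ could be $0$), but finite satisfiability is still easy there by taking $z_0$ to be any positive element smaller than $\min_j t_j /N$ for some $N\in\bE^{>\cO}$.
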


\begin{proposition}\label{prop:transserial_sOabs}
	Suppose that $T$ is transserial, $(\bE, \cO) \models T_\convex$ and $x \in \bE_* \succ \bE$ is strongly $\cO$-absorbed over $\bE$. Then all $y \in \bE \langle x \rangle$ have finite $\partial_x$-height in $(\bE\langle x \rangle, \cO_x, \partial_x)$.
	\begin{proof}
        Write $y=f(x)$ for a $\bE$-definable function $f$.
		Let $h = f^\dagger$, and $M=\big(\Br(x/\bE)(\bE\langle x \rangle):\cO\big)\cap \bE$. By Corollary~\ref{cor:generic_G_bound}, we can find $k \in \bN$  such that $G_k^h(t) \notin M$ for all $t \in \bE \setminus (F_M+M)$. Observe that since $x$ is strongly $\cO$-absorbed, $x\notin \bE +\big(\Br(x/\bE)(\bE\langle x \rangle):\cO\big)$, so there are $x_-, x_+ \in \bE$ with $x_-<x<x_+$ and $[x_-, x_+] \cap F_M + M =\emptyset$. It follows that for some for some $c\in \bE^{>M}$, $1/G_k^h(t) \ge c >M$ for all $t \in [x_-, x_+]$ and thus $1/G_k^h(x)>(\Br(x/\bE)(\bE \langle x \rangle):\cO)$, whence there is $0 \le m<k$ such that $1/\lder^{m+1}f(x) \notin (\Br(x/\bE)(\bE \langle x \rangle):\cO)$ and we can conclude by Proposition~\ref{prop:techincal}.
	\end{proof}
\end{proposition}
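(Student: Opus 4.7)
The plan is to combine the ``absorbed-derivation'' analysis of Section~\ref{sec:fields_of_germs} with transseriality applied to a well-chosen definable function. Because $x$ is strongly $\cO$-absorbed and $\cO$ is $T$-convex, Example~\ref{main:examples}(2) and Lemma~\ref{lem:basic_equivalence} guarantee that $\partial_x$ is $\cO_x$-Liouville-convex and $\cO_x$-absorbed; the equivalence (1)$\Leftrightarrow$(3) of Corollary~\ref{cor:main-cor} therefore reduces the claim to the following: for every $y = f(x) \in \bE\langle x \rangle\setminus \bE$, produce $m \in \bN$ with $\val(\lder_x^m y) = \val\bigl((\lder^m f)(x)\bigr) \in \val(\bE)$. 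Applying Proposition~\ref{prop:techincal}\ref{tech:lder_O-} with $g := \lder^m f$ and $g^\dagger = \lder^{m+1} f$, this target reformulates as
\[
1/(\lder^{m+1} f)(x) \notin (B_x^+ : \cO), \qquad B_x^+ := \Br(x/\bE)(\bE \langle x \rangle),
\]
so it suffices to find \emph{some} $m$ for which this holds.

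To that end I would feed Corollary~\ref{cor:generic_G_bound} the unary function $h := f^\dagger$ and the $\cO$-submodule $M := (B_x^+:\cO) \cap \bE$ of $\bE$; it yields $k \in \bN$ and a finite $F_M \subseteq \bE$ such that $1/G_k^h(t) \notin M$ for all $t \in \bE \setminus (F_M + M)$. Strong $\cO$-absorption of $x$ says $x \notin \bE + (B_x^+:\cO)$, so I can find $x_-, x_+ \in \bE$ with $x_- < x < x_+$ and $[x_-, x_+] \cap (F_M + M) = \emptyset$. By weak o-minimality of $(\bE, \cO)$, the definable map $1/G_k^h$ is then bounded below on $[x_-, x_+]$ by some $c \in \bE^{>M}$; transferring by elementarity to $x \in (x_-, x_+)$ gives $1/G_k^h(x) \ge c > M$, and a short convexity argument (using that $(B_x^+:\cO)\cap\bE = M$ and that $(B_x^+:\cO)$ is an $\cO$-submodule of $\bE\langle x\rangle$) upgrades this to $1/G_k^h(x) > (B_x^+:\cO)$. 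Unfolding $G_k^h = \min_{0\le j \le k}|\lder^j h|$ then produces the desired $m$ with $1/(\lder^{m+1} f)(x) = 1/(\lder^m h)(x) \notin (B_x^+:\cO)$.

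The most delicate step is the \emph{a~priori} existence of such a uniform $k$ in Corollary~\ref{cor:generic_G_bound}: transseriality is phrased as an ``eventually at $+\infty$ on $\cO$'' condition, whereas $x$ need not live anywhere near the frontier of $\cO$. This gap is bridged upstream, in Lemma~\ref{lem:generic_G_bound}, by feeding transseriality the three-variable auxiliary $a f(x_0 + a t)$ and then rescaling and translating; the resulting ``generic'' $G_k$-bound, valid outside any preassigned finite $\bE$-translate of an arbitrary $\cO$-submodule, is exactly what is needed to exploit the strong $\cO$-absorption of $x$ and is what drives the reduction to a one-line application of Proposition~\ref{prop:techincal}.
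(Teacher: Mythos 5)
Your proposal is correct and follows essentially the same route as the paper's: feed Corollary~\ref{cor:generic_G_bound} the function $h = f^\dagger$ and $M = \big(\Br(x/\bE)(\bE\langle x\rangle):\cO\big)\cap\bE$, use strong $\cO$-absorption of $x$ to find $x_-<x<x_+$ in $\bE$ with $[x_-,x_+]\cap(F_M+M)=\emptyset$, bound $1/G_k^h$ from below on that interval by some $c\in\bE^{>M}$, and close with Proposition~\ref{prop:techincal}\ref{tech:lder_O-}. Your additional scaffolding through Corollary~\ref{cor:main-cor} and Example~\ref{main:examples}(2) just makes explicit what the paper's terser proof leaves implicit.
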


\begin{proposition}\label{prop:non-transserial_sOabs}
    Suppose that $T$ is exponential but not transserial, then there is a model $(\bE, \cO)\models T_\convex$, a $\bE$-definable function $f: \bE \to \bE$, and $x,y \in \bE_*\setminus \bE$ with $x$ $\cO$-weakly immediate, and $y$ weak $\cO$-limit, such that for all $m \in \bN$, $\val(\dagger^m f(x)) \notin \val \bE$ and $\val(\dagger^mf(y))\notin \val(\bE)$.
    \begin{proof}
        Since $T$ is not transerial, there is $(\bE, \cO)\models T_\convex$ and a $\bE$-definable unary function $h$ such that $G_m^h(t)\succeq 1$ for all $m \in \bN$ and all large enough $t \in \cO$. The function $f=\exp(h)$, will then satisfy $G_m^{\partial f}(t) \succeq 1$ for all $m \in \bN$ and all large enough $t \in \cO$.
        Now, up to moving to an $\aleph_1$-saturated extension, we can assume that $\lder^m \partial f \succeq 1$ on an interval $(c_-, c_+)$ with $c_- \in \cO$ and $c_+ > \cO$. But then any $\cO$-wim $x \in \bE_*\setminus \bE$ with $\Br(x/\bE)(\bE)=\cO$ and such that $c_- < x <c_+$ will be such that $1/\lder^m \partial f (x) \in \Br(p/\bE)(\bE\langle x \rangle)$ for all $m$. Similarly for any $\cO$-limit $y$ with with $y \in (c_-, c_+)$.
    \end{proof}
\end{proposition}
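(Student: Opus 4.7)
The plan is to reverse the argument of Proposition~\ref{prop:transserial_sOabs}. By Lemma~\ref{lem:unary_functions}(2), the non-transseriality of $T$ yields a model $(\bE, \cO) \models T_\convex$ and a unary $\bE$-definable function $h$ such that for every $m \in \bN$ the condition $\forall r \in \cO, \exists t \in \cO, (t > r \land G_m^h(t) > 1)$ holds. Weak o-minimality of $T_\convex$ (each definable subset being a finite union of convex sets) strengthens this, for each $m$, to the existence of a threshold $r_m \in \cO$ above which $G_m^h > 1$; since $G_{m+1}^h \leq G_m^h$ pointwise, the $r_m$ may be chosen increasing in $m$.

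Because $T$ is exponential, set $f \coloneqq \partial(\exp h) = h' \cdot \exp h$, which is $\bE$-definable. A routine induction using the product rule for logarithmic derivatives together with the identity $\dagger(\exp g) = \partial g$ converts the lower bounds on $G_m^h$ into $G_m^f(t) \succeq 1$ for every $m$ and every sufficiently large $t \in \cO$. Now pass to an elementary extension $(\bE_*, \cO_*) \succeq (\bE, \cO)$ of sufficient saturation: the partial type asserting $t > r_m$ for every $m$, $t < b$ for every $b \in \bE^{>\cO}$, and $|\dagger^k f(t)| \geq 1$ for every $k$ is consistent (since $\cO$ has no maximum, $\bE^{>\cO}$ has no minimum, and the inequalities $|\dagger^k f| \geq 1$ all hold on one common end segment of $\cO$), hence realized by some $c_+ \in \bE_*$ with $c_+ > \cO$; consequently $|\dagger^m f| \geq 1$ holds throughout the interval $[r_0, c_+] \subseteq \bE_*$ for every $m$.

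Inside this interval, by further saturation, one produces a $\cO$-weakly immediate element $x \in \bE_* \setminus \bE$ with $\Br(x/\bE)(\bE) = \cO$ --- as the pseudolimit of a $\cO$-p.c.\ sequence in $\bE \cap (r_0, c_+)$ whose successive differences cofinally have valuation $0$ in $\val_\cO(\bE)$ --- and a weak $\cO$-limit $y \in \bE_* \setminus \bE$ in the same interval, namely $y = c + \eta$ with $c \in \bE \cap (r_0, c_+)$ and $\eta \in \cO_*^+ \setminus \cO_*^-$. Since $x, y \in [r_0, c_+]$, one has $|\dagger^m f(x)|, |\dagger^m f(y)| \geq 1$ for every $m$; because the same construction applies after replacing $c_+$ with an arbitrary element of $\bE_*^{>\cO}$, the estimate in fact holds for $t$ arbitrarily close to $\bE^{>\cO}$, giving $|\dagger^m f(z)| > 1/b$ for every $b \in \bE^{>\cO}$ and $z \in \{x, y\}$. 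Then Proposition~\ref{prop:breadth-ortho}(1) applied at $x$ (symmetric of breadth $\cO$) and Lemma~\ref{lem:absorption_special}(4) applied at $y$ both force $\val(\dagger^m f(z)) \notin \val(\bE)$. The main obstacle I anticipate is the construction of $x$: securing a pseudolimit of breadth exactly $\cO$ inside the prescribed bounded interval requires both enough saturation of $\bE_*$ and careful control of the scale of the approximating p.c.\ sequence.
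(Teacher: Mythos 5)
Your overall plan tracks the paper's argument: extract a witness function $h$ via Lemma~\ref{lem:unary_functions}(2), pass to $\partial(\exp h)$, use saturation to locate an interval $(c_-,c_+)$ with $c_+>\cO$ on which the iterated logarithmic derivatives are all $\succeq 1$, and realize the two types inside that interval. The treatment of the $\cO$-weakly immediate $x$ matches the paper in spirit, even though the citation is wrong: Proposition~\ref{prop:breadth-ortho}(1) is vacuous here, since for $x$ wim of breadth $\cO$ one has $B_x^+=B_x^-=\cO_*^-$ inside $\bE\langle x\rangle$ by Corollary~\ref{cor:wim_special_ortho}, so the hypothesis $f(x)\in B_x^+\setminus B_x^-$ can never hold. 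The step that actually closes the argument is Proposition~\ref{prop:techincal}\ref{tech:lder_O-}\ref{techcaveats:C}: for $x$ $\cO$-wim with $\Br(x/\bE)(\bE)=\cO$ one has $(B_x^+:\cO)=B_x^+=\cO_*^-$, so $|\dagger^{k}f(x)|\succeq 1$ gives $1/\dagger^{k}f(x)\in(B_x^+:\cO)$ and hence $\val(\dagger^{k-1}f(x))\notin\val(\bE)$.

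There is, however, a genuine gap in the construction of $y$. You take $y=c+\eta$ with $\eta\in\cO_*^+\setminus\cO_*^-$, i.e.\ $\eta$ realizes the cut separating $\cO$ from $\bE^{>\cO}$. Such an element is \emph{not} a weak $\cO$-limit. By Lemma~\ref{lem:weak_O_lim_char}(2) a weak $\cO$-limit must satisfy $\val_{\cO_*^-}(y-\bE)\subseteq\val_{\cO_*^-}(\bE)$, but for your $\eta$ and any $c'\in\cO$ one has $\cO<\eta-c'<\bE^{>\cO}$, so $\val_{\cO_*^-}(\eta-c')\notin\val_{\cO_*^-}(\bE)$: $\eta$ is weakly but not strongly $\cO$-absorbed, i.e.\ exactly the situation of Subsection~\ref{ssec:germs_at_special_types}, and $\bE\langle\eta\rangle$ is neither wim-constructible nor res-constructible, so it cannot serve to negate item~\ref{introthmenum:gne_res_1} of Theorem~\ref{introthm:transserial_char}. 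Correspondingly, Lemma~\ref{lem:absorption_special}(4), which you invoke, is a statement about that special cut, not about weak $\cO$-limits. What the proposition needs is a $\cO$-limit with large enough breadth: take $y=dz$ with $d\in\bE$, $\cO<d<c_+$, and $z$ realizing a dense cut over $\res_{\cO}(\bE)$, so that $B_y^+=d\co_*^-\supseteq\cO_*^-$; then $|\dagger^{k}f(y)|\succeq 1$ forces $1/\dagger^{k}f(y)\in\cO_*^-\subseteq(B_y^+:\cO)$ and Proposition~\ref{prop:techincal}\ref{tech:lder_O-}\ref{techcaveats:C} again gives $\val(\dagger^{k-1}f(y))\notin\val(\bE)$. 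The auxiliary claim ``$|\dagger^mf(z)|>1/b$ for every $b\in\bE^{>\cO}$'' is both obtained by an unclear re-choosing argument and not strong enough: for a type whose breadth is $\co_*^-$ one would need $|\dagger^mf(z)|>\cO$, i.e.\ infinite, which $\ge 1$ does not give.
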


\begin{lemma}\label{lem:gne-ppty_transitive}
    If $(\bE, \cO) \prec (\bE_1, \cO_1)$ and $(\bE_1, \cO_2) \prec (\bE_2, \cO_2)$ have the gne-property, then $(\bE, \cO) \prec(\bE_2, \cO_2)$ has the gne-property.
    \begin{proof}
        Let $x \in \bE_2$. If $\tp(x/\bE)$ is realized in $\bE_1$, say by some $x_1$, then then there is a $\bE$-definable gne $g$, such that $\val(g^{-1}(x_1)-\bE)\subseteq \val(\bE)$ and so $\val(g^{-1}(x)-\bE)\subseteq \val(\bE)$.
        So suppose that $\tp(x/\bE)$ is not realized in $\bE_1$.
        Then since $(\bE_1, \cO_2) \prec (\bE_2, \cO_2)$ has the gne-property, there is a $\bE_1$-definable gne $g$ such that $x=g(z)$ and $\val(z-\bE_1)\subseteq \val(\bE_1)$. Without loss of generality we can furthermore assume that $g$ is normal and essential at $z$. Write $g=g_0$ and $g_i=c_i+\sigma_i\exp(g_{i+1})$ for all $i<m$ where $m$ is the height of $g$. Note that if there is no $\tilde{c}_i\in \bE$ such that $g_i(z)\equiv_\bE \tilde{c}_i + \sigma_i\exp(g_{i+1}(z))$, since $\val(\exp(g_{i+1}(z)) \notin \val(\bE_1)$ we must have $c_i\equiv_\bE g_i(z)$ and $\tp(g_i(z)/\bE)$ would be realized in $\bE$. Thus since we assumed that $\tp(g_0(z)/\bE)$ is not realized in $\bE_1$, inductively we can find a $\bE$-definable gne $\tilde{g}$ such that $\tilde{g}(z)\equiv_\bE g(z)=x$.
    \end{proof}
\end{lemma}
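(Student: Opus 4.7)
The plan is to take an arbitrary $x \in \bE_2$ and produce a $\bE$-definable gne $g$ with $\val(g^{-1}(x)-\bE) \subseteq \val(\bE)$. The natural first move is to split on whether $\tp(x/\bE)$ is realized in $\bE_1$. If it is, pick any realization $x_1 \in \bE_1$, apply the gne-property of $(\bE,\cO) \prec (\bE_1,\cO_1)$ to $x_1$ to obtain a $\bE$-definable gne $g$ witnessing absorption for $x_1$, and observe that the same $g$ works for $x$ because $x \equiv_\bE x_1$. This case is essentially free.

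For the non-realized case I would invoke the gne-property of $(\bE_1,\cO_2) \prec (\bE_2,\cO_2)$ applied to $x$, obtaining a $\bE_1$-definable gne $g$ and an element $z \in \bE_2$ with $g(z) = x$ and $\val(z-\bE_1) \subseteq \val(\bE_1)$. Using Lemma~\ref{lem:gne-normalize} together with Remark~\ref{rmK:gne_to_essential}, I may assume without loss of generality that $g$ is both normal and essential at $z$, so that writing $g = g_0$ and $g_i = c_i + \sigma_i \exp(g_{i+1})$ with $c_i \in \bE_1$ and $\sigma_i \in \{\pm 1\}$, each tail $\exp(g_{i+1}(z))$ has valuation outside $\val(\bE_1)$.

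The core step is an inductive parameter-replacement from the outermost layer inward: at each level, either some $\tilde c_i \in \bE$ exists with $g_i(z) \equiv_\bE \tilde c_i + \sigma_i \exp(g_{i+1}(z))$ and I substitute it, or no such $\tilde c_i$ exists; in the latter case, since $\val(\exp(g_{i+1}(z))) \notin \val(\bE_1) \supseteq \val(\bE)$, the tail $\sigma_i \exp(g_{i+1}(z))$ is dominated by any $\bE$-perturbation of $c_i$ in the relevant sense, forcing $g_i(z) \equiv_\bE c_i$. Cascading this equivalence upward would make $x = g_0(z) \equiv_\bE c_0$ a realization of $\tp(x/\bE)$ in $\bE_1$, contradicting the case assumption. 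Assembling the surviving substitutions yields a $\bE$-definable gne $\tilde g$ with $\tilde g(z) \equiv_\bE x$, and therefore a $z' \in \bE_2$ with $\tilde g(z') = x$ and $z' \equiv_\bE z$.

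The main delicacy I anticipate is the closing step: the hypothesis on $z$ only provides absorption over $\bE_1$, whereas the required conclusion is $\val(\tilde g^{-1}(x) - \bE) \subseteq \val(\bE)$. Bridging this gap requires re-invoking the gne-property of the first extension on the $\bE_1$-parameters that appear in the differences $z - c$ for $c \in \bE$, so that any value not already in $\val(\bE)$ can be traced back via a $\bE$-definable gne applied to some auxiliary element of $\bE_1$. Making this interaction between the two gne-properties clean — in particular, verifying that the parameter replacement in the previous paragraph does not smuggle in a $\bE_1$-valued obstruction to absorption over $\bE$ — is the step where the normality and essentiality assumptions genuinely earn their keep.
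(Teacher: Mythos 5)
Your overall structure matches the paper's proof exactly: split on whether $\tp(x/\bE)$ is realized in $\bE_1$; discharge the realized case by the first gne-property; in the non-realized case invoke the second gne-property to write $x=g(z)$ with $z$ absorbed over $\bE_1$, normalize and essentialize $g$ at $z$, and replace the $\bE_1$-parameters level by level, using non-realization of $\tp(x/\bE)$ in $\bE_1$ to rule out the obstruction at each level. So far this is the paper's argument.

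Where you part ways is the closing step, which you flag as ``the main delicacy'' and propose to close by re-invoking the gne-property of $(\bE,\cO)\prec(\bE_1,\cO_1)$ on the $\bE_1$-parameters occurring in the differences $z-c$. That diagnosis is pointed in the wrong direction: there is no way to compose a gne witnessing absorption of such a parameter $d\in\bE_1$ with the gne $\tilde g$ you have already produced for $x$, and nothing in the paper suggests doing so. The gap, such as it is, is closed much more cheaply by a fact that the induction has \emph{already} supplied. Propagating non-realization inward through the essential levels of the normalized gne (the same propagation you use to justify each replacement) reaches the innermost translation and gives that $\tp(z/\bE)$ itself is not realized in $\bE_1$. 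From this, absorption of $z$ over $\bE$ follows directly and without any further appeal to the first gne-property: if $\val(z-c)\notin\val(\bE)$ for some $c\in\bE$, then since $z$ is absorbed over $\bE_1$ there is $d\in\bE_1$ of the same sign as $z-c$ with $z-c\asymp d$; but $\val(d)\notin\val(\bE)$ while $\val(a-c)\in\val(\bE)$ for every $a\in\bE^{\neq c}$, so $c+d\in\bE_1$ lies in the same $\bE$-cut as $z$, contradicting non-realization of $\tp(z/\bE)$ in $\bE_1$. Finally $\tilde g^{-1}(x)\equiv_\bE z$, and absorption over $\bE$ inside the fixed structure $(\bE_2,\cO_2)$ is determined by the complete $L$-type over $\bE$, so $\tilde g^{-1}(x)$ is absorbed over $\bE$ as well. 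Your suspicion that the normality and essentiality hypotheses ``earn their keep'' here was correct, but they do so by driving the non-realization down to $\tp(z/\bE)$, not by licensing a second use of the first gne-property.
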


\begin{proof}[Proof of Theorem~\ref{introthm:transserial_char}]
    \ref{introthmenum:transserial} $\Leftrightarrow$ \ref{introthmenum:exp-bdd_gne_at_special}
    is by Lemma~\ref{lem:transserial_vs_Wtransserial} and Corollary~\ref{cor:wtransserial_gne}.
    \ref{introthmenum:transserial} $\Rightarrow$ (\ref{introthmenum:gne_wim_1} \& \ref{introthmenum:gne_res_1})
    is Proposition~\ref{prop:transserial_sOabs}, and
    (\ref{introthmenum:gne_wim_1} or \ref{introthmenum:gne_res_1}) $\Rightarrow$ \ref{introthmenum:transserial} is Proposition~\ref{prop:non-transserial_sOabs}.
    \ref{introthmenum:gne_wim_1} $\Leftrightarrow$ \ref{introthmenum:gne_wim}
    and \ref{introthmenum:gne_res_1} $\Leftrightarrow$ \ref{introthmenum:gne_res}
    are by Lemma~\ref{lem:gne-ppty_transitive}.
\end{proof}

\subsection{Consequences of transseriality}\label{ssec:consequences}
This subsection contains the proof of Theorem~\ref{introthm:transserial_consequences}.

\begin{definition}\label{def:rosenlicht_levels}
    Let $T$ be an exponential o-minimal theory and $(\bE, \cO)\models T_\convex$. Recall that $x, y \in \bE^{> \cO}$ have the same \emph{Rosenlicht level} if there is a nautral number $m$ such that $\val(\log_m x)=\val(\log_m y)$ We write $x\asymp_Ly$ when $x$ and $y$ have the same Rosenlicht level. $\asymp_L$ is an equivalence relation with convex classes, so the set of Rosenlicht levels is naturally ordered.
\end{definition}

\begin{proof}[Proof of Theorem~\ref{introthm:transserial_consequences}]
    (1). By Theorem~\ref{introthm:transserial_char}, if $(\bE, \cO)\preceq (\bE_*, \cO_*)$ is res-constructible or wim-constructible, then it has the gne-property. So given any $y \in \bE_*^{>\cO_*}$, we can write $y=g(z)$ for a $\bE$-definable gne and some $z$ such that $\val_{\cO_*}(z-\bE)\subseteq \val_{\cO_*}(\bE)$. Furthermore by Lemma~\ref{lem:gne-normalize}, we can assume that such $g$ is normal and essential at $z$. Write $g=g_0$ and $g_i=c_i+\sigma_i\exp(g_{i+1})$ for all $i<m$ where $m$ is the height of $g$. If all $c_i$s are $0$, then it must be $\sigma_i=1$ for all $i$ and thus we have $y=\exp_m(z)$ where $\val(z) \in \val(\bE)$. If instead $c_i\neq 0$ for some $i$, pick the minimum such $i$ and observe that by normality of $g$, $c_i\succ \exp(g_{i+1}(z))$ and hence $\log_i(y)=c_i+\exp(g_{i+1}(z))\sim c_i$. In any case we have $\val(\log_n y) \in \val(\bE)$ for some $n\in \bN$ and hence $y \asymp_L d$ for some $d \in \bE$.
    (2) Follows from Corollary~\ref{cor:finite_hight_and_br_ortho}.
\end{proof}

\begin{example}
    All simply exponential o-minimal theories are transserial. This follows from the equivalence $(4)\Leftrightarrow(1)$ in Theorem~\ref{introthm:transserial_char} and \cite[Prop.~5.27]{freni2024t}, or directly from strong quantifier elimination for a simply exponential theory (relative to suitable power-bounded reducts) and the equivalence $(3)\Leftrightarrow(1)$: in fact if $T$ is simply exponential, then up to interdefinability we can assume that $L=L_0 \cup \{\exp, \log\}$ where $T|_{L_0}$ is power-bounded and $T$ is model-complete and universally axiomatized (see e.g.\ \cite[Sec.~5.2]{freni2024t}), so all $T$-definable functions are given piecewise by terms. But then proceeding by induction on terms we can prove condition (3) in Theorem~\ref{introthm:transserial_char} as follows. Suppose that (3) holds for $y_1=h(b)$ and let $y=f(y_1)$. If $f \in \{\exp, \log\}$, then (3) still holds. So suppose $f$ is a term of $L_0(\bE)$. Then up to translations by elements in $\bE$, we can assume that $\val_{\cO_*^-}(g(b))\notin \val_{\cO_*^-}(\bE)$, so there are $\beta\in \Exponents(T|_{L_0})$, $c,d \in \bE$ such that $f(g(b))\equiv_\bE c+ dg(b)^\beta$ so the validity of (3) is preserved. 
\end{example}

\section{Comments}

This is a preliminary version and may be subject to substantial changes in the future. In particular
\begin{enumerate}
    \item the terminology might change;
    \item it is likely that many results relying on the $T$-convexity of some $\cO$, or its closure under exponentiation will be generalized to the setting of \cite{marikova2011minimal}, when $\cO$ just has an o-minimal residue field sort;
    \item the following question is worth studying: suppose that $\bE_* \succ \bE$ is a finite $\dcl_T$-dimensional extension of model of $T$, $\cO_*\subseteq \bE_*$ is a $T$-convex valuation ring, and $\partial_1$ and $\partial_2$ are $\cO_*$-absorbed $\bE$-linear $T$-derivations on $\bE_*$, is there $a \in \bE_*^{\neq0}$ such that $a(\partial_1+\partial_2)$ is $\cO_*$-absorbed?
\end{enumerate}

\bibliographystyle{abbrvnat}
\bibliography{Ref}

\end{document}